
\documentclass[a4paper, 12pt]{amsart}

\usepackage[pdftex]{graphicx} 

\usepackage{amssymb}
\usepackage{amsmath}
\usepackage{amsthm}
\usepackage{amscd}
\usepackage{comment}
\usepackage[all,cmtip]{xy}
\usepackage{bm}

\usepackage{color}

\usepackage[utf8]{inputenc}

\usepackage{tikz}
\usetikzlibrary{positioning}
\usetikzlibrary{intersections}
\usetikzlibrary{calc, quotes, angles}

\theoremstyle{plain}
  \newtheorem{thm}{Theorem}[section]
  \newtheorem{lem}[thm]{Lemma}
  \newtheorem{slem}[thm]{Sublemma}
  \newtheorem{cor}[thm]{Corollary}
  \newtheorem{prop}[thm]{Proposition}
  \newtheorem{clm}[thm]{Claim}
  
  \newtheorem{ass}[thm]{Assertion}

\theoremstyle{definition}
  \newtheorem{defn}[thm]{Definition}
  \newtheorem{conj}[thm]{Conjecture}
  \newtheorem{ex}[thm]{Example}
  
  \newtheorem{prob}[thm]{Problem}

  \newtheorem{rem}[thm]{Remark}

\theoremstyle{remark}

\makeatletter

\newcommand{\wangle}[0]{\tilde{\angle}}
\newcommand{\diam}[0]{\mathrm{diam}\,}

\newcommand{\e}[0]{\epsilon}

\newcommand{\vol}[0]{\mathrm{vol}}

\newcommand{\ca}{\mathcal}
\newcommand{\ve}{\varepsilon}

\newcommand{\pa}{\partial}

\newcommand{\ol}{\overline}

\newcommand{\inte}[0]{{\rm int}}

\newcommand{\pmed}[0]{\par\medskip}
\newcommand{\psmall}[0]{\par\smallskip}
\newcommand{\pbig}[0]{\par\bigskip}
\newcommand{\n}[0]{\noindent}

\newcommand{\R}[0]{\mathbb R}

\newcommand{\beq}[0]{\begin{equation}}
\newcommand{\eeq}[0]{\end{equation}}
\newcommand{\beqq}[0]{\begin{equation*}}
\newcommand{\eeqq}[0]{\end{equation*}}
\newcommand{\bali}[0]{\begin{align}}
\newcommand{\eali}[0]{\end{align}}
\newcommand{\balii}[0]{\begin{align*}}
\newcommand{\ealii}[0]{\end{align*}}

\newcommand{\benu}[0]{\begin{enumerate}}
\newcommand{\eenu}[0]{\end{enumerate}}


\begin{document}

\title[Limits of manifolds with boundary]{Limits of manifolds with boundary II
}

\author{Takao Yamaguchi \and Zhilang Zhang }

%

\address{Takao Yamaguchi, Institute of Mathematics, University of Tsukuba, Tsukuba 305-8571, Japan}
%
%
\email{takao@math.tsukuba.ac.jp}

\address{Zhilang Zhang, School of Mathematics, Foshan University, Foshan, China}
 \email{zhilangz@fosu.edu.cn}

\subjclass[2010]{53C20, 53C21, 53C23}
\keywords{collapse; Gromov-Hausdroff convergence; manifold with boundary}

\thanks{This work was supported by JSPS KAKENHI Grant Numbers 18H01118, 21H00977  and  NSFC No.11901089}

\date{\today}

\begin{abstract} 
In this  paper, as a continuation of [34], we consider the Gromov-Hausdorff convergence and collapsing in the family of compact Riemannian manifolds with boundary satisfying lower bounds on the sectional curvatures of  manifolds,  boundaries and the second fundamental forms of boundaries, and an upper diameter bound.  We describe the local geometric structure of the limit spaces, and establish  some stability results including Lipschitz homotopy stability. 
\end{abstract}

\maketitle

\setcounter{tocdepth}{2}

\tableofcontents

\section{Introduction} \label{sec:intro}

For $n\ge 2, \kappa, \nu\in\R$, 
$\lambda\ge 0$ and $d>0$, let 
$\ca M(n,\kappa,\nu, \lambda, d)$ denote the set of all isometry classes of 
$n$-dimensional compact  Riemannian manifolds $M$ with boundary such that   
the  sectional curvatures of $M$ and $\pa M$, the second fundamental forms of $\pa M$
and the diameters satisfy 
\beq\label{eq:curv-assum}
      K_M \ge \kappa, \,\,\, K_{\pa M} \ge \nu,\,\,\,
 \Pi_{\partial M} \ge -\lambda, \,\,\,\diam(M)\le d,
\eeq
where  we do not need the condition on the lower curvature bound for $\pa M$
when $n=\dim M=2$.
In \cite{wong0}, Wong proved the 
 precompactness of $\ca M(n,\kappa,\nu, \lambda, d)$ 
with respect  to the Gromov-Hausdorff distance.

The purpose of the present paper is to 
determine  the  local geometric structure of  the limit spaces, and establish some results on local and global stabilities.
Let us consider a convergence
\beq \label{eq:convMtoN-intro}
\ca M(n,\kappa,\nu,\lambda,d)\ni M_i\to N\in
\ol{\ca M(n,\kappa,\nu,\lambda,d)}.
\eeq 
Note that the case of inradius collapse,
where the inradii ${\rm inrad}(M_i)$ of $M_i$
converge to $0$, the limit space $N$
is known to be an Alexandrov space
with curvature $\ge \nu$
(\cite{YZ:inradius}). Therefore in most cases, we assume the general case that 
the inradii ${\rm inrad}(M_i)$ of $M_i$ are
uniformly bounded away from $0$.
 In this case, we call \eqref{eq:convMtoN-intro} a {\it non-inradius collapse/convergence}, 
and say that $M_i$  non-inradius collapses/converges to $N$ 
as in \cite{YZ:part1}.
The {\em boundary} $N_0$ of  the limit space $N$ is
defined as 
\[   \text{
       $N_0:=\lim_{i\to\infty} \pa M_i$\quad under \eqref{eq:convMtoN-intro}.}
\]
Note that the  {\em interior} 
\[
      {\rm int} N:=N\setminus N_0
\]
of $N$ satisfies the local Alexandrov curvature condition $\ge \kappa$,
although it is not complete. 
Obviously, it is not the case for points of $N_0$.
Thus one of our main concerns is on the local structure around the  points of $N_0$.

From  \eqref{eq:curv-assum}, we may assume that 
the intrinsic metric $(\pa M_i)^{\rm int}$ of
$\pa M_i$ 
 converges to some
Alexandrov space $C_0$. We may also assume that the natural map
$\eta_i: (\pa M_i)^{\rm int} \to (\pa M_i)^{\rm ext}$
to the extrinsic metric,
converges to a surjective $1$-Lipschitz map 
$$
     \eta_0:C_0\to N_0.
$$
It is shown in \cite{YZ:inradius}  that 
$\# \eta_0^{-1}(x)\le 2$  for 
all $x\in N_0$, and $\eta_0$ 
induces a $1$-Lipschitz map  $d\eta_0:\Sigma_p(C_0) \to \Sigma_x(N)$ for any $p\in \eta_0^{-1}(x)$.
 
We call a point $x\in N_0$ {\it single}  (resp. {\it double})    if $\# \eta_0^{-1}(x)=1$
(resp. $\# \eta_0^{-1}(x)=2$).
We denote by $N_0^1$ (resp. by $N_0^2$) 
the set of all single points (resp. double points)
in $N_0$.
For $k=1$ or $2$,
let ${\rm int}\, N_0^k$ denote the interior of $N_0^k$ in $N_0$, and let 
$\pa N_0^k=\bar N_0^k\setminus {\rm int}\, N_0^k$ be the topological boundary of 
$N_0^k$ in $N_0$.
We set  
$$
\mathcal S^k :=\pa N_0^k \cap N_0^k.
$$ 

 In our previous paper \cite{YZ:part1},
we made clear the infinitesimal structure
of $N$.
For instance, $N$ becomes {\it infinitesimally Alexandrov} which implies that 
for any $x\in N_0$, the space of directions
$\Sigma_x(N)$ as well as the intrinsic
metric $\Sigma_x(N_0)^{\rm int}$ of 
$\Sigma_x(N_0)\subset\Sigma_x(N)$ 
is an 
Alexandrov space with curvature $\ge 1$.
Moreover the tangent cones, denoted
$T_x(N)$ and $T_x(N_0)$, of $N$ and $N_0$
at $x$ are isometric to the Euclidean cone
over $\Sigma_x(N)$ and $\Sigma_x(N_0)$
respectively (see \cite[Theorem 1.1]{YZ:part1}).

\psmall 
 Now we state our main results.
Let $m:=\dim N$ denote the topological dimension of $N$.

\pmed
\n
{\bf Results on almost isometries.}\,
\pmed
Let a sequence $M_i$ in $\ca M(n,\kappa,\nu,\lambda, d)$ non-inradius collapses/converges  to a geodesic space $N$.
By \cite[Lemma 4.18]{YZ:part1}, we have 
$$
\dim N=\dim_H N=m, \qquad   \dim N_0=\dim_H N_0=m-1.
$$
We call a point $x\in N_0$ {\it metrically regular} if  the intrinsic metric
$\Sigma_x(N_0)^{\rm int}$ of $\Sigma_x(N_0)$
in $\Sigma_x(N)$
 is isometric to $\mathbb S^{m-2}$. 
Otherwise we call $x$ {\it metrically singular}.
We denote by $N_0^{\rm reg}$ (resp. by $N_0^{\rm sing}$) the set of all metrically regular 
(resp. all metrically singular) points of  $N_0$.
\begin{defn}\label{defn:thin-thick-brpoint}
We say that a point $x$ of $N_0$ is {\it thick} 
(resp.\,\,{\it thin}) in $N$ if $\Sigma_x(N_0)\neq\Sigma_x(N)$
(resp. $\Sigma_x(N_0)=\Sigma_x(N)$).
A thick point $x$ is called a {\it boundary regular 
point} of $N$ if $\Sigma_x(N)$ is isometric 
to the unit hemisphere $\mathbb S^{m-1}_+$ (see also Definition \ref{defn:thick}).

\end{defn}

Note that if $x$ is a boundary regular point of $N$, then $x\in N_0^{\rm reg}$.
 However the converse is not true as 
one see from an example where $N$ is a square and 
$N_0$ is the boundary of the square
(see Example \ref{ex:convexbody}).

Let ${\ca Tk}$ (resp. ${\ca Tn}$)
denote the set of all thick points
(resp. all thin points), and let ${\ca Tk}^{\rm reg}$
be the set of all boundary regular points of $N$.
We set ${\ca Tn}^{\rm reg}:={\ca Tn}\cap N_0^{\rm reg}$.
We show that  ${\ca Tk}^{\rm reg}\cup {\ca Tn}^{\rm reg}$ has full measure in 
$N_0$ with respect to the $(m-1)$-dimensional 
Hausdorff measure (see Lemma \ref{lem:dimTKNc}).

\pmed\n
{\bf Local structure of $N_0$.}\,
For $x\in N_0$, let  $B^{N_0^{\rm ext}}(x,r)$ (resp. $B^{N_0^{\rm int}}(x,r))$ denote the metric $r$-ball around $x$ in $N_0$ with respect to 
the extrinsic metric (resp. the intrinsic metric) induced from $N$.

The local structure of $N_0$ at any regular point of $N_0$ can be determined as follows.
We denote by $\tau_x(\,\,)$ a function
depending on $x$ such that $\lim_{\e\to 0}\tau_x(\e)=0$. 
We say that a map $f:X\to X'$ between metric 
spaces is an {\it $\e$-almost isometry} if 
it is bijective and satisfies 
$||f(x),f(y)|/|x,y|-1| <\e$ for all $x,y\in X$.

\begin{thm}\label{thm:regular-ball}
For any $x\in N_0^{\rm reg}$, there exists $\e>0$ satisfying the following$:$
\begin{enumerate}
\item If $x\in N_0^1$  in addition, then  $\mathring{B}^{N_0^{\rm int}}(x,\e)$  is $\tau_x(\e)$-almost isometric to an open subset of $\R^{m-1}\,;$
\item  If $x\in {\ca Tk}^{\rm reg}\cup
 ({\ca Tn}^{\rm reg}\cap N_0^1)$  in addition to $(1)$, then both
$\mathring{B}^{N_0^{\rm ext}}(x,\e)$ and
$\mathring{B}^{N_0^{\rm int}}(x,\e)$ are
$\tau_x(\e)$-almost isometric to open subsets of $\R^{m-1}\,;$
\item If $x\in N_ 0^2$  in addition, then both $\mathring{B}^{N_0^{\rm ext}}(x,\e)$ and $\mathring{B}^{N_0^{\rm int}}(x,\e)$ are the union of 
two subsets of $N_0$ that are $\tau_x(\e)$-almost isometric to  open subsets of $\R^{m-1}$
with respect to the extrinsic and the intrinsic metrics of $N_0$ respectively.
\end{enumerate}
\end{thm}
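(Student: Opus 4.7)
\medskip

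\noindent The overall strategy is to transfer the almost-flat structure of $N_0$ from the $(m-1)$-dimensional Alexandrov space $C_0$ (the limit of $(\pa M_i)^{\rm int}$, of curvature $\ge \nu$) via the surjective $1$-Lipschitz map $\eta_0:C_0\to N_0$. My first step is to lift metric regularity to $C_0$: for $x\in N_0^{\rm reg}$ and $p\in\eta_0^{-1}(x)$, I claim $p$ is a regular point of $C_0$. The induced differential gives a surjective $1$-Lipschitz map from $\Sigma_p(C_0)$ onto $\Sigma_x(N_0)^{\rm int}\cong \mathbb{S}^{m-2}$ between Alexandrov spaces of curvature $\ge 1$ and dimension at most $m-2$, and the standard volume-rigidity argument (a $1$-Lipschitz surjection from a curvature $\ge 1$ space onto $\mathbb{S}^{m-2}$ of the same dimension must be an isometry) forces $\Sigma_p(C_0)\cong\mathbb{S}^{m-2}$.

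\medskip

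\noindent With regularity in $C_0$ in hand, I would invoke the Otsu--Shioya--BGP almost-isometric regularity at regular points of an $(m-1)$-dimensional Alexandrov space: for small $\e>0$, $\mathring{B}^{C_0}(p,\e)$ is $\tau_p(\e)$-almost isometric to an open subset of $\R^{m-1}$. For (1), the set $N_0^1$ is a neighborhood of $x$ in $N_0$ at metrically regular single points (by stability of $\sharp\eta_0^{-1}$ established in \cite{YZ:part1}), so $\eta_0$ is injective on a small neighborhood of $p$ and carries $\mathring{B}^{C_0}(p,\e)$ isometrically onto $\mathring{B}^{N_0^{\rm int}}(x,\e)$, giving the interior-metric conclusion. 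For (3), both preimages $p_1,p_2$ are regular, and local injectivity on each sheet yields the two-sheet description of $\mathring{B}^{N_0^{\rm int}}(x,\e)$ and $\mathring{B}^{N_0}(x,\e)$; in the exterior metric the two sheets meet only at $x$, but each is still $\tau_x(\e)$-almost isometric to an open subset of $\R^{m-1}$.

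\medskip

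\noindent For the exterior assertion in (2) I split into two subcases. If $x\in\ca Tk^{\rm reg}$, then $\Sigma_x(N)\cong \mathbb{S}^{m-1}_+$, and a blow-up argument applied to the convergence $M_i\to N$ shows that a neighborhood of $x$ in $N$ is $\tau_x(\e)$-almost isometric to a Euclidean half-ball; restricting to the boundary then gives the claim. If $x\in\ca Tn^{\rm reg}\cap N_0^1$, the equality $\Sigma_x(N)=\Sigma_x(N_0)$, together with metric regularity and singleness, forces the extrinsic metric on $\Sigma_x(N_0)\subset \Sigma_x(N)$ to coincide with its intrinsic metric $\mathbb{S}^{m-2}$; hence the interior and exterior metrics on $N_0$ share the same tangent cone at $x$, and the almost-isometry of the previous paragraph transfers. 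The main obstacle is precisely this thin regular single subcase: one must rule out any infinitesimal exterior ``chord'' that could make the exterior metric strictly smaller than the interior metric at first order, and this hinges on the identity $\Sigma_x(N_0)^{\rm int}=\Sigma_x(N)$ through a careful geodesic-versus-chord comparison, in the spirit of the boundary analysis carried out in \cite{YZ:part1}.
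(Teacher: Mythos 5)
The proposal gets (1) and the spirit of much of the argument right, but there is a real gap in the thin-regular-single subcase of (2), and the exterior-metric assertion in (3) is under-justified.

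For (1), your volume-rigidity argument that $p$ is a regular point of $C_0$ is one correct route; the paper reaches the same conclusion more directly by observing that $N_0^{\mathrm{reg}}\cap N_0^1\subset\mathrm{int}\,N_0^1$ (since $\ca S^1\subset N_0^{\mathrm{sing}}$), so $\eta_0$ is a local isometry near $p$ by Lemma~\ref{lem:eta'} and regularity of $p$ is immediate. Either is fine. For (3) in the exterior metric you claim each sheet is "still $\tau_x(\e)$-almost isometric," but this is exactly the non-trivial content: the exterior metric of $N_0$ could a priori be substantially shorter than the path metric of a single sheet, via paths through $N\setminus N_0$ or through the other sheet. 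One needs either Lemma~\ref{lem:almost-2cover} together with Lemma~\ref{lem:deviation}, or (as the paper does) a fresh strainer argument with $a_m\in\gamma_x^+$, $b_m\in\gamma_x^-$, to control the exterior distances on each sheet. You should cite one of these; as written the step is asserted rather than proved.

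The substantive gap is in (2) for $x\in\ca Tn^{\mathrm{reg}}\cap N_0^1$. You correctly identify that $\Sigma_x(N_0)$ with the \emph{exterior} metric equals $\mathbb S^{m-2}$ when $\Sigma_x(N)=\Sigma_x(N_0)$, so the exterior and interior tangent cones of $N_0$ agree at $x$. But equality of tangent cones at the single point $x$ does not transfer to an almost isometry of a whole ball $B^{N_0}(x,\e)$: the almost-isometry assertion requires a uniform bound comparing exterior and interior distances between \emph{arbitrary pairs} $y,z$ near $x$, not just rescaled distances from $x$. Your last sentence ("a careful geodesic-versus-chord comparison") names the missing ingredient but does not supply it. The paper's fix is structural: since $\ca Tn^{\mathrm{reg}}\cap N_0^1$ lies in $D(Y)^{\mathrm{reg}}$ (Theorem~\ref{thm:X1-f}), one replaces $Y$ by the double $D(Y)$ and runs the same $(m,\delta)$-strainer argument there, using the key estimate $\tilde\angle^Y a_m yz>\pi/2-\tau_x(\delta,\e)$ for all $y,z\in X_0$ near $x$ (proved by a blow-up/rescaling argument) together with Lemma~\ref{lem:deviation} to pass from $|y,z|_Y$ to $|y,z|_X$. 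Without some version of that uniform angle estimate, the thin subcase is not established.
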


\pmed
\begin{figure}
\begin{center}
\begin{tikzpicture}
[scale = 0.6]

\filldraw [fill=lightgray, opacity=.1] 
 (0,0) to  [out=00, in=180]  (0.2,0.1)
to [out=00, in=180]  (0.4,0)
to  [out=00, in=180] (0.7,0.2)
to   [out=00, in=180] (1.1,0)
to   [out=00, in=180] (1.6,0.4)
to   [out=00, in=180] (2.1,0)
to   [out=00, in=180] (2.9,0.6)
to  [out=-90, in=90] (2.9,-0.6)
to  [out=180, in=00] (2.1,0)
to  [out=180, in=00] (1.6,-0.4)
to  [out=180, in=00] (1.1,0)
to  [out=180, in=00] (0.7,-0.2)
to  [out=180, in=00] (0.4,0)
to  [out=180, in=00] (0.2,-0.1)
to  [out=180, in=00] (0,0);

\draw [-, thick] (0.2,0.1) to [out=180, in=00] (0,0);
\draw  [-, thick] (0.4,0) to [out=180, in=00] (0.2, 0.1);
\draw  [-, thick] (0.7,0.2) to [out=180, in=00] (0.4,0);
\draw  [-, thick] (1.1,0) to [out=180, in=00] (0.7,0.2);
\draw [-, thick] (1.6,0.4) to [out=180, in=00] (1.1,0);
\draw  [-, thick] (2.1,0) to [out=180, in=00] (1.6,0.4);
\draw  [-, thick] (2.9,0.6) to [out=180, in=00] (2.1,0);
\draw  [-, thick] (0.2,-0.1) to [out=180, in=00] (0,0);
\draw  [-, thick] (0.4,0) to [out=180, in=00] (0.2, -0.1);
\draw  [-, thick] (0.7,-0.2) to [out=180, in=00] (0.4,0);
\draw [-, thick] (1.1,0) to [out=180, in=00] (0.7,-0.2);
\draw  [-, thick] (1.6,-0.4) to [out=180, in=00] (1.1,0);
\draw [-, thick] (2.1,0) to [out=180, in=00] (1.6,-0.4);
\draw  [-, thick] (2.9,-0.6) to [out=180, in=00] (2.1,0);

\filldraw [fill=lightgray, opacity=.1] 
(-2.9,0.6) to [out=0, in=180] (-2.1,0)
to   [out=0, in=180] (-1.6,0.4)
 to [out=0, in=180] (-1.1,0)
 to [out=0, in=180] (-0.7,0.2)
to [out=0, in=180] (-0.4,0)
to [out=0, in=180] (-0.2, 0.1)
to [out=0, in=180] (0,0)
 to [out=180, in=00] (-0.2,-0.1) 
 to [out=180, in=00] (-0.4,0) 
 to [out=180, in=00]   (-0.7,-0.2)
 to [out=180, in=00]  (-1.1,0)
 to [out=180, in=00]  (-1.6,-0.4)
 to [out=180, in=00]  (-2.1,0)
 to [out=180, in=00]   (-2.9,-0.6) 
to  (-2.9,0.6); 

\draw [-, thick] (-0.2,0.1) to [out=180, in=00] (0,0);
\draw  [-, thick] (-0.4,0) to [out=0, in=180] (-0.2, 0.1);
\draw  [-, thick] (-0.7,0.2) to [out=0, in=180] (-0.4,0);
\draw  [-, thick] (-1.1,0) to [out=0, in=180] (-0.7,0.2);
\draw [-, thick] (-1.6,0.4) to [out=0, in=180] (-1.1,0);
\draw  [-, thick] (-2.1,0) to [out=0, in=180] (-1.6,0.4);
\draw  [-, thick] (-2.9,0.6) to [out=0, in=180] (-2.1,0);
\draw  [-, thick] (-0.2,-0.1) to [out=0, in=180] (0,0);
\draw  [-, thick] (-0.4,0) to [out=0, in=180] (-0.2, -0.1);
\draw  [-, thick] (-0.7,-0.2) to [out=0, in=180] (-0.4,0);
\draw [-, thick] (-1.1,0) to [out=0, in=180] (-0.7,-0.2);
\draw  [-, thick] (-1.6,-0.4) to [out=0, in=180] (-1.1,0);
\draw [-, thick] (-2.1,0) to [out=0, in=180] (-1.6,-0.4);
\draw  [-, thick] (-2.9,-0.6) to [out=0, in=180] (-2.1,0);

\fill (0,0) circle (2pt) node [below] {\small{$x$}};

\end{tikzpicture}
\end{center}
\caption{}
\end{figure}

\pmed\n
{\bf Local structure of $N$.}\,
As observed in \cite{wong0} (see also \cite{YZ:part1}), any space $N\in\overline{\ca M}$ can be embedded in an Alexandrov 
space $Y$.
We denote by   $S^{N^{\rm ext}}(p,r)$ and $B^{N^{\rm ext}}(p,r)$ the extrinsic  $r$-sphere and $r$-ball
around $x$ in $N$, i.e., 
\[
        S^{N^{\rm ext}}(p,r):=S^Y(p,r)\cap N,\quad B^{N^{\rm ext}}(p,r):=B^Y(x,r)\cap N.
\]  

The local structure of $N$ around any
thick point can be described  by  
the local topological stability and the result on almost isometry, as follows.

\begin{thm}\label{thm:regular-ballX(intro)}
If $x\in N_0$ is thick, then for any small 
enough $0<r\le r_x$, 
we have the following:
\begin{enumerate}
\item $\mathring{B}^{N^{\rm ext}}(x,r)$  is homeomorphic  to $T_x(N)\,;$ 
\item 
If $x$ is a  boundary regular 
point in addition,  
$\mathring{B}^{N^{\rm ext}}(x,r)$  is $\tau_x(r)$-almost isometric to an open subset of $\R_+^{m}$.
\end{enumerate}
\end{thm}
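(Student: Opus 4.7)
The plan is to prove (1) and (2) separately, relying on two established tools: the infinitesimally Alexandrov structure of $N$ from \cite{YZ:part1}, and Theorem \ref{thm:regular-ball} which already describes the intrinsic geometry of $N_0$ near any regular boundary point.

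For part (1), the thick hypothesis gives $\Sigma_x(N_0)\subsetneq \Sigma_x(N)$, so $\Sigma_x(N)$ contains directions pointing into $\inte N$ that are uniformly bounded away from $\Sigma_x(N_0)$. The approach is to build an admissible map from a small metric ball $\mathring{B}^{N}(x,\e)$ into $T_x(N)$ by choosing finitely many points $\{p_\alpha\}\subset N$ whose directions at $x$ form a sufficiently fine strainer-type configuration for the Alexandrov space $\Sigma_x(N)$, including at least one pair realizing an interior direction. Setting $\Phi(y)=(d(y,p_\alpha)-d(x,p_\alpha))_\alpha$, one verifies that $\Phi$ is noncritical on $\mathring{B}^{N}(x,\e)\setminus\{x\}$, so a Perelman-type conical neighborhood and fibration argument, adapted to boundary points as in \cite{YZ:part1}, yields a homeomorphism $\mathring{B}^{N}(x,\e)\cong T_x(N)$.

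For part (2), the assumption $\Sigma_x(N)\cong \mathbb S^{m-1}_+$ forces $T_x(N)\cong \R_+^m$ and, as noted after Definition \ref{defn:thin-thick-brpoint}, $x\in N_0^{\rm reg}$. I would pick $m-1$ points $p_1,\ldots,p_{m-1}$ on $N_0$ whose directions at $x$ approximate the standard equatorial orthonormal basis of $\mathbb S^{m-1}_+$, using Theorem \ref{thm:regular-ball} to guarantee that such an almost-orthogonal configuration exists. Thickness provides an additional point $p_m\in\inte N$ approximating the inward pole. Defining $\Phi=(f_1,\ldots,f_m)\colon \mathring{B}^{N}(x,\e)\to \R_+^m$ by $f_i(y)=d(y,p_i)-d(x,p_i)$, the almost-equality case of Toponogov hinge comparison in the Alexandrov interior, combined with the already-established almost isometry on $\mathring{B}^{N_0}(x,\e)$ from Theorem \ref{thm:regular-ball}, shows that $\Phi$ distorts distances by at most $\tau_x(\e)$ and its image contains an open subset of $\R_+^m$.

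The main obstacle is the uniform, quantitative transfer from infinitesimal information at the single point $x$ to a whole neighborhood. For (1), one must rule out critical points of $\Phi$ throughout the punctured ball; this requires an upper semicontinuity property of the spaces of directions at nearby boundary points, which is delicate since those points may switch between single, double, thick, or $\mathcal S^k$-type behavior. For (2), the hardest step is promoting a bi-Lipschitz bound to a genuine $\tau_x(\e)$-almost isometry. The natural route is a compactness/contradiction argument: if the assertion failed, there would exist $\e_i\to 0$ such that the rescaled pointed balls $(\mathring{B}^{N}(x,\e_i),\e_i^{-1}d,x)$ would not Gromov--Hausdorff converge to $(T_x(N),o)\cong(\R_+^m,0)$, contradicting the convergence of tangent cones established in \cite{YZ:part1} together with the boundary-regularity hypothesis at $x$.
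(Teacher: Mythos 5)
There is a genuine gap, and it is structural: you try to run Perelman/BGP machinery (fibration arguments, strainers, Toponogov comparison) directly on distance maps of $N$, but $N$ is \emph{not} an Alexandrov space in any neighborhood of a point $x\in N_0$ --- only ${\rm int}\,N$ is locally Alexandrov and $N$ is merely infinitesimally Alexandrov --- so neither the noncriticality/fibration step in your part (1) nor the ``almost-equality case of Toponogov'' in your part (2) has a theorem behind it. The paper's proof avoids this entirely by working in the Alexandrov extension $Y$, where $X$ is cut out as the superlevel set $\{d_{C^+_r}\ge r\}$ of the distance to the cylinder cap. For (1) it does not build a map on the ball at all: it applies the \emph{respectful} stability theorem (Theorem \ref{thm:stability-respectful}) to the blow-up $(\frac1r Y,x)\to(T_x(Y),o_x)$ with the pair $(d_{o_x},d_{C^+_*})$, whose $(c_0,\delta)$-regularity along $B^{T_xX_0}(o_x,1)\setminus\{o_x\}$ is checked \emph{on the tangent cone} using thickness, lower semicontinuity of angles and the suspension structure of $\Sigma_v(T_xY)$, and then glues homeomorphisms of annuli $A^X(x,\e^ir,\e^{i-1}r)$. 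This sidesteps exactly the difficulty you flag (nearby boundary points switching type); your proposed fix via ``upper semicontinuity of the spaces of directions'' is not available --- in Alexandrov geometry one only has lower semicontinuity of angles --- and \cite{YZ:part1} contains no fibration theorem on $N$ to adapt.

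For (2) there are two concrete failures in your plan. First, inside $N$ the normal direction has no antipode: $\Sigma_x(N)\cong\mathbb S^{m-1}_+$, so no pair $(a_m,b_m)\subset N$ can satisfy $\tilde\angle a_mxb_m>\pi-\delta$, and the BGP distortion estimate for your last coordinate $d(\cdot,p_m)-d(x,p_m)$ cannot be obtained intrinsically; the paper instead takes the $m$-th strainer element to be the \emph{set} $C^Y_t$ in $Y$ (whence the need for the set-strainer Theorem \ref{thm:ALMOST}, proved in this section) and transfers $Y$-distances to $X$-distances by Lemma \ref{lem:deviation}. Moreover $|C^Y_t,\cdot|-t$ vanishes exactly on $X_0$ and is nonnegative exactly on $X$, which is what makes the image precisely an open subset of $\R^m_+$; with an interior point $p_m$ the image of $N_0$ is only approximately flat, so you do not get an open subset of $\R^m_+$. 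Second, your final compactness step cannot ``promote a bi-Lipschitz bound to a $\tau_x(\e)$-almost isometry'': Gromov--Hausdorff convergence of the rescaled balls to $T_x(N)$ controls distances of order $\e$ up to additive error $o(\e)$, but the almost isometry asserted here is a uniform \emph{relative} distortion bound for all pairs in the ball, including pairs at mutual distance much smaller than $\e$, and these are invisible to a single blow-up at $x$. The relative bound has to come (and in the paper does come) from strainer estimates valid at every point of the ball in $Y$.
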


\begin{rem}\label{rem:regular-ball}
(1)\,
Theorem  \ref{thm:regular-ballX(intro)} does not hold  for a thin point $x\in N_0$ in general.
\par\n
(2)\, Theorem  \ref{thm:regular-ballX(intro)} (2) does not hold  for a  point $x\in{\ca Tk}\cap N_0^{\rm reg}$ in general.
See Example \ref{ex:convexbody}.
\end{rem}
For the regular thin points of $N_0$, we obtain the following result on
almost isometries.

\begin{thm}\label{thm:almostisometry(intro)}
For any regular thin point $x$ of $N_0$,
there are $\e=\e_x>0$ and 
a  $\tau_x(\e)$-Lipschitz function
$g:B^{\R^{m-1}}(0,\e)\to\R_+$ such that 
a closed neighborhood $W$ of $x$ in $N$
is $\tau_x(\e)$-almost isometric to the closed subset $G_g$ of $\R^m$ defined by 
the graph of $g$ as
\[
G_g=\{ (y,t)\in B^{\R^{m-1}}(0,\e)\times\R_+\,|\, 0\le t\le
g(y)\}.
\]
\end{thm}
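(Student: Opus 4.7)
The key observation is that a regular thin point $x$ satisfies $\Sigma_x(N)=\Sigma_x(N_0)^{\rm int}\cong\mathbb{S}^{m-2}$, so the tangent cone $T_x(N)$ is isometric to $\R^{m-1}$, one dimension below $\dim N=m$. My plan is to exploit this lower-dimensional Euclidean tangent structure to realize a neighborhood $W$ of $x$ in $N$ as a thin slab over an $(m-1)$-dimensional disk, where the ``thickness'' function of the slab is the Lipschitz $g$ of the theorem.

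First I construct the base chart. Choosing $m-1$ almost orthonormal directions $\xi_1,\ldots,\xi_{m-1}\in\Sigma_x(N)\cong \mathbb{S}^{m-2}$ and extending each $\xi_k$ to a point $q_k\in N$ at distance $a$ from $x$, I set
\[
f(p):=\bigl(d(p,q_1)-a,\ldots,d(p,q_{m-1})-a\bigr).
\]
Standard distance-function techniques, combined with the infinitesimal Alexandrov structure of $N$ from \cite{YZ:part1}, should show that for sufficiently small $\e$ the map $f\colon \mathring{B}^N(x,\e)\to B^{\R^{m-1}}(0,\e)$ is $1$-Lipschitz, surjective, and satisfies $\bigl||f(p)-f(q)|-d(p,q)\bigr|\le\tau_x(\e)\e$ in the $y$-direction. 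I then set
\[
g(y):=\diam f^{-1}(y),
\]
so $g(0)=0$ by the thinness of $x$.

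Next I parametrize each fiber $F_y:=f^{-1}(y)$ isometrically by $[0,g(y)]$, identifying the two extremal points (which lie on $N_0$) with $t=0$ and $t=g(y)$; the orientation is chosen coherently using $\eta_0^{-1}(x)$ when $x\in N_0^2$ and by local continuity when $x\in N_0^1$. This yields $\Phi\colon G_g\to W$, $\Phi(y,t):=$ the point on $F_y$ at parameter $t$. To show that $\Phi$ is $\tau_x(\e)$-almost isometric, I combine the $y$-direction almost isometry of $f$, the smallness $g(y)\le \tau_x(\e)\e$ of the fibers, and a Pythagorean-type decomposition $d^N(\Phi(y,t),\Phi(y',t'))^2\approx|y-y'|^2+(t-t')^2$ which follows because the Euclidean structure of $T_x(N)$ forces the fibers to be almost orthogonal to the base chart.

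The main obstacle is proving that $g$ is $\tau_x(\e)$-Lipschitz, which is needed both for $G_g$ to be well defined and for the Pythagorean estimate above. Upper semicontinuity is immediate from the definition, but a true Lipschitz bound requires comparing $F_y$ and $F_{y'}$ at nearby $y,y'$. The natural approach is via the approximating sequence $M_i\to N$: each fiber $F_y$ is a limit of transverse arcs $\gamma_{y,i}\subset M_i$ joining the two boundary pieces, and the length of $\gamma_{y,i}$ should vary Lipschitzly in $y$ thanks to the lower bounds $K_{\pa M_i}\ge \nu$ and $\Pi_{\pa M_i}\ge -\lambda$, which prevent $\pa M_i$ from oscillating in the collapse direction. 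A secondary subtlety, handled by the structure of $N_0$ and $\eta_0$ from \cite{YZ:part1}, is distinguishing the two endpoints of $F_y$ when $x\in N_0^1$ — where both sheets of the boundary pinch together at $x$ — as opposed to $x\in N_0^2$, where they are globally distinguished.
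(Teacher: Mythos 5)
Your geometric picture is right and matches the theorem's figure: a thin slab over an $(m-1)$-dimensional base, pinching to zero thickness at $x$. But the architecture of your proof differs fundamentally from the paper's, and the divergence is exactly where your argument develops a gap. The paper never tries to build the chart inside $N$ itself; it works in the ambient $m$-dimensional Alexandrov space $D(Y)$ (for $x\in N_0^1$) or $Y$ (for $x\in N_0^2$), where it can choose a genuine $(m,\delta)$-strainer $\{(a_i,b_i)\}_{i=1}^m$ with the last pair pointing along the perpendicular/fiber direction $(a_m=C_t^Y)$. This gives an honest $m$-dimensional almost-isometric chart $\Phi=(\psi,\Phi_m)$, and $W\subset N$ is then identified as $\Phi^{-1}(G_g)$. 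Because $\Phi$ is a homeomorphism onto an open subset of $\R^m$, the fibers over the base are automatically arcs, the Pythagorean estimate is inherited from $\Phi$ being almost isometric, and $g$ is the coordinate at which the fiber exits $N$.

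Your version replaces this with an $(m-1)$-dimensional map $f:N\to\R^{m-1}$ and parametrizes the fibers by hand. The key missing ingredient is the near-perpendicularity of the fiber direction to the base directions at points $p$ near $x$ (but not at $x$). This is what the paper establishes in Sublemma \ref{slem:unique-intersetY} via a non-trivial blow-up/contradiction argument, and it is used both to get the Pythagorean estimate and to prove the Lipschitz bound on $g$. You attribute this orthogonality to ``the Euclidean structure of $T_x(N)$,'' but that cannot work: $T_x(N)\cong\R^{m-1}$ is one dimension short and simply contains no fiber direction at all. The needed statement lives in $\Sigma_p$ for nearby $p$, where the slab has positive thickness, and degenerating to the pinch point $x$ is exactly what makes the argument delicate. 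Without this lemma your claims that $|f(p)-f(q)|\approx d(p,q)$ ``in the $y$-direction,'' that the fibers are arcs of diameter $\le\tau_x(\e)\e$, and that $d^2\approx|y-y'|^2+(t-t')^2$ are all unsupported. Your alternative route to the Lipschitz bound for $g$ via the transverse arcs in $M_i$ is plausible as a heuristic, but it is not worked out and would still have to confront the same perpendicularity question at the manifold level; the paper instead derives Lipschitzness purely in the limit from Sublemma \ref{slem:unique-intersetY} and the $m$-th coordinate of $\Phi$. So the gap is not cosmetic: the change from working in the ambient Alexandrov space to working directly in $N$ with a lower-dimensional chart removes the very structure that makes the Pythagorean step and the Lipschitz step provable.
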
 
\pmed
The following  is a figure concerning  Theorem \ref{thm:almostisometry(intro)} on the local structure around the regular thin points.

\pmed
\begin{figure}
\begin{center}
\begin{tikzpicture}
[scale = 0.6]

\draw [-, thick] (0,0) to [out=00, in=180] (4,0);
\draw  [-, thick] (0,0) to [out=180, in=0] (-4,0);
\draw [-, thick] (0,0) to [out=00, in=180] (1,-0.4);
\draw [-, thick] (1,-0.4) to [out=00, in=180] (2,0);
\draw [-, thick] (2,0) to [out=00, in=135] (4,-1);
\draw [-, thick] (0,0) to [out=180, in=00] (-1,-0.4);
\draw [-, thick] (-1,-0.4) to [out=180, in=00] (-2,0);
\draw [-, thick] (-2,0) to [out=180, in=45] (-4,-1);
\draw [-, thick] (-4,-1) -- (-4,0);
\draw [-, thick] (-3.5,-0.55) -- (-3.5,0);
\draw [-, thick] (-3.1,-0.3) -- (-3.1,0);
\draw [-, thick] (4,-1) -- (4,0);
\draw [-, thick] (3.5,-0.55) -- (3.5,0);
\draw [-, thick] (3.1,-0.3) -- (3.1,0);
\draw [-, thick] (-1.2,-0.4) -- (-1.2,0);
\draw [-, thick] (-0.8,-0.4) -- (-0.8,0);
\draw [-, thick] (1.2,-0.4) -- (1.2,0);
\draw [-, thick] (0.8,-0.4) -- (0.8,0);

\fill (-4.2,-0.5) circle (0pt) node [left]{\small{$W$}};
\fill (0,0) circle (2pt) node [above]{\small{$x$}};
\fill (-2.7,0) circle (0pt) node [above] {\tiny{Alex. boundary}};
\fill (-3.5,-1) circle (0pt) node [below] {\small{$N_0$}};
\fill (0,0.8) circle (0pt) node [above] {\tiny{1)}\,\,\small{$x\in N_0^1\cap N_0^{\rm reg} $}};

\draw [-, thick] (7,0.7) to [out=-45, in=180] (9,0);
\draw [-, thick] (9,0) to [out=00, in=180] (10,0.2);
\draw [-, thick] (10,0.2) to [out=00, in=180] (11,0);
\draw [-, thick] (11,0) to [out=00, in=180] (12,0.2);
\draw [-, thick] (12,0.2) to [out=00, in=180] (13,0);
\draw [-, thick] (13,0) to [out=00, in=225] (15,0.7);
\draw [-, thick] (7,-0.7) to [out=45, in=180] (9,0);
\draw [-, thick] (9,0) to [out=00, in=180] (10,-0.2);
\draw [-, thick] (10,-0.2) to [out=00, in=180] (11,0);
\draw [-, thick] (11,0) to [out=00, in=180] (12,-0.2);
\draw [-, thick] (12,-0.2) to [out=00, in=180] (13,0);
\draw [-, thick] (13,0) to [out=00, in=135] (15,-0.7);
\draw [-, thick] (7, -0.7) -- (7,0.7);
\draw [-, thick] (7.4,-0.4) -- (7.4,0.4);
\draw [-, thick] (7.8,-0.2) -- (7.8,0.2);
\draw [-, thick] (15, -0.7) -- (15,0.7);
\draw [-, thick] (14.6,-0.4) -- (14.6,0.4);
\draw [-, thick] (14.2,-0.2) -- (14.2,0.2);
\draw [-, thick] (9.8,-0.2) -- (9.8,0.2);
\draw [-, thick] (10.2,-0.2) -- (10.2,0.2);
\draw [-, thick] (11.8,-0.2) -- (11.8,0.2);
\draw [-, thick] (12.2,-0.2) -- (12.2,0.2);

\fill (15.2,0) circle (0pt) node [right]{\small{$W$}};
\fill (11,0) circle (2pt) node [above]{\small{$x$}};
\fill (15,-0.7) circle (0pt) node [below] {\small{$N_0$}};
\fill (11,0.8) circle (0pt) node [above] 
{\tiny{2)}\,\,\small{$x\in N_0^2\cap N_0^{\rm reg}$}};

\draw [->, thick]  (2,-2) -- (3.5,-3.5);
\draw [->, thick] (8,-2) -- (6.5,-3.5);
\fill (2.9,-2.5) circle (0pt) node [right] 
{\tiny{almost isometries}};

\draw [-, thick] (1,-5) -- (9,-5);
\draw [-, thick] (1,-3.8) to [out=-45, in=180] (3,-5);
\draw [-, thick] (3,-5) to [out=00, in=180] (4,-4.6);
\draw [-, thick] (4,-4.6) to [out=00, in=180] (5,-5);
\draw [-, thick] (5,-5) to [out=00, in=180] (6,-4.6);
\draw [-, thick] (6,-4.6) to [out=00, in=180] (7,-5);
\draw [-, thick] (7,-5) to [out=00, in=225] (9,-3.8);
\draw [-, thick] (1,-5) -- (1,-3.8);
\draw [-, thick] (1.5,-5) -- (1.5,-4.3);
\draw [-, thick] (1.9,-5) -- (1.9,-4.65);
\draw [-, thick] (9,-5) -- (9,-3.8);
\draw [-, thick] (8.5,-5) -- (8.5,-4.3);
\draw [-, thick] (8.1,-5) -- (8.1,-4.65);
\draw [-, thick] (3.8,-5) -- (3.8,-4.6);
\draw [-, thick] (4.2,-5) -- (4.2,-4.6);
\draw [-, thick] (5.8,-5) -- (5.8,-4.6);
\draw [-, thick] (6.2,-5) -- (6.2,-4.6);

\fill (1,-4.4) circle (0pt) node [left]{\small{$G_g$}};
\fill (5,-5) circle (2pt) node [below]{\tiny{$0$}};
\fill (8,-5) circle (0pt) node [below]{\small{$B^{\R^{m-1}}(0,\e)$}};

\end{tikzpicture}
\end{center}
\caption{Almost isometries at regular thin points}
\end{figure}
\pbig

\begin{rem} \label{rem:almost-graph}
In the situation of Theorem \ref{thm:almostisometry(intro)},
we have the following. 
\begin{enumerate}
\item If $x$ is a regular thin point contained in $N_0^1$,
then  $\{ t=0\}$   
corresponds to $N_0\cap W$
and $\{ t=g(y)>0\}$ corresponds to
the local Alexandrov boundary points
in $W\setminus N_0\,;$
\item If $x$ is a regular thin point in $N_0^2$, 
then 
$\{ t=0, t=g(y)\}$ (resp. $\{ g(y)=0\}$) corresponds to  $N_0\cap W$ (resp. to $N_0^2\cap W$).
\end{enumerate}
\end{rem}  

\pmed
\n
{\bf Local contractibility.}\,
We begin with the local contractibility of $N$.
 We say that a space $V$ is {\it $(C,\e)$-Lipschitz strongly 
contractible} to a point $x_0\in V$ if
there exists a $C$-Lipschitz homotopy 
$F:V\times [0,\e]\to V$ such that 
$F(\,\cdot\,,0)={\rm id}_V$, 
$F(\,\cdot\,,\e)= x_0$ and $F(x_0,t)=x_0$ for all $t\in [0,\e]$ (see \cite{FMY}).
From here on, we consider the $L_1$-distance on 
the product $V\times [0,\e]$.
 
By Perelman's local stability theorem 
(\cite{Pr:alexII}), any Alexandrov space
is locally contractible.
Based on concave functions constructed in 
\cite{Per} and \cite{Kap}, the Lipschitz 
contractibility of Alexandrov spaces
was shown in \cite{MY:SLC}.
In our situation, 
based on these results, we obtain 
the following.

\begin{thm} \label{thm:contractible}
The limit space  $N$ is locally Lipschitz contractible.  More precisely,  
for any $x\in N$ and $\zeta>0$, there exists $\e_{x,\zeta}>0$ such that
for  any $0<\e\le \e_{x,\xi}$, there exists a  closed domain $V$ 
containing $B(x,\e)$  that is $(1+\zeta, \e)$-Lipschitz strongly contractible to $x$.
\end{thm}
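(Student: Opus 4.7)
The plan is to realize the desired contraction as the (reparametrized) gradient flow of a strictly concave function that attains its unique maximum at $x$. This is the standard strategy for proving Lipschitz contractibility in Alexandrov geometry (\cite{Per}, \cite{Kap}, \cite{MY:SLC}), which I will adapt to the boundary setting. The argument splits naturally according to the position of $x$.

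If $x\in\mathrm{int}\, N$, an open neighborhood of $x$ in $N$ is an Alexandrov space of curvature $\ge\kappa$, and the conclusion is immediate from \cite{MY:SLC}. The essential case is $x\in N_0$. By the infinitesimal Alexandrov property established in \cite{YZ:part1}, the space of directions $\Sigma_x(N)$ is an Alexandrov space of curvature $\ge 1$. Following the Perelman--Kapovitch construction, I would choose points $q_1,\ldots,q_k\in N$ whose directions at $x$ form a sufficiently $\delta$-dense configuration in $\Sigma_x(N)$, together with a smooth $1$-Lipschitz concave cutoff $\phi$, and set
\[
 f(y):=\frac{1}{k}\sum_{j=1}^{k}\phi\bigl(d(y,q_j)\bigr).
\]
Strict concavity of $f$ on a closed neighborhood $V$ of $x$ is tested by the second variation of distance functions along geodesics of $N$, which in turn reduces to a first-variation estimate in $\Sigma_x(N)$; since $\Sigma_x(N)$ has curvature $\ge 1$ and the configuration is $\delta$-dense, this yields strict concavity for $\delta$ sufficiently small. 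By construction $|\nabla f|\le 1$ and $f$ attains its unique maximum at $x$.

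Next, the gradient flow $\Phi_t$ of the semiconcave function $f$ on $N$ is defined as in Alexandrov geometry. Concavity of $f$ together with the infinitesimal Alexandrov structure ensures that $\Phi_t$ is $1$-Lipschitz in the spatial variable, while $|\nabla f|\le 1$ forces $1$-Lipschitz dependence on $t$. Setting $V:=\{f\ge c\}$ for a constant $c$ slightly below $f(x)$ makes $V$ invariant under $\Phi_t$; tuning the scale so that $B(x,\e)\subseteq V\subseteq B(x,(1+\zeta)\e)$, every integral curve reaches $x$ within time $(1+\zeta)\e$. Reparametrizing time linearly to $[0,(1+\zeta)\e]$ gives the desired homotopy $F(y,t):=\Phi_{\psi(t)}(y)$, and $F(x,t)=x$ is automatic since $x$ is a critical point of $f$.

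The main obstacle is the construction of the strictly concave $f$ at a boundary point $x\in N_0$: $N$ is not globally Alexandrov near such a point, so the Perelman--Kapovitch argument does not apply verbatim. One has to justify the second-variation inequalities for $d(\,\cdot\,,q_j)$ using the precise local structure of geodesics in $N$ near $N_0$ and the infinitesimal Alexandrov property from \cite{YZ:part1}. The second delicate point is achieving Lipschitz constant exactly $1$ (rather than $1+o(1)$): on the tuned scale, curvature corrections to the contraction estimate must be absorbable by the factor $\zeta$ in the diameter of $V$, so that the flow is genuinely nonexpanding both in space and in time.
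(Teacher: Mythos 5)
Your treatment of interior points is fine and agrees with the paper. The gap is at boundary points $x\in N_0$: you propose to run the Perelman--Kapovitch construction directly in $N$, building $f(y)=\frac1k\sum_j\phi(d(y,q_j))$ and extracting strict concavity from the fact that $\Sigma_x(N)$ is an Alexandrov space of curvature $\ge 1$. This is precisely the step you flag as ``the main obstacle,'' and it is not resolved by anything you say. Strict concavity has to be checked along all geodesics of $N$ through points $y$ in a whole neighborhood of $x$; such $y$ may lie on $N_0$, where $N$ satisfies no local triangle-comparison condition, and the infinitesimal Alexandrov property (a statement about spaces of directions) yields no second-variation inequality for $d(\,\cdot\,,q_j)$ at nearby points. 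Near thin points the space is locally a thin graph region $G_g$ as in Theorem \ref{thm:almostisometry(intro)}, where neither semiconcavity of distance functions nor the gradient-flow machinery is available: Proposition \ref{prop:lip} and the existence/uniqueness of gradient curves are established for Alexandrov spaces, and $N$ is not one near $N_0$. So both pillars of your argument (concavity of $f$ and the $1$-Lipschitz contraction property of its flow on $N$) are unsupported exactly where the theorem is nontrivial.

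The paper's proof never builds a concave function on $N$. It works in the Alexandrov extension $Y=\lim\tilde M_i$: Proposition \ref{prop:1-Lip-contractibleAlex} gives a convex neighborhood $W\subset Y$ of $x$, a superlevel set of a strictly concave function on $Y$, which is $(1,(1+\zeta)\e)$-Lipschitz strongly contractible to $x$ inside $Y$. The genuinely new step is to push this down to $V=W\cap X$: one uses the gradient flow of the semiconcave distance function $f=|C^Y_{t_0},\cdot\,|$ on $Y$, and the key input is the angle estimate of Claim \ref{clm:angleCYW}, $\angle C^Y_{t_0}\,y\,W_X>\pi-\mu$ for all $y\in W\setminus X$, proved by a blow-up/contradiction argument (Sublemmas \ref{slem:dist(sigma,pisigma)} and \ref{slem:WX=WcapX}, a splitting argument, and the almost-gradient property \eqref{eq:gi-gradient} of the defining concave function along geodesics to $x$). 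This estimate makes the flow of $f$ a Lipschitz strong deformation retraction of $W$ onto $W\cap X$ with constant close to $1$, and composing it with the contraction of $W$ in $Y$ contracts $V$ to $x$. If you want to salvage your intrinsic route, you would first have to prove semiconcavity of distance functions and a gradient-flow contraction estimate on $N$ near $N_0$ (doubtful near cusps and thin points, and nowhere established); otherwise the detour through the extension $Y$, together with an analogue of Claim \ref{clm:angleCYW}, is needed.
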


\begin{rem}\label{rem:size-ball}
In Theorem \ref{thm:contractible}, from the 
condition stated there, we have 
\[
      B^N(x,\e)\subset V\subset B^N(x, (1+\zeta)\e).
\]
\end{rem}

\pmed\n
{\bf Global stabilities.}\, Now we state 
our results on the global stability:
the Lipschitz homotopy stability and the volume convergence.
\pmed
\n
{\bf Lipschitz homotopy stability.}\,

For Alexandrov spaces, the quantitative 
 Lipschitz homotopy stability was obtained in 
 Fujioka-Mitsuishi-Yamaguchi \cite{FMY}.
First we discuss it in our situation.

We often abbreviate  $\ca M(n,\kappa,\nu,\lambda,d)$ as  $\ca M$ for simplicity. 
Let $\overline{\ca M}$ denote the closure of
$\ca M$
with respect to the Gromov-Hausdorff distance.
For each integer $1\le m\le n$ and $v>0$, we denote by $\ol{\ca M}(m)$ (resp.
$\ol{\ca M}(m,v)$) the set of all
$N\in\ol{\ca M}$ such that 
$\dim N=m$ (resp. $\dim N=m$ and $\ca H^m(N)>v$).

For $C>0$, the {\it $C$-Lipschitz homotopy 
distance} $d_{\text{$C$-$LH$}}(X,X')$ (see \cite{FMY}) between 
metric spaces $X$ and $X'$
is defined as 
the infimum of $\epsilon>0$ such that there are $\epsilon$-approximations $f:X\to X'$ and $g:X'\to X$ that are $C$-Lipschitz, together with $C$-Lipschitz maps $F:X\times [0,\epsilon]\to X$ and $G:X'\times [0,\epsilon]\to X'$ satisfying
\[
F(\,\cdot,0)=g\circ f,\quad F(\,\cdot,\epsilon)={\rm id}_X,\quad G(\,\cdot,0)=f\circ g,\quad G(\,\cdot,\epsilon)={\rm id}_{X'}.
\]
 We also say that $X$ and $X'$ have \emph{the same $(C,\e)$-Lipschitz homotopy type}
if $d_{\text{$C$-$LH$}}(X,X')<\e$.
Note that this is not an equivalence relation.


\begin{thm} \label{thm:Lip-homotopy}
For any $1\le m\le n$ and $v>0$, there exists
$C>1$ depending on $n,\kappa,\nu,\lambda,d,
m,v$ satisfying the following$:$
For arbitrary $N,N'\in \ol{\ca M}(m,v)$,
if the Gromov-Hausdorff distance $d_{GH}(N,N')<C^{-1}$, then we have 
\[
     d_{\text{$C$-$LH$}}(N,N')<Cd_{GH}(N,N').
\]
\end{thm}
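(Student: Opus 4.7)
The plan is to adapt the quantitative Lipschitz homotopy stability argument of Fujioka--Mitsuishi--Yamaguchi \cite{FMY} for Alexandrov spaces to the present setting, using the local Lipschitz strong contractibility established in Theorem \ref{thm:contractible} as the essential replacement for the Alexandrov input. The first preparatory step is to upgrade Theorem \ref{thm:contractible} to a statement uniform over $\ol{\ca M}(m,v)$: for every $\zeta>0$ there exists $\e_0=\e_0(n,\kappa,\nu,\lambda,d,m,v,\zeta)>0$ such that for every $N\in\ol{\ca M}(m,v)$, every $x\in N$, and every $\e\le\e_0$, one has a closed domain $V$ with $B(x,\e)\subset V\subset B(x,(1+\zeta)\e)$ which is $(1,(1+\zeta)\e)$-Lipschitz strongly contractible to $x$. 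This follows by a standard contradiction argument from the precompactness of $\ca M(n,\kappa,\nu,\lambda,d)$ due to Wong \cite{wong0} and the continuity, under pointed Gromov--Hausdorff convergence, of the Perelman--Kapovitch type concave function construction used in the proof of Theorem \ref{thm:contractible}. The non-collapsing hypothesis $\ca H^m(N)>v$ moreover supplies a uniform lower bound on the $m$-dimensional Hausdorff measure of balls of radius $r\le d$ in $N\in\ol{\ca M}(m,v)$, and hence a uniform doubling constant $D=D(n,\kappa,\nu,\lambda,d,m,v)$ over the class.

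Now fix $N,N'\in\ol{\ca M}(m,v)$ with $\delta:=d_{GH}(N,N')<C^{-1}$, where $C$ will be chosen large, set $\e:=K\delta$ for a suitable universal multiple $K$, and let $h:N\to N'$, $k:N'\to N$ be $\delta$-approximations. Choose a maximal $\e$-separated net $\{x_i\}_{i=1}^L\subset N$ of cardinality $L\le L(D,d)\,\e^{-m}$, form the nerve $\ca N$ of the cover $\{\mathring B(x_i,2\e)\}$, and fix a $C_1(D)$-Lipschitz partition of unity $\{\phi_i\}$ subordinate to it. Using the uniform strong contractibility at each $h(x_i)\in N'$ at scale comparable to $\e$, define $f:N\to N'$ by a skeletal induction as in \cite{FMY}: on vertices set $f(x_i):=h(x_i)$, and extend over successive $q$-simplices of $\ca N$ using the Lipschitz strong contractions of domains that contain the values already defined on the boundary of the simplex; the extension is possible because adjacent net points are mapped within $C_2\e$ by $h$, which sits well inside the contraction radius. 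Construct $g:N'\to N$ symmetrically, using a net on $N'$ whose cardinality is controlled by transporting the volume lower bound through $k$. Standard bookkeeping shows that $f,g$ are $C_3$-Lipschitz $C_3\delta$-approximations.

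To produce the homotopies $F:N\times[0,\e]\to N$ between $g\circ f$ and $\mathrm{id}_N$ and $G:N'\times[0,\e]\to N'$ between $f\circ g$ and $\mathrm{id}_{N'}$, apply the same nerve-inductive procedure to the pair $(\mathrm{id}_N,g\circ f)$: since $d(p,g\circ f(p))\le C_4\delta$, each $p$ sits together with its image inside a single Lipschitz strongly contractible domain around a nearby net vertex, and the resulting local contractions are glued via $\{\phi_i\}$ in the time coordinate to yield a global $C_5$-Lipschitz homotopy. Setting $C:=\max(C_3,C_5,K)$ then gives $d_{C\text{-}LH}(N,N')\le \e=K\delta\le C\,d_{GH}(N,N')$, as required. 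The hardest part is controlling the Lipschitz constant of $F$ with respect to the $L_1$-product metric on $N\times[0,\e]$: the time interval has length $\e$, so the time-derivative of the glued homotopy is a priori of order $1/\e$, and only cancellation across overlapping partition-of-unity supports keeps the total constant $O(1)$. Achieving this cancellation requires that the contractions chosen at neighboring nerve vertices agree up to a $\tau(\e)$-error, which is precisely what makes both the uniform scale step and the skeletal induction delicate, exactly as in the Alexandrov case \cite{FMY}.
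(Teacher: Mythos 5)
Your overall architecture is the same as the paper's: reduce Theorem \ref{thm:Lip-homotopy} to (a) a uniform doubling bound, (b) uniform local Lipschitz contractibility over $\ol{\ca M}(m,v)$, and (c) the quantitative nerve-type stability theorem of \cite{FMY} (the paper simply quotes this as Theorem \ref{thm:Peter}, so your re-sketch of the skeletal induction is unnecessary and in any case defers to \cite{FMY} for the hard time-direction Lipschitz control). The genuine gap is in your first step, which is precisely where the paper's real work lies. You assert that Theorem \ref{thm:contractible} upgrades to a statement uniform in $N\in\ol{\ca M}(m,v)$, $x\in N$ and all scales $\e\le\e_0$ ``by a standard contradiction argument from precompactness \dots and the continuity, under pointed Gromov--Hausdorff convergence, of the Perelman--Kapovitch type concave function construction.'' This does not work as stated. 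First, a failure of uniformity would occur along scales $\e_i\to 0$, so precompactness of the moduli space gives nothing without rescaling; one must pass to blow-ups $(\e_i^{-1}Y_i,x_i)\to(Z,p)$ of the Alexandrov extensions. Second, the Perelman--Kapovitch construction does not pass continuously to such limits: it is built from points on a metric sphere of radius $r(x,\theta,\delta)$ that is not uniformly controlled in $x$ or in the space, and the paper explicitly flags this obstruction (Remark \ref{rem:ConvexHull}(3): ``the geometric size of $W_x$ is not comparable when $x$ moves over $Y$ \dots we need a global construction''). The paper's substitute is Theorem \ref{thm:conv-cover}: in the blow-up $Z$ one builds a strictly concave function from \emph{averaged Busemann-type functions} attached to a net in the ideal boundary $Z(\infty)$ (here the lower volume bound enters, to get $\dim Z(\infty)=m-1$ and the density estimate \eqref{eq:a}), and then lifts this function to $\e_i^{-1}Y_i$, re-verifying concavity by volume comparison. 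None of this is ``continuity of the local construction.''

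There is a second, related omission: even granted uniform $(C,\e)$-Lipschitz strongly contractible convex domains $W$ in the extension $Y$, your statement is about domains in $N$ itself, and the passage from $W$ to $W\cap N$ is not automatic. The paper contracts $W$ onto $W\cap X$ by the gradient flow of $d_{C^Y_{t_0}}$, and to make this uniformly Lipschitz it needs the uniform angle estimate of Claim \ref{clm:f-regular}, whose proof requires the extra quantitative property (3) in Theorem \ref{thm:conv-cover} (the definite increase of the concave function along geodesics toward the center), which is exactly the refinement of \cite{FMY},\cite{PtPt:extremal} proved there. One should also note that extensions are not unique and do not converge under $GH$-convergence of the $N$'s (Example \ref{ex:Ext-nonGH}, Remark \ref{rem:XnocontrolY}); this is handled in the paper by taking arbitrary extensions and subsequences inside the contradiction argument, but it is another point your ``standard contradiction argument'' would have to address. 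Your doubling step and the reduction via \cite{FMY} are fine; the missing content is the uniform-scale concave function construction and the uniform retraction of $W$ onto $W\cap N$, i.e.\ Theorems \ref{thm:conv-cover} and \ref{thm:N-Ccontractible}.
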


\begin{cor} \label{cor:finiteness}
There exists
$C>1$ depending on $n,\kappa,\nu,\lambda,d,m,v$ such that 
for any $\e>0$, the set $\ol{\ca M}(m,v)$ contains only finitely many $(C,\e)$-Lipschitz homotopy types.
\end{cor}

\begin{rem} \label{rem:LP-stability}
(1)\, 
It should be noted that in Theorem \ref{thm:Lip-homotopy}, $N$ and $N'$ are not necessarily homeomorphic to each other, and that $\ol{\ca M}(m,v)$
may contain infinitely many 
homeomorphism classes.

\par\n
(2)\,
Theorem \ref{thm:Lip-homotopy} is new even for the noncollapsing case when $m=n$.
More precisely, 
even if we fix an element $N\in\ol{\ca M}$ with 
$\dim N=n$, \cite{wong0}  yields only  the 
homotopy type stability (see also Proposition \ref{prop:homotopy=}).
\par\n 
(3)\,
More strongly, if we fix a Riemannian manifold $N$ in 
$\ca M(n,\kappa,\nu,\lambda,d)$, we 
obtain a diffeomorphism stability
(see Proposition \ref{prop:w-stability}).
\end{rem}
\pmed

\n
{\bf Local Lipschitz homotopy stability.}\,
Here we describe the general local structure results
in terms of Lipschitz homotopy stabiity.

The following result shows that any small
extrinsic metric sphere 
around each point of $X$ is Lipschitz homotopy equivalent to
the space of directions at the point. 
More explicitly, we have
\pmed 
\begin{thm}\label{thm:metric-sphere}
For any $1\le m\le n$ and $v>0$, there exists
$C$ depending on $n,\kappa,\nu,\lambda,d,
m,v$ satisfying the following$:$
For any $N\in\overline{\ca M}(m)$ and $p\in N$ with 
\beq\label{eq:volume-Sigma}
\begin{cases}
v\le \ca H^{m-1}(\Sigma_p(N))<\infty,
           \quad &\text{if $p\in{\ca Tk}\cup {\rm int} N$} \\
 v\le \ca H^{m-2}(\Sigma_p(N))<\infty,
           \quad &\text{if $p\in {\ca Tn}$},
\end{cases}
\eeq
there exists $r_p>0$ such that for each $0<r\le r_p$,
we have 
\[
      d_{\text{$C$-$LH$}}(S^{N^{\rm ext}}(p,r)/r, \Sigma_p(N))<\tau_p(r),
\]
where $S^{N^{\rm ext}}(p,r)/r$ denotes the rescaled metric $\frac{1}{r}S^{N^{\rm ext}}(p,r)$ equipped with the intrinsic metric.
\end{thm}

\psmall
Next we discuss the Lipschitz homotopy stability 
of small punctured balls at a given point of $N$.
For $p\in N$, we set 
\begin{gather*}
  B^{N^{\rm ext}}_*(p,r):=B^{N^{\rm ext}}(p,r) \setminus \{ p\}, \quad T_{p}^rN_*:=B^{T_p(N)}(o_p,r)\setminus \{ o_p\}.
\end{gather*}
\psmall  
\begin{thm}\label{thm:punctured-ball}
For any $1\le m\le n$ and $v>0$, there exists
$C$ depending on $n,\kappa,\nu,\lambda,d,
m,v$ satisfying the following$:$
For any $N\in\overline{\ca M}(m)$ and $p\in N$
satisfying \eqref{eq:volume-Sigma},
there exists $r_p>0$ such that for each $0<r\le r_p$
we have 
\[
      d_{\text{$C$-$LH$}}(B^{N^{\rm ext}}_*(p,r)/r, T_p^1N_*)<\tau_p(r),
\]
where $B^{N^{\rm ext}}_*(p,r)/r$ denotes the rescaled metric $\frac{1}{r}B^{N^{\rm ext}}_*(p,r)$.
\end{thm}
\pmed

\begin{rem}\label{rem:Alex-CBA}
For Alexandrov spaces $N$, Perelman's stability
theorem shows that $(B^N(p, r),p)$ is homeomorphic 
to $(T_p^1N,o_p)$.
For GCBA (geodesically complete, curvature bounded above) spaces $N$, Lychack and Nagano 
\cite[Theorem 1.12] {NL:geodesically}
proved that $S^N(p,r)$  is homotopy equivalent to 
$\Sigma_p(N)$, and Kramer \cite{Kram} proved that
$B^N_*(p, r)$  is homotopy equivalent to 
$\Sigma_p(N)$.

Our proofs show that 
Theorems \ref{thm:metric-sphere} and 
\ref{thm:punctured-ball} hold true for both
Alexandrov spaces and GCBA spaces
and are new even in those cases.  See also \cite[Remark 4.6]{FMY}.
\end{rem}

\begin{prob} \label{prob:ext->int}
In Theorems \ref {thm:metric-sphere} and 
\ref{thm:punctured-ball}, we considered the extrinsic metric spheres and the extrinsic punctured balls.
This is because of lack of knowledge about the intrinsic geometry of the limit space $N$.
For instance, we do not know if a 
$N$-minimal geodesic has direction at the initial point
in some cases (see \cite[Problem 5.7]{YZ:part1}).
It would be an interesting 
problem to determine whether we can replace them by the intrinsic metric spheres and the intrinsic punctured balls.
\end{prob} 

\pbig
\n
{\bf Volume convergence.}\,
Finally  we exhibit a result on   
the convergence of  Hausdorff measures of 
 $\pa M$ in $\ca M(n,\kappa,\nu,\lambda,d)$.
We denote by $\mu^m_N$ the $m$-dimensional Hausdorff measure of $N$.

Let a sequence $M_i \in \ca M(n,\kappa,\nu,\lambda,d)$ converge to a 
compact geodesic space $N$ with respect to the  Gromov-Hausdorff  distance. 
We easily verify   
$\lim_{i \to\infty}\mu^n_{M_i}=   \mu^n_N$
with respect to the measured Gromov-Hausdorff
topology (see Lemma \ref{lem:vol-conv}).

\begin{thm} \label{thm:vol-conv}
Under the above situation,  we have 
\[
\lim_{i \to\infty} \mu^{n-1}_{\pa M_i}=
   \mu^{n-1}_{N_0} +\iota_*(\mu^{n-1}_{N_0^2}),
\]
where $\iota:N_0^2\to N_0$ denotes the inclusion.
\end{thm}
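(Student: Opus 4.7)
The plan is to factor the convergence through the intrinsic boundaries $(\pa M_i)^{\rm int}$ and their GH-limit $C_0$, and then push forward by $\eta_0:C_0\to N_0$. Since $\pa M_i$ is a smooth hypersurface in $M_i$, its $(n-1)$-Hausdorff measure equals the Riemannian boundary volume and is the same whether computed with the intrinsic or the extrinsic distance; hence
\[
\mu^{n-1}_{\pa M_i}=\mu^{n-1}_{(\pa M_i)^{\rm int}}
\]
as Radon measures on the manifold $\pa M_i$. The $(\pa M_i)^{\rm int}$ are $(n-1)$-dimensional Alexandrov spaces with curvature $\ge\nu$ converging to $C_0$, and the standard volume convergence for Alexandrov spaces yields
\[
(\alpha_i)_*\mu^{n-1}_{(\pa M_i)^{\rm int}}\longrightarrow \mu^{n-1}_{C_0}
\]
weakly, where $\alpha_i:(\pa M_i)^{\rm int}\to C_0$ are GH-approximations.

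Because $\eta_0$ is by construction the limit of $\eta_i={\rm id}:(\pa M_i)^{\rm int}\to(\pa M_i)^{\rm ext}$, the composition $\eta_0\circ \alpha_i$ is $o(1)$-close to the boundary restriction $f_i|_{\pa M_i}$ of any GH-approximation $f_i:M_i\to N$. Combining this with the previous step gives the weak convergence
\[
(f_i|_{\pa M_i})_*\mu^{n-1}_{\pa M_i}\longrightarrow (\eta_0)_*\mu^{n-1}_{C_0}
\]
as measures on $N$, and the theorem reduces to identifying the right-hand side with $\mu^{n-1}_{N_0}+\iota_*\mu^{n-1}_{N_0^2}$. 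For this, recall that $\eta_0$ is injective over $\inte N_0^1$, two-to-one over $\inte N_0^2$, while the transition set ${\mathcal S}^1\cup{\mathcal S}^2$ has dimension at most $m-2$ and hence carries no $\mu^{n-1}$-mass. By Lemma \ref{lem:dimTKNc} the metrically regular part ${\ca Tk}^{\rm reg}\cup{\ca Tn}^{\rm reg}$ has full $(m-1)$-measure in $N_0$. At each regular point $x\in\inte N_0^k\cap N_0^{\rm reg}$, Theorem \ref{thm:regular-ball} produces $\tau_x(\e)$-almost isometries of small neighborhoods of $x$ in $N_0$ and of each sheet of $\eta_0^{-1}$ in $C_0$ to a common open subset of $\R^{m-1}$; since a $\tau$-almost isometry distorts the $(m-1)$-Hausdorff measure by a factor at most $(1\pm\tau)^{m-1}$, each sheet pushes $\mu^{n-1}_{C_0}$ onto $\mu^{n-1}_{N_0}$ on its image in the limit $\e\to 0$. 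Summing over the (one or two) sheets produces $\mu^{n-1}_{N_0^1}+2\mu^{n-1}_{N_0^2}=\mu^{n-1}_{N_0}+\iota_*\mu^{n-1}_{N_0^2}$, as required.

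The hard part is the final step: upgrading the asymptotic almost-isometry of Theorem \ref{thm:regular-ball} to a genuine identity of pushed-forward measures. The natural approach is a Vitali-type covering of the regular open part of $N_0$ by almost-Euclidean balls with $\tau\to 0$, together with the negligibility of the singular set $N_0^{\rm sing}\cup{\mathcal S}^1\cup{\mathcal S}^2$; the size estimate on the latter requires a dimension bound for the branch locus, which should follow from the stratification theory developed in \cite{YZ:part1}. A secondary point is the case $m<n$, where the conclusion collapses to $0=0$: here one must check directly that $\mu^{n-1}_{\pa M_i}\to 0$, which follows from the volume collapse of $(n-1)$-dimensional Alexandrov spaces with bounded diameter converging to a strictly lower-dimensional limit.
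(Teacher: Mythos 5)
Your overall strategy is the same as the paper's: push the BGP volume convergence of the intrinsic boundaries $(\pa M_i)^{\rm int}\to C_0$ forward by the gluing map $\eta_0$, and compute the multiplicity of $\eta_0$ (one over $N_0^1$, two over $N_0^2$). The genuine gap is your treatment of the transition set: you discard $\ca S^1\cup\ca S^2$ on the claim that it has Hausdorff dimension at most $m-2$, and you yourself flag that this "dimension bound for the branch locus" still needs to be supplied. For $\ca S^1$ this is fine, since $\ca S^1\subset N_0^{\rm sing}$ (Theorems \ref{thm:X1-f} and \ref{thm:non-trivial}) and Theorem \ref{thm:dim(metric-sing)} gives $\dim_H N_0^{\rm sing}\le m-2$. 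But no such bound holds, or is even claimed, for $\ca S^2=\pa N_0^2\cap N_0^2$: the double-point set $N_0^2$ can have positive $(m-1)$-measure (Remark \ref{rem:thin-large}), and in graph-type examples where the thickness function $g$ vanishes exactly on a fat Cantor set one gets ${\rm int}\,N_0^2=\emptyset$ and $\ca S^2=N_0^2$ of positive $\ca H^{m-1}$-measure, where moreover the points of $\ca S^2$ can be metrically regular, so they are not absorbed into $N_0^{\rm sing}$ either. Hence your decomposition "injective over $\inte N_0^1$, two-to-one over $\inte N_0^2$, negligible remainder" does not account for a possibly positive-measure part of $N_0^2$.

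The paper avoids this by never assuming $\ca S^2$ is small: it splits $C_0^2$ into ${\rm int}\,C_0^2$ (where Lemma \ref{lem:eta'}(2) gives an exact locally isometric double covering) and $\tilde{\ca S}^2$, and on the latter it invokes Proposition \ref{prop:eta-2cover} and Lemma \ref{lem:almost-2cover}, which assert that $\eta_0:C_0^2\to X_0^2$ is a locally \emph{almost} isometric double covering at \emph{every} point of $X_0^2$; since the distortion tends to $1$ at small scales, this still yields $(\eta_0)_*\bigl(\mu^{n-1}_{C_0^2}\bigr)=2\mu^{n-1}_{N_0^2}$ including over $\ca S^2$. So to repair your argument you should replace the negligibility claim for $\ca S^2$ by this covering statement (your multiplicity count is then correct as stated, and the rest of your outline — the identification of intrinsic and extrinsic boundary measures, the weak convergence under GH approximations, and the Vitali-type upgrade from almost isometries to equality of pushed-forward measures — is sound and parallels the published proof).
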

\pmed\n

The organization of the paper is as follows.

In section \ref{sec:prelim},  we first recall basic materials on 
Alexandrov spaces and Wong's extension procedure for 
Riemannian manifold with boundary. 

 In Section \ref{sec:non-inradius}, 
we recall from \cite{YZ:inradius}
and \cite{YZ:part1} the basic properties of  the limit spaces $N$ and the gluing map $\eta_0:  C_0\to N_0$, which will be 
needed in later sections.

In Section \ref{sec:infinitesimal},
we exhibit some 
results related with the infinitesimal structure 
of the limit spaces obtained in 
\cite{YZ:part1} that will be 
needed in later sections.

We begin the study of the local 
structures of $N$ and $N_0$ from Section \ref{sec:almost-stability}.
 In Section \ref{sec:almost-stability},
we prove Theorems  \ref{thm:regular-ballX(intro)}  and
\ref{thm:almostisometry(intro)}.
Theorem \ref{thm:contractible} is proved 
in Section \ref{sec:loc=cont}, 
where we begin the discussion on the  Lipschitz homotopy.

In  Section \ref{sec:LHS}, we discuss the
Lipschitz homotopy stability  and prove
Theorem \ref{thm:Lip-homotopy}.
Theorem \ref{thm:conv-cover} is a key result,
and is applied in the next section, too.

  In Section \ref{sec:LLHS},
we prove Theorems \ref {thm:metric-sphere}  and \ref{thm:punctured-ball}.
We also discuss a diffeomorphism stability there.

In  Section \ref{sec:behavior}, we discuss 
 the convergence of volumes of 
boundaries 
and an obstruction to collapse.
 Theorem \ref{thm:vol-conv} is proved there.

\psmall\n 
{\bf Acknowledgements}. The authors would like to thank Xiaochun Rong and Ayato Mitsuishi  for their valuable comments on the first version of our  paper.

 
\setcounter{equation}{0}
\section{Preliminaries}\label{sec:prelim}

\par\n
{\bf Notations and conventions} 
\psmall

 Throughout the paper, we use the same notations and conventions as in \cite{YZ:part1}. 
In particular, the following ones are often used:
\begin{itemize}
\item For two points $x$ and $y$ in a metric space
$X$, $d(x,y)$, $|x,y|$ or $|x,y|_X$ denote the distance 
between them$\,;$
\item For a point $x$ in a metric space
$X$, $B(x,r)$ or $B^X(x,r)$ denotes the closed metric $r$-ball around $x$.
Similarly, $S(x,r)$ or $S^X(x,r)$ denotes the metric  $r$-sphere around $x\,;$
\item $\gamma_{x,y}^X$ denotes a shortest curve
(an $X$-minimal geodesic) in a metric space $X$
of unit speed parametrized on $[0,|x,y|]\,;$
\item For a metric space $X=(X,d)$ and $\lambda>0$,
$\lambda X$ denotes the rescaled metric space
$(X,\lambda d)\,;$
\item For a closed subset $A$ of a geodesic space $X$, $A^{\rm int}$ denotes the intrinsic metric (length metric) on $A$ induced from $X$, and $A^{\rm ext}$
 denotes the extrinsic metric $A$ induced from $X$
that is the restriction of the distance of $X$ to $A\,;$
\item $\angle(\,\,\,,\,\,)$ denotes the angle$\,;$
\item $o_i$  denotes a sequence of positive numbers
satisfying  $\lim_{i\to\infty}o_i=0\,;$
\item $\tau(\e_1,\ldots,\e_n)$ (resp.  $\tau_{a_1,\ldots,a_m}(\e_1,\ldots,\e_n)$)  denotes a function 
satisfying  $\lim_{\e_1,\ldots,\e_n\to 0}
\tau(\e_1,\ldots,\e_n)=0$
(resp. a function depending on $a_1,\ldots,a_m$ 
satisfying  $\lim_{\e_1,\ldots,\e_n\to 0}
\tau_{a_1,\ldots,a_m}(\e_1,\ldots,\e_n)=0$ for fixed 
$a_1,\ldots,a_m$).
\end{itemize}  
Additionally, we use the following.
\pmed\n
$\bullet$\,For a closed subset $K$ of a metric space $X$, let  $A(K,r)$ or $A^X(K,r)$ denote
the  closed metric annulus around $K$ defined as 
$A(K,r,R)=B(K,R)\setminus\mathring{B}(K,r)$.

%
%

In this section, we provide some preliminary
arguments on Alexandrov spaces,  the GH-convergence, gradient flows and the extension of manifolds with
boundary.

\pmed\n
{\bf Alexandrov spaces.}\,
For the basic of Alexandrov spaces, 
we refer the readers to \cite{BGP},  \cite{BBI}, \cite{AKP}.
Let $X$ be an $m$-dimensional Alexandrov space
with curvature $\ge\kappa$.
For closed subsets $A$, $B$ of $X$ and $x, y\in X\setminus(A\cup B)$, consider a geodesic triangle on  the $\kappa$-plane $M_{\kappa}^2$
having the side-length $(|A,x|, |x,B|, |B, A|)$ whenever it exists.
Then $\tilde\angle AxB$ denotes the angle of this comparison triangle at the vertex
corresponding to $x$.

A point $x\in X$ is called {\em regular} if the space of directions, $\Sigma_x(X)$, at $x$ is isometric to the unit sphere  $\mathbb{S}^{m-1}$.
Otherwise we call $x$ {\em singular}. We denote by $X^{\rm reg}$ (resp. $X^{\rm sing}$)
the set of all regular points (resp. singular points) of $X$. 

For  $\delta>0$ and $1\le k\le m$, a system of $k$ pairs of points, $\{(a_i,b_i)\}_{i=1}^k$ is called a
$(k,\delta)$-\emph{strainer} at $x\in X$ if it satisfies
\begin{align*}
   \tilde\angle a_ixb_i > \pi - \delta, \quad &
                \tilde\angle a_ixa_j > \pi/2 - \delta, \\
      \tilde\angle b_ixb_j > \pi/2 - \delta,\quad &
                \tilde\angle a_ixb_j > \pi/2 - \delta,
\end{align*}
for all $1\le i\neq j\le k$.
The constant $\min_{1\le i,j\le k}\{|a_i,x|, |b_j,x|\}$
is called the \emph{length} of the strainer.
If  $x\in X$ has a $(k,\delta)$-strainer, then 
it is called $(k,\delta)$-strained.

The following lemma is useful in the geometry of
Alexandrov spaces.

\begin{lem}$($\cite[Lemma 5.6]{BGP}$)$
\label{lem:comparison}
Let $x$ be $(1,\delta)$-strained with $(1,\delta)$-strainer $(a,b)$ with length $\ell$. 
For any $y,z\in B(x,\e)$ we have
\[
  |\wangle ayz -\angle ayz|<\tau(\delta,\e/\ell),\quad
  |\wangle ayz +\wangle byz -\pi|<\tau(\delta,\e/\ell).
\]
\end{lem}  

If  $x\in X$  is $(m,\delta)$-strained, it is called 
a {\em $\delta$-regular} point.
We denote by $X_\delta^{\rm reg}$ the set  of all 
$\delta$-regular points of $X$.

\begin{thm} $($\cite[Theorem 9.4]{BGP}$)$ \label{thm:almost-isometry}
Let $x$ be $\delta$-regular with 
$(m,\delta)$-strainer $\{ (a_i,b_i)\}_{i=1}^m$.
If $\delta \ll 1/m$, then the map $f:\mathring{B}(x,\e)\to \R^m$ 
defined by 
\[
         f(y)=(|a_1,y|, \ldots, |a_m,y|)
\]
provides a $\tau(\delta,\e/\ell)$-almost isometry
onto an open subset of $\mathbb R^m$, where $\ell:=\min_{1\le i\le m}\{|a_i,x|, |b_i,x|\}$.
\end{thm}

When $X$ has nonempty boundary $\partial X$,
the double $D(X)$ of $X$ is  also
an Alexandrov space with curvature $\ge \kappa$
(see \cite{Pr:alexII}). 
A boundary point $x\in\partial X$ is called {\em $\delta$-boundary regular} if $x$ is $\delta$-regular
in $D(X)$.

In Section \ref{sec:almost-stability}, we need the following result on the dimension of the interior singular point sets.

\begin{thm} [\cite{BGP}, \cite{Per}, cf. \cite{OS}]   \label{thm:dim-sing} 
Let $m:=\dim X$. Then we have 
$$
            \dim_H(X^{\rm sing}\cap {\rm int}X) \le m-2,  \,\,\dim_H(\partial X)^{\rm sing} \le m-2,
$$  
where $(\partial X)^{\rm sing}:=D(X)^{\rm sing}\cap \pa X$.
\end{thm}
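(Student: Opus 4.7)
The plan is to combine the BGP strainer stratification with Perelman's doubling theorem, first handling the interior singular set and then reducing the boundary case to the interior analysis for the double.

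For each $\delta>0$, stratify $X$ by strainer number: let $S_k^\delta$ be the set of $(k,\delta)$-strained points, so that $X=S_0^\delta\supset S_1^\delta\supset\cdots\supset S_n^\delta$. The basic BGP distance-to-strainer lemma shows that on a neighborhood of any point of $S_k^\delta\setminus S_{k+1}^\delta$ the map $y\mapsto(d(y,a_1),\dots,d(y,a_k))$ is locally bi-Lipschitz into $\R^k$ with constants controlled by $\delta$, and that the fibers are necessarily zero-dimensional (a positive-length curve in a fiber would yield a direction extending the strainer to a $(k{+}1,\delta)$-strainer). Hence $\dim_H(S_k^\delta\setminus S_{k+1}^\delta)\le k$ for each $k<n$, and in particular $\dim_H(X\setminus S_n^\delta)\le n-1$ immediately.

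To upgrade the interior bound from $n-1$ to $n-2$, I would show that for any interior singular $x$ and small $\delta$, the point $x$ cannot lie in $S_{n-1}^\delta\setminus S_n^\delta$. Since $x$ is interior, $\Sigma_x$ is a closed $(n-1)$-dimensional Alexandrov space of curvature $\ge 1$, and an $(n-1,\delta)$-strainer at $x$ corresponds to $n-1$ almost-antipodal almost-orthogonal pairs in $\Sigma_x$. Because $\Sigma_x$ has no boundary, one can find an additional antipodal pair in $\Sigma_x$ almost perpendicular to the given ones; realising this extra pair by actual points of $X$ via Perelman's gradient-flow extension argument applied to distance functions from the existing strainer promotes the strainer at $x$ to an $(n,\delta')$-strainer with $\delta'=\delta'(\delta)\to 0$. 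Consequently $X^{\rm sing}\cap\text{int}X\subset\bigcup_{k\le n-2}S_k^\delta$, and the bound $\dim_H(X^{\rm sing}\cap\text{int}X)\le n-2$ follows from the strainer stratification.

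For the boundary bound I would invoke Perelman's doubling theorem: $D(X)$ is an $n$-dimensional Alexandrov space of curvature $\ge\kappa$ with empty boundary, and every point of $D(X)$ is interior in $D(X)$. Applying the first assertion to $D(X)$ yields $\dim_H D(X)^{\rm sing}\le n-2$, and since $(\partial X)^{\rm sing}=D(X)^{\rm sing}\cap\partial X$ by definition, the claim follows. The main obstacle is the extension step in the interior argument, producing the extra almost-orthogonal antipodal pair from an $(n-1,\delta)$-strainer: this is where the absence of boundary in $\Sigma_x$ is essential, since at a genuine boundary point of $X$ the perpendicular direction in $\Sigma_x$ would meet the boundary of $\Sigma_x$ and fail to produce a valid antipodal pair, which is precisely why the boundary case must be routed through the double instead of handled directly.
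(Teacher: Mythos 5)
You should first note that the paper does not prove this statement at all: it is quoted as a known theorem with references to \cite{BGP}, \cite{Per}, cf.\ \cite{OS}, so there is no internal proof to compare with. Your overall architecture is indeed the standard route behind those citations: the BGP strainer stratification gives $\dim_H$ of the non-$(k,\delta)$-strained set $\le k-1$; the interior bound $n-2$ comes from showing that an interior point with an $(n-1,\delta)$-strainer is automatically $(n,\tau(\delta))$-strained; and the boundary statement reduces, via Perelman's doubling theorem \cite{Pr:alexII}, to the interior statement applied to $D(X)$, where every point is interior. That last reduction is correct as you state it.

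The genuine gap is that the central step is asserted rather than proved, and the two justifications you offer do not work as written. First, producing the extra almost-antipodal pair in $\Sigma_x$ uses far more than ``$\Sigma_x$ has no boundary'': what is needed is a quantitative rigidity statement for $(n-1)$-dimensional boundaryless spaces of curvature $\ge 1$ containing $n-1$ almost-orthonormal almost-antipodal pairs. The exact case follows from iterated suspension rigidity (forcing $\Sigma_x\cong\mathbb S^{n-1}$), but the almost version, \emph{with $\tau(\delta)$ uniform over all such $\Sigma_x$} (uniformity is indispensable, since $X^{\rm sing}$ is an uncountable union of points), requires a compactness argument in which one must in particular exclude collapse of such spaces onto lower-dimensional round spheres; none of this appears in your sketch. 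Second, ``realising this extra pair by actual points of $X$ via Perelman's gradient-flow extension argument'' is not a proof: comparison angles are only $\le$ angles, so directions at angle close to $\pi$ do not directly yield points with comparison angle close to $\pi$. The correct (and elementary) mechanism is monotonicity/first variation of comparison angles, which produces a small $n$-th strainer pair along geodesics in the chosen directions; the same confusion infects your justification of the stratification bound (``a positive-length curve in a fiber would yield a direction extending the strainer''), where the actual content of \cite[Th.~10.6]{BGP} is a co-Lipschitz estimate for the distance map on the non-strained set, not merely zero-dimensional fibers. Finally, there is a quantifier slip: for a fixed $\delta$ the inclusion $X^{\rm sing}\cap{\rm int}\,X\subset\bigcup_{k\le n-2}S_k^\delta$ is false (a cone point with angle deficit much smaller than $\delta$ is interior, singular, and $(n,\delta)$-strained); one must write $X^{\rm sing}$ as the countable union over $j$ of the non-$(n,1/j)$-strained sets, apply the uniform implication to each with $\delta=\delta(j)$, and only then invoke the strainer dimension bound and take the union.
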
  
 \pmed

%
%
%

\pmed\n 
{\bf Gromov-Hausdorff convergence.}\,
For metric spaces $X$ and $X'$,
a not necessarily continuous map $f:X\to X'$ is called
$\e$-approximation if 
\begin{itemize}
\item $||f(x), f(x')|-|x,x'|| < \epsilon$ for all $x, x' \in X\,;$ 
\item $f(X)$ is $\e$-dense in $Y$.
\end{itemize} 
The Gromov-Hausdorff distance $d_{GH}(X,Y)<\e$
iff there are $\e$-approximations $\varphi:X\to Y$ and 
$\psi:Y\to X$ (see \cite{GLP} for more details).
 
For compact subsets $A_1,\ldots,A_k\subset X$
and $A'_1,\ldots,A'_k\subset X'$,
the Gromov-Hausdorff distance 
$d_{GH}((X,A_1,\ldots,A_k),(X',A'_1,\ldots,A'_k))<\e$ 
iff there are $\e$-approximations $\varphi:X\to X'$ and 
$\psi:X'\to X$ such that the restrictions
$\varphi|_{A_i}$ and $\psi|_{A'_i}$ give
$\e$-approximations between $A_i$ and $A'_i$ for any
$1\le i\le k$.

In the geometry of Alexandrov spaces,
the following topological stability theorem by Perelman plays a crucial role. 

\begin{thm} [\cite{Pr:alexII}, \cite{Per}, cf.\cite{Kap:stab}]      \label{thm:stability}
If a sequence $X_i$ of $m$-dimensional compact Alexandrov spaces 
with curvature $\ge \kappa$ Gromov-Hausdorff converges to an $m$-dimensional compact Alexandrov space $X$,
then $X_i$  is homeomorphic to $X$ for large enough $i$.
\end{thm}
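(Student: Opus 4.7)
\textbf{Proof proposal for Theorem \ref{thm:stability}.}
The plan is to follow Perelman's original strategy for the topological stability of Alexandrov spaces: construct local model maps using distance functions, show that corresponding maps on $X_i$ (obtained via Gromov-Hausdorff approximations) share the same ``regularity'' properties for large $i$, promote each such map to a local fiber bundle via a Morse-theoretic argument, and finally glue the resulting local homeomorphisms into a global one. Throughout, the non-collapse assumption $\dim X_i = \dim X = n$ is essential: it ensures that $(n,\delta)$-strainers persist under the convergence and that volumes stay bounded below.

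First, I would fix $\epsilon_i$-approximations $\varphi_i : X \to X_i$, $\psi_i : X_i \to X$ with $\epsilon_i \to 0$, and cover $X$ by finitely many open sets $U_\alpha$ on each of which a suitable ``admissible'' map $f_\alpha : U_\alpha \to \mathbb{R}^{k_\alpha}$ (built from a finite collection of distance functions $|a_j, \cdot|$, possibly combined via Perelman's $\max/\min$ or strictly concave averaging procedure) is defined and \emph{regular} in Perelman's sense, with well-controlled moduli. Near a $(n,\delta)$-strained point $p$ with $\delta \ll 1/n$, the map $f_p(x) = (|a_1,x|,\ldots,|a_n,x|)$ is bi-Lipschitz, by the standard strainer estimates in \cite{BGP}. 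Transporting $\{a_j\}$ to $X_i$ via $\varphi_i$ yields a map $f_{p,i}$ on a corresponding neighborhood, and angle comparison together with the convergence shows that $f_{p,i}$ remains an $(n,\delta')$-strainer map for large $i$, hence bi-Lipschitz, so $(f_{p,i})^{-1} \circ f_p$ is a local homeomorphism onto its image.

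The main obstacle, and the heart of Perelman's proof, is the singular locus where no full strainer exists. Here one must use admissible maps of rank $k_\alpha < n$ and invoke Perelman's fibration theorem: a regular admissible map is a locally trivial topological (MCS-)bundle, with fiber again an Alexandrov-like space of dimension $n-k_\alpha$. The corresponding map on $X_i$ is regular for large $i$ (regularity is an open condition stable under GH-convergence, by compactness of the directions involved), so it is also such a fibration, and induction on dimension combined with stability of the fibers gives a fiberwise homeomorphism between the two bundles. Controlling the fiber-to-fiber identifications requires Perelman's integration of the ``gradient-like'' vector fields that implement local triviality; the uniform lower curvature bound ensures that these flows are equicontinuous in $i$.

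Finally, I would glue. Choose a partition of unity subordinate to $\{U_\alpha\}$ and its pullback cover $\{V_\alpha\}$ on $X_i$. On overlaps, the local homeomorphisms differ by a small isotopy; Perelman's gluing lemma (see \cite{Kap:stab} for a clean exposition) uses these controlled isotopies, together with a carefully constructed strictly concave function playing the role of a Morse function, to inductively modify the local maps along a fixed ordering of the cover until they agree on overlaps. The output is a homeomorphism $X_i \to X$ for all sufficiently large $i$, completing the argument.
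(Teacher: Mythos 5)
This statement is Perelman's Stability Theorem, which the paper cites from \cite{Pr:alexII}, \cite{Per}, and \cite{Kap:stab} without proof; there is therefore no ``paper proof'' to compare against, and I am evaluating your sketch on its own terms against the standard argument.

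Your outline correctly identifies the three pillars of Perelman's proof: (i) local model maps built from distance functions (admissible maps), bi-Lipschitz near $(n,\delta)$-strained points; (ii) the fibration theorem for regular admissible maps, combined with reverse induction on the rank $k$ of the map to handle the singular set; (iii) stability of regularity under non-collapsing Gromov--Hausdorff convergence. The emphasis on non-collapse ($\dim X_i = \dim X = n$) being what preserves strainers and regularity is exactly right. One technical correction, though, in the gluing step: a partition of unity is the wrong tool. Homeomorphisms cannot be averaged pointwise, so you cannot glue the local maps $(f_{p,i})^{-1}\circ f_p$ by a partition-of-unity argument. What Perelman (and Kapovitch's exposition) actually does is formulate a quantitative \emph{Gluing Theorem} for homeomorphisms that respect a fixed admissible map $f$ (cf.\ Theorem~\ref{thm:stability-respectful} in this paper, which is a weak form of it): two local homeomorphisms that push forward $f$ to the corresponding $f'$ and are uniformly close can be deformed to agree on a slightly smaller set, using an isotopy constructed via the gradient-like vector fields of the local trivialization, and this is carried out by induction along an ordering of the cover. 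The ``strictly concave function playing the role of a Morse function'' you mention is used to produce the convex neighborhoods and the local product structures, not as a weighting device. So replace the partition-of-unity framing with a deformation-of-homeomorphisms / gluing-lemma framing and the outline is faithful to the standard proof.
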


A direction of a minimal geodesic from $p$ to 
a closed subset $A$ of an  Alexandrov space $X$ is  denoted by $\uparrow_p^A$.
The set of all directions $\uparrow_p^A$ 
from $p$ to $A$ is denoted by 
$\Uparrow_p^A$.
For  an open subset $U$ of $X$,  
let $f=(f_1,\ldots, f_k):U\to\R^k$ be the map defined by $f_i(x)=|A_i,x|$, where 
$A_i$  are closed subsets of $X$. For $c,\e>0$, 
the map $f$ is called {\it $(c,\e)$-regular} at $p\in U$ (see \cite{Pr:alexII})
if there is a point $w\in X$ satisfying
\begin{itemize}
\item $\angle(\Uparrow_p^{A_i},\Uparrow_p^{A_j})>\pi/2-\e\,;$
\item $\angle(\Uparrow_p^{w},\Uparrow_p^{A_i})>\pi/2+c$
\end{itemize}
for all $1\le i\neq j\le k$.

The following theorem is a respectful version of 
Theorem \ref{thm:stability}, which is a special case of 
\cite[Theorem 4.3]{Pr:alexII}.

\begin{thm}[\cite{Pr:alexII}] 
\label{thm:stability-respectful}
For an Alexandrov space $X$
and closed subsets $A_1,\ldots,A_k, B$ of $X$,
let $f_\alpha(x):=|A_\alpha,x|$, $h(x):=|B,x|$ 
and  let $I_\alpha$ be a closed interval contained in $f_\alpha(X)$\, $(1\le\alpha\le k)$, and set
$\mathbb I^k:=I_1\times\cdots\times I_k$.
Let $K$ be a compact subset of $f^{-1}(\mathbb I^k)$, and 
suppose that 
\begin{itemize}
\item 
$f:=(f_1, \dots, f_k)$ is $(c,\delta)$-regular on  $f^{-1}(\mathbb I^k)\,;$
\item $(f,h)$  is $(c,\delta)$-regular on $K$.
\end{itemize}
If 
$\delta$ is small enough compared with 
$c$ and $f^{-1}(\mathbb I^k)$,
then there exists $\e=\e_{A_1,\ldots,A_k,B}>0$ satisfying the following:
For an Alexandrov space $X'$ with curvature $\ge -1$ and the same dimension as $X$, and for closed
subsets
$A_1',\ldots, A'_k,B'$ of $X'$, suppose that the Gromov-Hausdorff distance between $(X,A_1,\dots, A_k,B)$ and $(X', A'_1, \dots, A'_k,B')$ is less than $\e$. Then 
there exists a homeomorphism 
$\varphi : f^{-1}(\mathbb I^k) \to (f')^{-1}(\mathbb I^k)$ such that 
\begin{enumerate}
\item $f' \circ \varphi = f$ on $f^{-1}(\mathbb I^k)\,;$
\item $(f',h') \circ \varphi = (f,h)$ on $K$, 
\end{enumerate}
where
$f'$ and $h'$ are the distance maps
defined by $A_1',\ldots,A_k'$ and $B'$ as 
$f$ and $h$.
\end{thm}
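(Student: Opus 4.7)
The plan is to adapt Perelman's proof of the stability theorem \cite{Pr:alexII} to the respectful setting. The underlying principle is that the admissible maps $f$ (on $f^{-1}(\mathbb I^k)$) and $(f,h)$ (on $K$) give local fibration structures on $X$, and the Gromov--Hausdorff closeness of the input data yields analogous local structures on $X'$, after which one glues local homeomorphisms compatibly with the distinguished coordinates.

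First, I would extend $f=(f_1,\ldots,f_k)$ to a locally admissible map of maximal rank. At each $p\in f^{-1}(\mathbb I^k)\setminus K$, the $(c,\delta)$-regularity of $f$ provides additional points $w_{k+1},\ldots,w_m\in X$ (where $m=\dim X$) so that
$\tilde f_p:=(f_1,\ldots,f_k,|w_{k+1},\cdot|,\ldots,|w_m,\cdot|)$ is $(c/2,2\delta)$-regular at $p$. At points of $K$ one first adjoins $h$ and then completes $(f,h)$ in the same way. By Perelman's local fibration/chart theorem, each $\tilde f_p$ is an open bi-Lipschitz chart on a neighborhood $U_p$. Transporting the new points $w_j$ together with $A_1,\ldots,A_k,B$ to $X'$ via an $\e$-approximation produces corresponding admissible maps $\tilde f_p'$ on $X'$ that remain $(c/4,4\delta)$-regular for $\e$ small enough, and one obtains local homeomorphisms $\varphi_p:U_p\to U_p'$ preserving the first $k$ (resp.\ $k+1$ at points of $K$) coordinates.

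Second, to globalize I would apply Perelman's inductive gluing over a finite cover $\{U_p\}$ of $f^{-1}(\mathbb I^k)$. On overlaps, two local charts $\varphi_p$ and $\varphi_q$ agree in the $f$-components (and in the $h$-component near $K$) and differ only along the complementary fibers; a partition-of-unity convex combination performed in the complementary coordinates of the chart $\tilde f_p'$ of $X'$ produces a single globally defined map that still preserves $f$ everywhere, and $h$ on a neighborhood of $K$. The main obstacle is precisely this gluing: one must ensure injectivity of the averaged map and that its image remains in the regular locus of $\tilde f'$, and this is exactly where the constant $\e=\e_{A_1,\ldots,A_k,B}$ enters --- it has to be small enough that the local models on $X$ and $X'$ are close enough for the convex-combination interpolation to remain a local homeomorphism. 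Adding the $h$-coordinate on $K$ introduces no essentially new difficulty, since $(f,h)$-regularity is $(k+1)$-tuple regularity and the same inductive scheme applies with $h$ treated as a frozen coordinate throughout the construction.
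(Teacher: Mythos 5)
The paper does not prove this statement; Theorem~\ref{thm:stability-respectful} is cited verbatim from Perelman's preprint \cite{Pr:alexII} (it is a special case of his Theorem~4.3), so there is no in-paper argument to compare yours against. What I can assess is whether your sketch is a faithful rendering of Perelman's argument, and there the high-level shape is right (cover $f^{-1}(\mathbb I^k)$ by local admissible charts that complete $f$, respectively $(f,h)$ near $K$, to a maximal-rank regular map; transport the auxiliary points to $X'$ by an $\e$-approximation; produce local respecting homeomorphisms; globalize).

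The genuine gap is in the globalization step. You propose gluing the local homeomorphisms $\varphi_p$ by a partition-of-unity convex combination performed in the complementary coordinates of a chart $\tilde f'_p$, and you suggest that taking $\e$ small will keep this interpolation a homeomorphism. That is not how Perelman's proof goes, and it does not work in general: the complementary fiber direction has dimension $m-k\ge 1$, and a pointwise convex combination of two homeomorphisms that agree in the $f$-coordinates can fail to be injective in the fiber coordinates regardless of how close the two maps are, because closeness of the charts does not control the geometry of the overlap in a way that makes linear interpolation of homeomorphisms a homeomorphism. Perelman's gluing is a genuinely different mechanism: it is an induction on $\dim X$ in which the fibers of a regular admissible map are identified as lower-dimensional MCS-spaces, and the local respecting homeomorphisms are made to agree on overlaps by invoking the deformation theory of homeomorphisms of topological manifolds and MCS-spaces (Siebenmann/Edwards--Kirby type results, repackaged by Perelman as a ``gluing theorem''), not by averaging. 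Until the interpolation step is replaced by that deformation/induction machinery, the proposal does not establish the theorem.
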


\pmed\n
{\bf Gradient flows.}\,
We shortly recall the properties of the gradient flows of semiconcave functions on  Alexandrov spaces. See \cite{Pet} for more details.

In what follows, for simplicity we assume that functions are defined on the whole space.
Let $X$ be an Alexandrov space and $f:X\to\mathbb R$.

If $X$ has no boundary, $f$ is said to be \textit{$\lambda$-concave} if its restriction to any unit-speed minimal geodesic $\gamma(t)$ is $\lambda$-concave, i.e., $f\circ\gamma(t)-(\lambda/2)t^2$ is concave.
If $X$ has boundary, we require the same condition for the tautological extension of $f$ to the double  $D(X)$.
A \textit{semiconcave} function is locally a $\lambda$-concave function, where $\lambda$ depends on each point.
A \textit{strictly concave} function is a $\lambda$-concave function with $\lambda<0$.

A typical example of a semiconcave function is a distance function $|A,\cdot\,|$ from a closed subset $A\subset X$.
Note that the concavity of $|A,\cdot\,|$ at $x\in X\setminus A$ is bounded above by a constant depending only on a lower bound of $|A,x|$ and the lower curvature bound of $X$.

Let $f:X\to\mathbb R$ be a $\lambda$-concave function and $p\in X$.
The directional derivative $df=d_pf$ is a $0$-concave function defined on the tangent cone $T_p(X)$.
The \textit{gradient} $\nabla_pf\in T_p(X)$ of $f$ at $p$ is characterized by the following two properties:
\beq
\begin{aligned}\label{eq:gradient}
\begin{cases}
 &df(v)\le\langle\nabla_pf,v\rangle \quad\text{for any $v\in T_p(X)$}\\
 & df(\nabla_pf)=|\nabla_pf|^2.
\end{cases}
\end{aligned}
\eeq

where $|\cdot|$ denotes the norm, i.e., the distance to the vertex $o_p$ of the cone and $\langle\cdot,\cdot\rangle$ is the scalar product defined by the norm and the angle at the vertex as in Euclidean plane.
The concavity of $df$ guarantees the existence and uniqueness of the gradient.

The \textit{gradient curve} of a semiconcave function $f$ is a curve whose right tangent vector is unique and equal to the gradient of $f$.
For any $\lambda$-concave function $f$, the gradient curve $\Phi(x,t)$ starting at $x\in X$ exists and is unique for all $t\ge0$. 
The map $\Phi:X\times[0,\infty)\to X$ is called the \textit{gradient flow} of $f$.

The following two results are useful in the present paper.

\begin{prop}[{\cite[2.1.4]{Pet}}]\label{prop:lip}
Let $\Phi:M\times[0,\infty)\to M$ be a the  gradient flow of a $\lambda$-concave function
$f:X\to\mathbb R$. 
Then $\Phi_t:=\Phi(\cdot,t):X\to X$ is $e^{\lambda t}$-Lipschitz for any $t\ge 0$.
In particular, if $f$ is $C$-Lipschitz, the restriction of $\Phi$ to $M\times[0,T]$ is $\max\{C,e^{\lambda T},1\}$-Lipschitz.
\end{prop}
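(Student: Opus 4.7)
The plan is to derive the spatial bound $|\Phi_t(x), \Phi_t(y)| \le e^{\lambda t}|x,y|$ by a Gr\"onwall-type argument applied to the function $\ell(t) := |\Phi(x,t), \Phi(y,t)|$, and then combine it with the temporal speed estimate $|\Phi(x,s), \Phi(x,t)| \le C|t-s|$ (available when $f$ is $C$-Lipschitz since then $|\nabla f| \le C$ pointwise) to deduce the joint Lipschitz claim on $X \times [0,T]$ with the $L_1$ metric.

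The algebraic heart of the argument is the inequality
\[
\langle \nabla_p f, \uparrow_p^q\rangle + \langle \nabla_q f, \uparrow_q^p\rangle \ge -\lambda\,|p,q|,
\]
which I would derive as follows. Along a unit-speed minimal geodesic $\gamma$ from $p$ to $q$, $\lambda$-concavity of $f$ gives the tangent-line estimate $f(\gamma(t)) \le f(p) + t\, d_p f(\uparrow_p^q) + \lambda t^2/2$; evaluating at $t = |p,q|$ and symmetrizing in $(p,q)$ produces $d_p f(\uparrow_p^q) + d_q f(\uparrow_q^p) \ge -\lambda\,|p,q|$, and property (1) of the gradient, namely $df(v) \le \langle \nabla f, v\rangle$, upgrades this to the claimed inequality.

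Applying the first variation formula for distance in an Alexandrov space to the curves $\alpha(t)=\Phi(x,t)$ and $\beta(t)=\Phi(y,t)$, whose right tangent vectors are $\nabla_{\alpha(t)} f$ and $\nabla_{\beta(t)} f$, gives
\[
\frac{d^+}{dt}\,\ell(t) \le -\langle \nabla_{\alpha(t)} f, \uparrow_{\alpha(t)}^{\beta(t)}\rangle - \langle \nabla_{\beta(t)} f, \uparrow_{\beta(t)}^{\alpha(t)}\rangle \le \lambda\,\ell(t),
\]
and the standard comparison lemma for upper right Dini derivatives then yields $\ell(t) \le e^{\lambda t}\,\ell(0)$, i.e.\ $\Phi_t$ is $e^{\lambda t}$-Lipschitz. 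For the second assertion, the triangle inequality gives $|\Phi(x,t),\Phi(y,s)|\le C|t-s| + e^{\lambda \min\{s,t\}}|x,y|$, and on $[0,T]$ the factor $e^{\lambda s}$ is bounded by $\max\{e^{\lambda T},1\}$ regardless of the sign of $\lambda$, yielding the Lipschitz constant $\max\{C, e^{\lambda T}, 1\}$.

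The principal technical obstacle is to legitimize the differential inequality at points where $\alpha(t)$ and $\beta(t)$ may be singular in $X$ or where minimal geodesics between them branch; this is handled by working with upper right Dini derivatives, exploiting the semicontinuity of the gradient map and the general regularity theory of gradient curves of semiconcave functions on Alexandrov spaces as developed in \cite{Pet}. A secondary point is that property (1) of the gradient is strong enough in the singular setting to prevent any loss compared with the smooth case, so the final Lipschitz constant $e^{\lambda t}$ is sharp, as already in the Euclidean model.
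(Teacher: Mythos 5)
The paper does not prove this proposition — it quotes it verbatim from Petrunin's survey \cite[2.1.4]{Pet} — so there is no in-paper proof to compare against. Your reconstruction is the canonical argument for this estimate and is correct: the symmetrized tangent-line inequality for a $\lambda$-concave function composed with a unit-speed geodesic gives $d_pf(\uparrow_p^q)+d_qf(\uparrow_q^p)\ge -\lambda|p,q|$, the defining property $df(v)\le\langle\nabla f,v\rangle$ upgrades this to the scalar-product version, the first-variation inequality (valid in the $\limsup$/Dini sense in Alexandrov spaces, with the caveat you correctly flag about branching and choice of direction $\uparrow$) turns it into $\frac{d^+}{dt}\ell\le\lambda\ell$, and Gr\"onwall for Dini derivatives yields $\ell(t)\le e^{\lambda t}\ell(0)$. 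The temporal estimate $|\nabla f|\le C$ follows from $|\nabla_pf|^2=df(\nabla_pf)\le C|\nabla_pf|$, and the splitting $|\Phi(x,t),\Phi(y,s)|\le C|t-s|+e^{\lambda\min\{s,t\}}|x,y|\le\max\{C,e^{\lambda T},1\}(|t-s|+|x,y|)$ correctly accounts for both signs of $\lambda$ and matches the $L_1$ product metric the paper adopts. This is exactly Petrunin's proof; the only point worth being scrupulous about in a fully written-out version is the first-variation inequality for two simultaneously moving endpoints, which you rightly reduce to the single-endpoint statement plus a Dini-derivative argument, both of which are in \cite{Pet}.
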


\begin{lem} $($\cite[Lemma 2.5]{FMY}$)$
\label{lem:grad-df}
Let $f$ be a $(-\lambda)$-concave function on $M$, where $\lambda>0$.
If $p\in M$ is the unique maximum point of $f$, then for any $x\in M\setminus\{p\}$, we have
\[|\nabla_xf|\ge df(\uparrow_x^p)\ge\lambda|px|/2.\]
\end{lem}
\begin{proof}
The $(-\lambda)$-concavity implies
\[df(\uparrow_x^p)\ge\frac{f(p)-f(x)+\lambda|px|^2/2}{|px|}.\]
\end{proof}

\pmed\n
{\bf Manifolds with boundary and gluing.}\,
A Riemannian manifold with boundary is not necessarily an Alexandrov space
unless $\Pi_{\pa M}\ge 0$.
Assuming 
\beq \label{eq:curv-assum2}
 K_M \ge \kappa, \,\,\,
 |\Pi_{\partial M}|\le \lambda,
\eeq
Wong (\cite{wong0}) carried out a gluing of $M$ and 
 warped  cylinders along their boundaries  in such a way that 
the resulting manifold  becomes an Alexandrov space having totally geodesic boundary,
based on the gluing theorem by Kosovski$\breve{i}$ \cite{Kos}.
As remarked in \cite{YZ:part1},
under the new weaker curvature conditions
in 
\eqref{eq:curv-assum},
we  can still carry out the gluing construction
without  additional argument.


Suppose $M$ is an $n$-dimensional complete  Riemannian manifold satisfying 
\eqref{eq:curv-assum}.
Then for arbitrarily  $t_0>0$ and  $0<\e_0 < 1$
there exists a monotone non-increasing function  $\phi: [0,t_0]\to (0,1]$ 
satisfying
\beq       \label{eq:phi}
\left\{
 \begin{aligned}
     \phi''(t)+K\phi(t)\leq0, \,\,\,&\phi(0)=1, \,\,\,\phi(t_0)=\e_0,\\
     -\infty<\phi'(0)\leq-\lambda,\,\, \,&\phi'(t_0)=0,
 \end{aligned}
   \right.
\eeq
for some constant  $K=K(\lambda,\ve_0,t_0)$.
Consider the warped product $C_M:= [0, t_0]\times_{\phi}\partial M$ with the metric on  $[0, t_0]\times\partial M$ defined by 
\[
   g(t,x)=dt^2+\phi^2(t)g_{\partial M}(x).
\]
From the construction, we have 
\beq  \label{eq:gluing-cond}
\begin{cases}
\begin{aligned}
&K_M\ge\kappa, \,\,\, K_{C_M}\ge c(\nu,\lambda,\e_0, t_0), \\ 
&II^M_{\pa M}+II^{C_M}_{\{ 0\}\times \pa M}\ge 0
\,\,\, \text{along \,$\pa M=\{ 0\}\times \pa M$}.
\end{aligned}
\end{cases}
\eeq
It follows from \eqref{eq:gluing-cond} 
and \cite{Kos} that the glued space
\[
                       \tilde M := M\cup_{\pa M} C_M
\]
along $\partial M$ and  $\{ 0\}\times\partial M$ satisfies the following.

\begin{prop} [\cite{wong0}] \label{prop:extendAS}
For  any $M$  satisfying \eqref{eq:curv-assum}, 
the following holds$:$
\begin{enumerate}
 \item $\tilde M$  is an  Alexandrov space with curvature 
  $\ge \tilde{\kappa}$,  
         where $\tilde{\kappa}=\tilde{\kappa}(\kappa,\nu, \e_0,\lambda,t_0);$
 \item the extrinsic metric $M^{\rm ext}$ of $M$ in $\tilde M$ is $L$-bi-Lipschitz homeomorphic to $M$ 
         for the uniform constant $L=1/\e_0;$
\item  $\diam(\tilde M)\le \diam(M) + 2t_0$.
\end{enumerate}
\end{prop}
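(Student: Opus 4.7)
The plan is to prove the three assertions in reverse order of difficulty: (3) follows immediately from the product structure of $C_M$, (2) is a length-comparison using the bounds on the warping factor $\phi$, and (1) reduces to Kosovski\u{i}'s gluing theorem \cite{Kos} after verifying its hypotheses.

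For (3), every point of $C_M$ lies within $t_0$ of $\partial M$ in the warped-product metric (the $t$-curves have unit speed), so any two points of $\tilde M$ can be joined by a path of length at most $\diam(M)+2t_0$. For (2), I would show that every rectifiable path in $\tilde M$ joining two points of $M$ can be replaced by a path entirely in $M$ whose length has grown by at most a factor $1/\e_0$. Indeed, each excursion $\gamma(s)=(x(s),t(s))$ of the path into $C_M$ starts and ends on $\partial M$, and its $C_M$-length satisfies $\int\sqrt{\dot t^2+\phi(t)^2|\dot x|^2}\,ds\ge\e_0\int|\dot x|\,ds$ because $\phi\ge\e_0$; replacing $\gamma$ by its projection $x(s)$ to $\partial M\subset M$ therefore costs at most a factor $1/\e_0$. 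Combined with the trivial inequality $d_{\tilde M}\le d_M$, this gives the claimed bi-Lipschitz equivalence.

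The substance of the proposition is (1), and the approach is to apply Kosovski\u{i}'s theorem to $M$ and $C_M$ glued along their isometric boundaries. Two hypotheses must be verified. First, $C_M$ must itself be Alexandrov with some curvature lower bound $c=c(\nu,\lambda,\e_0,t_0)$. This follows from the standard warped-product sectional curvature formulas: the radial curvature equals $-\phi''/\phi$, which is bounded below by $K(\lambda,\e_0,t_0)$ through \eqref{eq:phi}, and the tangential curvature equals $(K_{\partial M}-(\phi')^2)/\phi^2$, which is bounded below using $K_{\partial M}\ge\nu$, the uniform bound on $|\phi'|$ coming from the ODE in \eqref{eq:phi}, and $\phi\ge\e_0$. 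Second, the second fundamental forms along the common boundary must sum to a nonnegative tensor. The $M$-side gives $\Pi^M_{\partial M}\ge-\lambda$ by hypothesis, while on the $C_M$-side a direct computation yields $\Pi^{C_M}_{\partial M\times\{0\}}=-(\phi'(0)/\phi(0))\,g_{\partial M}\ge\lambda\,g_{\partial M}$ from $\phi(0)=1$ and $\phi'(0)\le-\lambda$. The two tensors then sum to a nonnegative form, and Kosovski\u{i}'s theorem delivers (1) with $\tilde\kappa=\min\{\kappa,c(\nu,\lambda,\e_0,t_0)\}$.

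The main obstacle is the bookkeeping in (1): one must pin down sign conventions so that the outward normals of $\partial M$ in $M$ and of $\partial M\times\{0\}$ in $C_M$ are antiparallel in $\tilde M$, ensuring that the ``sum'' appearing in \eqref{eq:gluing-cond} is genuinely the nonnegativity hypothesis of Kosovski\u{i}. Once these conventions are fixed and the explicit warped-product lower curvature bound on $C_M$ is in hand, the argument consists of direct invocations of the cited results.
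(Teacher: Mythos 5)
Your proposal is correct and follows the same route the paper takes (and that Wong takes in \cite{wong0}): the paper does not reprove the proposition but simply records the construction of the warping function $\phi$ satisfying \eqref{eq:phi}, asserts \eqref{eq:gluing-cond} ``from the construction,'' and then invokes Kosovski\u{i}'s gluing theorem; your verification of the hypotheses of \eqref{eq:gluing-cond} via the warped-product curvature formulas and the second-fundamental-form computation $\Pi^{C_M}_{\partial M\times\{0\}}=-\phi'(0)g_{\partial M}\ge\lambda g_{\partial M}$, together with the projection argument for (2) using $\phi\ge\e_0$ and the obvious bound for (3), is exactly the content of the cited result.
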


\begin{prop} [\cite{wong0}]\label{prop:cpn-diam}
For any $M\in\ca M(n,\kappa,\nu,\lambda,d)$,
we have the following:
\begin{enumerate}
 \item There exists a constant $D=D(n,\kappa,\nu,\lambda,d)$ such that  any component $\pa^\alpha M$  of $\pa M$ has intrinsic diameter bound
    \[
             \diam((\pa^\alpha M)^{\rm int})\leq D;
    \]
 \item $\partial M$ has at most $J$ components, where $J = J(n, \kappa,\nu, \lambda, d)$.
\end{enumerate}
\end{prop}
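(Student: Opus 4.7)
The plan is to use the Wong extension $\tilde M$ of Proposition \ref{prop:extendAS} to reduce both assertions to standard considerations in the compact Alexandrov space $\tilde M$, which has curvature $\ge \tilde\kappa$ and diameter $\le d+2t_0$. First, Bishop-Gromov volume comparison yields a uniform bound $\vol_n(\tilde M) \le V = V(n,\kappa,\nu,\lambda,d)$. The attached cylinder $C_M = \bigsqcup_\alpha \pa^\alpha M \times [0, t_0]$ carries the warped metric $dt^2+\phi(t)^2 g_{\pa M}$, so Fubini gives
\[
\vol_n(\pa^\alpha M \times [0, t_0]) = A \cdot \vol_{n-1}(\pa^\alpha M), \qquad A := \int_0^{t_0} \phi(t)^{n-1}\, dt > 0.
\]
Since the cylinder pieces are pairwise disjoint subsets of $\tilde M$, summing yields the total boundary volume bound $\sum_\alpha \vol_{n-1}(\pa^\alpha M) \le V/A$.

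For (1), I would show that any two points $p, q \in \pa^\alpha M$ can be joined by a curve on $\pa^\alpha M$ of length $\le C \cdot d$ for a uniform $C$, which bounds the intrinsic diameter. Starting with a minimizing geodesic $\gamma \subset M$ from $p$ to $q$ of length $\le d$, I would retract $\gamma$ onto $\pa M$ as follows: the hypotheses $\Pi \ge -\lambda$ and $K_M \ge \kappa$ yield a uniform lower bound $r_1 = r_1(\kappa, \lambda) > 0$ on the focal radius of $\pa M$, so the nearest-point projection $\pi: T_{r_1}(\pa M) \cap M \to \pa M$ is well defined and Lipschitz with constant depending only on $\kappa,\lambda$. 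For portions of $\gamma$ that lie outside the tube $T_{r_1}(\pa M)$, one works in $\tilde M$ and uses the Alexandrov gradient flow of Proposition \ref{prop:lip} applied to a suitable semiconcave function whose level sets foliate $\tilde M$ by equidistant hypersurfaces to $\pa M$, exploiting that $d(\cdot,\pa M)$ is semiconcave on $\{d(\cdot,\pa M)\ge r_1\}$ with concavity controlled by $\tilde\kappa$ and $r_1$. Concatenating these two moves produces a curve on $\pa^\alpha M$ of length $\le D = D(n,\kappa,\nu,\lambda,d)$. The hard part is making this retraction quantitative under the one-sided bound $\Pi \ge -\lambda$ only: unlike the two-sided setup $|\Pi|\le\lambda$ of Wong's original work, the boundary may be arbitrarily convex, so extra care is needed to keep the projection's Lipschitz constant uniform and to verify that the flow really carries points into the tube $T_{r_1}(\pa M)$.

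For (2), I would combine the total-volume bound above with a uniform positive lower bound on $\vol_{n-1}(\pa^\alpha M)$. The focal-radius estimate produces a disjoint embedded tube $\pa^\alpha M \times [0, r_1] \hookrightarrow M$ whose $n$-volume is at least $c_0 r_1 \vol_{n-1}(\pa^\alpha M)$ by Rauch-type comparison; combined with $\vol_n(M)\le L^n V$ this gives the total bound directly. To extract a uniform positive lower bound $v_0$ on each $\vol_{n-1}(\pa^\alpha M)$ one uses that $\pa^\alpha M$ is a closed $(n-1)$-Alexandrov space of curvature $\ge\nu$ whose intrinsic diameter is bounded by (1), together with the fact that the tube of width $r_1$ around any boundary point fits inside $M$ and carries at least volume $c_1 r_1^n$ by Bishop-Gromov downstairs. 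Once $v_0>0$ is fixed, the number of components is at most $V/(Av_0) =: J$. As a cross-check and alternative route, I would invoke the finite topological types of compact Alexandrov spaces with fixed bounds on dimension, curvature, and diameter applied to $\tilde M$: this directly bounds the number of components of $\pa\tilde M=\bigsqcup_\alpha \pa^\alpha M\times\{t_0\}$, hence of $\pa M$.
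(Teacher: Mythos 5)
The paper itself does not prove this proposition; it is quoted from \cite{wong0}, and the point of quoting it for the one-sided class \eqref{eq:curv-assum} is precisely that the proof only needs the extension $\tilde M$ of Proposition \ref{prop:extendAS}. Measured against that, your proposal has genuine gaps in both parts. For (1), the pivotal claim that $K_M\ge\kappa$ and $\Pi_{\pa M}\ge-\lambda$ give a uniform focal radius $r_1(\kappa,\lambda)>0$ for $\pa M$, with a uniformly Lipschitz nearest-point projection on the tube $T_{r_1}(\pa M)$, is false: these are purely one-sided bounds, and a focal radius estimate needs control from the other side. Concretely, the closed upper hemisphere $M_r\subset S^n_r$ with $r\to0$ lies in $\ca M(n,\kappa,\nu,\lambda,d)$ for any fixed parameters with $\lambda\ge0$ (it has $K_M=K_{\pa M}=1/r^2$, totally geodesic boundary, $\diam(M_r)=\pi r$), yet the focal radius of $\pa M_r$ is $\pi r/2\to0$ and the Lipschitz constant of the nearest-point projection blows up near the pole; note $\Pi\equiv0$ here, so the step fails even under Wong's two-sided hypothesis. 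You flag this as ``the hard part,'' but the proposed patch does not repair it: gradient flows of distance functions such as $d(\cdot,\pa M)$ or $d(\cdot,C^Y_{t_0})$ (Proposition \ref{prop:lip}) push points \emph{away} from the reference set, so they cannot carry interior segments into a tube around $\pa M$, and $-d(\cdot,\pa M)$ is not semiconcave near $\pa M$. Any correct argument here must avoid tubular neighborhoods inside $M$ altogether; Wong's route works in $\tilde M$, where $\pa^\alpha M\times\{t_0\}$ is a totally geodesic boundary slice of an Alexandrov space with curvature $\ge\tilde\kappa$ and diameter $\le d+2t_0$, whose intrinsic metric is just $\e_0\,(\pa^\alpha M)^{\rm int}$.

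For (2), your counting scheme requires a uniform lower bound $v_0>0$ for $\vol_{n-1}(\pa^\alpha M)$ and a lower bound $c_1r_1^n$ for volumes of $r_1$-balls in $M$; both are false because collapse is allowed in $\ca M(n,\kappa,\nu,\lambda,d)$. The flat products $S^1_\e\times D^2$ satisfy all the bounds (convex boundary, flat, bounded diameter) while the boundary area $2\pi\e$ and all unit-ball volumes tend to $0$. Your ``alternative route'' via finiteness of topological types of Alexandrov spaces with only dimension, curvature and diameter bounds is not a theorem either: finiteness requires a lower volume bound (already the lens spaces of curvature $1$, diameter $\le\pi$, give infinitely many types). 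The correct argument, which the paper itself carries out in Proposition \ref{prop:volume-ratio}, is a \emph{relative} volume comparison in $\tilde M$: each cylinder block $C_{M,\alpha}=\pa^\alpha M\times_\phi[0,t_0]$ contains the ball $B^{\tilde M}(\tilde p_\alpha,t_0/2)$, whose volume by Bishop--Gromov is at least the definite fraction $b^n_K(t_0/2)/b^n_K(2t_0+d)$ of $\vol(\tilde M)$; since the blocks are disjoint, the number of boundary components is at most $J=b^n_K(2t_0+d)/b^n_K(t_0/2)$. This needs no absolute lower bound on boundary volume and is independent of part (1), so your derivation of (2) from (1) plus $v_0$ should be replaced by this relative argument.
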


\pmed


\setcounter{equation}{0}

\section{Basic structure of limit spaces} \label{sec:non-inradius}

 From here on, we consider the situation that 
$M_i \in \ca M(n,\kappa,\nu,\lambda, d)$  
 non-inradius collapses/converges to a  geodesic space $N$. 
Let $\tilde M_i$ be the extension of $M_i$ 
constructed in the previous section.
Passing to a subsequence, we have the following convergences  in the sense of Section \ref{sec:prelim}:
\begin{enumerate}
 \item $(M_i, \pa M_i)$ converges to 
$(N, N_0)\,;$ 
\item $(\tilde M_i, M_i,\partial M_i)$ converges to a triple $(Y,X,X_0)$ with $Y\supset X\supset X_0$,
%
where $Y$ is an Alexandrov space with curvature $\ge \tilde\kappa$.
\end{enumerate}
 
\pmed\n
{\bf Limit spaces.}\, 
 First we recall 
basic properties of $X$ and $Y$ and the relations between them, established in 
\cite{wong0}, \cite{YZ:inradius}, \cite{YZ:part1}
except Lemma \ref{lem:dX-est}.

 In  view of \eqref{eq:gluing-cond},
passing to a subsequence, we may assume that $C_{M_i}$ converges 
to some compact Alexandrov space $C$ with curvature $\ge c(\nu,\lambda,\ve_0, t_0)$. 
Obviously we have  
\beq \label{eq:defC}
 C=[0, t_0]\times_{\phi}C_0, 
   \,\, \, \, C_0:=\lim_{i\to\infty} (\pa M_i)^{\rm int},
\eeq
where  $C_0$ is an Alexandrov space with curvature $\ge \nu$.

For simplicity we set  
\beq \label{eq:defC0}
    C_0=\{0\}\times C_0,\,\,\,\,  
    C_t:=\{t\}\times C_0\subset C,
\eeq
and 
\[
       {\rm int}\, X:= X\setminus X_0.
\]
Let $C_{M_i}^{\rm ext}$ denote the extrinsic metric 
of $C_{M_i}$
induced from $\tilde M_i$, which is defined as 
the restriction of the metric of $\tilde M_i$.
Since the identity map $\iota_i: C_{M_i}\to C_{M_i}^{\rm ext}$ is 1-Lipschitz,
we have a 1-Lipschitz map $\eta: C\to Y$ in the limit.
Note that $\eta:C\to Y\setminus {\rm int}\, X$ is surjective.

From now on,  we consider the gluing map 
\[
\eta_0: = \eta|_{\{ 0 \}\times C_0} : C_0\to X_0,
\]
which is a surjective $1$-Lipschitz map with respect to the
intrinsic metrics. 
Note that $\eta_0$ is the limit of $1$-Lipschitz map $\eta_{0,i}:(\pa M_i)^{\rm int} \to (\pa M_i)^{\rm ext}$, where $(\pa M_i)^{\rm ext}$ denotes the extrinsic  
metric induced from $\tilde M_i$.

%

\begin{lem} [{\cite[Lemma 3.1]{YZ:inradius}}]  \label{lem:loc-isom}
The map $\eta:C\setminus C_0\to Y\setminus X$ is a bijective local isometry.
\end{lem}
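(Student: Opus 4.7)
My plan is to verify in turn that $\eta$ maps $C\setminus C_0$ into $Y\setminus X$, that it is surjective onto $Y\setminus X$, that it is injective there, and finally that it is a local isometry. The unifying observation is essentially topological: since $M_i\cap C_{M_i}=\partial M_i$, any continuous path in $\tilde M_i$ starting at a point of $C_{M_i}\setminus\partial M_i$ and reaching $M_i$ must cross $\partial M_i$. Combined with the warped product structure, this gives $d_{\tilde M_i}(\iota_i(p_i),\partial M_i)=d_{\tilde M_i}(\iota_i(p_i),M_i)=t_i$ for any $p_i=(x_i,t_i)\in C_{M_i}$ (the radial segment realizes the upper bound, any other path stays in $C_{M_i}$ with length at least $t_i$), and similarly that any path in $\tilde M_i$ from $\iota_i(p_i)$ to $\iota_i(q_i)$ with $q_i=(x_i',t_i')$ either stays in $C_{M_i}$ or has length at least $t_i+t_i'$.

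For the image condition, fix $p=(x,t)\in C\setminus C_0$ with $t>0$, and take $p_i=(x_i,t_i)\to p$ in $C_{M_i}$. The observation above gives $d_Y(\eta(p),X)=\lim_i t_i=t>0$, so $\eta(p)\in Y\setminus X$. Surjectivity is the reverse: for $y\in Y\setminus X$, approximate $y$ by points $z_i\in\tilde M_i$ with $d_{\tilde M_i}(z_i,M_i)\ge\delta>0$ for large $i$; such $z_i$ must lie in $C_{M_i}\setminus\partial M_i$ and correspond to $(x_i,t_i)$ with $t_i\ge\delta$, so a subsequential limit lies in $C\setminus C_0$ and maps to $y$.

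For injectivity, suppose $\eta(p)=\eta(q)$ with $p=(x,t),q=(x',t')\in C\setminus C_0$. Pick $p_i\to p$, $q_i\to q$ in $C_{M_i}$. The dichotomy on paths gives
\[
d_{\tilde M_i}(\iota_i(p_i),\iota_i(q_i))\ge \min\{d_{C_{M_i}}(p_i,q_i),\,t_i+t_i'\}.
\]
The left side tends to $0$ while the right tends to $\min\{d_C(p,q),t+t'\}$. Since $t,t'>0$, this forces $d_C(p,q)=0$, i.e.\ $p=q$.

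For the local isometry, fix $p=(x,t)\in C\setminus C_0$ and $0<r<t/4$. I would show that for the approximating $p_i$, the restriction of $\iota_i$ to the intrinsic ball $B_{C_{M_i}}(p_i,r)$ is distance-preserving into $\tilde M_i$: given $q_1,q_2$ in this ball, the triangle inequality yields $d_{C_{M_i}}(q_1,q_2)<2r$, while any path in $\tilde M_i$ passing through $\partial M_i$ has length at least $2(t_i-r)>2r$. Hence the shortest path in $\tilde M_i$ stays inside $C_{M_i}$ and realizes the intrinsic distance; passing to the Gromov-Hausdorff limit gives that $\eta$ is an isometry on $B_C(p,r)$. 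The main technical point is ensuring the uniform lower bound on the "detour" through $M_i$ survives the limit, but this is automatic once the radial distance to $\partial M_i$ is identified with $t_i$ as above.
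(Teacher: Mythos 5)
Your proposal is correct: the key facts you isolate (that $|\iota_i(x_i,t_i),\partial M_i|_{\tilde M_i}=|\iota_i(x_i,t_i),M_i|_{\tilde M_i}=t_i$, and that any path between two cylinder points either stays in $C_{M_i}$ or has length at least $t_i+t_i'$) do yield the image condition, surjectivity, injectivity, and the distance-preservation on small balls after passing to the limit, exactly as you argue. Note that this paper does not reprove the lemma but cites \cite[Lemma 3.1]{YZ:inradius}; your argument is the standard one used there, resting on the identity $|\eta(p,t),X|=t$ that the paper records just after defining $\eta$, so there is no substantive difference in approach.
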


Let $\tilde\pi:C\to C_0$ and $\pi:Y\to X$ be the canonical projections. 
To be precise, $\pi$ is defined as
\[
          \pi(y) = \eta_0\circ \tilde \pi\circ \eta^{-1}(y), \quad (y\in Y\setminus X).
\]   
For any $p\in C_0$ with $x:=\eta_0(p)$, let 
$$
\tilde\gamma_p^{+}(t) := (p,t), \quad
\gamma_x^+(t):=\eta(\tilde\gamma(t)), \quad t\in [0,t_0].
$$
We call the geodesic $\tilde\gamma_p^{+}$
 (resp. $\gamma_x^+$) the {\it perpendicular to} $C_0$  at $p$
(resp. a perpendicular to $X_0$ at $x$).
The maps $\tilde \pi$ and $\pi$ are the projections along perpendiculars.
We also call 
$$
\tilde\xi_p^+:=\dot{\tilde\gamma}_p^+(0), \quad
\xi_x^+:=\dot\gamma_x^+(0)
$$
the perpendicular directions at $p$ and  $x$ respectively.
 When $\#\eta_0^{-1}(x)=2$, we use the notation $\gamma_x^+(t)$ and 
$\gamma_x^-(t)$ to denote the two perpendiculars at $x$. We set
\beq \label{eq:xipm}
\xi_x^{+} := \dot\gamma_x^+(0) \quad
\xi_x^{-} := \dot\gamma_x^-(0).
\eeq
Obviously, we have  $|\eta(p,t), X|=t$  for  all $(p,t)\in C$.  We set
\beq \label{eq:CtY}
            C_t^Y:= \eta(C_t).
\eeq
The projection $\pi:Y\to X$
also provides a Lipschitz strong deformation of $Y$ to $X$. Thus we have 

\begin{prop}[{\cite[Theorem 1.5]{wong0}}] \label{prop:retractionYX}
$Y$ has the same Lipschitz homotopy type as $X$.
\end{prop}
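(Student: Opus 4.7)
The plan is to construct an explicit Lipschitz strong deformation retraction of $Y$ onto $X$; the retraction $\pi:Y\to X$ and the inclusion $X\hookrightarrow Y$ will then be mutually inverse Lipschitz homotopy equivalences.

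First, on the warped cylinder $C=C_0\times_\phi [0,t_0]$ I would define
\[
\tilde H:C\times [0,t_0]\to C, \qquad \tilde H((p,t),s):=(p,\max\{t-s,0\}),
\]
which is the perpendicular flow collapsing each fiber $\{p\}\times [0,t_0]$ to $(p,0)$ at time $t_0$. A direct check using the warped product metric $dt^2+\phi^2(t)\, g_{C_0}$, together with the facts that $\phi$ is monotone non-increasing and bounded below by $\varepsilon_0>0$, shows $\tilde H$ is $L$-Lipschitz for some $L=L(\phi,t_0)$. Next I would push $\tilde H$ down to $Y$ via $\eta$: put $H(y,s):=y$ for $y\in X$, and $H(y,s):=\eta(\tilde H((p,t),s))$ for $y=\eta(p,t)\in Y\setminus X$, where $(p,t)\in C\setminus C_0$ is uniquely determined by Lemma \ref{lem:loc-isom}. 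The two definitions agree on $X_0$, since $\eta$ carries $C_0$ into $X_0$ and $\tilde H$ fixes $C_0$. By construction $H(\cdot,0)=\mathrm{id}_Y$, $H(\cdot,t_0)=\pi$, and $H(x,s)=x$ for every $x\in X$ and $s\in [0,t_0]$.

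The main task, and the principal obstacle, is the global Lipschitz estimate for $H$ on $Y\times [0,t_0]$. Since $\eta$ is $1$-Lipschitz and $\tilde H$ is $L$-Lipschitz, the restriction of $H$ to $\eta(C)\times [0,t_0]$ is $L$-Lipschitz; on $X\times [0,t_0]$ it is the projection to $Y$, hence $1$-Lipschitz. For the crossover estimate between $y_1=\eta(p_1,t_1)\in Y\setminus X$ and $y_2\in X$, I would take a minimizing path from $y_1$ to $y_2$ in $Y$, which meets $X_0$ at some $x_0$, and split the desired bound via the triangle inequality using $H(\cdot,s)|_{X_0}=\mathrm{id}_{X_0}$ and the Lipschitz bound on each piece. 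A subtle point is that $\eta$ need not be injective along $C_0$ (double points in $X_0$), but the flow moves each $(p,t)$ only along the single perpendicular containing it, so no sheet-ambiguity arises and the estimate at $X_0$ is unambiguous.

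This yields a Lipschitz strong deformation retraction of $Y$ onto $X$, and hence the Lipschitz homotopy equivalence $X\simeq Y$ asserted by the proposition, recovering Wong's result in our present setting.
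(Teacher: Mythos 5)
Your construction is the one the paper intends and attributes to Wong: the projection $\pi$ along perpendiculars, implemented on $C=C_0\times_\phi[0,t_0]$ as the flow $\tilde H((p,t),s)=(p,\max\{t-s,0\})$ and pushed to $Y$ by $\eta$, with the strong deformation retraction identities $H(\cdot,0)=\mathrm{id}_Y$, $H(\cdot,t_0)=\pi$, $H|_{X\times[0,t_0]}=\mathrm{pr}_1$ clear. The gap is in the Lipschitz estimate. The sentence ``Since $\eta$ is $1$-Lipschitz and $\tilde H$ is $L$-Lipschitz, the restriction of $H$ to $\eta(C)\times[0,t_0]$ is $L$-Lipschitz'' does not follow, because $H|_{\eta(C)\times[0,t_0]}$ is schematically $\eta\circ\tilde H\circ(\eta^{-1}\times\mathrm{id})$, and the $1$-Lipschitz property of $\eta$ only controls the final composition by $\eta$. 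For the first step you would need $\eta^{-1}:\eta(C)\to C$ to be Lipschitz, and this fails globally: for a double point $x\in X_0^2$ with $\eta_0^{-1}(x)=\{p,q\}$, the points $y_1=\eta(p,\delta)$ and $y_2=\eta(q,\delta)$ satisfy $|y_1,y_2|_Y\le 2\delta$, while $|(p,\delta),(q,\delta)|_C$ stays comparable to $|p,q|_{C_0}$, which is bounded away from $0$. The chain of inequalities you invoke then bounds $|H(y_1,s),H(y_2,s)|_Y$ by $L|(p,\delta),(q,\delta)|_C$, which is useless. (The conclusion that $H$ is uniformly Lipschitz is true, but not for the stated reason; your closing remark about double points only addresses well-definedness of $H$, not the estimate.)

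The repair uses Lemma~\ref{lem:loc-isom} rather than the $1$-Lipschitz property of $\eta$ for the inverse direction. Given $(y_1,s_1),(y_2,s_2)$, take a $Y$-geodesic $\gamma$ from $y_1$ to $y_2$, interpolate $s$ linearly, and split $[0,|y_1,y_2|]$ into the closed set where $\gamma\in X$ and the open complement where $\gamma\in Y\setminus X$. On the closed set $H(\cdot,s)$ is the identity, hence the speed of $H\circ(\gamma,s)$ is at most that of $\gamma$. On each open interval, $\gamma$ lifts \emph{isometrically} to $C\setminus C_0$ by the bijective local isometry of Lemma~\ref{lem:loc-isom}; there the warped product formula, $\phi$ non-increasing with $\e_0\le\phi\le 1$, gives the pointwise speed bound $\le(1/\e_0)(\,\text{speed of }\gamma\,)+|s'|$ for $u\mapsto(p(u),\max\{t(u)-s(u),0\})$, and the ($1$-Lipschitz) map $\eta$ pushes this bound down to $Y$. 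The two regimes patch continuously because $H(\cdot,s)$ fixes $X_0$ pointwise, as you note, and integrating the a.e.\ speed bound over the geodesic yields the global Lipschitz constant $\max\{1,1/\e_0\}$. Your ``crossover'' paragraph gestures toward this splitting for $y_1\in Y\setminus X$, $y_2\in X$, but the lifting via Lemma~\ref{lem:loc-isom} and the speed bound along the geodesic are what is actually required; the $1$-Lipschitz property of $\eta$ alone cannot do the job.
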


\psmall

Let $X^{\rm int}$ denote the intrinsic metric of $X$.  

\psmall
\begin{lem}[{\cite[2.1 (iii)]{wong0}}] \label{lem:XbiLipX}
The canonical map $X\to X^{\rm int}$ is an
$L$-bi-Lipschitz homeomorphism.
\end{lem}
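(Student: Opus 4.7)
\medskip
\noindent\textbf{Proof plan.}\, The plan is to propagate the uniform bi-Lipschitz equivalence between $M_i$ and $M_i^{\rm ext}$ from Proposition \ref{prop:extendAS}(2) to the limit. Since the extrinsic distance is always bounded above by the interior distance, the identity $X\to X^{\rm int}$ is automatically $1$-Lipschitz in one direction, so only the reverse bound
\[
     |x,y|_{X^{\rm int}} \le L\,|x,y|_X, \qquad L=1/\ve_0,
\]
for all $x,y\in X$ remains to be established. Topological invertibility and continuity of the inverse then follow immediately from these two-sided Lipschitz bounds, giving the desired bi-Lipschitz homeomorphism.

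First I would pick $p_i,q_i\in M_i$ with $p_i\to x$ and $q_i\to y$ under the triple convergence $(\tilde M_i,M_i,\pa M_i)\to (Y,X,X_0)$, so that $|p_i,q_i|_{\tilde M_i}\to |x,y|_X$. By Proposition \ref{prop:extendAS}(2), the identity $M_i\to M_i^{\rm ext}$ is $L$-bi-Lipschitz, hence
\[
    |p_i,q_i|_{M_i}\le L\,|p_i,q_i|_{\tilde M_i}.
\]
So for each $i$ there exists a rectifiable curve $\gamma_i\subset M_i$ from $p_i$ to $q_i$ of length at most $L\,|p_i,q_i|_{\tilde M_i}+1/i$. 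After reparameterizing by arclength on $[0,1]$ (up to a uniformly bounded factor), the $\gamma_i$ become a uniformly Lipschitz family of curves into the ambient Alexandrov spaces $\tilde M_i$.

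Next I would apply the Arzel\`a--Ascoli theorem in the Gromov--Hausdorff setting: a subsequence of the $\gamma_i$ converges uniformly to a Lipschitz curve $\gamma:[0,1]\to Y$ with $\gamma(0)=x$, $\gamma(1)=y$. Since each $\gamma_i(t)$ lies in $M_i$ and $M_i\to X$ as closed subsets, the image of $\gamma$ lies in $X$. By the standard lower semicontinuity of length under uniform GH-convergence,
\[
       \ell_Y(\gamma)\,\le\,\liminf_{i\to\infty}\ell_{\tilde M_i}(\gamma_i)\,\le\, L\,|x,y|_X.
\]
As $X$ carries the metric restricted from $Y$, the length of $\gamma$ viewed inside $X$ equals $\ell_Y(\gamma)$, and $\gamma$ is thus an admissible competitor for the interior metric, yielding $|x,y|_{X^{\rm int}}\le L\,|x,y|_X$.

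The step I expect to be the main obstacle is the justification that the limit curve $\gamma$ actually lies in $X$ rather than merely in $Y$; this is where the \emph{triple} convergence $(\tilde M_i,M_i,\pa M_i)\to (Y,X,X_0)$ is genuinely used, together with the fact that Hausdorff limits of points $\gamma_i(t)\in M_i$ inside $\tilde M_i\to Y$ land in $X$. The remaining ingredients---the uniform bi-Lipschitz constant from \eqref{eq:phi} via $\e_0$, the GH version of Arzel\`a--Ascoli, and lower semicontinuity of length---are standard.
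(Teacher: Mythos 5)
Your argument is correct. Note that the paper itself gives no proof of this lemma: it is quoted directly from Wong \cite[2.1(iii)]{wong0}, where it is obtained by exactly the kind of limiting argument you describe, namely propagating the uniform constant $L=1/\e_0$ of Proposition \ref{prop:extendAS}(2) to the limit via almost-minimizing curves, Arzel\`a--Ascoli in the Gromov--Hausdorff setting, and lower semicontinuity of length. Your write-up handles the genuinely delicate point (that the limit curve lies in $X$, using the triple convergence $(\tilde M_i,M_i,\pa M_i)\to(Y,X,X_0)$) correctly; the only small step worth making explicit is that the length of $\gamma_i$ measured in $\tilde M_i$ is at most its length in the intrinsic metric of $M_i$ (since $d_{\tilde M_i}\le d_{M_i}$ on $M_i$), which is what feeds into the lower semicontinuity estimate, and that the length of the limit curve computed in $X$ with the restricted metric agrees with its length in $Y$, so it is indeed an admissible competitor for $|x,y|_{X^{\rm int}}$.
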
  

\begin{prop} [{\cite[Proposition 3.9, Remark 3.12]{YZ:inradius}}]\label{prop:intrinsic}
There exists an isometry $f: X^{\rm int} \to N$ such that 
$f(X_0) =N_0$.
\end{prop}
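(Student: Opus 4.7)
My plan is to construct $f$ by simultaneous use of the two GH-convergences at our disposal. Fix $\e_i$-approximations $\varphi_i:M_i\to N$ realizing the intrinsic convergence $M_i\to N$ and $\psi_i:\tilde M_i\to Y$ realizing the triple convergence $(\tilde M_i,M_i,\pa M_i)\to(Y,X,X_0)$. For $x\in X$, pick $p_i\in M_i$ with $\psi_i(p_i)\to x$ and set $f(x):=\lim\varphi_i(p_i)\in N$ along a convergent subsequence. Well-definedness, surjectivity and continuity follow from the uniform $L$-bi-Lipschitz equivalence of the intrinsic and exterior metrics on each $M_i$ (Proposition \ref{prop:extendAS}(2)): if $\psi_i(p_i),\psi_i(q_i)\to x$, then $d_{\tilde M_i}(p_i,q_i)\to 0$ forces $d_{M_i}(p_i,q_i)\le L\,d_{\tilde M_i}(p_i,q_i)\to 0$, so $\varphi_i(p_i),\varphi_i(q_i)$ share a limit in $N$. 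The identity $f(X_0)=N_0$ is immediate, since $\pa M_i$ converges to both $X_0$ and $N_0$ under the respective convergences.

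To show $f:X^{\rm int}\to N$ is an isometry, the starting point is that the Kosovski$\breve{i}$/Wong gluing produces a Riemannian metric on $\tilde M_i$ whose restriction to $M_i$ agrees with the original metric away from an arbitrarily small collar of $\pa M_i$, so that $L_{\tilde M_i}(\gamma)=L_{M_i}(\gamma)+o(1)$ for every curve $\gamma\subset M_i$. Consequently $d_{M_i}(p_i,q_i)=d^{\rm int}_{M_i\,\text{in}\,\tilde M_i}(p_i,q_i)+o(1)$, and the isometry claim is equivalent to the convergence $d^{\rm int}_{M_i\,\text{in}\,\tilde M_i}(p_i,q_i)\to d_X^{\rm int}(x,y)$ whenever $\psi_i(p_i)\to x$, $\psi_i(q_i)\to y$. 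One inequality, $d_X^{\rm int}(x,y)\le d_N(f(x),f(y))$, is handled by a standard limit-curve argument: take minimizing curves $\alpha_i\subset M_i$ realizing $d_{M_i}(p_i,q_i)$, which are $1$-Lipschitz in $\tilde M_i$, extract an Arzel\`a--Ascoli subsequential limit curve $\alpha\subset X$ in $Y$, and apply lower semi-continuity of length to obtain $L_Y(\alpha)\le\liminf d_{M_i}(p_i,q_i)=d_N(f(x),f(y))$, so that $d_X^{\rm int}(x,y)\le L_Y(\alpha)\le d_N(f(x),f(y))$.

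The reverse inequality $d_N(f(x),f(y))\le d_X^{\rm int}(x,y)$ is where the main obstacle lies, because intrinsic metrics of subsets are not in general lower semi-continuous under extrinsic GH-convergence: a priori, ``shortcuts'' through $C_{M_i}\subset\tilde M_i$ could shrink $d_{M_i}$ below the expected value. My remedy is to leverage the non-inradius hypothesis $\inf_i\inrad(M_i)>0$. Given $\delta>0$ and an almost-minimizing curve $\beta\subset X$ from $x$ to $y$ with $L_Y(\beta)\le d_X^{\rm int}(x,y)+\delta$, I would perturb it to a curve $\beta'\subset\inte X$ staying a uniform positive distance $r$ from $X_0$, with $L_Y(\beta')\le L_Y(\beta)+\delta$; such a perturbation exists by the non-inradius assumption and the local product structure of $Y$ along perpendiculars recalled in Lemma \ref{lem:loc-isom}. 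Subdivide $\beta'$ into arcs much shorter than $r$, approximate the vertices by $p_{i,j}\in M_i$ via $\psi_i$, and join consecutive approximants by minimizing curves in $\tilde M_i$. The crucial step is to observe that, for $i$ large and fine enough subdivision, each such $\tilde M_i$-minimizer must stay in $\inte M_i$: its endpoints lie at uniform positive $d_{\tilde M_i}$-distance from $\pa M_i$ (by the bi-Lipschitz bound applied to the uniform inradius), so a minimizer of sufficiently small $\tilde M_i$-length cannot reach $\pa M_i$ and hence cannot escape into $C_{M_i}$. This produces a piecewise curve inside $M_i$ of total $M_i$-length $\le L_Y(\beta')+o(1)$, whence $d_{M_i}(p_i,q_i)\le L_Y(\beta)+2\delta+o(1)$. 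Passing to the limit $i\to\infty$ and then $\delta\to 0$ completes the proof.
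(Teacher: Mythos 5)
Your construction of $f$, the verification that $f(X_0)=N_0$, and the inequality $d_X^{\rm int}(x,y)\le d_N(f(x),f(y))$ (limit curves plus lower semicontinuity of length, using Proposition \ref{prop:extendAS}(2) for well-definedness) are sound; note the paper itself does not reprove this statement but quotes it from \cite{YZ:inradius}. The genuine gap is in your reverse inequality. The perturbation step --- replacing an almost-minimizer $\beta\subset X$ by a curve $\beta'\subset{\rm int}\,X$ that stays a uniform distance $r$ from $X_0$ at the cost of $\delta$ in length, ``by the non-inradius assumption'' --- fails in general. Non-inradius collapse only bounds the inradii of the $M_i$ globally; it does not make $X$ locally thick near every boundary point. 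Indeed the limit may contain entire $(m-1)$-dimensional sheets consisting of boundary points: by Lemma \ref{lem:X02}, every $x\in{\rm int}\,X_0^2$ has a ball with $\dim_H(X\cap B(x,\e))=m-1$, so a whole neighborhood of $x$ in $X$ lies in $X_0$ (cf. Lemma \ref{lem:double-sus}, where $\Sigma_x(X)=\Sigma_x(X_0)$), and $\ca H^{m-1}(X_0^2)$ can be positive (Remark \ref{rem:thin-large}; this is the picture where the graph function $g$ vanishes on a set of positive measure). A curve forced to cross such a sheet cannot be pushed off $X_0$ at all, no matter how much extra length you allow; moreover, when $x,y\in X_0$ a curve from $x$ to $y$ cannot stay at distance $\ge r$ from $X_0$, so the formulation already breaks at the endpoints. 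Hence the key bound $d_N(f(x),f(y))\le d_X^{\rm int}(x,y)$ is not established. (Your parenthetical that the vertices lie far from $\pa M_i$ ``by the bi-Lipschitz bound applied to the uniform inradius'' is also off: what that step actually uses is that the vertices of $\beta'$ are at distance $\ge r$ from $X_0$, which is exactly the hypothesis that fails.)

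The repair is not to avoid the attached cylinder but to project back into it: a $\tilde M_i$-minimal geodesic between two points of $M_i$ at distance $\ell$ penetrates $C_{M_i}$ only to depth $O(\ell^2)$, and its projection along the perpendiculars back to $M_i$ has length at most $(1+O(\ell^2))\ell$; this is the quantitative content of Lemma \ref{lem:deviation}, which comes from the same reference \cite{YZ:inradius} cited for Proposition \ref{prop:intrinsic}. Subdividing $\beta$ finely, approximating the subdivision points in $M_i$, joining consecutive points by $\tilde M_i$-minimizers and replacing each by its projection into $M_i$ produces a curve in $M_i$ of $M_i$-length at most $L_Y(\beta)+o_i(1)$, with no need for the pieces to stay away from $\pa M_i$; this works uniformly on the thin and double-point sheets and closes the argument, whereas your avoidance strategy cannot.
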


From now on, in view of Proposition \ref{prop:intrinsic},
 we often identify 
$N=X^{\rm int}$ and  $N_0=X_0$,
or shortly $N=X$ when no confusion 
arises.

\psmall
We always assume $m=\dim Y$.
From a uniform positive lower bound for  ${\rm inrad}(M_i)$, we immediately have 
\beq
      \dim X = m,   \label{eq:dimX} \,\,\, {\rm int}\, X \neq\emptyset,
\eeq
and that $X_0$ coincides with the topological boundary $\partial X$ of $X$ in $Y$.

Let 
$X\cup_{\eta_0} ([0,t_0]\times_{\phi}C_0)$ denote the geodesic space obtained by the result of
gluing of the two geodesic spaces $X^{\rm int}$ and $[0,t_0]\times_{\phi}C_0$ by the map 
$\eta_0:\{ 0\}\times C_0\to X^{\rm int}_0$.

\begin{lem} [{\cite[Proposition 3.11]{YZ:inradius}}]   \label{lem:YX0}
 $Y$ is isometric to the geodesic space 
\[
      X\cup_{\eta_0}([0,t_0]\times_{\phi}C_0),
\]
where $(0,x)\in \{ 0\}\times C_0$ is identified with $\eta_0(x)\in X_0$ for each $x\in C_0$.  
\end{lem}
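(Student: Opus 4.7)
The plan is to construct an explicit isometry $F:Z\to Y$, where $Z:=X^{\rm int}\cup_{\eta_0}(C_0\times_\phi[0,t_0])$, by setting $F|_{X^{\rm int}}$ to be the inclusion $X^{\rm int}\hookrightarrow Y$ and $F|_C:=\eta$. The definition is consistent on the gluing locus since $(p,0)\in C_0\times\{0\}$ is identified with $\eta_0(p)=\eta(p,0)\in X_0$, and bijectivity follows from Lemma~\ref{lem:loc-isom} together with the decomposition $Y=X\cup\eta(C)$, $X\cap\eta(C)=X_0$.

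First I would verify that $F$ is $1$-Lipschitz. The inclusion $X^{\rm int}\hookrightarrow Y$ is $1$-Lipschitz by the very definition of the interior metric, and $\eta:C\to Y$ is $1$-Lipschitz as the GH-limit of the $1$-Lipschitz identity maps $\iota_i:C_{M_i}\to C_{M_i}^{\rm ext}$. Since $d_Z$ is the infimum of lengths of paths in $Z$, and each such path projects under $F$ to a path in $Y$ of no greater length, we obtain $d_Y(F(z_0),F(z_1))\le d_Z(z_0,z_1)$.

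The heart of the argument is the reverse inequality $d_Z\le d_Y$ after identification. Given a rectifiable curve $\gamma:[0,1]\to Y$, I would decompose its parameter interval into the closed set $A:=\gamma^{-1}(X)$ and the open complement $B=\gamma^{-1}(Y\setminus X)$. On each connected component of $B$, the restriction of $\gamma$ lifts via the bijective local isometry $\eta^{-1}:Y\setminus X\to C\setminus C_0$ of Lemma~\ref{lem:loc-isom} to a curve of the same length in $C\setminus C_0$, with the limit at each endpoint landing on $C_0$ over the corresponding point of $X_0$. On pieces inside $A$, the $Y$-length of $\gamma$ equals the $X^{\rm int}$-length, by the definition of the interior metric on $X\subset Y$. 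Concatenating the lifts yields a path in $Z$ whose total length equals that of $\gamma$, so infimizing gives $d_Z\le d_Y$.

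The main obstacle is handling the case that $A$ and $B$ interleave on a complicated subset of $[0,1]$, so that the concatenation of lifts is not evidently finite. I would address this by first approximating $\gamma$ by piecewise geodesics meeting $X_0$ in only finitely many points, lifting each approximation explicitly, and then passing to the limit using lower semicontinuity of length. This parallels the proof of \cite[Proposition~3.11]{YZ:inradius}, and where possible I would simply invoke that result, combined with Lemma~\ref{lem:XbiLipX} to pass between the exterior and interior metrics on $X$ when comparing path lengths along the $X$-portions.
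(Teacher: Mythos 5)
Your statement is quoted in this paper from \cite[Proposition 3.11]{YZ:inradius} and is not reproved here, so the comparison can only be with that source; your overall scheme (the glued bijection $F$, the inequality $d_Y\le d_Z$ from the $1$-Lipschitz property of the inclusion $X^{\rm int}\hookrightarrow Y$ and of $\eta$, and the reverse inequality by lifting curves of $Y$) is the natural one, and the first two steps are correct as you present them.

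The genuine gap is in the reverse inequality. Your reduction rests on the claim that a curve in $Y$ can be replaced by an inscribed piecewise geodesic meeting $X_0$ in only finitely many points, and you give no justification for this: a $Y$-geodesic with endpoints in $X$ can a priori make infinitely many excursions into $Y\setminus X$ (Lemma \ref{lem:deviation} only forces such excursions to be shallow, not finitely many), and no transversality or Sard-type argument is available in this singular setting. In addition, ``lower semicontinuity of length'' is the wrong tool for your limiting step: it gives $L(\gamma)\le\liminf L(\gamma_n)$, whereas you would need an upper bound on $\limsup L(\gamma_n)$ (this is automatic for inscribed broken geodesics, but then the limit is not needed at all, and the finiteness claim remains the unproved crux). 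The step can be repaired without any finiteness claim, as follows. Given a rectifiable $\gamma$ from $y_0$ to $y_1$ and $\epsilon>0$, keep only the finitely many components of $\gamma^{-1}(Y\setminus X)$ on which $\gamma$ has length above a threshold chosen so that the total length over the remaining components is $<\epsilon$. Lift each long excursion isometrically into $C\setminus C_0$ by Lemma \ref{lem:loc-isom}; its endpoints extend continuously to points of $C_0$ lying over the corresponding points of $X_0$, since a finite-length lift in the compact space $C$ converges. Replace each short excursion by its projection to $X_0$: the projection $\tilde\pi:C\to C_0$ increases lengths by at most the factor $\epsilon_0^{-1}=\phi(t_0)^{-1}$ (cf.\ Proposition \ref{prop:retractionYX}), so this costs at most $\epsilon_0^{-1}\epsilon$ in total. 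The parts of $\gamma$ lying in $X$, concatenated with these projected arcs, are curves in $X$ whose $Y$-length equals their $X^{\rm int}$-length, so the whole construction produces a chain in $Z$ from $F^{-1}(y_0)$ to $F^{-1}(y_1)$ of length at most $L_Y(\gamma)+\epsilon_0^{-1}\epsilon$. Letting $\epsilon\to0$ and taking $\gamma$ almost minimal gives $d_Z\le d_Y$; Lemma \ref{lem:XbiLipX} is only needed if one insists on a continuous global lift, which this chain argument avoids.
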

\pmed


\begin{defn}
For $k=1,2$, set
\[
    X_0^k:=\{ x \in X_0\,|\, \# \eta_0^{-1}(x)=k\,\}, \,\,\, C_0^k:=\eta_0^{-1}(X_0^k).
\] 
In view of $N=X^{\rm int}$, we often identify 
$N_0 =X_0,\,\,\, N_0^k = X_0^k$.
Define $\mathcal S^k\subset X_0^k$ in the same way, and set 
$\tilde{ \mathcal S^k} :=\eta_0^{-1}(\mathcal S^k)$. We also set
$\ca S=\ca S^1\cup\ca S^2$ and 
$\tilde{\ca S}=\tilde{\ca S}^1\cup\tilde{\ca S}^2$.
\end{defn}
         
\pmed

The following  is immediate from
the warped product structure of $C$.

\begin{lem} [{\cite[Lemma 4.5]{YZ:inradius}}]
For any $p\in C_0$, 
$\Sigma_p(C)$ is isometric to the half-spherical suspension $\{ \dot{\tilde\gamma}_p^{+}(0) \}*\Sigma_p(C_0)$.
\end{lem}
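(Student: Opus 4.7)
The plan is to identify the tangent cone $T_p(C)$ at the boundary point $(p,0)$ with the Euclidean half-product $T_p(C_0)\times\R_+$; taking unit spheres then yields the claimed half-spherical suspension. The key ingredient is that, by \eqref{eq:phi}, $\phi(0)=1$, so the warping factor is trivial at the level $t=0$.

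Concretely, I would first use continuity of $\phi$: for any $\e>0$, choose $\delta>0$ with $|\phi(t)-1|<\e$ for all $t\in[0,\delta]$. On the slab $S_\delta:=C_0\times[0,\delta]$, equipped either with the warped product metric $dt^2+\phi(t)^2 g_{C_0}$ or with the Riemannian product metric $dt^2+g_{C_0}$, the identity map is $(1+O(\e))$-bi-Lipschitz at the level of lengths of smooth curves, and hence also at the level of the induced intrinsic distances on $S_\delta$.

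Next, for points $(q_1,t_1),(q_2,t_2)\in C$ at distance at most $r<\delta/4$ from $(p,0)$, any $d_C$-minimizing geodesic between them has length at most $2r<\delta/2$; since the $t$-coordinate is $1$-Lipschitz with respect to $d_C$, such a geodesic cannot leave $S_\delta$. The length comparison above therefore gives
\[
d_C\!\bigl((q_1,t_1),(q_2,t_2)\bigr) = (1+O(\e))\sqrt{(t_1-t_2)^2+d_{C_0}(q_1,q_2)^2}.
\]
Letting $r\to 0$ and $\e\to 0$, the rescaled pointed spaces $(C,\lambda\, d_C,(p,0))$ Gromov-Hausdorff converge, as $\lambda\to\infty$, to the pointed Euclidean half-product $(T_p(C_0)\times\R_+,(o_p,0))$. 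Hence $T_p(C)$ is isometric to $T_p(C_0)\times\R_+$.

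Taking the unit sphere in this half-Euclidean product, the positive $\R_+$-direction corresponds to the perpendicular direction $\dot{\tilde\gamma}_p^{+}(0)$ and is orthogonal to every direction in $\Sigma_p(C_0)$. This description coincides with the spherical join $\{\dot{\tilde\gamma}_p^{+}(0)\}*\Sigma_p(C_0)$, i.e., the half-spherical suspension claimed. The only step requiring a small verification is the confinement of $d_C$-geodesics to the slab $S_\delta$; this follows from the triangle inequality together with the $1$-Lipschitz behavior of the $t$-coordinate, so no genuine obstacle arises.
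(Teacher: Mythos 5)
Your argument is correct, and it is the natural way to make precise what the paper dismisses as ``immediate from the warped product structure of $C$'': since $\phi(0)=1$, the warped metric $dt^2+\phi(t)^2g_{C_0}$ is $(1+O(\e))$-bi-Lipschitz to the direct product metric on a thin slab $C_0\times[0,\delta]$, small geodesics are confined there because $t=|\cdot,C_0|$ is $1$-Lipschitz, and the rescaling limit gives $T_p(C)\cong T_p(C_0)\times\R_+$, whose link is $\{\dot{\tilde\gamma}_p^+(0)\}*\Sigma_p(C_0)$. A couple of cosmetic points: ``lengths of smooth curves'' should be Lipschitz (or rectifiable) curves since $C_0$ is only an Alexandrov space, and ``Euclidean half-product'' should just read the metric product $T_p(C_0)\times\R_+$; neither affects the substance.
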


For $x\in X_0$, $\Sigma_x(X_0)$ and $\Sigma_x(X)$ are defined as  closed subsets of
the Alexandrov space  $\Sigma_x(Y)$ as usual. 

\begin{lem}[{\cite[Lemma 3.11]{YZ:part1}}] \label{lem:not-perp}
For any $x\in X_0$, 
and for any $\xi\in\Sigma_x(Y)$
with $\angle(\xi_x^+,\xi)<\pi/2$ and $Y$-geodesic $\gamma$ with
$\dot\gamma(0)=\xi$, we have the following$:$
\begin{enumerate}
\item $\gamma(t)\in Y\setminus X$ for all small enough $t>0\,;$
\item  The curve $\sigma(t)=\pi\circ \gamma(t)$ defines a 
unique direction $[\sigma]\in\Sigma_x(X_0)$ in the sense
$[\sigma]=\lim_{t\to 0} \uparrow_x^{\sigma(t)}$, and satisfies 
\[
 \angle(\xi_x^{+},\xi)+\angle(\xi, [\sigma])=\angle(\xi_x^{+}, [\sigma])=\pi/2\,;
\]
\item Under the convergence $(\frac{1}{t} Y, x)\to (K_x(Y),o_x)$ as 
$t\to 0$, $\sigma(t)$ converges to a minimal geodesic
$\sigma_\infty$ from $o_x$ in the direction $[\sigma]$.
\end{enumerate}
\end{lem}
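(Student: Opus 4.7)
The plan is to reduce everything to the warped cylinder $C = C_0\times_\phi[0,t_0]$ via $\eta$ and exploit its product structure. By Lemma~\ref{lem:loc-isom}, $\eta|_{C\setminus C_0}$ is a local isometry onto $Y\setminus X$, so any portion of $\gamma$ lying in $Y\setminus X$ lifts to a $C$-geodesic $\tilde\gamma$. Once (1) is established, this lift extends continuously to $(p,0)\in C_0$ for some $p\in\eta_0^{-1}(x)$, giving an initial direction $\tilde\xi\in\Sigma_p(C)$, and the remaining steps become calculations inside the warped product that are then transported back by $\eta$.

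For (1), I would consider $h(y):=|y,X|$ on $Y$, which satisfies $h(\eta(q,s))=s$ for $(q,s)\in C$. Comparing $|\gamma_x^+(s),\gamma(t)|_Y$ with the warped-product distance $|\tilde\gamma_p^+(s),\tilde\gamma(t)|_C$ (equal because $\eta$ is isometric on the cylinder side) identifies $\angle(\xi_x^+,\xi)$ with $\alpha:=\angle(\tilde\xi_p^+,\tilde\xi)$. A first-variation computation in the warped product then yields $h(\gamma(t)) = t\cos\alpha + o(t)$, which is strictly positive for small $t>0$ since $\alpha<\pi/2$, proving (1).

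For (2), the half-spherical suspension $\Sigma_p(C) = \{\tilde\xi_p^+\}*\Sigma_p(C_0)$ (the warped-product lemma just above) decomposes $\tilde\xi = \cos\alpha\cdot\tilde\xi_p^+ + \sin\alpha\cdot u$ for a unique $u\in\Sigma_p(C_0)$ when $\alpha>0$ (the case $\alpha=0$ has $\sigma\equiv x$ and nothing to check). Standard warped-product geodesic analysis gives $|\tilde\pi\tilde\gamma(t),p|_{C_0}=t\sin\alpha+o(t)$ with limit direction $u$ at $p$. Since $\eta_0$ is $1$-Lipschitz, $|\sigma(t),x|_Y\le t\sin\alpha+o(t)$; the matching lower bound comes from Alexandrov comparison in $Y$ applied to the triple $(x,\gamma(t),\sigma(t))$ together with $|\gamma(t),x|_Y=t$ and $|\gamma(t),\sigma(t)|_Y = h(\gamma(t)) = t\cos\alpha+o(t)$. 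With $|\sigma(t),x|_Y = t\sin\alpha+o(t)$ in hand, a standard convergence argument produces the limit direction $v:=\lim_{t\to 0}\uparrow_x^{\sigma(t)}\in\Sigma_x(X_0)$, and the first-variation identities force $\angle(\xi,v)=\pi/2-\alpha$; finally, $\angle(\xi_x^+,v)=\pi/2$ is automatic because $v\in\Sigma_x(X_0)$.

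Part (3) then follows by blow-up: under $(\frac{1}{t}Y,x)\to(K_x(Y),o_x)$, $\gamma$ rescales to the ray $s\mapsto s\xi$ and, by the first-order identities in (2), $\sigma$ rescales to the ray in direction $v$ in $K_x(X_0)\subset K_x(Y)$. The hardest part of the argument is the lower bound $|\sigma(t),x|_Y\ge t\sin\alpha-o(t)$ and the resulting existence of the direction $v$: because $\eta_0$ may compress $C_0$ nontrivially, $v$ cannot be defined merely as a differential image of $u$, and the exact $t\sin\alpha$-rate must be extracted intrinsically inside $Y$ via Alexandrov comparison rather than pulled back through $\eta_0$.
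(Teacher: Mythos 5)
This lemma is imported into the present paper from \cite[Lemma 3.11]{YZ:part1} without proof, so there is no argument here to compare against; the remarks below assess your sketch on its own terms.

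Your plan of lifting $\gamma$ to the warped cylinder $C$ is the right idea for (2) and (3), but your argument for (1) is circular. You compare $|\gamma_x^+(s),\gamma(t)|_Y$ with $|\tilde\gamma_p^+(s),\tilde\gamma(t)|_C$ and then run a ``first-variation computation in the warped product'' for $h\circ\tilde\gamma$; both steps already invoke the lift $\tilde\gamma(t)$, which exists only after one knows $\gamma(t)\in Y\setminus X$ — exactly statement (1). The fix is to do the first variation directly in $Y$: set $f:=|C^Y_{t_0},\cdot\,|$, note that $f\equiv t_0$ on $X_0$ and $f>t_0$ on ${\rm int}\,X$, that the perpendiculars are the minimal geodesics from $x$ to $C^Y_{t_0}$ so that $\Uparrow_x^{C^Y_{t_0}}\subset\{\xi_x^+,\xi_x^-\}$, and apply the first-variation formula to get $f(\gamma(t))=t_0-t\cos\angle(\Uparrow_x^{C^Y_{t_0}},\xi)+o(t)\le t_0-t\cos\angle(\xi_x^+,\xi)+o(t)<t_0$. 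Hence $\gamma(t)\notin X$ for small $t>0$, with no lift involved and $\alpha:=\angle(\xi_x^+,\xi)$ read off in $\Sigma_x(Y)$ rather than ``identified'' through a lift that doesn't yet exist.

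The more serious gap is the lower bound $|\sigma(t),x|_Y\ge t\sin\alpha-o(t)$ in (2), which is the crux. You attribute it to ``Alexandrov comparison applied to the triple $(x,\gamma(t),\sigma(t))$,'' but two side lengths do not control the third: from $|\gamma(t),x|=t$ and $|\gamma(t),\sigma(t)|=t\cos\alpha+o(t)$ the triangle inequality gives only $|\sigma(t),x|\ge t(1-\cos\alpha)-o(t)$, which is weaker than $t\sin\alpha$. What is needed is control of the angle at $\sigma(t)$, namely $\angle\, x\,\sigma(t)\,\gamma(t)\ge\pi/2-O(t)$, followed by the Toponogov hinge comparison. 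That angle estimate is not automatic: $\sigma(t)$ is the foot of perpendicular, so $\uparrow_{\sigma(t)}^{\gamma(t)}$ makes angle $\ge\pi/2$ with every direction of $\Sigma_{\sigma(t)}(X)$, but $\uparrow_{\sigma(t)}^{x}$ is the direction of a $Y$-geodesic between two points of $X$ and need not lie in $\Sigma_{\sigma(t)}(X)$. One has to invoke something like Lemma \ref{lem:deviation} (geodesics between points of $X$ at distance $r$ deviate from $X$ by $O(r^2)$ and have initial direction within $O(r)$ of a tangent direction to $X$) to bring $\uparrow_{\sigma(t)}^{x}$ close to $\Sigma_{\sigma(t)}(X)$. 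This ingredient is missing from your sketch, and without it neither the existence of the limit direction $v$ nor the angle additivity $\angle(\xi_x^+,\xi)+\angle(\xi,v)=\angle(\xi_x^+,v)=\pi/2$ follows. Part (3) is indeed a formal blow-up consequence of (2), but it inherits this gap.
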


For $x\in X_0^1$, we consider the {\it radius
of $\Sigma_x(Y)$ viewed from $\xi_x^+$} defined as 
\[
{\rm rad}(\xi_x^+):=\max \{ \angle(\xi_x^+, \xi)\,|\,\xi\in\Sigma_x(Y)\}.
\]

\begin{lem}[{\cite[Lemma 3.13]{YZ:part1}}] \label{lem:single-interior}
For  any $x\in X_0^1$ with ${\rm rad}(\xi_x^+)>\pi/2$, 
take $\xi\in\Sigma_x(Y)$ such that 
$\angle(\xi^+_x,\xi)>\pi/2$ and the geodesic $\gamma$
in the direction $\xi$ is defined. Then we have 
\begin{enumerate}
\item $x\in {\rm int}\,X_0^1\,;$
\item $\gamma((0,\e))\subset
{\rm int} X$ for some $\e>0$.
\end{enumerate}
\end{lem}

\begin{lem}[{\cite[Lemma 3.12]{YZ:part1}}] \label{lem:double-sus}
If $x\in X_0^2$, then $\Sigma_x(Y)$ is isometric to the spherical suspension $\{ \xi_x^+, \xi_x^-\}*\Sigma$, where $\xi_x^+, \xi_x^-$ are as in 
\eqref{eq:xipm},
and $\Sigma$
is isometric to  any of $\Sigma_p(C_0)$, $\Sigma_x(X_0)$ and $\Sigma_x(X)$ with $\eta_0(p)=x$.
\end{lem}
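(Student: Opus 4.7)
The plan is to reconstruct $\Sigma_x(Y)$ by pasting together two closed hemispheres contributed by the preimages $\{p_1,p_2\}=\eta_0^{-1}(x)$, identify their shared equator with $\Sigma_x(X_0)$, and verify that the two poles are antipodal so that the gluing is a genuine spherical suspension.

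The first step is to transfer the warped-product structure of $C$ at $p_k$ to $Y$ at $x$. By the warped-product expression $C=C_0\times_\phi[0,t_0]$, the space $\Sigma_{p_k}(C)$ is isometric to the half-suspension $\{\tilde\xi_{p_k}^+\}*\Sigma_{p_k}(C_0)$ (this is recorded just before the statement). By Lemma \ref{lem:eta-2} together with Proposition \ref{prop:eta-2cover}, $\eta_0$ is an isometry from a small $C_0$-neighborhood of $p_k$ onto its image in $X_0$ in the interior metrics, and the distortion between interior and exterior metrics on $V$ is $\tau_x(r)$, so passing to tangent cones gives an isometry $d\eta_0\colon\Sigma_{p_k}(C_0)\to\Sigma_x(X_0)$. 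Combined with Lemma \ref{lem:loc-isom}, which says $\eta\colon C\setminus C_0\to Y\setminus X$ is a bijective local isometry, the extended differential $d\eta$ at $p_k$ embeds $\Sigma_{p_k}(C)$ isometrically into $\Sigma_x(Y)$, sending $\tilde\xi_{p_k}^+$ to the perpendicular direction $\xi_x^\pm$ and $\Sigma_{p_k}(C_0)$ onto $\Sigma_x(X_0)$. Call the images $H^\pm:=\{\xi_x^\pm\}*\Sigma_x(X_0)\subset\Sigma_x(Y)$.

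The second step is to show $\Sigma_x(Y)=H^+\cup H^-$. Any $\xi\in\Sigma_x(Y)$ is realized by a $Y$-geodesic $\mu$ from $x$. If $\mu$ immediately enters $Y\setminus X$, then via $\eta^{-1}$ it lifts to a $C$-geodesic emanating from exactly one of the $p_k$, placing $\xi\in H^\pm$ accordingly; concretely, when $\angle(\xi_x^+,\xi)<\pi/2$ or $\angle(\xi_x^-,\xi)<\pi/2$ this is handled by Lemma \ref{lem:not-perp} applied to $\xi_x^+$ or $\xi_x^-$. If $\mu$ stays in $X$, Lemma \ref{lem:deviation} shows that the projection $\pi\circ\mu\subset X_0$ approximates $\mu$ to second order, forcing $\xi\in\Sigma_x(X_0)\subset H^+\cap H^-$ (and Proposition \ref{prop:perp+horiz}(2) independently characterizes $\Sigma_x(X_0)$ as the common equator). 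Combined, every direction in $\Sigma_x(Y)$ lies in $H^+$ or $H^-$.

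The third step is the antipodality $\angle(\xi_x^+,\xi_x^-)=\pi$, which upgrades the set-theoretic decomposition to a metric suspension. The concatenation $\gamma_x^+\cup\gamma_x^-$ is a $Y$-curve of length $2t_0$ through $x$; by Lemma \ref{lem:deviation} any $Y$-geodesic joining an endpoint of $\gamma_x^+(r)$ with the corresponding endpoint of $\gamma_x^-(r)$ stays within $O(r^2)$ of $X$, so upon rescaling by $1/r$ and passing to $T_x(Y)$ the angle at the apex equals $\pi$. Consequently $H^+\cup H^-$ is isometric to $\{\xi_x^+,\xi_x^-\}*\Sigma_x(X_0)$. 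Finally, $\Sigma_x(X)=\Sigma_x(X_0)$: any $\xi\in\Sigma_x(X)\setminus\Sigma_x(X_0)$ would lie in $H^+\setminus\Sigma_x(X_0)$ or $H^-\setminus\Sigma_x(X_0)$ by the second step, hence correspond under $d\eta^{-1}$ to a direction in $\Sigma_{p_k}(C)\setminus\Sigma_{p_k}(C_0)$ that propagates into $C\setminus C_0$, which maps under $\eta$ into $Y\setminus X$, contradicting $\xi\in\Sigma_x(X)$. The main obstacle is the antipodality $\angle(\xi_x^+,\xi_x^-)=\pi$: the local warped-product structure at each $p_k$ alone only yields a join-type gluing, and the deviation estimate (Lemma \ref{lem:deviation}) is what forces the poles to sit exactly opposite so that the join becomes a genuine suspension.
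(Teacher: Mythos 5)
Since this lemma is quoted from \cite[Lemma 3.12]{YZ:part1}, the present paper contains no proof to compare with; judging your argument on its own merits, the overall architecture (two half-suspensions coming from the two lifts $p_1,p_2$, equator $\Sigma_x(X_0)$, antipodal poles) is reasonable, but the step you yourself single out as the crux is not justified. Lemma \ref{lem:deviation} concerns minimal geodesics \emph{whose endpoints lie in $X$}; the points $\gamma_x^{+}(r)$ and $\gamma_x^{-}(r)$ lie in $Y\setminus X$ at distance exactly $r$ from $X$, so no curve joining them can stay within $O(r^2)$ of $X$, and the statement you cite is simply false for these endpoints. The antipodality $\angle(\xi_x^+,\xi_x^-)=\pi$ therefore remains unproved in your write-up. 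A correct route is elementary: show $|\gamma_x^+(s),\gamma_x^-(t)|=s+t$ for small $s,t$. Indeed, a competitor path that meets $X$ has length $\ge s+t$ because $d(\cdot,X)$ is $1$-Lipschitz and the endpoints are at distances $s$ and $t$ from $X$; a competitor path that avoids $X$ lifts by Lemma \ref{lem:loc-isom} to a path in $C\setminus C_0$ from $(p_1,s)$ to $(p_2,t)$, whose projection to $C_0$ shows its length is at least $\e_0\,|p_1,p_2|_{C_0}>0$ (as $\phi\ge\e_0$), a fixed constant exceeding $s+t$ for small $s,t$. Hence the concatenation of the two perpendiculars is a geodesic, the poles are antipodal, and then the diameter-$\pi$ rigidity for Alexandrov spaces of curvature $\ge 1$ together with Proposition \ref{prop:perp+horiz}(2) yields at once that $\Sigma_x(Y)$ is the suspension over the equator $\{\xi:\angle(\xi_x^+,\xi)=\pi/2\}=\Sigma_x(X_0)$, making most of your Step 2 unnecessary.

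There are further gaps you should repair. In Step 2, if $\mu$ stays in $X$ then $\pi\circ\mu=\mu$, so Lemma \ref{lem:deviation} gives no information and does not place $\xi$ in $\Sigma_x(X_0)$. In the last step, you infer $\xi\notin\Sigma_x(X)$ from the fact that the geodesic in direction $\xi$ enters $Y\setminus X$; this conflates tangent directions of a closed set with directions of geodesics remaining in the set (for $\{y\ge x^2\}\subset\R^2$ the horizontal direction at the origin is a tangent direction although the corresponding geodesic leaves the set immediately), so no contradiction is obtained. What is actually needed is the quantitative statement that every $\xi\in\Sigma_x(X)$ makes angle $\ge\pi/2$ with both $\xi_x^{+}$ and $\xi_x^{-}$ (a first-variation/lifting estimate in the spirit of Lemma \ref{lem:not-perp}), which combined with the suspension structure forces $\Sigma_x(X)$ into the equator. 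Finally, in Step 1 the claim that $d\eta$ embeds $\Sigma_{p_k}(C)$ \emph{isometrically} onto $H^{\pm}$ with respect to the ambient metric of $\Sigma_x(Y)$ is more than Proposition \ref{prop:length'} provides (it gives a $1$-Lipschitz map, locally isometric off $\Sigma_{p_k}(C_0)$) and is essentially part of what must be proven; the rigidity route above avoids having to assert it.
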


From Lemmas \ref{lem:not-perp}, \ref{lem:single-interior}
and \ref{lem:double-sus},  we obtain the following.

\begin{prop}[{\cite[Proposition 3.17]{YZ:part1}}]    \label{prop:perp+horiz}
Let $x\in X_0$.
\begin{enumerate}
 \item For every  $\xi\in\Sigma_x(Y)\setminus \Sigma_x(X)$ which is not a perpendicular direction, there is a unique perpendicular direction  $\xi_x^{+}\in \Sigma_x(Y)$ at $x$
and a unique  $v\in\Sigma_x(X_0)$ such that
      \begin{equation*}
         \angle(\xi_x^{+},\xi)+\angle(\xi, v)=\angle(\xi_x^{+}, v)=\pi/2.
       \end{equation*}
 \item For any perpendicular direction $\xi_x^+\in\Sigma_x(Y)$, we have
     \[
              \Sigma_x(X_0) = \{ v\in\Sigma_x(Y)\,|\,\angle(\xi_x^{+}, v)=\pi/2\}.
    \]
 \item If $x\in X_0^1$ and $\xi_x^+$ is the perpendicular direction at $x$, then  
    \[
              \Sigma_x(X) = \{ v\in\Sigma_x(Y)\,|\,\angle(\xi_x^{+}, v)\ge \pi/2\}.
    \]
\end{enumerate}
\end{prop}

\psmall

\begin{lem}$($\cite[Lemma 3.25]{YZ:part1}$)$ \label{lem:deviation}
For arbitrary $x, y\in X$ and any  minimal geodesic
$\mu:[0,\ell]\to Y$ joining them, let $\sigma=\pi\circ\mu$
and set $\rho(t)=|\mu(t), X|$. Then we have
\begin{enumerate}
 \item  $\rho(t)\le Ct|x,y|_Y$, where $C=C(\lambda)$. In particular,
\[
     \max \rho\le O(|x,y|_Y^2);
\]
 \item $\angle(\mu'(0), [\sigma])\le O(|x,y|_Y);$
 \item$\displaystyle {\left| \frac{L(\sigma)}{L(\mu)} - 1\right|< O(|x,y|_Y^2)}$.
\end{enumerate}
\end{lem}  

\begin{lem}[{\cite[Lemma 3.21]{YZ:part1}}]\label{lem:S1-closed}
${\ca S}^1$ is closed.
\end{lem}

We consider the distance function $d_X$ from $X$.

\begin{lem}\label{lem:dX-est}
For any $x\in X_0$ and $y\in B^Y(x,\e)$, setting $s:=|y,X|$, we have 
\[
  \angle(\dot\gamma^Y_{y,x}(0), \Sigma_y(d_X^{-1}([0,s])))<\tau_x(\e),
\]
\end{lem}
\begin{proof}
By contradiction, suppose the conclusion does not hold. Then we have a sequence $y_i\to x$ such that 
\beq\label{eq:angle(dX)}
\angle(\dot\gamma^Y_{y_i,x}(0), \Sigma_{y_i}(d_X^{-1}([0,s_i])))\ge \omega>0,
\eeq
where $s_i:=|y_i,X|$ and $\omega$ is a constant 
independent of $i$.
Consider the rescaling limit:
\[
   (Y/s_i, y_i)\to (T_x(Y),y_\infty),
\]
where we may assume that $d_X^{-1}([0,s_i])$
converges to a convex set $Z_\infty$ of $T_x(Y)$.
If $\xi_{y_\infty}^+$ denotes the direction perpendicular to $Z_\infty$, from the convexity of $Z_\infty$, we have 
$\angle(\uparrow_{y_\infty}^{o_x}, \xi_{y_\infty}^+)\ge \pi/2$. From the  lower semicontinuity of angle,
this yields a contradiction to \eqref{eq:angle(dX)}.
\end{proof}  

\pmed\n
{\bf Gluing maps.}\, 
Next we describe the properties of the gluing map 
$\eta_0:C_0\to X_0$ proved in 
\cite{YZ:part1}.
\pmed

\begin{lem}[{\cite[Lemma 4.5]{YZ:part1}}] \label{lem:eta'}
The restrictions of $\eta_0$  to 
${\rm int}\,C_0^k$\,$(k=1,2)$ 
have the following properties:
 \begin{enumerate}
 \item $\eta_0 :{\rm int}\,C_0^1\to {\rm int}\,X_0^1$ is a bijective local isometry  with 
respect to the intrinsic metrics$;$
 \item $\eta_0:{\rm int}\,C_0^2\to {\rm int}\,X_0^2$ is a locally isometric double covering
with respect to the intrinsic metrics.
 \end{enumerate}
\end{lem} 

\pmed

Recall that  $X_0^k\setminus \mathcal S^k$ is open in $X_0$.
Moreover we have the following. 

\begin{lem}[{\cite[Lemma 3.15]{YZ:part1}}] \label{lem:X02}
 ${\rm int}\, X_0^2=X_0^2\setminus \mathcal S^2$ is open in $X$. In particular, we have 
 \begin{enumerate}
 \item for every $x\in X_0^2\setminus \mathcal S^2$, there is an $\epsilon>0$ such that
 $\dim_H X \cap B(x, \epsilon) = m-1;$
 \item  $\mathcal S$ is empty if and only if $X_0=X_0^1$.
\end{enumerate}
\end{lem} 

\psmall

\begin{prop}[{\cite[Proposition 4.9]{YZ:part1}}] \label{prop:eta-2cover}
The restriction $\eta_0:C_0^2\to X_0^2$ is a locally 
almost isometric double covering. Namely, for every $x\in X_0^2$, there
is a neighborhood $V$ of $x$ in $X_0^2$ such that 
$\eta_0^{-1}(V)$ consists of disjoint open subsets
$W_1$ and $W_2$ of $C_0^2$ and each restriction $\eta_0:W_k\to V$\,\,$(k=1,2)$\,
 is almost isometric in the sense that for all $\tilde y, \tilde z\in W_k$ we have  
\[
    1-\tau_x(r)< \frac{|\eta_0(\tilde y), \eta_0(\tilde z)|_Y}{|\tilde y,\tilde z|_{C_0}} \le 1,
\]
where $r=\diam(V)$.
\end{prop}  

From now, we denote by  $B^{X_0}(x,r)$ the $r$-ball around $x$ with respect to the 
extrinsic metric induced from $X$.

\begin{lem}[{\cite[Sublemma 4.13]{YZ:part1}}] \label{lem:almost-2cover}
For any $p\in C_0^2$, if $r>0$ is small enough, then $\eta_0:B^{C_0}(p,r)\to X_0$ is injective, and 
for all $\tilde y,\tilde z\in B^{C_0}(p,r)$ we have 
\[
    1-\tau_p(r)\le 
 \frac{|\eta_0(\tilde y), \eta(\tilde z)|_Y}{|\tilde y, \tilde z|_{C_0}}\le 1.
\]
\end{lem}  
\pmed


\setcounter{equation}{0}
\section{Infinitesimal structure} \label{sec:infinitesimal}

In this section, we exhibit some 
results on the infinitesimal structure 
of the limit spaces, which will be 
needed in later sections.
 Most of them are obtained in  
\cite{YZ:part1} except the results on the directional derivative of $\pi$.

First of all, we recall that the limit space 
$N$ of $M_i$ is infinitesimally Alexandrov.
This implies that  the space of directions 
$\Sigma_x(N)$ at any $x\in N$
is  defined and 
becomes an Alexandrov space with curvature $\ge 1$. 
Moreover the boundary $N_0$ of $N$ is infinitesimally sub-Alexandrov, which shows that the space of directions 
$\Sigma_x(N_0)$ at any $x\in N_0$
is  defined  as a closed subset of 
$\Sigma_x(N)$ and the intrinsic metric $\Sigma_x(N_0)^{\rm int}$ of 
$\Sigma_x(N_0)$ induced from $\Sigma_x(N)$ is an Alexandrov space with curvature $\ge 1$. 
 In addition, 
$\Sigma_x(X)$  and $\Sigma_x(X_0)$
are naturally identified with 
$\Sigma_x(N)$  and $\Sigma_x(N_0)$
respectively.
For the detail, we refer the reader to \cite[Section 5]{YZ:part1}.

\begin{prop}[{\cite[Proposition 3.24]{YZ:part1}}] \label{prop:tang-cone}
For every $x\in X_0$, under the convergence  
\[
    \lim_{\e\to 0}\biggl(\frac{1}{\e} Y, x\biggr) =(T_x(Y),o_x),
\]
$(X_0, x)$ and $(X, x)$ converge to the Euclidean cones $K(\Sigma_x(X_0)),o_x)$ and $K(\Sigma_x(X)),o_x)$ 
over $\Sigma_x(X_0)$ and $\Sigma_x(X)$
respectively as $\e\to 0$.
\end{prop}

We set
\[
       T_x(X_0) := K(\Sigma_x(X_0)), \quad T_x(X) := K(\Sigma_x(X))
\]
and call them the {\it tangent cones}  of $X_0$ and $X$ at $x$ respectively, as usual.

The infinitesimal structure at points of 
$X_0^2$ is rather simple and
described 
in  Lemma \ref{lem:double-sus}.

\pmed
Now we consider the {\it differential} of the map $\eta:C\to Y$, which is defined as a rescaling limit as follows. 
Fix $p\in C_0$ and $x=\eta_0(p)\in X_0$, and
let $t_i$ be an arbitrary sequence of positive numbers with $\lim_{i\to\infty} t_i=0$.
Passing to a subsequence, we may assume that
\[
     \eta_i:=\eta:  \biggl(\frac{1}{t_i}C, p\biggr) \to \left(\frac{1}{t_i}Y, x\right)
\]
converges to a $1$-Lipschitz map
\[
   d\eta_p:  (T_p(C),o_ p) \to (T_x(Y), o_x)
\]
between  the tangent cones of the Alexandrov spaces.

It is shown in \cite{YZ:part1} that 
the limit 
$d\eta_p:  (T_p(C),o_ p) \to (T_x(Y), o_x)$ is uniquely determined independent of 
the choice of $\{ t_i\}$.
We  call it the {\it differential} of $\eta$ at $p$.
Note that $d\eta_p$ induces a $1$-Lipschitz map
\[
            (d\eta_0)_p:T_p(C_0) \to T_x(X_0),
\]
and  an injective local isometry
\[
  (d\eta)_p:T_p(C)\setminus T_p(C_0) \to T_p(Y)\setminus T_x(X).
\]
In what follows, we simply write as $d\eta$ and $d\eta_0$ for $d\eta_p$ and $(d\eta_0)_p$ respectively, when 
$p$ is fixed.

For the perpendicular $\gamma_x^+(t):=\eta(p,t)$,
let $\xi_x^+:=   \dot\gamma_x^+(0)$, and set
\[
    \Sigma_x(Y)^+:=\{ \xi\in\Sigma_x(Y)\,|\,\angle(\xi,\xi_x^+)\le\pi/2\}.
\]

\begin{prop}[{\cite[Proposition 3.27]{YZ:part1}}] \label{prop:length'}
For every $\tilde v\in T_p(C)$, we have  
\[
                |d\eta(\tilde v)| = |\tilde v|.
\]
In particular,  $\eta$ and $\eta_0$ induce 
$1$-Lipschitz maps 
$$
d\eta:\Sigma_p(C) \to \Sigma_x(Y),\quad
d\eta_0:\Sigma_p(C_0) \to \Sigma_x(X_0)
$$
such that 
\begin{itemize}
\item both $d\eta:\Sigma_p(C) \to \Sigma_x(Y)^+$ and
$d\eta_0:\Sigma_p(C_0) \to \Sigma_x(X_0)$
are surjective$\,;$
\item $d\eta:\Sigma_p(C)\setminus\Sigma_p(C_0) \to \mathring{\Sigma}_x(Y)^+$
is a bijective local isometry.
\end{itemize}
\end{prop}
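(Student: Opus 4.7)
The plan is to establish the main identity $|d\eta(\tilde v)|=|\tilde v|$ directly, from which the ``in particular'' clause will follow: $d\eta$ commutes with the cone structure by its definition as a rescaling limit, so once norms are preserved the induced maps on unit spheres are automatically $1$-Lipschitz, and the injective local isometry on $T_p(C)\setminus T_p(C_0)\to T_x(Y)\setminus T_x(X)$ recorded just above the proposition descends to the unit spheres. The upper bound is immediate: $\eta$ is $1$-Lipschitz, so its rescaling limit $d\eta$ is $1$-Lipschitz with $d\eta(o_p)=o_x$, giving $|d\eta(\tilde v)|\le|\tilde v|$; and since $d\eta$ is cone-homogeneous, it suffices to show $|d\eta(\xi)|=1$ for every unit $\xi\in\Sigma_p(C)$.

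For $\xi\in\Sigma_p(C)\setminus\Sigma_p(C_0)$, I would produce a $Y$-geodesic lift. Set $\xi^{*}:=d\eta(\xi)\in T_x(Y)\setminus T_x(X)$, which is nonzero by injectivity of $d\eta$, and $\hat\xi^{*}:=\xi^{*}/|\xi^{*}|$. Since $\angle(\xi_x^+,\hat\xi^{*})\le\angle(\tilde\xi_p^+,\xi)<\pi/2$ (the first inequality by $1$-Lipschitz, the second by the half-suspension $\Sigma_p(C)=\{\tilde\xi_p^+\}*\Sigma_p(C_0)$), Lemma \ref{lem:not-perp}(1) provides a unit-speed $Y$-geodesic $\gamma$ with $\dot\gamma(0)=\hat\xi^{*}$ and $\gamma(t)\in Y\setminus X$ for small $t>0$. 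By Lemma \ref{lem:loc-isom} the preimage $\tilde\gamma(t):=\eta^{-1}(\gamma(t))$ is a curve in $C\setminus C_0$, and combining local isometry (giving length $b-a$) with the $1$-Lipschitz bound $|\tilde\gamma(a),\tilde\gamma(b)|_C\ge|\gamma(a),\gamma(b)|_Y=b-a$ forces $|\tilde\gamma(a),\tilde\gamma(b)|_C=b-a$, so $\tilde\gamma$ is a $C$-geodesic on $(0,T]$. Continuity together with the finiteness of $\eta_0^{-1}(x)$ and the hemisphere structure of Lemma \ref{lem:double-sus} (in the $X_0^2$ case) gives $\tilde\gamma(t)\to p$ as $t\to 0^+$, and letting $a\to 0^+$ yields $|p,\tilde\gamma(t)|_C=t$. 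Hence $\tilde\xi^{**}:=\dot{\tilde\gamma}(0^+)\in\Sigma_p(C)\setminus\Sigma_p(C_0)$ is a unit vector with $d\eta(\tilde\xi^{**})=\hat\xi^{*}$; cone homogeneity then gives $d\eta(|\xi^{*}|\tilde\xi^{**})=|\xi^{*}|\hat\xi^{*}=\xi^{*}=d\eta(\xi)$, and injectivity of $d\eta$ on $T_p(C)\setminus T_p(C_0)$ forces $\xi=|\xi^{*}|\tilde\xi^{**}$. Taking norms yields $1=|\xi|=|\xi^{*}|$, i.e., $|d\eta(\xi)|=1$.

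For $\xi\in\Sigma_p(C_0)$, continuity closes the remaining case. Pick a meridian $\xi_\theta$ from $\tilde\xi_p^+$ to $\xi$ in the half-spherical suspension $\Sigma_p(C)=\{\tilde\xi_p^+\}*\Sigma_p(C_0)$, with $\xi_\theta\in\Sigma_p(C)\setminus\Sigma_p(C_0)$ for $\theta<\pi/2$ and $\xi_\theta\to\xi$ as $\theta\to\pi/2$. The previous paragraph gives $|d\eta(\xi_\theta)|=1$, and continuity of $d\eta$ (from $1$-Lipschitz) together with continuity of the norm on $T_x(Y)$ yields $|d\eta(\xi)|=1$. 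Cone homogeneity then upgrades this to $|d\eta(\tilde v)|=|\tilde v|$ on all of $T_p(C)$. The main obstacle I anticipate is the lifting step: confirming that $\tilde\gamma(t)$ converges to the intended preimage $p$ in the $X_0^2$ case (handled by matching the hemisphere of $\hat\xi^{*}$ to $p$ via Lemma \ref{lem:double-sus}) and, at points of $\mathcal S$, that the more singular local geometry still permits a unique continuous extension of the lift to $t=0$. Once that is handled, the opposite $1$-Lipschitz bound closes the equality.
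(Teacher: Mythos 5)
This paper does not actually prove Proposition \ref{prop:length'}; it is quoted from \cite{YZ:part1} (Proposition 3.24), so your argument has to stand on its own. Its overall architecture is reasonable: the upper bound from $1$-Lipschitzness, the reduction by homogeneity to unit directions, the lifting of a geodesic through the bijective local isometry of Lemma \ref{lem:loc-isom}, the identification of the endpoint of the lift via the suspension structure of Lemma \ref{lem:double-sus}, and the final use of injectivity of $d\eta$ on $T_p(C)\setminus T_p(C_0)$ all fit together, and the continuity step for $\xi\in\Sigma_p(C_0)$ is fine.

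The genuine gap is the sentence ``Lemma \ref{lem:not-perp}(1) provides a unit-speed $Y$-geodesic $\gamma$ with $\dot\gamma(0)=\hat\xi^{*}$.'' That lemma is conditional: it describes the behaviour of a geodesic \emph{given} that one exists with the prescribed initial direction; it does not furnish one. In an Alexandrov space the set of geodesic directions at a point is dense in $\Sigma_x(Y)$ but in general not all of it, so you cannot simply start a $Y$-geodesic in the exact direction $\hat\xi^{*}$, and your whole lifting construction hinges on having such a $\gamma$. The gap is repairable: the condition $\angle(\xi_x^+,\cdot)<\pi/2$ is open, so you can choose geodesic directions $\zeta_k\to\hat\xi^{*}$ with $\angle(\xi_x^+,\zeta_k)<\pi/2$, run your lifting argument for each $\zeta_k$ to get unit directions $\tilde\zeta_k\in\Sigma_p(C)$ with $d\eta(\tilde\zeta_k)=\zeta_k$, pass to a subsequential limit $\tilde\zeta$ using compactness of $\Sigma_p(C)$ and continuity of the $1$-Lipschitz map $d\eta$, and then conclude $d\eta(|\xi^{*}|\tilde\zeta)=\xi^{*}=d\eta(\xi)$ and $\xi=|\xi^{*}|\tilde\zeta$ by injectivity (note $\tilde\zeta\notin\Sigma_p(C_0)$ automatically, since $\hat\xi^{*}\notin T_x(X)$ while $d\eta_0$ maps $T_p(C_0)$ into $T_x(X_0)$); but this replacement argument is not in your text. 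Two smaller points should also be justified rather than asserted: the inequality $\angle(\xi_x^+,\hat\xi^{*})\le\angle(\tilde\xi_p^+,\xi)$ does not follow from $1$-Lipschitzness alone when $|d\eta(\xi)|$ may be $<1$; it needs the comparison $|d\eta(s\tilde\xi_p^+),d\eta(\xi)|\le|s\tilde\xi_p^+,\xi|$ along the perpendicular ray (where $d\eta(s\tilde\xi_p^+)=s\xi_x^+$ isometrically) and the law of cosines with $s\to\infty$, which gives $|\xi^{*}|\cos\angle(\xi_x^+,\hat\xi^{*})\ge\cos\angle(\tilde\xi_p^+,\xi)>0$; and $\xi^{*}\neq o_x$ follows from the stated range $T_x(Y)\setminus T_x(X)$ of $d\eta$ on $T_p(C)\setminus T_p(C_0)$ rather than from injectivity.
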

\psmall

For any $x\in X_0^1$ and $p\in C_0$ with 
$\eta_0(p)=x$, consider the map
$d\eta_0:\Sigma_p(C_0)\to\Sigma_x(X_0)$.

\begin{lem}[{\cite[Lemma 6.1]{YZ:part1}}]\label{lem:sharp-deta} We have 
$1\le \# d\eta_0^{-1}(v) \le 2$ for all $v\in\Sigma_x(X_0)$.
\end{lem}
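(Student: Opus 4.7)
The plan is to handle the two inequalities separately, both via the rescaling definition of the differential together with the local structure of $\eta_0$.

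For the lower bound $\sharp d\eta_0^{-1}(v)\ge 1$, I would establish surjectivity of $d\eta_0$. Given $v\in\Sigma_x(X_0)$, take a curve $\gamma(t)\in X_0$ from $x$ realizing $v$ with $|\gamma(t),x|_Y=t$, pick $q_t\in\eta_0^{-1}(\gamma(t))$ (which tends to $p$ because $\eta_0^{-1}(x)=\{p\}$ and $C_0$ is compact), and set $s_t:=|p,q_t|_{C_0}\ge t$ (from the $1$-Lipschitz property of $\eta_0$). Passing to a subsequence along which $\uparrow_p^{q_t}\to\tilde v\in\Sigma_p(C_0)$ and $t/s_t\to a\in(0,1]$, the rescaling by $s_t$ gives $d\eta_0(\tilde v)=a\cdot v$; the norm preservation of Proposition \ref{prop:length'} forces $a=1$, and hence $d\eta_0(\tilde v)=v$.

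For the upper bound $\sharp d\eta_0^{-1}(v)\le 2$ I would argue by contradiction. Suppose $\tilde v_1,\tilde v_2,\tilde v_3\in\Sigma_p(C_0)$ are three distinct preimages of $v$, and fix $\delta>0$ with $|\tilde v_i,\tilde v_j|_{\Sigma_p(C_0)}>4\delta$ for $i\ne j$. Choose sequences $q_n^{(i)}\in C_0$ on a common scale $r_n:=|p,q_n^{(i)}|_{C_0}\to 0$ with $\uparrow_p^{q_n^{(i)}}\to\tilde v_i$, and write $y_n^{(i)}:=\eta_0(q_n^{(i)})$. Tangent-cone convergence (Proposition \ref{prop:tang-cone}) together with norm preservation yields
\[
|q_n^{(i)},q_n^{(j)}|_{C_0}\ge c r_n,\qquad |y_n^{(i)},y_n^{(j)}|_Y=o(r_n),\qquad |y_n^{(i)},x|_Y=r_n(1+o(1)),
\]
for some $c=c(\delta)>0$, so the $q_n^{(i)}$ are pairwise at least $c r_n$ apart in $C_0$ while the $y_n^{(i)}$ collapse to a common point at scale $r_n$ in $Y$.

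The goal is now to produce, for large $n$, three distinct preimages of $y_n^{(1)}$ under $\eta_0$, contradicting $\sharp\eta_0^{-1}(y_n^{(1)})\le 2$. Using tangent-cone convergence for the interior metric of $X_0$, a minimizing curve $\alpha_n^{(i)}$ in $X_0^{\rm int}$ from $y_n^{(i)}$ to $y_n^{(1)}$ has length $o(r_n)$. Lemma \ref{lem:eta'} shows that $\eta_0$ restricts to a bijective local isometry over ${\rm int}\,X_0^1$ and to a locally isometric double cover over ${\rm int}\,X_0^2$; combined with Lemma \ref{lem:eta-2} and Proposition \ref{prop:eta-2cover} at the branching locus $\mathcal S$, this allows the lifting of the short path $\alpha_n^{(i)}$ to a curve $\tilde\alpha_n^{(i)}\subset C_0$ starting at $q_n^{(i)}$ whose endpoint $q_n^{(i),*}\in\eta_0^{-1}(y_n^{(1)})$ satisfies $|q_n^{(i),*},q_n^{(i)}|_{C_0}\le (1+o(1)) L(\alpha_n^{(i)})=o(r_n)$. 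Since $|q_n^{(i)},q_n^{(j)}|_{C_0}\ge c r_n$ while each $|q_n^{(i),*},q_n^{(i)}|_{C_0}=o(r_n)$, the three $q_n^{(i),*}$ are mutually distinct for large $n$, giving the contradiction. The principal obstacle is the path-lifting of $\alpha_n^{(i)}$ when the curve meets $\mathcal S$, where the double-cover structure degenerates; one handles this by perturbing $\alpha_n^{(i)}$ within $X_0^{\rm int}$ so that it meets $\mathcal S$ only on a set of strictly smaller dimension, and then using the quantitative almost-isometry of Proposition \ref{prop:eta-2cover} to control the lift and its length across the remaining crossings.
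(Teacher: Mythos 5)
Note first that this paper does not prove Lemma \ref{lem:sharp-deta} at all: it is imported verbatim from \cite[Lemma 6.1]{YZ:part1}, so there is no proof here to compare against; I can only judge your argument on its own terms. Your first half (surjectivity, $\sharp d\eta_0^{-1}(v)\ge 1$) is essentially sound: replacing the curve $\gamma$ by a sequence $y_t\in X_0$ with $|x,y_t|_Y=t$ converging to $\bar v$ under the blow-up, and taking blow-up limits of the preimages $q_t$ (rather than limits of the directions $\uparrow_p^{q_t}$, which need not agree with the blow-up limit), the norm preservation of Proposition \ref{prop:length'} does force $t/s_t\to 1$ and produces a preimage direction of $v$; here you correctly use $x\in X_0^1$ to get $q_t\to p$.

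The upper bound, however, has a genuine gap at its quantitative core. You need $|y_n^{(i)},y_n^{(1)}|_{X_0^{\rm int}}=o(r_n)$ in order to make the lifted endpoints $q_n^{(i),*}$ land within $o(r_n)$ of $q_n^{(i)}$, but the only convergence available (Proposition \ref{prop:tang-cone}) controls the \emph{exterior} metric of $X_0$ inside $Y$: it gives $|y_n^{(i)},y_n^{(1)}|_Y=o(r_n)$ and nothing more. Exterior closeness at scale $r_n$ does \emph{not} imply interior closeness at that scale, and in exactly the situations relevant to this lemma it is false: at a cusp point (Definition \ref{defn:cusp}, e.g.\ a plane region whose boundary consists of the two branches $\{(s,\pm s^2)\}$), two directions of $\Sigma_p(C_0)$ with the same image under $d\eta_0$ are realized by boundary points whose $Y$-distance is $o(r)$ while their $X_0^{\rm int}$-distance is comparable to $r$. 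So the step ``a minimizing curve in $X_0^{\rm int}$ from $y_n^{(i)}$ to $y_n^{(1)}$ has length $o(r_n)$'' is unjustified, and with it the distinctness of the three lifted preimages of $y_n^{(1)}$ collapses; the argument as written would wrongly ``prove'' $\sharp d\eta_0^{-1}(v)\le 2$ by the same mechanism that it would need already to know. A secondary problem is the lifting itself: $\eta_0$ is only known to be a local isometry over ${\rm int}\,C_0^1$ and an (almost isometric) double cover over $X_0^2$ (Lemma \ref{lem:eta'}, Proposition \ref{prop:eta-2cover}); across $\mathcal S$ no covering structure is available at this stage, your proposed perturbation off $\mathcal S$ is impossible when $\dim X_0=1$ and unsupported in general since no dimension bound on $\mathcal S^2$ is available here (Theorem \ref{thm:dim(metric-sing)} controls $\ca S^1\cup\ca C$, not $\mathcal S^2$), and continuity of a lift through points where the two sheets merge would itself need an argument. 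The relation between the interior and exterior infinitesimal structures of $X_0$ is precisely the delicate content surrounding Theorem \ref{thm:X1-f} in \cite{YZ:part1}, which is proved \emph{after} (and using) the lemma you are asked to prove, so it cannot be invoked to patch the estimate.
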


Now we define the involution $f_*:\Sigma_p(C_0)\to\Sigma_p(C_0)$ 
as 
\begin{align} \label{eq:defn-df}
 \text{$f_*(\tilde v):= \tilde w$ \, if\, $\{\tilde v, \tilde w\}=d\eta_0^{-1}(d\eta_0(\tilde v))$}, 
\end{align}

The infinitesimal structure at points of 
$X_0^1$ is given by the following
 
\begin{thm}[{\cite[Theorem 6.4]{YZ:part1}}]\label{thm:X1-f}
For each $x\in X_0^1$,
take $p\in C_0$ with $\eta_0(p)=x$. Then 
$f_*:\Sigma_p(C_0)\to\Sigma_p(C_0)$ is an isometry satisfying  the following:
\benu
 \item $\Sigma_x(X_0)^{\rm int}$ is isometric to the quotient space $\Sigma_p(C_0)/f_*\,:$
 \item If ${\rm rad}(\xi_x^+)=\pi/2$, then  $\Sigma_x(Y)$ and $\Sigma_x(X)$ are  isometric to the quotient geodesic  spaces 
 $\Sigma_p(C)/f_*$ and 
 $\Sigma_p(C_0)/f_*$ respectively$\,;$
\item If ${\rm rad}(\xi_x^+)>\pi/2$, then $f_*$ is the identity and 
$\Sigma_x(Y)$ is isometric to the gluing $\Sigma_p(C)\cup_{d\eta_0} \Sigma_x(X)$, where the identification is made by the isometry $d\eta_0:\Sigma_p(C_0) \to \Sigma_x(X_0)^{\rm int}.$ 
\eenu
\end{thm}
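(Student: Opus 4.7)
My plan is to first establish that $f_*$ is a well-defined isometric involution on $\Sigma_p(C_0)$, and then treat the three conclusions (1)--(3) in turn. Lemma \ref{lem:sharp-deta} ensures $\sharp d\eta_0^{-1}(v) \in \{1,2\}$, so $f_*$ is unambiguous, continuous, and equal to the identity on the fixed set where the fibre is a singleton. To prove that $f_*$ is an isometry I would realise non-fixed directions at $p$ through sequences in $C_0$: given $\tilde v \in \Sigma_p(C_0)$ with $f_*(\tilde v) = \tilde w \neq \tilde v$, I pick $p_n, p_n' \to p$ in $C_0$ whose rescaled positions at $p$ converge to $\tilde v$ and $\tilde w$ respectively, and which satisfy $\eta_0(p_n) = \eta_0(p_n')$; since $p_n \neq p_n'$, both must lie in $C_0^2$. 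For two such pairs $(p_{1,n}, p_{1,n}')$ and $(p_{2,n}, p_{2,n}')$ representing $(\tilde v_1, \tilde w_1)$ and $(\tilde v_2, \tilde w_2)$, Lemma \ref{lem:almost-2cover} applied on both sheets gives $|p_{1,n}, p_{2,n}|_{C_0}/|p_{1,n}', p_{2,n}'|_{C_0} \to 1$, and passing to the blow-up limit at $p$ yields $d(\tilde v_1, \tilde v_2) = d(\tilde w_1, \tilde w_2)$.

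For assertion (1), Proposition \ref{prop:length'} guarantees that $d\eta_0$ is $1$-Lipschitz, hence descends to a continuous $1$-Lipschitz bijection $\overline{d\eta_0} : \Sigma_p(C_0)/f_* \to \Sigma_x(X_0)^{\rm int}$. To upgrade this to an isometry I would run a length-lifting argument: any minimal arc $\gamma$ in $\Sigma_x(X_0)^{\rm int}$ can be lifted piecewise to $\Sigma_p(C_0)$, using that on the complement of the fixed set of $f_*$ the map $d\eta_0$ is a local isometric double cover, obtained as the blow-up version of Proposition \ref{prop:eta-2cover} and Lemma \ref{lem:eta'}. When a lift reaches the fixed set, continuation is provided either on the same sheet or, after applying $f_*$, on the other; both options are length-preserving since $f_*$ is an isometry. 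Concatenating produces a curve in $\Sigma_p(C_0)$ whose image in the quotient has length $L(\gamma)$, giving the reverse Lipschitz inequality.

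For (2), assume ${\rm rad}(\xi_x^+) = \pi/2$. Then every $\xi \in \Sigma_x(Y)$ satisfies $\angle(\xi_x^+, \xi) \le \pi/2$ and hence lies on a minimal arc of length $\le \pi/2$ from $\xi_x^+$; since $\Sigma_p(C) = \{\tilde\xi_p^+\}*\Sigma_p(C_0)$ is the full half-spherical suspension and $d\eta$ is an injective local isometry off $\Sigma_p(C_0)$ by Proposition \ref{prop:length'}, each such arc lifts to a half-meridian in $\Sigma_p(C)$, so $d\eta$ is surjective. A fibre analysis identical to the one for $d\eta_0$ then yields $\Sigma_x(Y) \cong \Sigma_p(C)/f_*$. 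For $\Sigma_x(X)$, any $\xi$ with $\angle(\xi_x^+, \xi) < \pi/2$ lies in $d\eta(\Sigma_p(C)\setminus\Sigma_p(C_0))$, which by Lemma \ref{lem:loc-isom} is contained in $\Sigma_x(Y)\setminus\Sigma_x(X)$. Hence $\Sigma_x(X) \subset \Sigma_x(X_0)$, the reverse inclusion is automatic, and a comparison of intrinsic metrics combined with (1) gives $\Sigma_x(X) \cong \Sigma_x(X_0)^{\rm int} \cong \Sigma_p(C_0)/f_*$.

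For (3), ${\rm rad}(\xi_x^+) > \pi/2$ together with $x \in X_0^1$ forces $x \in {\rm int}\,X_0^1$ by Lemma \ref{lem:single-interior}. On a neighbourhood of $p$, Lemma \ref{lem:eta'}(1) then shows $\eta_0$ is a bijective local isometry, so $d\eta_0$ is an isometry and $f_*$ is the identity. Proposition \ref{prop:perp+horiz}(3) identifies $\Sigma_x(X) = \{v : \angle(\xi_x^+, v) \ge \pi/2\}$, while the directions with $\angle(\xi_x^+, v) \le \pi/2$ are the isometric image $d\eta(\Sigma_p(C))$; these two pieces meet exactly along $\Sigma_x(X_0) = d\eta_0(\Sigma_p(C_0))$, giving the claimed gluing description of $\Sigma_x(Y)$. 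The step I expect to require the most care is the isometry of $f_*$ together with the length-lifting in (1): the former requires propagating the almost-isometric double-cover estimates of Proposition \ref{prop:eta-2cover} and Lemma \ref{lem:almost-2cover} to an exact isometry after rescaling at $p$, while the latter depends on controlling the lift near the fixed set of $f_*$, whose fine structure inside $\Sigma_p(C_0)$ is not otherwise known.
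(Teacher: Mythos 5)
The paper states Theorem \ref{thm:X1-f} only as a citation of \cite[Theorem 6.4]{YZ:part1} and does not reproduce the proof, so there is no argument in this paper to compare against. Evaluating your proposal on its own terms: the difficulty you flag at the end is real, and I think it is more serious than you allow, because it creates a genuine gap for a whole class of points.

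Your route to showing that $f_*$ is an isometry realizes a non-fixed pair $\{\tilde v, f_*(\tilde v)\}$ through sequences $p_n, p_n'\to p$ in $C_0$ with $\eta_0(p_n)=\eta_0(p_n')$, hence $p_n,p_n'\in C_0^2$, and then passes Lemma \ref{lem:almost-2cover} to the rescaled limit. This can only work when $p$ is an accumulation point of $C_0^2$. But the paper explicitly allows \emph{cusps} (Definition \ref{defn:cusp}): points $x\in{\rm int}\,X_0^1$ at which $f_*$ is nontrivial. At such a point Lemma \ref{lem:eta'}(1) says $\eta_0$ is a bijection on an entire neighbourhood of $p$, so there are no double points whatsoever; Lemma \ref{lem:almost-2cover} is vacuous there, and the length-lifting in your part (1) has no ``blow-up of Proposition \ref{prop:eta-2cover}'' to draw on. The point you are not accounting for is that $d\eta_0$ is by definition the rescaled limit of $\eta_0:C_0\to X_0$ with the \emph{exterior} metric on $X_0$ (Proposition \ref{prop:length'}), and it can be $2$-to-$1$ even while $\eta_0$ is a local homeomorphism onto $X_0^{\rm int}$. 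The folding is purely infinitesimal, not a shadow of genuine double points of $\eta_0$, so any proof that $f_*$ is an isometry must address it directly.

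The same confusion undermines your treatment of (3). You argue that ${\rm rad}(\xi_x^+)>\pi/2$ forces $x\in{\rm int}\,X_0^1$ via Lemma \ref{lem:single-interior}, and then that Lemma \ref{lem:eta'}(1) gives ``$\eta_0$ is a bijective local isometry, so $d\eta_0$ is an isometry and $f_*$ is the identity.'' If that inference were valid, it would apply to every $x\in{\rm int}\,X_0^1$ and would rule out cusps outright, contradicting Definition \ref{defn:cusp} and Theorem \ref{thm:dim(metric-sing)}(1), which bound the size of $\ca C$ precisely because it can be nonempty. Lemma \ref{lem:eta'}(1) concerns the \emph{interior} metrics, while $d\eta_0$ is valued in $\Sigma_x(X_0)$ with the metric induced from $\Sigma_x(Y)$; a local isometry with respect to the former does not control the latter. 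As written, your proof of (3) never actually uses ${\rm rad}(\xi_x^+)>\pi/2$ beyond placing $x$ in ${\rm int}\,X_0^1$, yet the strict inequality is exactly what must be exploited to conclude $f_*={\rm id}$.
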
 

The following is a key result in \cite{YZ:part1}.

\begin{thm}[{\cite[Theorem 1.2]{YZ:part1}}] \label{thm:non-trivial}
For any $p\in\tilde{\ca S}^1$,
$f_*:\Sigma_p(C_0)\to\Sigma_p(C_0)$ is not
the identity.
\end{thm}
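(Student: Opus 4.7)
The plan is to argue by contradiction: assume $f_* = \mathrm{id}_{\Sigma_p(C_0)}$. By Theorem \ref{thm:X1-f}(1) this promotes the $1$-Lipschitz surjection $d\eta_0\colon \Sigma_p(C_0)\to\Sigma_x(X_0)^{\mathrm{int}}$ to a bijective isometry, and together with Proposition \ref{prop:length'} the induced cone map $d\eta_0\colon T_p(C_0)\to T_x(X_0^{\mathrm{int}})$ becomes an isometry. The goal is to contradict this using the defining property of $p\in\tilde{\ca S}^1$: since $x=\eta_0(p)\in \ca S^1=\pa N_0^1\cap N_0^1$, we may pick $y_n\in X_0^2$ with $y_n\to x$, let $\{q_n,q_n'\}=\eta_0^{-1}(y_n)$ with $q_n\neq q_n'$, and observe via compactness of $C_0$ and $\eta_0^{-1}(x)=\{p\}$ that both $q_n\to p$ and $q_n'\to p$.

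The first-order blow-up: set $r_n:=\max(|q_n,p|_{C_0},|q_n',p|_{C_0},|q_n,q_n'|_{C_0})$ and consider $\bigl((1/r_n)C_0,p\bigr)\to (T_p(C_0),o_p)$. After passing to a subsequence the points $q_n,q_n'$ converge to $\tilde q,\tilde q'\in T_p(C_0)$ with $\max(|\tilde q|,|\tilde q'|,|\tilde q-\tilde q'|)=1$. The identity $\eta_0(q_n)=\eta_0(q_n')$ passes to $d\eta_0(\tilde q)=d\eta_0(\tilde q')$, and combined with the norm identity from Proposition \ref{prop:length'} this forces $|\tilde q|=|\tilde q'|$. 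Provided $\tilde q\neq \tilde q'$, both norms are equal and positive, so their radial directions $\tilde v=\tilde q/|\tilde q|$ and $\tilde v'=\tilde q'/|\tilde q'|$ are distinct elements of $\Sigma_p(C_0)$ satisfying $d\eta_0(\tilde v)=d\eta_0(\tilde v')$; hence $f_*(\tilde v)=\tilde v'\neq \tilde v$, contradicting $f_*=\mathrm{id}$.

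The main obstacle is the degenerate case $\tilde q=\tilde q'$, where $q_n,q_n'$ approach $p$ along a common direction $\tilde v\in \Sigma_p(C_0)$ at comparable rates with $|q_n,q_n'|_{C_0}$ of strictly smaller order. I would handle it by a second blow-up at the finer scale $r_n':=|q_n,q_n'|_{C_0}$ centered at $q_n$: the convergence $\bigl((1/r_n')C_0,q_n\bigr)\to T_{\tilde q}(T_p(C_0))$ together with the cone structure of $T_p(C_0)=K(\Sigma_p(C_0))$ at a positive-norm point yields an isometric splitting $\mathbb R\times T_{\tilde v}(\Sigma_p(C_0))$, and the isometry $d\eta_0$ descends to an isometry between the iterated tangent cones on the two sides. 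In this limit $q_n$ tends to the basepoint while $q_n'$ tends to a point at distance exactly $1$ from it, yet the coincidence $\eta_0(q_n)=\eta_0(q_n')$ would collapse both limits to the basepoint on the target, violating the isometry and producing the final contradiction. The key technical subtlety I foresee is reconciling the intrinsic metric on $X_0$ (in which $d\eta_0$ is an isometry by Theorem \ref{thm:X1-f}) with the ambient extrinsic metric from $Y$ in which the blow-ups are naturally taken; for this I would invoke Lemma \ref{lem:XbiLipX} together with the infinitesimal Alexandrov structure recalled in Section \ref{sec:infinitesimal} to identify the two metrics at the level of tangent cones.
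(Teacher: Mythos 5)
This theorem is cited in the paper from \cite[Theorem 1.2]{YZ:part1} without proof, so there is no in-paper proof to compare against; what follows is an assessment of your proposal on its own terms.

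Your basic strategy---take double points $y_n\in X_0^2$ converging to $x\in\ca S^1$, lift them to pairs $\{q_n,q_n'\}\subset C_0$ converging to $p$, and show the two sheets see distinct directions mapping to the same thing under $d\eta_0$---is a natural one, and the non-degenerate branch is plausible. You correctly identify that $\ca S^1=\pa N_0^1\cap N_0^1$ forces such a sequence $y_n$ to exist, that $q_n,q_n'\to p$ by compactness and $\eta_0^{-1}(x)=\{p\}$, and that Proposition~\ref{prop:length'} forces $|\tilde q|=|\tilde q'|$ so that when $\tilde q\ne\tilde q'$ you obtain two distinct unit directions identified by $d\eta_0$, whence $f_*\neq\mathrm{id}$ directly from \eqref{eq:defn-df} and Lemma~\ref{lem:sharp-deta}.

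The genuine gap is the degenerate branch $\tilde q=\tilde q'$, which your argument does not close. You assert that $\bigl((1/r_n')C_0,q_n\bigr)\to T_{\tilde q}(T_p(C_0))$ and that ``the isometry $d\eta_0$ descends to an isometry between the iterated tangent cones,'' but neither claim is justified. A second rescaling at the faster scale $r_n'=|q_n,q_n'|\ll r_n$ centered at the \emph{moving} basepoints $q_n$ is not an iterated tangent cone: the order of limits matters, and the subsequential pointed limit of $(\frac{1}{r_n'}C_0,q_n)$ need not be $T_{\tilde q}(T_p(C_0))$ at all. More seriously, the subsequential limit of the maps $\eta_0\colon(\frac{1}{r_n'}C_0,q_n)\to(\frac{1}{r_n'}X_0^{\rm int},y_n)$ has no a priori relation to the derivative of $d\eta_0$ at $\tilde q$; $\eta_0$ is merely $1$-Lipschitz and a first-order blow-up result like Proposition~\ref{prop:length'} or the uniqueness of $d\eta_p$ does not propagate to a second-order statement at moving basepoints. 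Concretely, the local-isometry/double-covering neighborhoods provided by Lemma~\ref{lem:eta-2} and Proposition~\ref{prop:eta-2cover} around $q_n$ are not quantified and could shrink faster than $r_n'$, so you cannot conclude that the second blow-up sees an injective map. As it stands, the tangential-fold case is exactly where one expects the real difficulty to lie, and it is left open. You would either need to rule out the degenerate case outright (e.g.\ by a more careful choice of $y_n$ or a quantitative estimate on the separation of the two sheets near $\tilde{\ca S}^1$) or supply a rigorous second-order blow-up argument that actually identifies the limit with a map that must be injective; the appeal to Lemma~\ref{lem:XbiLipX} and ``the infinitesimal Alexandrov structure'' does not do this.
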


Finally we exhibit a result on the Hausdorff dimensions of the
metric singular set and the boundary singular sets
of $N_0$.

Theorems \ref{thm:X1-f} and \ref{thm:non-trivial}
imply $\ca S^1\subset X_0^{\rm sing}$.

\begin{defn} \label{defn:cusp}
A point $x$ of ${\rm int} X_0^1$ is called a 
{\it cusp} if $f_*:\Sigma_p(C_0)\to \Sigma_p(C_0)$
is not the identity. We denote by 
${\ca C}$ the set of all cusps.
Obviously we have $\ca C\subset X_0^{\rm sing}$.
\end{defn}

\begin{thm}[{\cite[Theorem 1.4]{YZ:part1}}] \label{thm:dim(metric-sing)}
Let a sequence $M_i$ in $\ca M(n,\kappa,\nu,\lambda, d)$ non-inradius collapses/converges  to a geodesic space $N$.
If $\dim N=m$, then  the following holds$:$
\begin{enumerate}
\item $\dim_H N_0^{\rm sing} \le m-2$. In particular,
$\dim_H (\ca S^1\cup\ca C)\le m-2\,;$
\item $\dim_H (N_0^{\rm sing} \cap {\rm int}\, N_0) \le m-3$.
\end{enumerate}
\end{thm}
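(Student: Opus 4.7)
The strategy is to stratify $N_0$ according to the local behavior of the gluing map $\eta_0 : C_0 \to N_0$ and its differential, and to bound the Hausdorff dimension of $N_0^{\rm sing}$ on each stratum. Write
\[
N_0 = \bigl({\rm int}\,N_0^1 \setminus \ca C\bigr) \,\sqcup\, \ca C \,\sqcup\, {\rm int}\,N_0^2 \,\sqcup\, \ca S,
\]
where $\ca S = \ca S^1 \cup \ca S^2$. First I would check the inclusion $\ca S^1 \cup \ca C \subset N_0^{\rm sing}$: on $\ca C$ this follows from Definition \ref{defn:cusp} because a non-identity isometric involution $f_*$ on the round sphere $\mathbb S^{m-2}$ (and more generally on any $\Sigma_p(C_0)$ of curvature $\ge 1$) cannot yield a quotient isometric to $\mathbb S^{m-2}$; on $\ca S^1$ it follows from the combination of Theorems \ref{thm:X1-f} and \ref{thm:non-trivial}. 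This gives the ``in particular'' part of (1) once the bounds below are established.

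\textbf{Step 1 (Easy strata via $\eta_0$).} On ${\rm int}\,N_0^2$, Lemma \ref{lem:eta-2} and Proposition \ref{prop:eta-2cover} show that $\eta_0$ is a local isometric double cover, so for any $p \in \eta_0^{-1}(x)$, the map $d\eta_0$ in Proposition \ref{prop:length'} becomes an isometry $\Sigma_p(C_0)\to\Sigma_x(N_0)^{\rm int}$. Thus $x$ is metrically singular if and only if $p$ is Alexandrov-singular in the $(m-1)$-dimensional Alexandrov space $C_0$. On ${\rm int}\,N_0^1\setminus\ca C$, Theorem \ref{thm:X1-f}(1) together with $f_*={\rm id}$ yields the same identification $\Sigma_x(N_0)^{\rm int}\cong\Sigma_p(C_0)$, so the same dichotomy holds. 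Since $\eta_0$ is $1$-Lipschitz, Theorem \ref{thm:dim-sing} applied to $C_0$ gives $\dim_H$ of each of these two subsets $\le m-3$ (modulo the caveat about $\partial C_0$ discussed below).

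\textbf{Step 2 (Cusps).} For $x \in \ca C$ whose preimage $p$ is a regular point of $C_0$, $f_*$ is a non-trivial isometric involution of $\mathbb S^{m-2}$. Cusps of this type sit inside the fixed-point structure of such involutions, which propagates lower-dimensionally in $C_0$; combined with the bound $\dim_H C_0^{\rm sing}\le m-3$ from Theorem \ref{thm:dim-sing} (to handle the case $p\in C_0^{\rm sing}$), this gives $\dim_H \ca C \le m-3$, finishing (2). Finally for (1), the bound $\dim_H \ca S \le m-2$ follows from the fact that $\ca S^k$ lies in the topological boundary of $N_0^k$ in $N_0$, so in particular in the topological boundary of a closed subset of the $(m-1)$-dimensional space $N_0$, hence is nowhere dense of codimension at least one; combining with $\dim_H N_0=m-1$ (from \cite[Lemma 4.18]{YZ:part1}) yields $\dim_H \ca S\le m-2$.

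\textbf{Expected main obstacle.} The cusp bound in Step 2 is the most delicate point: one must rule out large sets of regular $p\in C_0$ on which $f_*\neq{\rm id}$, using only that the involutions come from the double-sheeted structure via perpendicular geodesics described in Lemma \ref{lem:not-perp} and Proposition \ref{prop:perp+horiz}. A secondary technical issue is that Theorem \ref{thm:dim-sing} as stated controls $C_0^{\rm sing}\cap{\rm int}\,C_0$ and $(\partial C_0)^{\rm sing}$ separately, while $\partial C_0$ itself (if present) can have dimension $m-2$; one has to verify that $\partial C_0$ cannot appear in our setting or, failing that, that $\eta_0(\partial C_0)$ falls inside strata already accounted for, using that the $\partial M_i$ are closed manifolds and invoking the warped-product description \eqref{eq:defC}. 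Provided these two points are overcome, assembling the dimension bounds from Steps 1--2 with $\dim_H \ca S\le m-2$ gives both (1) and (2).
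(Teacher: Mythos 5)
This statement is quoted verbatim from \cite[Theorem 1.4]{YZ:part1} and the present paper does not reprove it, so there is no in-paper argument to compare against; but your blind attempt has genuine gaps. The sharpest is your bound on $\ca S$: you argue that $\ca S$ lies in a topological boundary of a closed subset of the $(m-1)$-dimensional space $N_0$, "hence is nowhere dense of codimension at least one," and conclude $\dim_H\ca S\le m-2$. This is not valid --- nowhere dense closed subsets of $\R$ can have Hausdorff dimension $1$ (fat Cantor sets), so topological nowhere-density gives no drop in Hausdorff dimension; the inequality $\dim_H\ca S^1\le m-2$ is a \emph{consequence} of (1) via the inclusion $\ca S^1\subset N_0^{\rm sing}$, not an independent topological input. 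The second gap, which you flag yourself, is the cusp bound; Step~2 ("cusps sit inside the fixed-point structure of such involutions, which propagates lower-dimensionally") is a hope, not an argument: for a regular $p\in C_0$ the involution $f_*$ acts on $\Sigma_p(C_0)\cong\mathbb S^{m-2}$, and nothing you say prevents the set of such $p$ with $f_*\ne{\rm id}$ from being $(m-2)$-dimensional. The third issue, also flagged by you, is $\partial C_0$: limits of closed manifolds can acquire Alexandrov boundary of full codimension one, and you do not rule its $\eta_0$-image out of $N_0^{\rm sing}$.

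The underlying structural problem is that you try to do everything inside the $(m-1)$-dimensional space $C_0$, which is precisely where all three obstructions live. The route suggested by the proof of Lemma~\ref{lem:dimTKNc} in this very paper is to work in the $m$-dimensional ambient Alexandrov space $Y$ and its double $D(Y)$: one shows directly that $N_0^{\rm sing}\subset ({\rm int}\,Y)^{\rm sing}\cup D(Y)^{\rm sing}$, using the infinitesimal structure results (Lemma~\ref{lem:double-sus}, Theorem~\ref{thm:X1-f}, Proposition~\ref{prop:perp+horiz}) to verify that whenever $\Sigma_x(Y)\cong\mathbb S^{m-1}$ or $\Sigma_x(D(Y))\cong\mathbb S^{m-1}$, Proposition~\ref{prop:perp+horiz}(2) forces $\Sigma_x(X_0)^{\rm int}$ to be the round equator $\mathbb S^{m-2}$ orthogonal to $\xi_x^+$. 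Applying Theorem~\ref{thm:dim-sing} to the $m$-dimensional $Y$ then gives $\dim_H N_0^{\rm sing}\le m-2$ in one step, with cusps, $\ca S$, and $\partial C_0$ handled implicitly; the sharper bound~(2), restricted to ${\rm int}\,N_0$, is where a $C_0$-based stratification of the kind you set up in Step~1 could legitimately enter, but only after the boundary and cusp issues have been disposed of.
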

\pmed\n

\pmed\n 
{\bf Blow up limits.}\,
Let $(Y, X, X_0)$ (resp. $(C,C_0)$) be the limit spaces
of $(\tilde M_i,M_i,\pa M_i)$ (resp. $(C_{M_i}, \{ 0\}\times(\pa M_i)^{\rm int})$.
For any sequence $y_i\in X_0$, $\tilde y_i\in C_0$
with $y_i=\eta_0(\tilde y_i)$ and $\e_i\to 0$,
let us consider the rescaling limits 
\begin{align} \label{eq:inradius-conv}
     \left(\frac{1}{\e_i} Y, y_i\right) \to (Y_\infty, y_\infty), \quad
\left(\frac{1}{\e_i} C, \tilde y_i\right) \to (C_\infty, \tilde y_\infty),
\end{align}

\par\n
{\bf Notations.}\,
From here on, let us denote by  $X_\infty$, $(X_0)_\infty$, $Y_\infty$, 
$(\pa Y)_\infty$  and $C_\infty$, $(C_0)_\infty$
 the limits of $X$, $X_0$, $Y$, $\pa Y$
and $C$, $C_0$  respectively,
with respect to  blow-up rescaling limits
like \eqref{eq:inradius-conv}
under consideration.
Obviously, $Y_\infty$ is a complete noncompact nonnegatively curved Alexandrov space with boundary $(\pa Y)_\infty$.
$1$-Lipschitz maps $\eta_\infty:C_\infty\to Y_\infty$ and $(\eta_0)_\infty:(C_0)_\infty\to (X_0)_\infty$ are defined as the limits of 
$\eta$ and $\eta_0$ respectively.
A perpendicular $\gamma_{y_\infty}^+$ and 
a perpendicular direction $\xi_{y_\infty}^+$ at 
$y_\infty$ are defined similarly.
For any $p_\infty\in (C_0)_\infty$, let
$x_\infty:=\eta_\infty(p_\infty)\in (C_0)_\infty$.
The differential 
of $\eta_\infty$ at $p_\infty$ 
\[
    d\eta_\infty:T_{p_\infty}(C_\infty)\to T_{x_\infty}(Y_\infty)
\]
is uniquely determined as the blow-up limit of 
$\eta_\infty$ as before. In the same way as Proposition \ref{prop:length'}, we can verify  
\beq \label{eq:d(eta-infty)}
      |d\eta_\infty(\tilde v)|=|\tilde v|
\eeq
for all $\tilde v\in T_{p_\infty}(C_\infty)$, and hence
it induces $1$-Lipschitz maps
$d(\eta_\infty):\Sigma_{p_\infty}(C_\infty)\to \Sigma_{x_\infty}(Y_\infty)$ and 
$d\eta_\infty:\Sigma_{p_\infty}((C_0)_\infty)\to \Sigma_{x_\infty}((X_0)_\infty)$.  
These maps satisfies the properties corresponding to Proposition \ref{prop:length'}.

\begin{lem} \label{lem:inrad-collapse}
Under the rescaling limits \eqref{eq:inradius-conv},
it holds that
\begin{enumerate}
\item $X_\infty$ is convex in $Y_\infty$. 
\item $C_\infty=(C_0)_\infty\times\R_+$.
\end{enumerate}
\end{lem} 
\begin{proof}
(1) follows from \cite[Lemma 7.2]{YZ:part1}. 
(2) is clear since $C=[0,t_0]\times_\phi C_0$.
\end{proof}

\pmed\n
{\bf The directional derivative of $\pi$.}\,
We consider the projection $\pi:Y\to X$
and study its infinitesimal properties.
For any $y\in Y\setminus{\rm int}X$,
let $z:=\pi(y)$ and $t(y):=|y,X|=|y,z|$.
Note that 
\beq \label{eq:pi-expression}
     \pi=\eta_0\circ\tilde\pi\circ\eta^{-1}\quad
    \text{on $Y\setminus X$}.
\eeq

First assume $t(y)>0$. Then we have the orthogonal
decomposition 
\beq\label{eq:decomp-Ty(Y)}
    T_y(Y)=T_y(C_{t(y)})\oplus \R\xi^+(y),
\eeq
where $\xi^+(y)$ denotes the direction of the
perpendicular through $y$. 
We call $T_y(C_{t(y)}^Y)$ and $\R\xi^+(y)$ the \emph{horizontal} and the \emph{vertical} subspaces of $T_y(Y)$ respectively.
For any 
$v\in T_y(Y)$, let $v=v_H+v_V$ denote the orthogonal decomposition of $v$.

\begin{lem}\label{lem:derivative-pi}
There exists the directional derivative 
$d\pi_y: T_y(Y)\to T_z(X_0)$ having the property
\beq \label{eq:|dpi|=phi}
      |d\pi_y(v)|=\phi(t(y))^{-1}|v_H|.
\eeq
\end{lem}
\begin{proof}
Let $\tilde y:=\eta^{-1}(y)$. Note that $d\eta^{-1}$
preserves the decompostions \eqref{eq:decomp-Ty(Y)} and the corresponding decomposition
\[
      T_{\tilde y}(C)=T_{\tilde y}(C_{t(y)})\oplus \R\xi^+(\tilde y)
\]
of $T_{\tilde y}(C)$. From the warped product structure of $C$, 
$d\tilde\pi_{\tilde y}$ has the property corresponding to \eqref{eq:|dpi|=phi}.
For any $v\in T_y(Y)$, let $c$ be a curve parametrized on $[0,\e)$ such that $\dot c(0)=v$.
Then we have 
\[
|(\tilde\pi\circ\eta^{-1}\circ c)'(0)|=\phi(t(y))^{-1}|v_H|.
\]
The conclusion follows from \eqref{eq:pi-expression}
and 
Proposition \ref{prop:length'} immediately. 
\end{proof}

When $t(y)=0$, we have the following.

\begin{lem}\label{lem:derivative-pi-0}
If $y\in X_0$, the directional derivative 
$d\pi_y: T_y(Y)\to T_y(X)$ is characterizes as follows: For any $\xi\in T_y(Y)\setminus T_y(X)$,
choose $v\in\Sigma_y(X_0)$ as in Proposition
\ref{prop:perp+horiz}(1). Then we have 
\beq \label{eq:dpi=sin}
   d\pi_y(\xi)=\cos \angle(\xi,v)\, v.
\eeq
\end{lem}
\begin{proof}
We can apply \eqref{eq:pi-expression} partially.
Take $\tilde\xi\in T_{\tilde y}(C)\setminus T_{\tilde y}(C_0)$ such that $d\eta(\tilde\xi)=\xi$.
From the splitting $T_{\tilde y}(C)=T_{\tilde y}(C_0)\times\R_+$, we have the formula
for $d\tilde\pi_{\tilde y}(\tilde\xi)$ corresponding to
\eqref{eq:dpi=sin}. The conclusion follows from
Proposition \ref{prop:length'}.
\end{proof}

Next for any fixed $x\in X_0$, we investigate the behaviour of $d\pi_y$ when $y$ is close to $x$
and $|y,\pi(y)|/|x,y|$ is small enough.

\begin{lem} \label{eq:dpi-x}
For any fixed $x\in X_0$, if $0<|y,\pi(y)|/|x,y|<\tau_x(|x,y|)$, then we have 
\[
     \angle(d\pi_y(\uparrow_y^x), \uparrow_{\pi(y)}^x)<\tau_x(|x,y|).
\]
\end{lem}
\begin{proof}
The proof is by contradiction.
Suppose the lemma does not hold. Then we have sequence $y_i\to x$ such that 
$0<|y_i,\pi(y_i)|/|x,y_i|\to 0$ and 
\[
   \angle(d\pi_{y_i}(\uparrow_{y_i}^x), 
\uparrow_{\pi(y_i)}^x)\ge \omega>0
\]
for a constant $\omega$ independent of $i$.
Let $z_i:=\pi(y_i)$ and $s_i:=|y_i,z_i|$.

Consider the point $y_i':=\gamma_{y_i,x}(s_i)$, and 
set $\gamma_i:=\gamma_{y_i,y_i'}$.
Let $\tilde y_i$, $\tilde z_i$ and $\tilde y_i'$ be 
the respective lifts of  $y_i$, $z_i$ and $y_i'$ to $C$.
Let $\tilde\gamma_i:=\gamma_{\tilde y_i,\tilde y_i'}$
be the lift of $\gamma_i$.
It should be emphasized that 
we do not know if a geodesic 
$\gamma_{z_i, y_i'}$ has a lift to $C$.
This forces us to take a  geodesic 
$\gamma_{\tilde z_i, \tilde y_i'}$ in $C$ and consider 
its $\eta$-image, as described below. 

\begin{slem} \label{slem:geode-warped}
One can choose a minimal 
geodesic  $\gamma_{\tilde z_i,\tilde y_i'}$ such that 
$\tilde\pi(\tilde\gamma_i)=\tilde\pi(\gamma_{\tilde z_i,\tilde y_i'})$ up to monotone parametrization.
\end{slem}
\begin{proof}
First choose any $\gamma_{\tilde z_i,\tilde y_i'}$ and set 
$\tilde \sigma_i:=\gamma_{\tilde z_i,\tilde y_i'}$. 
From the warped product structure of $C$, 
both the images $\tilde\pi(\tilde\gamma_i)$ and
$\tilde\pi(\tilde\sigma_i)$ provide shortest
curves joining $\tilde z_i$ and $\tilde\pi(\tilde y_i')$. 
Let $\tilde\pi(\tilde\gamma_i)(s)$ and
$\tilde\pi(\tilde\sigma_i)(s)$\,$(0\le s\le |\tilde z_i, \tilde\pi(\tilde y_i')|_{C_0})$ be the arclength
parameter of $\tilde\pi(\tilde\gamma_i)$ and
$\tilde\pi(\tilde\sigma_i)$.
For each $s$, take $t(s)\in [0, t_0]$  such that 
$(t(s), \tilde\pi(\tilde\sigma_i)(s))\in\tilde\sigma_i$.
It follows from the warped product structure of $C$
that the curve 
\[
\tilde\rho_i(s):=(t(s), \tilde\pi(\tilde\gamma_i)(s))
\]
is shortest joining $\tilde z_i$ and $\tilde y_i'$ since $L(\tilde\rho_i)=L(\tilde\sigma_i)$.
Since $\tilde\pi(\tilde\rho_i)=\tilde\pi(\tilde\gamma_i)$,  $\tilde\rho_i$ is a required one.
\end{proof}

Let $\tilde\sigma_i$ denote the geodesic
joining $\tilde z_i$ to $\tilde y_i'$ provided in 
Sublemma \ref{slem:geode-warped}
satisfying 
$\tilde\pi(\tilde\gamma_i)=\tilde\pi(\tilde\sigma_i)$.
Consider $\sigma_i:=\eta\circ\tilde\sigma_i$.
As mentioned above,  we do not know if $\sigma_i$ is $Y$-minimal.
However  $\sigma_i$ has the definite direction
$\dot\sigma_i(0)=d\eta(\dot{\tilde\sigma}_i(0))$
at $z_i$. Note also that  
\beq\label{eq:pi(sigma)=pi(gamma)}
       \pi(\sigma_i)=\pi(\gamma_i).
\eeq

Set
\[
     \xi_i^+:=\dot\gamma_{z_i,y_i}(0),\,\,
u_i:=\dot\sigma_i(0),\,\,
\nu_i:=\dot\gamma_{z_i,x}(0).\,\,
\]
Let $v_i\in\Sigma_{z_i}(X_0)$ be the direction 
defined by $\pi(\sigma_i)=\pi(\gamma_i)$.
It follows from 
Lemma \ref{lem:not-perp}(3) that 
\beq \label{eq:angle(3)=pi/2-Sec4}
\angle(\xi_i^+,v_i)=\angle(\xi_i^+,u_i)+\angle(u_i,v_i)=\pi/2.
\eeq
\begin{figure}
\begin{center}
\begin{tikzpicture}
[scale = 0.5]

\draw  [thick] (-1,0)  to [out=-10, in=190] (1,0);
\draw  [thick] (-1,0)  to [out=90, in=225] (0,4);
\draw  [thick] (1,0)  to [out=90, in=-45] (0,4);
\fill (-0.95,2) circle (2pt) node [left] {\small{$u_i$}};
\draw  [thick] (-0.95,2)  to [out=-60, in=150] (1,0);
\fill (0,4) circle (2pt) node [above] {\small{$\xi_i^+$}};
\fill (-1,0) circle (2pt) node [left] {\small{$v_i$}};
\fill (1,0) circle (2pt) node [right] {\small{$\nu_i$}};

\end{tikzpicture}
\end{center}  
\vspace{-0.3cm}
\caption{ }
\end{figure}
\pmed


Here we use the abbreviation $a\sim b$ if and only if $|a-b|<o_i$.
Since $\angle(\xi_i^+,\nu_i)\sim \pi/2$, 
\eqref{eq:angle(3)=pi/2-Sec4} implies  
\[
\angle\nu_i\xi_i^+u_i=\angle\nu_i\xi_i^+v_i
\ge \wangle\nu_i\xi_i^+v_i \sim |\nu_i,v_i|
 \ge\omega/2,
\]
for large enough $i$.

We show 
\beq \label{eq:angle(xi,u)=a+b-4}
\angle(\xi_i^+,u_i)=\angle(\tilde\xi_i^+,\tilde u_i)\sim \pi/4,
\eeq
where $\tilde\xi_i^+:=d\eta^{-1}(\xi_i^+)$ and $\tilde u_i:=d\eta^{-1}(u_i)$.
 The first equality follows from  Proposition \ref{prop:length'}. 
Letting $s_i:=|\tilde y_i,\tilde z_i|$, we consider the convergence
\beq \label{eq:split-Y/s-4}
      (C/s_i,\tilde y_i)  \to (C_\infty, \tilde y_\infty).
\eeq
By the splitting theorem, 
$C_\infty$ is isometric to a product
$\R\times D_\infty$.
Let $\tilde z_i':=\pi(\tilde y_i')$.
From $\Sigma_{\tilde z_i}(C)=\tilde\xi_i^+*\Sigma_{\tilde z_i}(C_0)$,
we have
\[
 \angle(\tilde\xi_i^+, d\tilde\pi(\tilde u_i))=
   \angle(\tilde\xi_i^+, \tilde u_i)+
  \angle(\tilde u_i, d\tilde\pi(\tilde u_i))
     =\pi/2.
\]  
Note that Lemma \ref{lem:deviation}
can be applied to $C$-minimal geodesic 
joining two points of $C_0$, which implies 
\[
   \angle(d\tilde\pi(\tilde u_i), \uparrow_{\tilde z_i}^{\tilde z_i'})<O(|\tilde z_i,\tilde z_i'|)=o_i.
\]
Thus we have 
\[
   \angle \tilde y_i\tilde z_i \tilde y_i'+
\angle \tilde y_i'\tilde z_i \tilde z_i' \sim \pi/2.
\]
From the lower semicontinuity of angle,  we have
\[
  \pi/2\ge \angle \tilde y_\infty \tilde z_\infty \tilde y_\infty'+
\angle \tilde y_\infty'\tilde z_\infty \tilde z_\infty' 
\ge \angle\tilde y_\infty \tilde z_\infty\tilde z_\infty' 
=\pi/2.
\]
This  implies $\lim_{i\to\infty}\angle \tilde y_i\tilde z_i \tilde y_i'=\angle \tilde y_\infty \tilde z_\infty \tilde y_\infty'$, which is equal to $\pi/4$
from the splitting 
$C_\infty=\R\times D_\infty$,
yielding \eqref{eq:angle(xi,u)=a+b-4}.

Next we show 
\beq \label{eq:angle(uzeta)=pi/4-Sec4}
\angle(u_i,\nu_i)\sim \pi/4.
\eeq

Choose small $t_i>0$ such that
$\angle(\dot\sigma_i(0),\uparrow_{w_i}^{\sigma_i(t_i)})<1/i$, and set 
$d_i:=|\sigma_i(t_i),X|$.
Put  $a_i:=\gamma_{w_i}^+(d_i)$ and
$a_i':=\sigma_i(t_i)$.
To get \eqref{eq:angle(uzeta)=pi/4-Sec4},  
it suffices to show  
\beq \label{eq:stretch(xwx)-4}
\angle xz_i a_i'
\sim  \pi/4.\quad
\eeq

Let us 
consider the convergence
\beq \label{eq:split-Y/s-4}
      (Y/d_i,z_i)  \to (Y_\infty, z_\infty).
\eeq
By the splitting theorem, 
$Y_\infty$ is isometric to a product
$\R\times Z_\infty$.
Let $a_\infty'$ be the respective limits
of $a_i'$ under \eqref{eq:split-Y/s-4}.
Choosing a $(1,o_i)$-strainer $(x,x_i)$ at $z_i$ such 
that $t_i/\min\{ |x,z_i|, |x_i,z_i|\}<o_i$, and 
applying Lemma \ref{lem:comparison},
we have 
\beq \label{eq:strainer-ineq-4}
\begin{cases}
\begin{aligned}
&\wangle xz_i a_i'+ \wangle x_{i}z_i a_i'\sim\pi,
\quad \\
& \angle xz_i a_i'\sim \wangle xz_i a_i',
 \quad
 \wangle x_{i} z_i a_i'\sim\angle x_{i} z_i a_i.
\end{aligned}
\end{cases}
\eeq
In view of the splitting $Y_\infty=\R\times Z_\infty$
under \eqref{eq:split-Y/s-4}, we have 
$\angle \hat x z_\infty a_\infty'=\pi/4$, where 
$\hat x$ is the element of $Y_\infty(\infty)$ 
defined as the limit of the geodesic 
$\gamma^Y_{z_i,x}$ under \eqref{eq:split-Y/s-4}. This together with \eqref{eq:strainer-ineq-4} 
yields  \eqref{eq:stretch(xwx)-4}.

Combining \eqref{eq:angle(xi,u)=a+b-4},   \eqref{eq:angle(uzeta)=pi/4-Sec4} and 
$\angle(\xi_i^+,\nu_i)\sim \pi/2$, we obtain  
$\wangle\nu_i u_i \xi_i^+\sim \pi$,
which implies $\angle\nu_iu_i v_i\to 0$, and hence
$|v_i,\nu_i|\to 0$.
This is a contradiction and 
completes the proof of Lemma \ref{eq:dpi-x}.
\end{proof}

 From now on, we abbreviate $d\pi_y$ to
$d\pi$ as long as there is no confusion.

\setcounter{equation}{0}
\section{Almost isometries and local stability}  \label{sec:almost-stability}
\psmall
In this section, we are concerned with the local structures around points of $X_0$ in both $X_0$ and $X$.

First we describe some basic properties of the
thick points and the thin points of $X_0$.
For $x\in X_0$, let ${\bm \xi}_x^\perp$ denote the set of all perpendicular directions at $x$, and 
consider the {\it radius
of $\Sigma_x(Y)$ viewed from ${\bm \xi}_x^\perp$} defined as 
\[
{\rm rad}({\bm \xi}_x^\perp):=\max \{ \angle({\bm \xi}_x^\perp, \xi)\,|\,\xi\in\Sigma_x(Y)\}.
\]
The definition of thick/thin points of $X_0$ 
given below is equivalent to the one given in Definition \ref{defn:thin-thick-brpoint}.

\begin{defn}\label{defn:thick}
We say that a point $x\in X_0$  is {\it thick} (resp. {\it thin}) in $X$
if ${\rm rad}({\bm \xi}_x^\perp)>\pi/2$ (resp.
${\rm rad}({\bm \xi}_x^\perp)=\pi/2$).
Let ${\ca Tk}$ (resp. ${\ca Tn}$)
denote the set of all thick points
(resp. all thin points), and let ${\ca Tk}^{\rm reg}$
be the set of all boundary regular points of $X$
(see Definition \ref{defn:thin-thick-brpoint}).
We set ${\ca Tn}^{\rm reg}:={\ca Tn}\cap X_0^{\rm reg}$.
\end{defn}

 Note that 
\begin{itemize}
\item $X_0^2\subset {\ca Tn}\,;$
\item ${\ca Tk}\subset {\rm int} X_0^1\setminus \ca C$, \,${\ca Tn}^{\rm reg}\subset ({\rm int} X_0^1\setminus \ca C) \cup X_0^2\,;$
\item  ${\ca Tk}^{\rm reg}\subset {\ca Tk} \cap X_0^{\rm reg}$. However, the equality does not necessarily hold
(see Example \ref{ex:convexbody})$\,;$
\item ${\ca Tk}^{\rm reg}\cup{\ca Tn}^{\rm reg}$ coincides with the set of points $x\in  X_0$ such that the extrinsic metric
$\Sigma_x(X_0)$ is isometric to $\mathbb S^{m-2}$.
\end{itemize}

\begin{lem}\label{lem:dimTKNc}
\[
   \dim_H(X_0\setminus({\ca Tk}^{\rm reg}\cup{\ca Tn}^{\rm reg}))\le m-2.
\]
\end{lem}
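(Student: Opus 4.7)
The plan is to split the exceptional set into its metrically singular part and a regular-but-not-boundary-regular part, then recognize the second part as a subset of the interior singular locus of the ambient Alexandrov space $Y$.

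First I would decompose
\[
X_0\setminus({\ca Tk}^{\rm reg}\cup{\ca Tn}^{\rm reg})
= N_0^{\rm sing}\,\cup\, E,
\qquad
E:= X_0^{\rm reg}\setminus({\ca Tk}^{\rm reg}\cup{\ca Tn}^{\rm reg}).
\]
The first piece satisfies $\dim_H N_0^{\rm sing}\le m-2$ by Theorem \ref{thm:dim(metric-sing)}(1), so the whole task reduces to bounding $\dim_H E$. Since ${\ca Tn}^{\rm reg}={\ca Tn}\cap X_0^{\rm reg}$, every thin metrically regular point is automatically in ${\ca Tn}^{\rm reg}$; hence every $x\in E$ is thick. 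The bullet-point inclusion ${\ca Tk}\subset{\rm int}\,X_0^1\setminus\ca C$ then gives
\[
E\subset ({\rm int}\,X_0^1\setminus\ca C)\cap X_0^{\rm reg}\cap {\ca Tk},
\qquad E\cap{\ca Tk}^{\rm reg}=\emptyset.
\]

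The main step is to show that each $x\in E$ is an interior singular point of $Y$. Fix $x\in E$ and let $p\in C_0$ be its unique preimage under $\eta_0$. Because $x\notin\ca C$, the involution $f_*$ on $\Sigma_p(C_0)$ is the identity, so by Theorem \ref{thm:X1-f}(1) and metric regularity,
\[
\Sigma_p(C_0)\cong\Sigma_x(X_0)^{\rm int}\cong\mathbb S^{m-2}.
\]
The half-spherical suspension structure of $C$ then gives $\Sigma_p(C)=\{\tilde\xi_p^{+}\}*\Sigma_p(C_0)\cong \mathbb S^{m-1}_+$. Since $x$ is thick, Theorem \ref{thm:X1-f}(3) yields
\[
\Sigma_x(Y)=\Sigma_p(C)\bigcup_{d\eta_0}\Sigma_x(X),
\]
the map $d\eta_0$ identifying the equator $\mathbb S^{m-2}$ of $\Sigma_p(C)$ isometrically with the boundary of $\Sigma_x(X)$. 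If $\Sigma_x(Y)$ were isometric to $\mathbb S^{m-1}$, the complement of the hemisphere $\Sigma_p(C)$ would be forced to be the antipodal hemisphere, so $\Sigma_x(X)\cong\mathbb S^{m-1}_+$ and $x\in{\ca Tk}^{\rm reg}$, a contradiction. Hence $\Sigma_x(Y)\not\cong\mathbb S^{m-1}$. Moreover $x\in{\rm int}\,Y$ since every point of $X_0$ satisfies $|x,C_{t_0}^Y|\ge t_0$ and $\pa Y\subset C_{t_0}^Y$. Thus $x\in Y^{\rm sing}\cap{\rm int}\,Y$.

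Finally, applying Theorem \ref{thm:dim-sing} to the $m$-dimensional Alexandrov space $Y$ gives $\dim_H(Y^{\rm sing}\cap{\rm int}\,Y)\le m-2$, whence $\dim_H E\le m-2$, and combining with the bound on $N_0^{\rm sing}$ proves the lemma. The main technical point is the sphere-rigidity argument of the previous paragraph: showing that gluing a hemisphere and a curvature $\ge 1$ space along their common boundary equator can yield $\mathbb S^{m-1}$ only when the second piece is itself a hemisphere. This is straightforward once one observes that $\mathbb S^{m-2}$ must sit as a totally geodesic equator of the resulting $\mathbb S^{m-1}$, so the second piece is determined up to isometry by the gluing data.
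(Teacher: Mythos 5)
Your overall strategy is the same as the paper's (push the exceptional set into metric-singular sets of $N_0$ and singular sets of the ambient space and quote Theorem \ref{thm:dim-sing} and Theorem \ref{thm:dim(metric-sing)}), and the first half of your key step --- if $\Sigma_x(Y)\cong\mathbb S^{m-1}$ then $\Sigma_x(X)$ is forced to be $\mathbb S^{m-1}_+$, contradicting $x\notin{\ca Tk}^{\rm reg}$ --- is exactly the paper's argument. But there is a genuine gap in the sentence ``Moreover $x\in{\rm int}\,Y$ since every point of $X_0$ satisfies $|x,C_{t_0}^Y|\ge t_0$ and $\pa Y\subset C_{t_0}^Y$.'' The inclusion $\pa Y\subset C^Y_{t_0}$ is false: the Alexandrov boundary of $Y$ is not just the limit of $\pa\tilde M_i=\pa M_i\times\{t_0\}$, because collapse can create boundary far away from $C^Y_{t_0}$, including at points of $X_0$ itself. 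Indeed the paper states explicitly (Case 1 of the proof of Theorem \ref{thm:almostisometry(intro)}) that every regular thin point of $X_0^1$ lies in $\pa Y$, even though it is at distance $t_0$ from $C^Y_{t_0}$; and thick points of $X_0$ can also lie on $\pa Y$ --- e.g.\ take $M_i=A\times K_{\e_i}$ with $A$ a flat disk and $K_{\e_i}$ a flat Klein bottle collapsing to an interval $I$, so that $N=A\times I$, $N_0=\pa A\times I$, and every point of $\pa A\times\pa I\subset N_0^1$ is thick but lies on $\pa Y$. So your argument, as written, does not cover the possibility $x\in E\cap\pa Y$, and the blanket claim ``every point of $X_0$ is interior in $Y$'' is simply wrong.

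The gap is repairable, and the repair is precisely the second case in the paper's proof: for $x\in E\cap\pa Y$ one shows $x\in D(Y)^{\rm sing}$, since $\Sigma_x(D(Y))\cong\mathbb S^{m-1}$ would force $\Sigma_x(Y)\cong\mathbb S^{m-1}_+$, and then the set $\{v:\angle(\xi_x^+,v)\le\pi/2\}$ being a full hemisphere with round equator forces $\xi_x^+$ to be the pole, so that $x$ would be a thin regular point --- contradicting $x\in E\subset{\ca Tk}$. One then invokes the bound $\dim_H(\pa Y)^{\rm sing}\le m-2$ from Theorem \ref{thm:dim-sing}, which your proof never uses. With that extra case added (or with an honest proof that points of $E$ cannot lie on $\pa Y$, which you have not given), your decomposition into $N_0^{\rm sing}\cup E$ works and is only cosmetically different from the paper's splitting into the $X_0^2$ and $X_0^1$ parts.
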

\begin{proof}  Since
$X_0^2\setminus {\ca Tn}^{\rm reg}\subset
X_0^{\rm sing}$,
from  \cite[Theorem 1.4]{YZ:part1} and  Theorem \ref{thm:dim-sing},
it suffices to show that 
$$
X_0^1\setminus({\ca Tk}^{\rm reg}\cup{\ca Tn}^{\rm reg})\subset ({\rm int} Y)^{\rm sing}\cup
 D(Y)^{\rm sing}.
$$
Let $x$ be an arbitrary point of 
$X_0^1\setminus({\ca Tk}^{\rm reg}\cup{\ca Tn}^{\rm reg})$.
If $x\in {\rm int} Y$, then we have
$x\in ({\rm int} Y)^{\rm sing}$.
Actually, if $\Sigma_x(Y)=\mathbb S^{m-1}$, then 
Theorem \ref{thm:X1-f} implies that 
$\Sigma_x(X)=\mathbb S^{m-1}_+$, which contradicts
$x\notin  {\ca Tk}^{\rm reg}$.
If $x\in \pa Y$, then we have
$x\in D(Y)^{\rm sing}$.
Actually, if $\Sigma_x(D(Y))=\mathbb S^{m-1}$, then 
we have $\Sigma_x(Y)=\mathbb S^{m-1}_+$, 
and hence $x\in X_0^{\rm reg}\cap{\ca Tn}$. This contradicts
$x\notin  {\ca Tn}^{\rm reg}$.
\end{proof}

\begin{ex}\label{ex:convexbody}
Let $X$ be a compact convex body in $\mathbb R^m=\R^m\times\{ 0\}\subset\R^{m+1}$,
and let $M_\e$ be the intersection of the
boundary of the  
$\e$-neighborhood of $X$ in $\R^{m+1}$ with  
the half space $x_{m+1}\le 0$.
Then $M_\e$ is an $m$-dimensional nonnegatively curved manifold with totally geodesic boundary,
which converges to $X$ as $\e\to 0$.
Note that $X_0=X_0^1$.

As a special case, let $X$ be a $3$-dimensional cube
$I^3$. Then we have 
\begin{itemize}
\item any point of $X_0=\pa I^3$ is thick in $X\,;$
\item$\ca Tk^{\rm reg}$ is the union of open maximal 
faces of the polyhedron $\pa I^3\,;$
\item for any point of the edges of $\pa I^3$ except the 
vertices, we have $\Sigma_x(X_0)^{\rm int}=\mathbb S^1$, and hence $x\in X_0^{\rm reg}$, but $x\notin \ca Tk^{\rm reg}$. 
\end{itemize}
\end{ex}

\begin{rem}\label{rem:thin-large}
 Using \cite[Example 9.7]{YZ:part1},
one obtains  an example of non-inradius collapse such that
$\ca H^{m-1}(X_0^2)>0$ ($m=\dim X$).
Similarly, 
replacing the function $g$ in  Example \cite[Example 6.18(1)]{YZ:part1}  by a sequence
of functions $g_n$ similar to the one in
\cite[Example 9.7]{YZ:part1},
one can construct an example of non-inradius collapse such that 
\begin{itemize}
\item $\ca H^{m-1}(\ca Tn\cap X_0^1)>0\,;$
\item no local inradius collpase occurs
in the sense that $\dim_H(U)=m$
for any  nonempty open subset $U$ of $X$.
\end{itemize} 
\end{rem}

\pmed\n
\subsection{Local structure of $N_0$}
\psmall  
In this subsection, we prove Theorem \ref{thm:regular-ball}.
Recall that $B^{X_0}(x,r)$ (resp. $B^{X_0^{\rm int}}(x,r))$ denotes the metric $r$-ball around $x\in X_0$ with respect to 
the extrinsic metric (resp. the intrinsic metric) induced from $X$.

\pmed
\begin{proof}[Proof of Theorem \ref{thm:regular-ball}]  
(1)\,  Let $x\in X_0^{\rm reg}\cap X_0^1$.
By Theorems \ref{thm:X1-f} and \ref{thm:non-trivial}, we have $x\in {\rm int} X_0^1$ and  $\eta_0^{-1}(x)\in C_0^{\rm reg}$. The conclusion follows immediately from 
Lemma \ref{lem:eta'} and Theorem \ref{thm:almost-isometry} 
for $C_0$.
The conclusion of (3) for 
$\mathring{B}^{N_0^{\rm int}}(x,\e)$ 
also follows in a similar way, and hence omitted.

In what follows, let us consider the extrinsic metric of $X_0$. 

(2)\,
 If  $x\in {\ca Tk}^{\rm reg}$ (resp.
 $x\in {\ca Tn}^{\rm reg}  \cap X_0^1$),
Theorem \ref{thm:X1-f}
shows that $x\in Y^{\rm reg}$ (resp. $x\in D(Y)^{\rm reg}$).
Let us consider the case  $x\in {\ca Tk}^{\rm reg}$. 
By replacing $Y$ by $D(Y)$, we can discuss 
the case $x\in {\ca Tn}^{\rm reg} \cap X_0^1$ in the same way, and hence we omit the proof for this case.

For any $\delta>0$, we can find an $(m,\delta)$-strainer $\{(a_i,b_i)\}_{i=1}^m$ such that 
$a_i,b_i\in X_0$\,$(1\le i\le m-1)$, and  $a_m:=\gamma_x^+(t)$ for small $t>0$.
Letting $\varphi_i(y):=|a_i,y|_Y$, we have a
$\tau_x(\delta,\e)$-almost isometry
$\varphi=(\varphi_1,\ldots,\varphi_m):B^Y(x,\e)\to\R^m$ onto an open subset
of $\R^m$ (Theorem \ref{thm:almost-isometry}).
We show that $\psi:=(\varphi_1,\ldots,\varphi_{m-1})$ provides a $\tau_x(\delta,\e)$-almost isometry 
$\psi:B^{X_0}(x,\e)\to\R^{m-1}$ onto an open subset
of $\R^{m-1}$.
\cite[Theorem 5.4]{BGP} shows that 
$\psi$ is an open map.
From Lemma \ref{lem:deviation}, we have
for all $y,z\in B^{X_0}(x,\e)$
\beq\label{eq:quot|yz|XY}
   1\le \frac{|y,z|_X}{|y,z|_Y}\le 1+O(\e^2).
\eeq
We show that 
\beq \label{eq:angle(ayz)}
 \tilde\angle^Y a_m yz >\pi/2-\tau_x(\delta,\e),\quad
\tilde\angle^Y a_m zy >\pi/2-\tau_x(\delta,\e).
\eeq
Suppose this does not hold. Then we may assume that there are sequences $y_i,z_i\in X_0$ converging to $x$ satisfying
$\tilde\angle^Y a_m y_iz_i\le \pi/2-c$ for 
a constant $c>0$ independent of $i$ and $t$.
Set $\e_i:=|y_i,z_i|_Y$ and consider the rescaling limit
\[
    \biggl( \frac{1}{\e_i}Y,y_i\biggr)\to (Y_\infty,y_\infty).
\]
It turns out that $\angle(\gamma_{y_\infty}^+, \uparrow_{y_\infty}^{z_\infty})\le\pi/2-c$.
This implies that  
$$
|\gamma_{y_i}^+(\e_iR),z_i|<|\gamma_{y_i}^+(\e_iR),y_i|
$$ for large $R>0$ and $i$. This is a contradiction.

\eqref{eq:angle(ayz)} yields that 
$||a_m,y|-|a_m,z|| < \tau_x(\delta,\e)|y,z|_Y$.
Together with \eqref{eq:quot|yz|XY}, this yields
that 
\beqq
   \biggl|\frac{|\psi(y),\psi(z)|_{\R^{m-1}}}{|y,z|_X}
      -1\biggr| <\tau_x(\delta,\e).
\eeqq
This yields the conclusion for $\mathring{B}^X(x,\e)$.
The argument for $\mathring{B}^{X_0^{\rm int}}(x,\e)$
is almost the same. It only suffices to show the following in place of \eqref{eq:quot|yz|XY}:
\beq\label{eq:quot|yz|XintY}
   1\le \frac{|y,z|_{X_0^{\rm int}}}{|y,z|_Y}\le 1+\tau_x(\delta,\e).
\eeq
To show \eqref{eq:quot|yz|XintY},
we replace the above $\varphi_m$ by
$\tilde\varphi_m(y):=|C_t^Y,y|_Y$ for small
$t>0$.
By Theorem \ref{thm:ALMOST},
$\tilde\varphi:=(\varphi_1,\ldots,\varphi_{m-1},\tilde\varphi_m)$ provides a $\tau_x(\delta,\e)$-almost isometry 
$\tilde\varphi:B^{Y}(x,\e)\to\R^{m}$ onto an open subset
of $\R^{m}$.
Now the point is 
$\tilde\varphi(B^Y(x,\e)\cap X_0)=
\tilde\varphi(B^Y(x,\e))\cap \R^{m-1}\times\{ 0\}$.
For any $y,z\in B^{X_0^{\rm int}}(x,\e)$,
if $\alpha$ denotes the minimal geodesic joining
$\tilde\varphi(y)$ and $\tilde\varphi(z)$,
then $\tilde\varphi^{-1}\circ\alpha$ is a curve
joining $y$ and $z$ in $X_0$.
This implies that 
\begin{align*}
  |y,z|_{X_0^{\rm int}} &\le L(\tilde\varphi^{-1}\circ\alpha)\le (1+\tau_x(\delta,\e))|\tilde\varphi(y),\tilde\varphi(z)| \\
&\le (1+\tau_x(\delta,\e))^2|y,z|_Y,
\end{align*}
showing \eqref{eq:quot|yz|XintY}.

(3)\,
Next suppose $x\in X_0^2$.
Let $\{ p_1,p_2\}:=\eta_0^{-1}(x)$, 
and $U_k:=\eta_0(\mathring{B}(p_k,\e))$\,
$(k=1,2)$.
In this case, we find the $(m,\delta)$-strainer $\{(a_i,b_i)\}_{i=1}^m$ such that 
 $b_m\in\gamma_x^-$ in addition.
In a way similar to the above argument in the case of $x\in {\ca Tk}^{\rm reg}$, 
we can conclude that 
$U_k$ is $\tau_x(\delta,\e)$-almost isometric to open 
subsets of $\R^{m-1}$ with respect to 
the extrinsic metric of $X_0$.
This completes the proof.
\end{proof}

\pmed\n
\subsection{Local structure of $N$}
In this subsection, we investigate the local structure around any thick point in $X$,  and prove
Theorems \ref{thm:regular-ballX(intro)} and
\ref{thm:almostisometry(intro)}.

The following argument works for an arbitrary  finite dimensional Alexandrov space $Y$ with curvature bounded below. We let $m=\dim Y<\infty$ as usual.
%
%

\begin{defn} \label{defn:set-strainer}
For  $\delta>0$ and $1\le k\le m$, a collection  of $k$ pairs  $\{(A_i,B_i)\}_{i=1}^k$ of closed subsets of $Y$ is called a
$(k,\delta)$-\emph{strainer} at $x\in Y$ if it satisfies
\beq
\begin{aligned}\label{defn:AxB}
   \tilde\angle A_ixB_i > \pi - \delta, \quad &
                \tilde\angle A_ixA_j > \pi/2 - \delta, \\
      \tilde\angle B_ixB_j > \pi/2 - \delta,\quad &
                \tilde\angle A_ixB_j > \pi/2 - \delta,
\end{aligned}
\eeq
for all $1\le i\neq j\le k$. 
 The constant 
$\ell:=\min_{1\le i\le k} \{ |A_i,x|, |B_i,x|\}$ is called the {\it length} of the strainer.
\end{defn}

\begin{thm}$($cf.\cite{BGP}$)$\label{thm:ALMOST}
 Let $\{(A_i,B_i)\}_{i=1}^m$ be an
$(m,\delta)$-strainer at $x\in Y$ with length $\ge\ell$.
Then 
the distance map 
$\Phi:B(x,\e)\to \R^m$ defined by 
$\Phi(y)=(|A_1,y|,\ldots,|A_m,y|)$  is a
$\tau(\delta,\e/\ell)$-almost isometry
onto an open subset of $\R^m$
\end{thm}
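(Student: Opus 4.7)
The plan is to mirror the classical Burago--Gromov--Perelman argument (cf.\ \cite{BGP}) for the point strainer case, with the only change being that the point strainer $\{(a_i,b_i)\}$ is replaced by the set strainer $\{(A_i,B_i)\}$. The entire BGP proof uses only three facts about the coordinate functions: each $f_i:=|A_i,\cdot|$ is $1$-Lipschitz, it is $\lambda$-concave on $B(x,\e)$ with $\lambda$ depending on $\kappa$ and a lower bound for $|A_i,x|$, and the comparison angle conditions \eqref{defn:AxB} translate, via Toponogov's comparison, to an almost orthonormality of the directions $\{\uparrow_y^{A_i}\}_{i=1}^m$ in $\Sigma_y(Y)$. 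All three hold verbatim for distance functions to closed sets.

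First I would propagate the strainer from $x$ to every $y\in B(x,\e)$. Since $|A_i,\cdot|$ and $|B_i,\cdot|$ are $1$-Lipschitz and $|A_i,B_i|$ does not depend on a reference point, the three side lengths defining $\tilde\angle A_i y B_i$ change by at most $\e$ relative to their values at $x$, so each comparison angle in \eqref{defn:AxB} remains within $\tau_x(\e)$ of its value at $x$. Hence $\{(A_i,B_i)\}$ is an $(m,\delta+\tau_x(\e))$-strainer at every $y\in B(x,\e)$. Toponogov then gives
\[
\angle(\uparrow_y^{A_i},\uparrow_y^{B_i})>\pi-\delta-\tau_x(\e),\qquad
\angle(\uparrow_y^{A_i},\uparrow_y^{A_j})>\pi/2-\delta-\tau_x(\e),
\]
together with the two symmetric inequalities, for all $i\ne j$ in $\Sigma_y(Y)$. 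Since $\dim Y=m$, this forces the $m$ directions $\{\uparrow_y^{A_i}\}_{i=1}^m$ to be an almost orthonormal system in $T_y(Y)$, so
\[
\sum_{i=1}^m\cos^2\angle(v,\uparrow_y^{A_i})=1+\tau_x(\delta,\e)
\]
for every unit $v\in\Sigma_y(Y)$.

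For the almost isometry itself, I would take $y,z\in B(x,\e)$ joined by a minimizing geodesic $\gamma:[0,\ell]\to Y$ with $\ell=|y,z|_Y$ and apply the semiconcavity of $f_i$: since $f_i\circ\gamma$ is $\lambda$-concave,
\[
f_i(z)-f_i(y)=\ell\cdot df_i|_y(\uparrow_y^z)+O(\ell^2)=-\ell\cos\theta_i+O(\ell^2),
\]
where $\theta_i:=\min_{\xi\in\Uparrow_y^{A_i}}\angle(\uparrow_y^z,\xi)$. Squaring, summing over $i$, and invoking the orthonormality identity at $y$ with $v=\uparrow_y^z$ yields
\[
|\Phi(z)-\Phi(y)|^2=\ell^2\bigl(1+\tau_x(\delta,\e)\bigr),
\]
which is the desired two-sided estimate $|\Phi(z)-\Phi(y)|=|y,z|_Y\bigl(1\pm\tau_x(\delta,\e)\bigr)$. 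Openness of $\Phi$ onto an open subset of $\R^m$ then follows from the Alexandrov-space inverse function theorem in \cite{BGP}.

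The main technical obstacle I anticipate is the possible multi-valuedness of $\uparrow_y^{A_i}$, since a minimizing geodesic from $y$ to $A_i$ need not be unique and the nearest-point map into $A_i$ may jump as $y$ varies. This is handled by working with the differential $df_i|_y(v)=-\max_{\xi\in\Uparrow_y^{A_i}}\cos\angle(v,\xi)$, which is unambiguously defined, and by noting that \emph{every} $\xi\in\Uparrow_y^{A_i}$ satisfies the propagated strainer inequalities, so both the orthonormality estimate and the first variation formula are insensitive to the choice of minimizing direction.
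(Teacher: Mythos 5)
Your overall architecture matches the paper's proof: propagate the strainer to every $y\in B(x,\e)$, deduce from \cite[Lemma 9.3]{BGP} the almost-orthonormality $\sum_i\cos^2\angle(\Uparrow_y^{A_i},\xi)=1\pm\tau(\delta)$, estimate each coordinate increment by a first-variation formula, sum the squares, and conclude openness. But there is a genuine gap at the central step. You write
\[
f_i(z)-f_i(y)=\ell\cdot df_i|_y(\uparrow_y^z)+O(\ell^2)
\]
and justify the \emph{equality} by $\lambda$-concavity of $f_i\circ\gamma$. Concavity only gives the one-sided bound $f_i(z)-f_i(y)\le \ell\,df_i|_y(\uparrow_y^z)+\lambda\ell^2/2$: a concave function lies below its tangent line, but nothing prevents its derivative from dropping abruptly along $\gamma$ (distance functions are semiconcave, not semiconvex, and $|A_i,\cdot|\circ\gamma$ can have concave kinks where the nearest point in $A_i$ jumps). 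The matching lower bound, with an error that is uniform over $y,z\in B(x,\e)$, is precisely the nontrivial content here, and your remark that the formula is ``insensitive to the choice of minimizing direction'' does not supply it.

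The paper closes exactly this gap in Sublemma \ref{slem:set-comparison}: it bounds $\Phi_i(z)-\Phi_i(y)$ from above using a nearest point $y'\in A_i$ to $y$ (via $\Phi_i(z)\le|z,y'|$ and the angle--comparison-angle estimate of \cite[Lemma 5.6]{BGP}), from below using a nearest point $z'\in A_i$ to $z$ (via $\Phi_i(z)=|z,z'|$, $\Phi_i(y)\le|y,z'|$), and then reconciles the two angles $\angle y'yz$ and $\angle z'yz$ by proving $\angle y'yz'<4\delta$ (Claim \ref{clm:angle(y'yz')}), which uses the $B_i$-half of the strainer: both $\uparrow_y^{y'}$ and $\uparrow_y^{z'}$ are forced to be nearly antipodal to the direction toward $B_i$. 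Some such argument controlling how $\Uparrow_\cdot^{A_i}$ varies between the two endpoints (equivalently, the near-supplementarity of the angles at $y$ and at $z$) is indispensable; if you instead apply semiconcavity at both endpoints you only get one inequality between $\cos\angle(\Uparrow_y^{A_i},\uparrow_y^z)$ and $\cos\angle(\Uparrow_z^{A_i},\uparrow_z^y)$, not the two-sided statement your squared-sum computation needs. The remaining steps of your proposal (strainer propagation, use of the $B_i$'s only through almost-orthonormality, and openness via invariance of domain / the BGP open-map argument) are consistent with the paper.
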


In the case when $A_i,B_i$ are all points,
Theorem \ref{thm:ALMOST} is proved in \cite[Theorem 9.4]{BGP} (Theorem \ref{thm:almost-isometry}).
In the general case, we need to modify the 
argument there, though the essential idea 
is still the same as \cite[Theorem 9.4]{BGP}.
Since the authors do not know a reference for 
Theorem \ref{thm:ALMOST}, we give the 
proof below for reader's convenience.

\begin{proof}[Proof of Theorem \ref{thm:ALMOST}]
It is easy to verify that $\{ (\Uparrow_y^{A_i},\Uparrow_y^{B_i})\}_{i=1}^{m}$ is an $(m,\delta)$-strainer
of $\Sigma_y(Y)$ in the sense of 
\cite[Definition 9.1]{BGP} for all $y\in B^Y(x,\e)$
if $\e$ is small enough.  From \cite[Lemma 9.3]{BGP}, we have 
\beq \label{eq:Sumcos2=1}
  \biggl|\sum_{i=1}^m\cos^2\angle(\Uparrow_y^{A_i},\xi)-1\biggr| <\tau(\delta)
\eeq
for any $\xi\in\Sigma_y(Y)$.
We set $\Phi=(\Phi_1,\ldots,\Phi_m)$, and
$$
\ell:=\min _{1\le i\le m} \min\{|A_i,x|,|B_i,x|\}.
$$

\begin{slem}\label{slem:set-comparison}
For each $1\le i\le m$ and arbitrary $y,z\in B^Y(x,\e)$, if $\e$ is small enough, we have 
\[
   \biggl|\frac{|\Phi_i(z)-\Phi_i(y)|}{|z,y|} + \cos\angle(\Uparrow_y^{A_i}, \uparrow_y^z)\biggr|
     <\tau(\delta,\e/\ell).
\]
\end{slem}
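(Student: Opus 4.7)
My plan is to follow the Toponogov-based first-variation argument from BGP's Theorem 9.4, adapted from point-strainers to set-strainers. For the upper bound on $\Phi_i(z)-\Phi_i(y)$, I would choose a footpoint $a\in A_i$ with $|a,y|=|A_i,y|$, use $|A_i,z|\le|a,z|$, and apply hinge comparison to $\triangle ayz$; since $|y,z|<\e\ll\ell\le|a,y|$, Taylor-expanding the $\kappa$-law of cosines gives
\[
   \frac{\Phi_i(z)-\Phi_i(y)}{|y,z|}\le -\cos\angle(\uparrow_y^a,\uparrow_y^z)+O(\e/\ell).
\]
For the lower bound I would symmetrically take $a'\in A_i$ realizing $|A_i,z|$ and work in $\triangle a'zy$, obtaining
\[
   \frac{\Phi_i(z)-\Phi_i(y)}{|y,z|}\ge \cos\angle(\uparrow_z^{a'},\uparrow_z^y)-O(\e/\ell).
\]

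Next I would relate these to $\angle(\Uparrow_y^{A_i},\uparrow_y^z)$ via two auxiliary facts. First, since $y$ is $(m,\delta')$-strained with $\delta'=\delta+\tau(\e/\ell)$, actual angles at $y$ match Euclidean comparison angles up to $\tau(\delta')$; applied to the thin triangle $\triangle a'zy$ (one side $<\e$, two sides comparable to $\ell$), whose comparison triangle is nearly degenerate and satisfies $\tilde\angle a'yz+\tilde\angle a'zy=\pi+O(\e/\ell)$, this yields $\cos\angle(\uparrow_z^{a'},\uparrow_z^y)=-\cos\angle(\uparrow_y^{a'},\uparrow_y^z)+\tau(\delta,\e/\ell)$. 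Second, the comparison inequality $|A_i,B_i|\le|a,b|$ for footpoints $a\in A_i,b\in B_i$ gives $\tilde\angle ayb\ge\tilde\angle A_iyB_i>\pi-\delta'$, so every $\xi\in\Uparrow_y^{A_i}$ satisfies $\angle(\xi,\Uparrow_y^{B_i})>\pi-\tau(\delta,\e/\ell)$. Because $\Sigma_y(Y)$ has curvature $\ge 1$, spherical comparison then forces $\diam(\Uparrow_y^{A_i})<\tau(\delta,\e/\ell)$. Finally, $|a',y|-|A_i,y|\le 2|y,z|$ shows that $a'$ is an almost-footpoint from $y$, and a short perturbation argument places $\uparrow_y^{a'}$ within $\tau(\delta,\e/\ell)$ of $\Uparrow_y^{A_i}$. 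Combining these replaces both $\angle(\uparrow_y^a,\uparrow_y^z)$ and $\angle(\uparrow_y^{a'},\uparrow_y^z)$ by $\angle(\Uparrow_y^{A_i},\uparrow_y^z)$ within error $\tau(\delta,\e/\ell)$, and the two-sided estimate follows.

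The main obstacle is the diameter control $\diam(\Uparrow_y^{A_i})<\tau(\delta,\e/\ell)$: the spherical comparison in $\Sigma_y(Y)$ is standard once the strainer is known to hold at $y$, but tracking the perturbation $|x,y|<\e$ through all comparison angles introduces several extra $\e/\ell$ terms that must be carefully absorbed into $\tau$. A related subtlety is that $\tilde\angle A_iyB_i$ is defined via $|A_i,B_i|$ rather than via specific footpoints; fortunately $|A_i,B_i|\le|a,b|$ goes the favorable direction, so this causes no loss.
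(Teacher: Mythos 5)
Your proposal is correct, and at the top level it follows the same first-variation scheme as the paper's proof (an adaptation of \cite[Theorem 9.4]{BGP}): bound $\Phi_i(z)-\Phi_i(y)$ from above via a footpoint of $A_i$ nearest to $y$, from below via a footpoint nearest to $z$, and convert both into $-\cos\angle(\Uparrow_y^{A_i},\uparrow_y^z)$. The differences are in execution. For the lower bound the paper stays at the vertex $y$: it writes $\Phi_i(z)-\Phi_i(y)\ge |z,z'|-|y,z'|$ with $z'$ the footpoint of $z$ and estimates this through the comparison angle $\wangle z'yz$ and angle rigidity at the strained point $y$ only, whereas you compute the hinge at $z$ and transfer the angle back to $y$ through the nearly degenerate comparison triangle, which additionally invokes angle rigidity at $z$ (harmless, since every point of $B(x,\e)$ is strained when $\e\ll\ell$). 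The genuinely delicate point, footpoint coherence, is handled in the paper by Claim \ref{clm:angle(y'yz')} ($\angle y'yz'<4\delta$), proved by a compactness/contradiction argument along sequences converging to $x$; you instead prove the analogous facts quantitatively --- $\diam(\Uparrow_y^{A_i})<\tau(\delta,\e/\ell)$ and $\uparrow_y^{a'}$ close to $\Uparrow_y^{A_i}$ --- using $\wangle ayb\ge\wangle A_iyB_i$, the almost-footpoint estimate $|y,a'|\le|A_i,y|+2|y,z|$, near-equality in the triangle inequality against a footpoint of $B_i$, and the angle-sum (perimeter $\le 2\pi$) comparison in $\Sigma_y(Y)$; all of these steps are valid. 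Your route yields an explicit, sequence-free estimate matching the quantitative form of the statement, at the cost of the extra angle transfer from $z$ to $y$; the paper's route is shorter at that step but relies on a non-effective limiting argument.
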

\begin{proof}
Choose  nearest points $y',z'$ of $A_i$ from $y,z$
respectively.
Since $|\angle y'yz-\wangle y'yz|<\tau(\delta,\e/\ell)$ from Lemma \ref{lem:comparison}, 
we have 
\beq
\begin{aligned} \label{eq:bdabove}
\Phi_i(z)-\Phi_i(y) &\le |z,y'|-|y,y'|\\
 & \le |y,z|(-\cos \angle y'yz+\tau(\delta,\e/\ell)).
\end{aligned}
\eeq
This shows the half of Sublemma \ref{slem:set-comparison}.

\begin{clm}\label{clm:angle(y'yz')}
If
$\e$ is small enough, we have 
$\angle y'yz' <4\delta$.
\end{clm}
\begin{proof}
We proceed by contradiction.
Suppose this does not hold. Then we have 
sequences $y_j$ and $z_j$ converging to $x$
such that 
$\angle y_j'y_jz_j' \ge 4\delta$,
where $y_j',z_j'$ are nearest points of $A_i$ from
$y_j,z_j$ respectively.
Take a nearest point $w_j$ of $B_i$ from $y_j$.
Let $x_1',x_2'\in A_i$ and $w\in B_i$ be limits
of $y_j',z_j'$ and $w_j$ as $j\to\infty$.
Since $x_1',x_2'$ are nearest point of $A_i$ from $x$, \eqref{defn:AxB} implies  $\wangle x_k' x w >\pi-\delta$ 
$(k=1,2)$. Hence we have for large $j$
\[
   \angle y_j' y_j w_j \ge\wangle y_j' y_j w_j >\pi-2\delta, \quad
\angle z_j' y_j w_j\ge 
 \wangle z_j' y_j w_j >\pi-2\delta,
\]
yielding $\angle y_j'y_jz_j'<4\delta$,
a contradiction.
\end{proof}

Since $|\angle z'yz-\wangle z'yz|<\tau(\delta,\e/\ell)$ from  Lemma \ref{lem:comparison}, 
using Claim \ref{clm:angle(y'yz')}, similarly we have
\beq
\begin{aligned} \label{eq:bddbelow}
\Phi_i(z)-\Phi_i(y) &\ge |z,z'|-|y,z'|\\
& \ge |y,z|(-\cos \angle z'yz-\tau(\delta,\e/\ell))\\
 & \ge |y,z|(-\cos \angle y'yz-\tau(\delta,\e/\ell))
\end{aligned}
\eeq
Combining \eqref{eq:bdabove}
and \eqref{eq:bddbelow},
we complete the 
proof of Sublemma \ref{slem:set-comparison}.
\end{proof}

It follows from
\eqref{eq:Sumcos2=1} and Sublemma \ref{slem:set-comparison}  that 
\[
 \biggl|\frac{|\Phi(z)-\Phi(y)|^2}{|z,y|^2} -1\biggr|
 <\tau(\delta,\e/\ell).
\]
Thus $\Phi$ is  
$\tau(\delta,\e/\ell)$-almost isometric.
Finally by the invariance of domain theorem 
in $\R^m$, $\Phi(\mathring{B}(x,\e))$ is open.
This completes the proof of Theorem \ref{thm:ALMOST}.
\end{proof}

\begin{proof}[Proof of Theorem \ref{thm:regular-ballX(intro)}]
 (1)\,
Consider the convergence
\beq \label{eq:rescaling-stability}
    \biggl(\frac{1}{r}Y,x\biggr) \to (T_x(Y), o_x), \quad r\to 0.
\eeq
For $0<r<t_0$, set $C^+_r:=\bigcup_{r\le t\le t_0} C^Y_t$ (see \eqref{eq:CtY}), and let $C^+_*\subset T_x(Y)$ be the limit 
of $C^+_r$ under \eqref{eq:rescaling-stability}.
Note that 
$X=\{ d_{C^+_r}\ge r\}$ and 
$T_xX=\{ d_{C^+_*}\ge 1\}$.

Let us consider the distance functions
$(d^Y_x, d^Y_{C^+_r})$ on $Y$, which converge to
$(d_{o_x}, d_{C^+_*})$ on $T_x(Y)$ under \eqref{eq:rescaling-stability}.
For any $0<\e<1$, consider the metric annuli
\begin{align*}
A^X(x,\e r,r)&:=(d_x, d_{C^+_r})^{-1}([\e r,r]\times[r,\infty))\cap X,\\
A^{T_xX}(o_x,\e,1)&:=(d^Y_{o_x}, d^Y_{C^+_*})^{-1}([\e,1]\times[1,\infty)\cap T_xX.
\end{align*}
Note that $d_{o_x}$ is $(1-\delta)$-regular 
on $\{ 0<d_{o_x}\le 1\}$ for any $\delta>0$.
We show that  
$(d_{o_x}, d_{C^+_*})$ is $(c_0,\delta)$-regular on 
$\{ 0<d_{o_x}\le 1, d_{C^+_*}=1\}=B^{T_xX_0}(o_x,1)\setminus \{ o_x\}$,
where $c_0$ is a constant depending only on 
$x$.
For any $v\in B^{T_xX_0}(o_x,1)\setminus \{ o_x\}$,
$\xi_v^+\in\Sigma_v(T_xY)$ denotes the perpendicular
direction at $v$ to $T_x X$.
Let $\nabla d_{o_x}$ denote the direction at 
$v$ with $\angle(\nabla d_{o_x},\uparrow_v^{o_x})=\pi$.
Note that 
$\Sigma_v(T_xY)$ is the spherical suspension 
$\{ \nabla d_{o_x},\uparrow_v^{o_x}\}*\Sigma_\infty$, where 
$\Sigma_\infty$ is an Alexandrov space
with curvature $\ge 1$ containing $\xi_v^+$.
From the lower semicontinuity of angles together
with the assumption ${\rm rad}(\xi_x^+)>\pi/2$,
we have 
\[
  {\rm rad}(\xi_v^+):=
\sup \{ \angle(\xi_v^+, \xi)\,|\, \xi\in\Sigma_\infty\}>\pi/2.
\]
Therefore from the suspension structure of 
$\Sigma_v(T_xY)$, it is possible to choose
$w\in\Sigma_v(T_xY)$ satisfying
\beq \label{eq:regular-angle(ovw)}
\angle(\xi_v^+,w)>\pi/2+c, \quad  
\angle(\uparrow_v^{o_x},w)>\pi/2+c,
\eeq
where $c>0$ is a constant depending only on 
$x$.
Since $\angle(\xi_v^+,\uparrow_v^{o_x})=\pi/2$,
\eqref{eq:regular-angle(ovw)} shows that 
$(d_{o_x}, d_{C^+_*})$ is $(c_0,\delta)$-regular at $v$ for any $\delta$.

By Theorem \ref{thm:stability-respectful},
we have a homeomorphism
\[
\varphi_1:A^{X^{\rm ext}}(x,\e r,r)\to
A^{T_xX}(o_x,\e,1),
\]
where we set 
$$
A^{X^{\rm ext}}(x,a,b):=A^Y(x,a,b)\cap X.
$$
By iteration, we also have 
homeomorphisms
\[
\varphi_i:A^{X^{\rm ext}}(x,\e^i r,\e^{i-1}r)\to
A^{T_xX}(o_x,\e^i,\e^{i-1})
\]
for all $i\ge 1$.
 Here since $\varphi_i$ are constructed based on
gluing of locally defined homeomorphisms, we may assume that $\varphi_i$  extends 
$\varphi_{i-1}$ (see \cite[Theorem 4.3]{Pr:alexII}). 
Combining all $\varphi_i$\, $(i\ge 1)$, we get 
a homeomorphism
$\varphi:B^X(x,r)\to B^{T_xX}(o_x,1)$.
The required neighborhood is given by 
$U(x,r):=\mathring{B}_{\rm ex}^X(x,r)$.

(2)\,  By Theorem \ref{thm:X1-f}, $x$ is a 
regular point of $Y$. From the assumption on $x$, for any $\delta>0$, choose
an $(m,\delta)$-strainer $\{ (a_i,b_i)\}_{i=1}^{m}$ of $Y$ at $x$ such that 
$\{ (a_i,b_i)\}_{i=1}^{m-1}\subset X_0$ and 
$a_m=C_t^Y$ for some $t>0$.
Consider the distance map $\Phi:\mathring{B}^Y(x,\e) \to \R^m$ defined by 
\[
  \Phi(y):= (|a_1,y|,\ldots, |a_{m-1},y|, |a_m,y|-t),
\]
By Theorem \ref{thm:ALMOST}, 
$\Phi$ is a
$\tau(\delta,\e/\ell)$-almost isometry,
and so is 
the restriction 
$\Phi:\mathring{B}^Y(x,\e)\cap X\to \R^m_+$. 
This completes the proof of Theorem \ref{thm:regular-ballX(intro)}(2).
\end{proof}
\psmall

Now we give an application of Theorem \ref{thm:regular-ballX(intro)}.
The {\it radius} of a metric space $\Sigma$
is defined as 
$$
{\rm rad}(\Sigma):=\inf_{x\in\Sigma} \sup_{y\in\Sigma} |x,y|.
$$

\begin{thm}$($\cite{GP}, \cite{PtPt:extremal}$)$
\label{thm:RadiusSphere}
If $\Sigma$ is a compact Alexandrov space with
curvature $\ge 1$ satisfying ${\rm rad}(\Sigma)>\pi/2$, then it is homeomorphic to a sphere.
\end{thm}

\begin{cor} \label{cor:stability}
 If $x\in  {\ca Tk}$ satisfies  
${\rm rad}(\Sigma_x(X_0)^{\rm int})>\pi/2$, 
then 
$\mathring{B}^{X^{\rm ext}}(x,\e)$ is homeomorphic to 
$\R_+^{m}$ for small enough $\e>0$.
\end{cor}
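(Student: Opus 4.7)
The plan is to combine Theorem \ref{thm:regular-ballX(intro)}(1) with a distance-function/critical-point analysis on $\Sigma_x(X)$. Since $x$ is thick, Theorem \ref{thm:regular-ballX(intro)}(1) yields $\e>0$ with $\mathring{B}^X(x,\e)\cong T_x(X)=K(\Sigma_x(X))$. Because $\R_+^m\cong K(\mathbb S^{m-1}_+)$ and the cone construction respects homeomorphisms of the base, it suffices to show that $\Sigma_x(X)$ is homeomorphic to $\mathbb S^{m-1}_+$, i.e.\ to a closed $(m{-}1)$-disk.

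First I would establish $\Sigma_x(X_0)^{\rm int}\cong\mathbb S^{m-2}$. Under $x\in X_0^{\rm reg}$ this is the definition of metric regularity. Under the alternative hypothesis ${\rm rad}(\Sigma_x(X_0)^{\rm int})>\pi/2$, the space $\Sigma_x(X_0)^{\rm int}$ is a compact $(m{-}2)$-dimensional Alexandrov space with curvature $\ge 1$ and radius $>\pi/2$, so the Grove--Shiohama--Perelman sphere theorem for Alexandrov spaces provides the homeomorphism.

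Next, Theorem \ref{thm:X1-f}(3) applies since $x\in\ca Tk\subset\inte\, X_0^1\setminus\ca C$: we have $f_*={\rm id}$, $d\eta_0$ is an isometry $\Sigma_p(C_0)\to\Sigma_x(X_0)^{\rm int}$, and $\Sigma_x(Y)=\Sigma_p(C)\cup_{d\eta_0}\Sigma_x(X)$. By Proposition \ref{prop:perp+horiz}(3),
\[
\Sigma_x(X)=\{v\in\Sigma_x(Y):\angle(\xi_x^+,v)\ge\pi/2\},\qquad \Sigma_x(X_0)=\{v\in\Sigma_x(Y):\angle(\xi_x^+,v)=\pi/2\}.
\]
Viewing $\Sigma_x(X)$ with its intrinsic metric as an $(m{-}1)$-dimensional Alexandrov space of curvature $\ge 1$ with boundary $\Sigma_x(X_0)^{\rm int}$, I would study $h:=d_{\xi_x^+}$ on it. The thickness hypothesis gives $h(v^*)={\rm rad}(\xi_x^+)>\pi/2$ attained at an interior point $v^*\in\Sigma_x(X)\setminus\Sigma_x(X_0)$; and at any $v\in\Sigma_x(X_0)$, the direction in $\Sigma_x(Y)$ opposite to $\uparrow_v^{\xi_x^+}$ is transverse to $\Sigma_x(X_0)$ and points into $\Sigma_x(X)$, giving a non-zero gradient of $h$, so $\Sigma_x(X_0)$ is a regular level of $h$.

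The principal obstacle is showing $h$ has no critical points in $\Sigma_x(X)$ other than $v^*$. Granted this, Perelman's fibration/conical neighborhood theorem for distance functions on Alexandrov spaces furnishes a homeomorphism $\Sigma_x(X)\cong K(\Sigma_x(X_0)^{\rm int})\cong K(\mathbb S^{m-2})\cong \mathbb S^{m-1}_+$, whence $T_x(X)\cong K(\mathbb S^{m-1}_+)\cong\R_+^m$. The absence of intermediate critical points should follow from the curvature $\ge 1$ bound of $\Sigma_x(Y)$ together with the gluing decomposition $\Sigma_x(Y)=\Sigma_p(C)\cup\Sigma_x(X)$, using that the complementary region $\Sigma_p(C)\setminus\Sigma_p(C_0)$ controls the near hemisphere of $\xi_x^+$; an alternative route would pass to the double $D(\Sigma_x(X))$, verify it is a closed Alexandrov space of curvature $\ge 1$ of radius $>\pi/2$ (using the topology of $\Sigma_x(X_0)^{\rm int}\cong\mathbb S^{m-2}$ and the thickness), and deduce $D(\Sigma_x(X))\cong\mathbb S^{m-1}$ via the sphere theorem, so that $\Sigma_x(X)$ is a closed disk.
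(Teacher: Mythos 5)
Your strategy tracks the paper's proof quite closely: both reduce the claim, via Theorem \ref{thm:regular-ballX(intro)}(1), to showing that $\Sigma_x(X)$ is homeomorphic to a closed $(m-1)$-disk; establish $\Sigma_x(X_0)^{\rm int}\cong\mathbb{S}^{m-2}$ (trivially in the metrically regular case, and by the radius sphere theorem of \cite{GP}, \cite{PtPt:extremal} when ${\rm rad}(\Sigma_x(X_0)^{\rm int})>\pi/2$); and then use critical point theory for $d_{\xi_x^+}$ together with Perelman's fibration theorem to conclude $\Sigma_x(X)\cong K(\Sigma_x(X_0))$, a disk. The paper runs the critical point argument on the full space of directions $\Sigma_x(Y)$, declaring $\zeta$ to be the farthest point from $\Sigma_x(X_0)$ (equivalently from $\xi_x^+$, since $d_{\xi_x^+}=d_{\Sigma_x(X_0)}+\pi/2$ on $\Sigma_x(X)$) and asserting that $d_{\xi_x^+}$ is regular on $\Sigma_x(Y)\setminus\{\xi_x^+,\zeta\}$. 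Your version restricts to $\Sigma_x(X)$ as a space with boundary; this is permissible, since $\Sigma_x(X)=\{d_{\xi_x^+}\ge\pi/2\}$ is a superlevel set of the concave function $-\cos d_{\xi_x^+}$ and hence convex in $\Sigma_x(Y)$, but it buys you nothing over the paper's phrasing.

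The gap you flag---absence of intermediate critical points, equivalently uniqueness of the farthest point---is genuine, and the paper itself states the regularity assertion without elaboration, so you have correctly identified the one step that needs care. However, your proposed alternative route via the double does not close the gap as stated: you cannot infer ${\rm rad}(D(\Sigma_x(X)))>\pi/2$ from ${\rm rad}(\xi_x^+)>\pi/2$. The thickness hypothesis controls distances in $\Sigma_x(Y)$ to $\xi_x^+$, which is not a point of $\Sigma_x(X)$, and in $D(\Sigma_x(X))$ paths may cut through the boundary $\Sigma_x(X_0)$, so the intrinsic radius of the double can fail to exceed $\pi/2$ (it is not even monotone in the data at hand). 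If you want to close the critical-point gap, the cleaner route is the paper's: exploit concavity of $-\cos d_{\xi_x^+}$ on the boundaryless curvature~$\ge1$ space $\Sigma_x(Y)$ to produce a single maximum $\zeta$ with regularity of $d_{\xi_x^+}$ elsewhere, then apply Theorem \ref{thm:stability-respectful}/Perelman's fibration theorem.
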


\begin{proof} By Theorem \ref{thm:RadiusSphere},
$\Sigma_x(X_0)$ is homeomorphic to $\mathbb S^{m-2}$.
Let $\zeta$ be the farthest point of 
$\Sigma_x(X)$ from $\Sigma_x(X_0)$
(and hence from $\xi_x^+$).
Since $d_{\xi_x^+}$ is regular on
$\Sigma_x(Y)\setminus \{ \xi_x^+,\zeta\}$,
Perelman's fibration theorem
 (\cite{Pr:alexII}, \cite{Per}) yields that 
$\Sigma_x(X)$ is homeomorphic to the unit cone over
$\Sigma_x(X_0)$, which is homeomorphic to a disk.
The conclusion  immediately follows  from 
Theorem \ref{thm:regular-ballX(intro)}(1).
\end{proof}  

We now investigate the local structure around any regular thin point.
 
\begin{proof}[Proof of Theorem \ref{thm:almostisometry(intro)}]
First we consider
\pmed\n
{\bf Case 1)\, $x\in {\ca Tn}^{\rm reg}\cap X_0^1$}.\,
\pmed
\begin{figure}
\begin{center}
\begin{tikzpicture}
[scale = 0.5]
\draw  [-, thick] (-4,0) -- (4,0);
\draw [-, thick] (-4,1.3) to [out=-35, in=175] (0,0);
\draw [-, thick] (4,1.3) to [out=215, in=05] (0,0);
\draw [dotted, thick] (-4,1.3) -- (-3.2, 2.2);
\draw [dotted, thick] (4,1.3) -- (3.2, 2.2);
\draw [dotted, thick] (-3.2, 2.2) to [out=-30, in=210]     (3.2,2.2);

\fill (0,0) circle (2pt) node [below] {\small{$x$}};

\fill (-3.5,-0.2) circle (0pt) node [above] {\small{$X$}};
\fill (0,0.2) circle (0pt) node [above] {\small{$Y$}};

\fill (4,1.3) circle (0pt) node [right] {\small{$X_0$}};
\fill (4,0) circle (0pt) node [right] {\small{$\pa Y$}};
\filldraw [fill=lightgray, opacity=.1] 
(-4,0) to (4,0)
 to  [out=90, in=-90]  (4, 1.3)
to [out=215, in=5]  (0,0)
to  [out=175, in=-35] (-4,1.3)
to  [out=90, in=90]  (-4, 0);

\end{tikzpicture}
\end{center} 
\vspace{-0.5cm}
\caption{Thin point}   \label{fig:special-thin}
\end{figure}

%
%

Note that $x\in\pa Y$ and 
$\Sigma_x(X_0)=\Sigma_x(\pa Y)$ (see Fig.\ref{fig:special-thin}).
For any $\delta>0$, choose
an $(m,\delta)$-strainer $\{ (a_i,b_i)\}_{i=1}^{m}$ of $D(Y)$ at $x$ such that 
$\{ (a_i,b_i)\}_{i=1}^{m-1}\subset X_0$,
$a_m=C^Y_{t}$ and $b_m=\tilde C^Y_{t}$ for some 
$t>0$, where 
$\tilde C^Y_{t}$ is another copy of $C^Y_{t}$ in
$D(Y)$.
Set $s_i:=|a_i,x|$ for $1\le i\le m-1$, and
consider the distance maps $\psi:B^{D(Y)}(x,2\e)\to \R^{m-1}$, $\Phi:B^{D(Y)}(x,2\e)\to \R^m$ defined by 
\begin{align*}
&\psi(y):= (|a_1,y|-s_1,\ldots, |a_{m-1},y|-s_{m-1}), \quad \\
& \Phi(y):= (\psi(y), |a_m,y|-t_0).
\end{align*}

By Theorem \ref{thm:ALMOST},
$\Phi$ is a $\tau(\delta,\e/\ell)$-almost isometry
onto an open subset of $\R^m$.

\begin{slem} \label{slem:psi-almostisom}
$\psi$ provides a $\tau(\delta,\e/\ell)$-almost isometric map from $B^{X_0}(x,2\e)$ onto an open subset of $\R^{m-1}$. 
\end{slem}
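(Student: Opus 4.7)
The plan is to deduce Sublemma \ref{slem:psi-almostisom} from the almost-isometry of $\Phi$ on $B^{D(Y)}(x,2\e)$ already established above, by showing that the last coordinate $\Phi_m = |a_m,\cdot|-t_0$ is constant on $X_0$ near $x$. Concretely, I would first verify that $|a_m,y|=t$ for every $y \in X_0$: since $a_m = C^Y_t = \eta(C_0\times\{t\})$, each point $q \in a_m$ satisfies $|q,X|=t$ by the warped-product structure of $C$ together with Lemma \ref{lem:loc-isom}, so $|y,q|\ge t$ for $y \in X_0 \subset X$, and equality is attained along the perpendicular $\gamma_y^+$ at parameter $t$. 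Hence $\Phi_m \equiv t-t_0$ on $X_0 \cap B^{D(Y)}(x,2\e)$.

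With $\Phi_m$ constant on $X_0$, for any $y,z \in X_0 \cap B^{X_0}(x,2\e)$ the last coordinate of $\Phi$ drops out, giving
\[
|\psi(y)-\psi(z)|_{\R^{m-1}} = |\Phi(y)-\Phi(z)|_{\R^m} = (1\pm\tau_x(\delta,\e))\,|y,z|_{D(Y)}
\]
via Theorem \ref{thm:ALMOST}. To convert this into a statement about the exterior metric on $X_0$, which by definition coincides with $|\cdot,\cdot|_Y$, I would invoke the standard folding argument across $\pa Y = X_0$ in the double $D(Y)$: any path in $D(Y)$ between two boundary points can be reflected via the canonical isometric involution of $D(Y)$ into a single copy of $Y$ while preserving its length, so $|y,z|_{D(Y)} = |y,z|_Y$. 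This yields the desired $\tau_x(\delta,\e)$-almost isometric estimate for $\psi$ on $B^{X_0}(x,2\e)$.

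It remains to show that $\psi(B^{X_0}(x,2\e))$ is open in $\R^{m-1}$. My approach is to verify that $\{|a_m,\cdot| = t\} \cap B^{D(Y)}(x,2\e) = X_0 \cap B^{D(Y)}(x,2\e)$: on the warped cylinder piece $y=\eta(p,s) \in Y\setminus X$ with small $s>0$ one has $|a_m,y| < t$ directly from the perpendicular distance in $C$, while on $X\setminus X_0$ and on the reflected copy of $Y\setminus X_0$ inside $D(Y)$ one has $|a_m,y|>t$ locally near $x$, because reaching $C^Y_t$ from either side picks up an essentially additive contribution from $|y,X_0|>0$. Consequently $\psi(B^{X_0}(x,2\e))$ equals the intersection of the open set $\Phi(B^{D(Y)}(x,2\e)) \subset \R^m$ with the hyperplane $\{z_m = t-t_0\}$, hence is open in $\R^{m-1}$. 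The main obstacle I anticipate is this last sign analysis, specifically obtaining a strict inequality $|a_m,y|>t$ for $y \in X\setminus X_0$ near $x$; unlike the warped cylinder case where the inequality is immediate from the product structure, the $X$-side estimate requires the thin regular hypothesis at $x$ and the $(m,\delta)$-strainer configuration to rule out any cancellation and confirm the additive behavior of $|a_m,\cdot|$ transverse to $X_0$.
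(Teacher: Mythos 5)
Your approach is genuinely different from the paper's. The paper applies Lemma \ref{lem:deviation} to the $Y$-geodesic between $y,z\in X_0$: its initial direction $\xi$ is $O(\e)$-close to some $v\in\Sigma_y(X_0)$, and since $\angle(\Uparrow_y^{a_m},v)=\pi/2$ by Proposition \ref{prop:perp+horiz}(2), the $m$-th strainer cosine is $O(\e)$; then \eqref{eq:Sumcos2=1} forces the first $m-1$ cosines to nearly normalize, and the BGP computation gives the almost-isometry of $\psi$ directly. Your route instead uses the clean observation that $|a_m,y|=t$ for every $y\in X_0$, so that $\Phi_m$ is literally constant on $X_0$ and the estimate for $\psi$ is inherited from that of $\Phi$. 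That part is correct and rather elegant; it avoids re-running the strainer estimate on the $(m-1)$-tuple.

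Two inaccuracies in your argument, one minor and one consequential. First, the exterior metric on $X_0$ is by definition the restriction of the \emph{intrinsic} metric of $X$ (i.e.\ of $N=X^{\rm int}$), not of $|\cdot,\cdot|_Y$; the passage from $|y,z|_{D(Y)}=|y,z|_Y$ to $|y,z|_{X_0}$ still requires one more application of Lemma \ref{lem:deviation}, as the paper does at the end. This is easily fixed. Second, and more seriously, the identification ``$\pa Y=X_0$'' is false: $\pa Y$ is the Alexandrov boundary of $Y$ and in general also contains $C^Y_{t_0}$ and $\eta(\pa C_0\times[0,t_0])$, while thick points of $X_0$ are not in $\pa Y$ at all. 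That misconception is harmless for the folding step (the folding argument works for any $y,z\in Y$), but it undermines the openness argument.

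The genuine gap is precisely in the reflected-copy case of your sign analysis, not the $X\setminus X_0$ case you flag. For $y\in X\setminus X_0$ the inequality $|a_m,y|>t$ is clean and needs no hypothesis beyond separation: any $D(Y)$-geodesic from $y$ to $a_m\subset Y\setminus X$ must cross $X_0=\pa X$ at some $w$, and then $|a_m,y|=|a_m,w|+|w,y|\ge t+|y,X_0|>t$. By contrast, for $y\in\tilde Y\setminus\pa Y$ the geodesic to $a_m$ crosses $\pa Y$ at some $w$, and your ``additive contribution'' heuristic implicitly assumes $w\in X$. But $w$ could a priori lie on the cylinder part $\eta(\pa C_0\times(0,t_0])$, where $|a_m,w|=|s-t|$ with $s=|w,X|$, and this is $<t$ when $s<2t$. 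To close the gap one must show this set is uniformly far from $x$: since $x$ is a regular thin single point, $p=\eta_0^{-1}(x)\notin\pa C_0$ (as $\Sigma_p(C_0)\cong\mathbb S^{m-2}$), so $\eta_0(\pa C_0)$ is at some distance $\e_0>0$ from $x$, whence $\eta(\pa C_0\times[0,t_0])$ is at distance $\ge\e_0/2$ from $x$; then choosing $t$ and $\e$ with $t+4\e<\e_0/2$ ensures $w\in X$, which gives $|a_m,w|\ge t$. This extra argument is the price your approach pays for avoiding Lemma \ref{lem:deviation} in the metric estimate; the paper's route doesn't need the level-set characterization at all.
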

\begin{proof}
For any $y,z\in B^Y(x,2\e)\cap X_0$,
let $\xi:=\dot\gamma^Y_{y,z}(0)\in\Sigma_y(Y)$.
By Lemma \ref{lem:deviation}, we have 
a direction $v\in\Sigma_y(X_0)$ such that 
$\angle(\xi,v)<O(\e)$.
Since $\angle(\Uparrow_y^{a_m},v)=\pi/2$,
it follows from \eqref{eq:Sumcos2=1} that 
\beq
 \biggl|\sum_{i=1}^{m-1}\cos^2\angle(\Uparrow_y^{a_i},\xi)-1\biggr| <\tau(\delta,\e).
\eeq
 \cite[Theorem 9.4]{BGP} (see also Sublemma \ref{slem:set-comparison}) then yields that 
$\psi:B^Y(x,2\e)\cap X_0\to\R^{m-1}$ is 
$\tau_x(\delta,\e)$-almost isometric.
By Lemma \ref{lem:deviation}, this provides a
$\tau_x(\delta,\e)$-almost isometry
from $B^{X_0}(x,\e)$ onto an open subset of 
$\R^{m-1}$.
\end{proof}

\begin{slem}\label{slem:unique-intersetY}
For any $z\in B^{\pa Y}(x,2\e)$, we have
\[
  \max_{\xi\in\Sigma_z(\pa Y)}   |\angle(\Uparrow_z^{a_m},\xi)-\pi/2|<\tau_x(\e).
\]
\end{slem}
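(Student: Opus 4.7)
The plan is to reduce the sublemma to the infinitesimal orthogonality Proposition~\ref{prop:perp+horiz}(2). Specifically, I will show that $\Uparrow_z^{a_m}=\{\xi_z^+\}$, where $\xi_z^+$ is the unique perpendicular direction at $z$; the proposition then yields $\angle(\xi_z^+,\xi)=\pi/2$ \emph{exactly} for every $\xi\in\Sigma_z(X_0)$, and combined with the local identification $\Sigma_z(\pa Y)=\Sigma_z(X_0)$ near the regular thin point $x$, the maximum in the sublemma is in fact $0$, hence trivially $<\tau_x(\e)$.

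To localize, I choose $\e$ small enough that $B^{\pa Y}(x,2\e)\subset\mathrm{int}\,X_0^1$: this is possible because $x\in{\ca Tn}^{\rm reg}\cap X_0^1$ is regular, so $x\notin\ca S^1\cup\ca C$, and $\ca S^1$ is closed by Lemma~\ref{lem:S1-closed}. Theorem~\ref{thm:X1-f}(2) applied at the thin regular point furthermore gives $\Sigma_x(Y)\cong\mathbb S^{m-1}_+$, i.e., $x$ is a boundary regular point of the Alexandrov space $Y$; by Perelman's stability this property propagates to all $z\in\pa Y$ in a neighborhood of $x$, so $\pa Y$ coincides locally with $X_0$ and $\Sigma_z(\pa Y)=\Sigma_z(X_0)$ for every $z\in B^{\pa Y}(x,2\e)$. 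In particular each such $z$ lies in $X_0^1$ and has a unique preimage $p_z:=\eta_0^{-1}(z)\in C_0$ and hence a single perpendicular direction $\xi_z^+\in\Sigma_z(Y)$.

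For the structural step, I use Lemma~\ref{lem:YX0} to describe $Y\cong X\cup_{\eta_0}C$ with $C=C_0\times_\phi[0,t_0]$. Since $|q,X|_Y=t$ for every $q\in a_m=C_t^Y$, the triangle inequality gives $|z,a_m|_Y\ge t$. Conversely, for any $q=\eta(p,t)\in a_m$, a path in $Y$ from $z$ to $q$ must cross the gluing at some point $\eta_0(\tilde p)\in X_0$ and then proceed through the warped product $C$ from $(\tilde p,0)$ to $(p,t)$; a direct length calculation shows $|(\tilde p,0),(p,t)|_C\ge t$ with equality iff $\tilde p=p$ and the $C$-path is the vertical perpendicular. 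Combined with $|z,\eta_0(\tilde p)|_X\ge 0$, equality in total length $=t$ forces $\tilde p=p=p_z$, so the unique minimizing geodesic is the perpendicular $s\mapsto\eta(p_z,s)$ and $\Uparrow_z^{a_m}=\{\xi_z^+\}$. Applying Proposition~\ref{prop:perp+horiz}(2) at $z$ then gives $\angle(\Uparrow_z^{a_m},\xi)=\angle(\xi_z^+,\xi)=\pi/2$ for every $\xi\in\Sigma_z(\pa Y)=\Sigma_z(X_0)$, so the maximum is $0$. The main anticipated obstacle is the local identification $\pa Y=X_0$ near $x$: it depends crucially on the regular thin hypothesis, for such points are precisely those where $\Sigma_x(Y)$ is a round closed hemisphere, making $x$ boundary regular in $Y$ so that the property propagates stably to a neighborhood.
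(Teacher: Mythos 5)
There is a genuine gap: you conflate the Alexandrov boundary $\pa Y$ of the extension $Y$ with the limit boundary $X_0=N_0$ near $x$. Thinness and regularity of $x$ are conditions at the single point $x$ only; at a regular thin point of $X_0^1$ the local model (this is exactly what Theorem \ref{thm:almostisometry(intro)} and Remark \ref{rem:almost-graph}(1) describe) is a cusp-like slab: the bottom sheet $\{t=0\}$ is $N_0\cap W$, while the top sheet $\{t=g>0\}$ consists of the local Alexandrov boundary points of $Y$ lying in ${\rm int}\,X$, and the two sheets meet only where $g$ vanishes (e.g.\ at $x$). So a typical $z\in B^{\pa Y}(x,2\e)$ is \emph{not} a point of $X_0$: it has no perpendicular direction, your warped-product computation (which is correct, but only for $z\in X_0$) does not apply, and the minimal geodesics from $z$ to $a_m=C^Y_t$ are not perpendiculars. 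Consequently the angle with directions of $\Sigma_z(\pa Y)$ is only approximately $\pi/2$ — already for a flat slab $X=\{0\le t\le g(y)\}$ with $g$ of small nonzero slope the deviation is $\arctan|g'|\neq 0$ — so your conclusion that the maximum equals $0$ is false in general; the error $\tau_x(\e)$ in the sublemma is essential (and cannot be obtained from the Lipschitz bound on $g$, since Sublemma \ref{slem:fistau-Lip} is proved \emph{after}, and using, this sublemma). The step you use to force the identification — ``$x$ is boundary regular in $Y$, by Perelman's stability this propagates to all nearby boundary points, hence $\pa Y$ coincides locally with $X_0$'' — is also unfounded: exact metric regularity ($\Sigma_z\cong\mathbb S^{m-1}_+$) is not an open condition and does not propagate, and even if every nearby $z\in\pa Y$ were boundary regular this would say nothing about $\pa Y=X_0$; in fact nearby thick points of $X_0$ are typically interior points of $Y$, while nearby points of $\pa Y$ are typically top-sheet points of ${\rm int}\,X$.

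For comparison, the paper proves the approximate statement by contradiction through a blow-up at $x$: if the estimate failed along $y_i\to x$, one uses \cite[Lemma 1.8]{Ya:conv} to realize a violating direction $\xi_i$ by points $z_i\in\pa Y$ with $|y_i,z_i|=|y_i,x|$, rescales $D(Y)$ by $|x,y_i|^{-1}$ so that the limit is $T_x(D(Y))$ with $T_x(\pa Y)=\R^{m-1}$, and observes that the limits of the minimal geodesics $\gamma_i$ from $y_i$ to $a_m$ become the perpendicular to $T_x(\pa Y)$ at $y_\infty$ (because $\gamma_i$ contains the perpendicular subarc $\gamma^+_{y_i'}$ at its crossing point $y_i'\in X_0$, and $y_i'\to y_\infty$ in the rescaled limit), contradicting the persisting definite acute angle. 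Your exact-orthogonality reduction via Proposition \ref{prop:perp+horiz}(2) cannot replace this argument, because the points it would apply to are not the points the sublemma is about.
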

\begin{proof}
Suppose that the sublemma does not hold.
Then there exists a sequence $y_i\in\pa Y$ converging to 
$x$ and $\xi_i\in\Sigma_{y_i}(\pa Y)$ such that 
$|\angle(\Uparrow_{y_i}^{a_m},\xi_i)-\pi/2|\ge \theta_0$ for some
$\theta_0>0$ independent of $i$.
 By \cite[Lemma 1.8]{Ya:conv}, we may assume 
\begin{itemize}
\item
$\angle(\Uparrow_{y_i}^{a_m},\xi_i)<\pi/2-\theta_0\,;$ 
\item there is a sequence $z_i\in \pa Y$ satisfying 
\[
     \angle(\uparrow_{y_i}^{z_i},\xi_i)<o_i,\quad
     |y_i,z_i|=|y_i,x|.
\]
\end{itemize} 
Consider the rescaling limit
\beq \label{eq:limitT(DY)}
   \bigg( \frac{1}{|x,y_i|}D(Y),y_i\biggr) \to (T_x(D(Y)), y_\infty).
\eeq
Let $\xi_\infty:=\uparrow_{y_\infty}^{z_\infty}\in\Sigma_{y_\infty}(T_x(D(Y)))$. Since 
$z_\infty\in(\pa Y)_\infty=T_x(\pa Y)$ and 
$T_x(\pa Y)=\R^{m-1}$,
we have $\xi_\infty\in\Sigma_{y_\infty}(T_x(\pa Y))$. 
From the assumption and the 
lower semicontinuity of angles, we have 
\beq\label{eq:acute-angle}
\angle(\dot\gamma_\infty(0), \xi_\infty)\le\pi/2-\theta_0,
\eeq
where $\gamma_\infty$ is the limit of 
a minimal geodesic $\gamma_i$ 
from $y_i$ to $a_m$. 

Let $y_i'\in X_0$ be the intersection of 
$\gamma_i$  with $X_0$.
Note that $\gamma_{y_i'}^+$ is a 
subarc of $\gamma_i$. Since 
$y_i'$ also converges to $y_\infty$ under the convergence
\eqref{eq:limitT(DY)},  $\gamma_\infty$ is the perpendicular 
$\gamma_{y_\infty}^+$ to 
 $T_x(\pa Y)$ at $y_\infty$.
This is a contradiction to \eqref{eq:acute-angle}.
\end{proof}

Note that the directional derivative 
$\Phi_*$ of $\Phi$ at any $z\in B(x,2\e)$ is $\tau_x(\e)$-almost
preserving the lengths of vectors and the angles
in the sense that
\[
     ||\Phi_*(v)|-1|<\tau_x(\e),\quad 
  |\angle(\Phi_*(v),\Phi_*(w))-\angle(v,w)|<\tau_x(\e)
\]
for all $v,w\in\Sigma_z(D(Y))$  (see \cite[Lemma 9.3,
Theorem 9.5]{BGP}).
\begin{slem} \label{slem:unique-int-paY}
For any $y\in B^{X_0}(x,2\e)$,
there is a unique $f(y)\ge 0$ such that 
the curve $t\mapsto\Phi^{-1}(\psi(y), t)$ \, $(t\ge 0)$ 
meets $\pa Y$ at $t=f(y)$.
\end{slem}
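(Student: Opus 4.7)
The plan is to treat $\Phi$ as a near-Euclidean chart on $B^{D(Y)}(x,2\e)$, so that the curve $t\mapsto\Phi^{-1}(\psi(y),t)$ appears as a straight vertical line in $\R^m$ and the image $\Phi(\pa Y\cap B^{D(Y)}(x,2\e))$ appears as a nearly horizontal Lipschitz graph over the first $m-1$ coordinates. Each such vertical line then meets the graph in a unique point, which will be $f(y)$.

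For existence, I would first verify that $\Phi_m\equiv 0$ on $X_0\cap B^{D(Y)}(x,2\e)$: the $1$-Lipschitz function $z\mapsto|z,X|_Y$ vanishes on $X_0$ and takes value $T$ on $a_m=C^Y_T$, which gives $|a_m,y|_Y\ge T$ for $y\in X_0$, while the perpendicular geodesic $\gamma_y^+|_{[0,T]}$ supplies the reverse inequality. Since $x\in\ca Tn^{\rm reg}\cap X_0^1$ forces $x\in\pa Y$ with $\Sigma_x(X_0)=\Sigma_x(\pa Y)$ and $x$ is regular in $D(Y)$ by Theorem \ref{thm:X1-f}, the stability theorem (Theorem \ref{thm:stability-respectful}) identifies $X_0\cap B(x,2\e)$ with $\pa Y\cap B(x,2\e)$ for $\e$ small. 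Hence $f(y):=0$ gives $\Phi^{-1}(\psi(y),0)=y\in\pa Y$.

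For uniqueness I control the tangential variation of $\Phi_m$ on $\pa Y$. Let $z\in\pa Y\cap B(x,2\e)$ satisfy $\Phi(z)=(\psi(y),t)$ for some $t\ge 0$. The $\tau_x(\delta,\e)$-almost isometry of $\Phi$ gives $|z,y|_{D(Y)}\le(1+\tau_x(\delta,\e))\,t$. By Sublemma \ref{slem:unique-intersetY}, for every $w\in\pa Y\cap B(x,2\e)$ and every $\xi\in\Sigma_w(\pa Y)$ the angle $\angle(\Uparrow_w^{a_m},\xi)$ lies in $(\pi/2-\tau_x(\e),\pi/2+\tau_x(\e))$, so the first variation formula $d\Phi_m(\xi)=-\cos\angle(\Uparrow_w^{a_m},\xi)$ yields $|d\Phi_m(\xi)|\le\tau_x(\e)$ for every direction $\xi$ tangent to $\pa Y$. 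Choosing a nearly-minimal $\pa Y$-path from $y$ to $z$, whose length is at most $(1+\tau_x(\e))|z,y|_{D(Y)}$ by the intrinsic-extrinsic Lipschitz comparison on $\pa Y$ near the regular point $x$, and integrating, I obtain
\[
t=|\Phi_m(z)-\Phi_m(y)|\le\tau_x(\e)(1+\tau_x(\e))(1+\tau_x(\delta,\e))\,t.
\]
For sufficiently small $\e$ the prefactor is strictly less than $1$, forcing $t=0$, hence $\Phi(z)=\Phi(y)$ and $z=y$.

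The main obstacle is the intrinsic-extrinsic Lipschitz comparison on $\pa Y$ near $x$, needed to upgrade the pointwise bound $|d\Phi_m|_{\Sigma(\pa Y)}|\le\tau_x(\e)$ into the global estimate $|\Phi_m(z)-\Phi_m(y)|\le\tau_x(\e)|z,y|_{D(Y)}$. It reduces to showing that $\pa Y\cap B(x,2\e)$ is $(1+\tau_x(\e))$-bi-Lipschitz to an open subset of $\R^{m-1}$; this follows from the identities $\Sigma_x(D(Y))=\mathbb S^{m-1}$ and $\Sigma_x(\pa Y)=\mathbb S^{m-2}$ combined with an application of Theorem \ref{thm:stability-respectful} to the pair $(D(Y),\pa Y)$.
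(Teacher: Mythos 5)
There is a genuine gap, and it sits at the center of your argument. The claim that Theorem \ref{thm:stability-respectful} ``identifies $X_0\cap B(x,2\e)$ with $\pa Y\cap B(x,2\e)$'' is false. At a regular thin point $x\in X_0^1$ one only knows the infinitesimal coincidence $\Sigma_x(X_0)=\Sigma_x(\pa Y)$ at the single point $x$; no stability theorem upgrades this to equality of the two closed subsets on a ball, and in fact they do not coincide: a thick point $y\in X_0$ near $x$ (i.e.\ ${\rm rad}({\bm\xi}_y^\perp)>\pi/2$) is an interior point of $Y$, so $y\notin\pa Y$, while the Alexandrov boundary near $x$ sits \emph{above} $X_0$ at height $f(y)>0$ in the chart $\Phi$. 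This is precisely the content of Theorem \ref{thm:almostisometry(intro)} and Remark \ref{rem:almost-graph}(1): $\{t=0\}$ corresponds to $N_0\cap W$, whereas $\{t=g>0\}$ corresponds to Alexandrov boundary points of $Y$ not in $N_0$; the two sets touch only at thin points. Consequently your existence step ($f(y):=0$ because ``$y\in\pa Y$'') fails for every thick $y\in B^{X_0}(x,2\e)$, and your uniqueness step, which joins $y$ to $z$ by a path \emph{inside} $\pa Y$ and hence presupposes $y\in\pa Y$, would force $f\equiv 0$, i.e.\ $\pa Y=X_0$ near $x$ --- contradicting the graph structure the theorem is establishing. (A secondary issue: Theorem \ref{thm:stability-respectful} yields homeomorphisms respecting distance functions, not the $(1+\tau_x(\e))$-bi-Lipschitz chart on $\pa Y$ your last paragraph invokes; such a statement would need a separate strainer argument, as in the proof of Theorem \ref{thm:regular-ball}(2) run in $D(Y)$.)

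The correct mechanism does not compare $X_0$ with $\pa Y$ at all. One computes $d\Phi_z(\Uparrow_z^{a_m})$, whose first $m-1$ entries are $\tau_x(\delta,\e)$-small by the strainer condition and whose last entry is $-1$, so that $\angle\bigl(d\Phi^{-1}_{\Phi(z)}(-\pa/\pa t_m),\Uparrow_z^{a_m}\bigr)<\tau_x(\delta,\e)$, which is \eqref{eq:slope-g}: the vertical curve $t\mapsto\Phi^{-1}(\psi(y),t)$ runs almost exactly along the $\Uparrow^{a_m}$-direction. By Sublemma \ref{slem:unique-intersetY} every direction tangent to $\pa Y$ near $x$ makes angle $\pi/2\pm\tau_x(\e)$ with $\Uparrow^{a_m}$, so the curve is uniformly transverse to $\pa Y$; it therefore meets $\pa Y$ at most once, and it must meet it because it starts on the $Y$-side at $y\in X_0$ and, as the last coordinate (the distance to $a_m\subset Y$) grows, passes into the reflected copy of $Y$ inside $D(Y)$. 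Your path-integration idea (small tangential derivative of $\Phi_m$ along $\pa Y$) could be salvaged for the uniqueness half if applied to two intersection points $z_1,z_2\in\pa Y$ rather than to $y$ and $z$, but it would still require proving the intrinsic--extrinsic comparison for $\pa Y$ near $x$, and it gives no existence statement; as written, the proposal does not prove the sublemma.
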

\begin{proof}
Let $(t_1,\ldots,t_m)$ be the canonical coordinates of 
$\R^m$.
Note that 
$|\cos\angle(\Uparrow_z^{a_i},\Uparrow_z^{a_m})|<\tau_x(\delta,\e)$
for all $z\in B^X(x,\e)$. 
Since 
\[
d\Phi_z(\Uparrow_z^{a_m})=
     (-\cos\angle(\Uparrow_z^{a_1},\Uparrow_z^{a_m}),\ldots,
-\cos\angle(\Uparrow_z^{a_{m-1}},\Uparrow_z^{a_m}),-1),
\]
it follows that 
\beq \label{eq:slope-g}
\angle(d\Phi^{-1}_{\Phi(z)}(-\pa/\pa t_m),\Uparrow_z^{a_m}) <\tau_x(\delta,\e).
\eeq
Sublemma \ref{slem:unique-intersetY} yields the conclusion
immediately.
\end{proof}

Let $g:=f\circ\psi^{-1}:B^{\R^{m-1}}(0,\e)\to\R_+$,
and set
\begin{align*}
G_g&:=\{(z,t)\,|\, 0\le t\le g(z),z\in B^{\R^{m-1}}(0,\e)\}, \\
W &:=\Phi^{-1}(G_g). 
\end{align*}
It is easily verified that the canonical  map
$\Phi:W\to G_f$ 
is $\tau_x(\e)$-almost isometry with respect to both
the intrinsic  and the extrinsic metrics.

Finally we show 

\begin{slem}\label{slem:fistau-Lip}
$g$ is $\tau_x(\e)$-Lipschitz.
\end{slem}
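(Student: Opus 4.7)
The plan is to estimate $|g(\bar y_1) - g(\bar y_2)|$ by lifting to the corresponding boundary points in $\pa Y$. Given $\bar y_1, \bar y_2 \in B^{\R^{m-1}}(0,\e)$, set $z_k := \Phi^{-1}(\bar y_k, g(\bar y_k)) \in \pa Y$, so that $\Phi_i(z_k) = \bar y_{k,i}$ for $1 \le i \le m-1$ and $\Phi_m(z_k) = g(\bar y_k)$. I would establish two comparisons separately: first, $|g(\bar y_1) - g(\bar y_2)| \le \tau_x(\e)\, |z_1, z_2|_Y$; second, $(1 - \tau_x(\e))\,|z_1, z_2|_Y \le |\bar y_1 - \bar y_2|_{\R^{m-1}}$. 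Combining them directly yields the desired $\tau_x(\e)$-Lipschitz bound after absorbing constants.

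Both comparisons rest on the following auxiliary claim: for any $z_1, z_2 \in \pa Y \cap B^Y(x, 2\e)$, the initial direction $\uparrow_{z_1}^{z_2}$ of a $Y$-minimal geodesic from $z_1$ to $z_2$ admits some $v \in \Sigma_{z_1}(\pa Y)$ with $\angle(\uparrow_{z_1}^{z_2}, v) < \tau_x(\e)$. I would prove this by contradiction, mirroring the proof of Sublemma \ref{slem:unique-intersetY}: assume a sequence $z_{1,i}, z_{2,i}$ violating it with a uniform angle gap, rescale $D(Y)$ at $z_{1,i}$ by $|z_{1,i}, z_{2,i}|^{-1}$, and use that $x \in D(Y)^{\rm reg}$, so the tangent cone is Euclidean $\R^m$ in which $\pa Y$ limits to the flat hyperplane $T_x(\pa Y) = \R^{m-1}$; the limit geodesic between the two limit points in $\R^{m-1}$ must lie in $\R^{m-1}$, contradicting the angle gap by lower semicontinuity of angles.

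Granted the auxiliary claim, Sublemma \ref{slem:unique-intersetY} applied to the approximating $v$ gives $|\angle(\Uparrow_{z_1}^{a_m}, \uparrow_{z_1}^{z_2}) - \pi/2| < \tau_x(\e)$. Sublemma \ref{slem:set-comparison} with $A_m = a_m$ then immediately produces the first comparison
\[
|g(\bar y_1) - g(\bar y_2)| = |\Phi_m(z_1) - \Phi_m(z_2)| < \tau_x(\e)\, |z_1, z_2|_Y.
\]
For the second comparison, combine the auxiliary claim with \eqref{eq:Sumcos2=1} to obtain
\[
\sum_{i=1}^{m-1} \cos^2\angle(\Uparrow_{z_1}^{a_i}, \uparrow_{z_1}^{z_2}) > 1 - \tau_x(\e),
\]
and argue exactly as in the proof of Sublemma \ref{slem:psi-almostisom} (by summing Sublemma \ref{slem:set-comparison} over $i \le m-1$) to conclude
\[
|\bar y_1 - \bar y_2|^2_{\R^{m-1}} = \sum_{i=1}^{m-1} (\Phi_i(z_1) - \Phi_i(z_2))^2 \ge (1 - \tau_x(\e))\, |z_1, z_2|^2_Y.
\]

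The main obstacle is the auxiliary claim, which is morally the analogue of Lemma \ref{lem:deviation} but for $\pa Y$ rather than $X$. Since $\pa Y$ is not supplied with an intrinsic Alexandrov structure in the framework developed so far, the cleanest route is precisely the rescaling/contradiction argument modelled on Sublemma \ref{slem:unique-intersetY}, exploiting that at the $D(Y)$-regular thin point $x$ the tangent cone is genuinely Euclidean and $T_x(\pa Y)$ is genuinely flat. The remaining ingredients are routine applications of already-proved sublemmas.
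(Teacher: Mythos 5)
Your reduction to the two comparisons is sound, and granted your auxiliary claim the rest does go through (in fact the second comparison is redundant: since $\Phi$ is a $\tau_x(\delta,\e)$-almost isometry by Theorem \ref{thm:ALMOST} and $|\Phi(z_1)-\Phi(z_2)|^2=|\bar y_1-\bar y_2|^2+|g(\bar y_1)-g(\bar y_2)|^2$, it follows from the first comparison). The genuine gap is the auxiliary claim itself, which is exactly the nontrivial point, and the proof you sketch for it does not work. First, you blow up $D(Y)$ at $z_{1,i}$ at the scale $\lambda_i=|z_{1,i},z_{2,i}|$; in the contradiction setup this scale may be far smaller than $|z_{1,i},x|$, and regularity of $x$ in $D(Y)$ controls only blow-ups based at $x$, or at points whose distance to $x$ is comparable to the rescaling scale. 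This is precisely why the paper's proof of Sublemma \ref{slem:unique-intersetY} rescales by $|x,y_i|^{-1}$ and, via \cite[Lemma 1.8]{Ya:conv}, chooses the comparison point with $|y_i,z_i|=|y_i,x|$, so that all points stay at unit scale in $T_x(D(Y))$. Singular points of $D(Y)$ may accumulate at the regular point $x$, so the limit of $\bigl(\lambda_i^{-1}D(Y),z_{1,i}\bigr)$ need not be $\R^m$, and the rescaled $\pa Y$ need not converge to a flat hyperplane. Second, even granted such a flat limit in which the limit geodesic lies in the limit of $\pa Y$, lower semicontinuity of angles runs the wrong way for your contradiction: your hypothesis is a lower bound $\angle(\uparrow_{z_{1,i}}^{z_{2,i}},v)\ge\theta_0$ for \emph{every} $v\in\Sigma_{z_{1,i}}(\pa Y)$, an infinitesimal statement at $z_{1,i}$, and to refute it you must produce an actual element of $\Sigma_{z_{1,i}}(\pa Y)$ making small angle with $\uparrow_{z_{1,i}}^{z_{2,i}}$; the limit picture only yields comparison-angle information at scale $\lambda_i$ (say, a point $w_i\in\pa Y$ almost on the geodesic $[z_{1,i}z_{2,i}]$), and converting that into a direction of $\Sigma_{z_{1,i}}(\pa Y)$ requires an additional argument of exactly the kind the claim is supposed to supply.

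Note that the paper avoids the uniform claim altogether: it fixes $y$ and takes $y'$ \emph{sufficiently close} to $y$, so that $\uparrow_z^{z'}$ is automatically $\tau$-close to $\Sigma_z(\pa Y)$ (with a modulus depending on $z$, by the very definition of the space of directions of the extremal subset $\pa Y$); then Sublemma \ref{slem:unique-intersetY} gives $|\angle(\uparrow_z^{a_m},\uparrow_z^{z'})-\pi/2|<\tau_x(\e)$, and \eqref{eq:slope-g} transfers this to the coordinates, i.e.\ a pointwise Lipschitz estimate with the uniform constant $\tau_x(\e)$ but a point-dependent neighborhood; chaining along segments of $B^{\R^{m-1}}(0,\e)$ then gives the global $\tau_x(\e)$-Lipschitz bound. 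If you want to keep your global two-point scheme, you must either prove the auxiliary claim by a genuinely different argument (for instance exploiting that $|a_m,\cdot\,|=|b_m,\cdot\,|$ on $\pa Y$, so the near-straightness $\wangle a_mz_1b_m>\pi-\tau$ forces $\wangle a_mz_1z_2$ to be close to $\pi/2$ at the comparison-angle level), or retreat to the infinitesimal version as the paper does.
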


\begin{proof}
For any $y\in B^{X_0}(x,\e)$, let $y'\in B^{X_0}(x,\e)$ be a point close to $y$.
Set $u:=(\psi(y),f(y))$, $u':=(\psi(y'),f(y'))$
and 
$z:=\Phi^{-1}(u)$, $z':=\Phi^{-1}(u')$.
By Sublemma \ref{slem:unique-intersetY}, we get
\[
|\angle(\uparrow_z^{a_m},\uparrow_z^{z'})-
\pi/2|
<\tau_x(\e)
\]
if $u'$ is sufficiently close to $u$.
It follows from \eqref{eq:slope-g} that 
\[
\biggl| 
\angle\biggl(-\biggl(\frac{\pa}{\pa t_m}\biggr)_u,\uparrow_u^{u'}\biggr) -\frac{\pi}{2}\biggr|<\tau_x(\e).
\]
This implies that $g$ is $\tau_x(\e)$-Lipschitz.
\end{proof}

\pmed\n
{\bf Case 2)\, $x\in X_0^2\cap X_0^{\rm reg}$}.\,
\pmed

The basic idea is the same as Case 1).
Let $\{ p,q\}:=\eta_0^{-1}(x)$, and 
$\e\ll |p,q|$.
Set $U(x,2\e):=\eta_0(B^{C_0}(p,2\e))$ and 
$V(x,2\e):=\eta_0(B^{C_0}(q,2\e))$.
Let $A_m$ and $B_m$ be  small balls around  $\gamma_x^+(t_0)$ and $\gamma_x^-(t_0)$ in 
$C_{t_0}^Y$ of a fixed radius, respectively.
For any $\delta>0$, choose
an $(m,\delta)$-strainer $\{ (a_i,b_i)\}_{i=1}^{m}$ of $Y$ at $x$
such that $a_m:=A_m$ and $b_m:=B_m$.
Consider the distance maps $\psi:B^{Y}(x,2\e)\to \R^{m-1}$, $\Phi:B^{Y}(x,2\e)\to \R^m$ defined by 
\[
\psi(y):= (|a_1,y|,\ldots, |a_{m-1},y|), \quad
\Phi(y):= (\psi(y), |a_m,y|-t_0).
\]
As in Case 1), $\psi|_{U(x,2\e)}$ and $\Phi$ are  $\tau_x(\e)$-almost isometric.

Replacing  $\pa Y$ by $V(x,\e)$, we have the following 
sublemmas in a similar way:

\begin{slem}\label{slem:unique-intersetY2}
For any $z\in B^Y(x,2\e)\cap V(x,2\e)$, we have
\[
  \max_{\xi\in\Sigma_z(V(x,2\e))}   |\angle(\Uparrow_z^{a_m},\xi)-\pi/2|<\tau_x(\e).
\]
\end{slem}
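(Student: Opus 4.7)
The plan is to argue exactly as in the proof of Sublemma \ref{slem:unique-intersetY}, with the $q$-sheet $V(x,2\e)$ playing the role of $\pa Y$. I would argue by contradiction: assuming the sublemma fails, there exist sequences $y_i\in V(x,2\e)$ converging to $x$ and directions $\xi_i\in\Sigma_{y_i}(V(x,2\e))$ with $|\angle(\Uparrow_{y_i}^{a_m},\xi_i)-\pi/2|\ge\theta_0$ for some fixed $\theta_0>0$. Pulling back by the local isometry $\eta_0^{-1}$ (guaranteed by Lemma \ref{lem:eta-2} and Proposition \ref{prop:eta-2cover}) and applying \cite[Lemma 1.8]{Ya:conv} inside the Alexandrov space $B^{C_0}(q,2\e)$, I may pass to a subsequence with $\angle(\Uparrow_{y_i}^{a_m},\xi_i)<\pi/2-\theta_0$ and produce $z_i\in V(x,2\e)$ satisfying $\angle(\uparrow_{y_i}^{z_i},\xi_i)\to 0$ and $|y_i,z_i|$ comparable to $|y_i,x|$.

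Next, I would analyze the rescaling limit
\[
\biggl(\frac{1}{|x,y_i|}Y,y_i\biggr)\to (T_x(Y),y_\infty).
\]
By Lemma \ref{lem:double-sus}, $\Sigma_x(Y)$ is the spherical suspension $\{\xi_x^+,\xi_x^-\}*\Sigma_x(X_0)$; since $x\in X_0^{\rm reg}$, $\Sigma_x(X_0)$ is the round $\mathbb S^{m-2}$, so $T_x(Y)$ is Euclidean $\R^m=T_x(X_0)\times\R$ with the $\xi_x^\pm$-axis orthogonal to the horizontal copy of $T_x(X_0)=\R^{m-1}$. Combining Proposition \ref{prop:eta-2cover} with Proposition \ref{prop:tang-cone}, the rescaled $V(x,2\e)=\eta_0(B^{C_0}(q,2\e))$ converges to the full horizontal subspace $T_x(X_0)\times\{0\}$, so in particular $y_\infty,z_\infty\in T_x(X_0)$ and $\xi_\infty:=\uparrow_{y_\infty}^{z_\infty}$ is a horizontal direction.

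Finally, I identify the limit of $\Uparrow_{y_i}^{a_m}$. Under the rescaling, $a_m$ sits at height $\approx t_0/|x,y_i|\to\infty$ along the $\xi_x^+$-axis, and in the Euclidean cone $T_x(Y)$ the unique asymptotic direction from the finite point $y_\infty$ toward this point at infinity is the parallel $\xi_x^+$, so $\Uparrow_{y_i}^{a_m}\to\xi_x^+$ as directions at $y_\infty$. Lower semicontinuity of angles then yields $\angle(\xi_x^+,\xi_\infty)=\pi/2$, contradicting $\angle(\Uparrow_{y_i}^{a_m},\xi_i)\le\pi/2-\theta_0$. I expect the main obstacle to be the precise identification of the rescaled limit of $V(x,2\e)$ with the full horizontal hyperplane $T_x(X_0)\subset T_x(Y)$: this depends on combining the Euclidean suspension structure from Lemma \ref{lem:double-sus} (which uses $x\in X_0^{\rm reg}$ in an essential way) with the almost isometric double covering of Proposition \ref{prop:eta-2cover} to transfer the flat tangent cone of $C_0$ at $q$ onto the desired subspace of $T_x(Y)$.
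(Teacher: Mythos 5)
Your overall scheme (contradiction, the reduction via \cite[Lemma 1.8]{Ya:conv}, blow-up at $x$, and lower semicontinuity of angles) is the same as the paper's, which simply repeats the argument of Sublemma \ref{slem:unique-intersetY} with $V(x,2\e)$ in place of $\pa Y$. But your key step has a genuine gap: you justify $\Uparrow_{y_i}^{a_m}\to\xi_x^+$ only by saying that, after rescaling, $a_m$ recedes to infinity ``along the $\xi_x^+$-axis,'' so the asymptotic direction from $y_\infty$ must be the parallel vertical one. Under the pointed convergence $\bigl(\tfrac1{|x,y_i|}Y,y_i\bigr)\to(T_x(Y),y_\infty)$ the set $a_m$ escapes to infinity and the limit cone carries no information about the geometry of $Y$ at the macroscopic scale $t_0$; directions toward a receding target do not converge for free, and this is exactly the point that must be proved. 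The paper's model proof supplies the needed input: the minimal geodesic $\gamma_i$ from $y_i$ to $a_m$ contains a perpendicular $\gamma_{y_i'}^+$ as a subarc, and the entry point $y_i'$ converges to $y_\infty$ under the rescaling, so the limit of $\gamma_i$ is the perpendicular ray at $y_\infty$; only then does semicontinuity of angles give the contradiction. In the present case one needs the analogous facts, e.g.\ that $|y,A_m|=t_0+o(|x,y|)$ for $y\in V(x,2\e)$ near $x$ (this uses both that $A_m$ is a ball of \emph{fixed} radius around $\gamma_x^+(t_0)$ and the tangency at $x$ of the two sheets $U,V$, i.e.\ the regularity of $x$), which forces the cylindrical part of $\gamma_i$ to be vertical and the part inside $X$ to have length $o(|x,y_i|)$. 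Your heuristic never uses these ingredients; indeed, at a non-regular double point where the two sheets meet at a definite angle, $a_m$ still ``recedes along the axis'' in the blow-up, yet $\Uparrow_{y_i}^{a_m}$ does not converge to the vertical direction and the conclusion fails, so some argument of this kind is unavoidable.

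A secondary remark: you single out, as the main obstacle, the identification of the rescaled limit of $V(x,2\e)$ with the \emph{full} hyperplane $T_x(X_0)$. That identification is not actually needed: since $y_i,z_i\in X_0$, Proposition \ref{prop:tang-cone} already gives $y_\infty,z_\infty\in T_x(X_0)$, and because $T_x(Y)$ is Euclidean (Lemma \ref{lem:double-sus} plus $x\in X_0^{\rm reg}$) the hyperplane $T_x(X_0)$ is totally geodesic, so $\xi_\infty=\uparrow_{y_\infty}^{z_\infty}$ is automatically horizontal; containment of the limit of $V$ in $T_x(X_0)$ suffices. The real difficulty sits in the step you treated as immediate.
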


\begin{slem} \label{slem:unique-int-paY2}
For any $y\in U(x,3\e/2)$,
there is a unique $f(y)\ge 0$ such that 
the curve $t\to\Phi^{-1}(\psi(y), t)$ \, $(t\ge 0)$ 
meets $V(x,2\e)$ at $t=f(y)$.
\end{slem}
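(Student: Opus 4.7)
The plan is to mirror the proof of Sublemma \ref{slem:unique-int-paY}, with $\pa Y$ replaced by $V(x, 2\e)$ and Sublemma \ref{slem:unique-intersetY} by Sublemma \ref{slem:unique-intersetY2}. The guiding picture is: the curve $\gamma(t) := \Phi^{-1}(\psi(y), t)$ has tangent almost parallel to $-\Uparrow_{\gamma(t)}^{a_m}$, which by Sublemma \ref{slem:unique-intersetY2} is almost perpendicular to $V(x, 2\e)$ at every point of $V(x, 2\e)$; thus $\gamma$ meets $V(x, 2\e)$ transversally, while the almost-isometric double cover places $y$ itself in $V(x, 2\e)$ and so supplies the crossing.

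The slope estimate $\angle(d\Phi^{-1}_{\Phi(z)}(-\pa/\pa t_m), \Uparrow_z^{a_m}) < \tau_x(\delta, \e)$ is derived exactly as in \eqref{eq:slope-g} from the fact that $\{(a_i, b_i)\}_{i=1}^m$ is an $(m, \delta)$-strainer at $x$. For existence of $f(y) \ge 0$, write $y = \eta_0(\tilde y_p)$ with $\tilde y_p \in B^{C_0}(p, 3\e/2)$ and let $\tilde y_q \in \eta_0^{-1}(y)$ be the other preimage. Since $\eta_0$ is $1$-Lipschitz, $|y, x|_Y \le 3\e/2$, so Lemma \ref{lem:almost-2cover} yields $|\tilde y_q, q|_{C_0} < (1 + \tau_x(\e))|y, x|_Y < 2\e$ for $\e$ small enough. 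Hence $y = \eta_0(\tilde y_q) \in V(x, 2\e)$, and the curve $\gamma$ at $t = \Phi_m(y) = |a_m, y| - t_0$ is at the point $y \in V(x, 2\e)$; since $A_m \subset C_{t_0}^Y$ and the perpendicular identity $|C_{t_0}^Y, y| = t_0$ holds at $y \in X_0$, we get $\Phi_m(y) \ge 0$, so $f(y) := \Phi_m(y)$ is the desired value.

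For uniqueness, I would adapt Sublemma \ref{slem:psi-almostisom} to $V$: for $w, z \in V(x, 2\e)$ with $\xi := \uparrow_z^w$, Lemma \ref{lem:deviation} produces a horizontal $v \in \Sigma_z(X_0)$ with $\angle(\xi, v) = O(\e)$; Sublemma \ref{slem:unique-intersetY2} then gives $\angle(\Uparrow_z^{a_m}, v) \approx \pi/2$, so \eqref{eq:Sumcos2=1} and Sublemma \ref{slem:set-comparison} jointly imply $\psi|_{V(x, 2\e)}$ is $\tau_x(\e)$-almost isometric, hence injective. Since $\psi(\gamma(t)) = \psi(y)$ identically, at most one $t$ can produce a point of $V(x, 2\e)$, and uniqueness of $f(y)$ follows. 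The main obstacle is this extension of the almost-isometry from $U(x, 2\e)$ to $V(x, 2\e)$: the Case 2 construction originally records it only on the $U$-sheet, and leveraging it on the $V$-sheet hinges on the double-suspension structure $\Sigma_x(Y) = \{\xi_x^+, \xi_x^-\} * \Sigma_x(X_0)$ from Lemma \ref{lem:double-sus}, which ensures the horizontal strainer directions $\Uparrow_z^{a_i}$ ($i < m$) lie in $\Sigma_z(X_0)$ and thus detect motion along $V$ as faithfully as along $U$.
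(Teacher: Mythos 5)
Your uniqueness argument is acceptable (and in fact a slightly different route from the paper's, which gets uniqueness directly from the transversality given by \eqref{eq:slope-g} and Sublemma \ref{slem:unique-intersetY2}, whereas you get it from injectivity of $\psi|_{V(x,2\e)}$; that works once you prove the symmetric version of Sublemma \ref{slem:psi-almostisom} on the $V$-sheet, which follows by exchanging the roles of $a_m$ and $b_m$, i.e.\ of $p$ and $q$ — not from the suspension structure at the single point $x$). The existence half, however, has a genuine gap. You assume that every $y\in U(x,3\e/2)$ has a second preimage $\tilde y_q$ near $q$, i.e.\ that $U(x,3\e/2)\subset X_0^2$. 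This is false in general: $x\in X_0^2$ may lie on $\ca S^2=\pa N_0^2\cap N_0^2$, and then most points of the $U$-sheet are \emph{single} points of $X_0$, whose unique lift lies near $p$; for such $y$ there is no $\tilde y_q$, Lemma \ref{lem:almost-2cover} does not apply (it concerns pairs of points of $X_0^2$), and $y\notin V(x,2\e)$ because $\e\ll|p,q|$. Your argument therefore only covers the locus where the graph function vanishes ($f(y)=\Phi_m(y)=0$, since the perpendicular over $\tilde y_p$ ends in $A_m$), which is exactly the degenerate part of the statement; if all of $U(x,3\e/2)$ consisted of double points the sublemma would be trivial.

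The real content is the case $y\in U(x,3\e/2)\cap X_0^1$, where the curve $t\mapsto\Phi^{-1}(\psi(y),t)$ must actually traverse the slab ${\rm int}\,X$ between the two sheets and hit $V(x,2\e)$ at a \emph{positive} time. There existence has to be argued as in the paper's Case 1 (Sublemma \ref{slem:unique-int-paY}): by \eqref{eq:slope-g} the forward direction of the curve stays within $\tau_x(\delta,\e)$ of the direction opposite to $\Uparrow^{a_m}$, hence points from the $U$-side toward the $B_m$-side; since inside $B^Y(x,2\e)$ the sheet $V(x,2\e)$ separates the slab from the $q$-side cylinder, and Sublemma \ref{slem:unique-intersetY2} makes any crossing transverse, the curve cannot reach the $B_m$-side without meeting $V(x,2\e)$. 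This separation/transversality step is what your proposal never supplies, and without it the existence of $f(y)$ is not established.
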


Defining  $g:=f\circ\psi^{-1}:B^{\R^{m-1}}(0,\e)\to\R_+$, $G_g$ and $W$ as in Case 1), 
one can verify that the canonical map
$\Psi:W\to G_g$
is  a $\tau_x(\e)$-almost isometry in the same way.

Finally as in  Sublemma \ref{slem:fistau-Lip},
one can show that $g$ is $\tau_x(\e)$-Lipschitz.
This completes the proof of
Theorem \ref{thm:almostisometry(intro)}.
\end{proof}

\begin{conj} \label{conj:stability}
The homeomorphism version of Theorem \ref{thm:almostisometry(intro)}
would also hold for any thin point $x\in X_0^1$ with 
$f_*={\rm id}$
and for any $x\in X_0^2$
if the ball $B^{\R^{m-1}}(0,\e)$
is replaced by $B^{T_xN_0}(o_x,\e)$.
\end{conj}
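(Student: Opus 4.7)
The plan is to adapt the proof of Theorem \ref{thm:almostisometry(intro)} by replacing the quantitative strainer-based construction with a qualitative topological one, using the respectful stability theorem (Theorem \ref{thm:stability-respectful}) applied to a suitable rescaling at $x$. The argument splits into the two cases $x\in\ca Tn\cap X_0^1$ with $f_*={\rm id}$ and $x\in X_0^2$, but both cases share a common structural feature that drives the proof.

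\textbf{Step 1: Tangent-cone splitting.}\,
By Theorem \ref{thm:X1-f}(2) combined with $f_*={\rm id}$ in the first case, and by Lemma \ref{lem:double-sus} in the second, $\Sigma_x(Y)$ is a spherical suspension over $\Sigma_x(X_0)^{\rm int}$, with one pole $\xi_x^+$ in the single case and two antipodal poles $\xi_x^{\pm}$ in the double case. Passing to the Euclidean cones via Proposition \ref{prop:tang-cone}, one obtains a metric splitting
\[
T_x(Y)\cong T_xN_0\times\R_+\quad\text{or}\quad T_x(Y)\cong T_xN_0\times\R,
\]
respectively, with $T_xN_0=K(\Sigma_x(X_0)^{\rm int})$ in which $T_xX$ sits as the horizontal slab $T_xN_0\times\{0\}$.

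\textbf{Step 2: Distance functions and respectful stability.}\,
Following the strategy used in the proof of Theorem \ref{thm:regular-ballX(intro)}(1), fix a small $r>0$ and consider the pair of distance functions $(d_x,h)$ on $Y$, where $h:=|C_r^Y\cdot|$ in the single case, and $h:=|C_r^Y\cup\tilde C_r^Y,\cdot|$ in the double case (as in Case 2 of the proof of Theorem \ref{thm:almostisometry(intro)}). Under the rescaling $(\tfrac{1}{r}Y,x)\to(T_xY,o_x)$, the pair converges to $(d_{o_x},h_*)$ on $T_xY$, where $h_*$ is (the absolute value of) the vertical coordinate relative to the splitting of Step 1. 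The suspension structure lets one locate, at each point $v$ of the relevant annular set $\{0<d_{o_x}\le 1,\, h_*\le 1\}\setminus\{o_x\}$, a direction $w\in\Sigma_v(T_xY)$ simultaneously making obtuse angle with $\uparrow_v^{o_x}$ and with the gradient of $h_*$, giving $(c,\delta)$-regularity of $(d_{o_x},h_*)$ there. Theorem \ref{thm:stability-respectful} then yields a homeomorphism $\varphi:B^Y(x,r)\to B^{T_xY}(o_x,1)$ with $h_*\circ\varphi=h$ for small enough $r$.

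\textbf{Step 3: Construction of the graph.}\,
For each $y\in T_xN_0\cap B(o_x,\e)$, let $g(y)$ be the supremum of those $t\ge 0$ such that the vertical segment $\{y\}\times[0,t]$ lies entirely in $\varphi(X\cap B^Y(x,r))$ (choosing the appropriate side in the double case). Closedness of $X$ in $Y$ ensures that the sup is attained and $g:B^{T_xN_0}(o_x,\e)\to\R_+$ is upper semicontinuous; lower semicontinuity, and hence continuity, follows from the fact that the perpendicular ray from any $y'\in X_0$ near $x$ has a well-defined first exit from $X$, which is Lemma \ref{lem:not-perp} applied in the rescaled limit. One then sets $W:=\varphi^{-1}(G_g)$ and verifies that $\varphi$ restricts to the desired homeomorphism $W\to G_g$.

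\textbf{Main obstacle.}\,
The hard part is the combination of Steps 2 and 3. Because $x$ is thin, ${\rm rad}(\xi_x^+)=\pi/2$, so one does \emph{not} have the thick-case witness used in the proof of Theorem \ref{thm:regular-ballX(intro)}(1); instead, one must use the finer two-sided suspension decomposition given by Theorem \ref{thm:X1-f}(2) (resp.\ Lemma \ref{lem:double-sus}) to exhibit $w$ in the horizontal factor $T_xN_0$. More seriously, one needs to show that the vertical fibers of $\varphi$ meet $X$ in a connected interval, so that the image $\varphi(X\cap B^Y(x,r))$ is genuinely the subgraph of a single function. Quantitative control of the "exit point" of a perpendicular, uniform in the base point, is delicate without strainers and seems to require an inductive use of the respectful stability on each small annulus $A^X(x,\e^i r,\e^{i-1}r)$, mirroring the iteration performed in the proof of Theorem \ref{thm:regular-ballX(intro)}(1).
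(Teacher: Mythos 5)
First, note that the statement you are attempting to prove is explicitly labelled \emph{Conjecture} in the paper; the authors themselves do not supply a proof, so there is no paper argument to compare against. What you have written is therefore best judged as a research sketch, and on that footing it has a genuine unresolved gap that you yourself flag in your last paragraph.

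The central difficulty is in the transition from Step 2 to Step 3. Theorem \ref{thm:stability-respectful} produces a homeomorphism of annular regions that respects $f$ globally and respects the second coordinate $h$ only on the compact level set $K=\{h=1\}$, i.e.\ only on $X_0$. For a \emph{thick} point this is enough to conclude $\varphi(X)=\{h_*\ge1\}$, because $X$ and $T_xX$ are both full-dimensional domains with boundary $\{h=1\}$ and $\{h_*=1\}$, and a homeomorphism that matches the boundary must match the sides. At a thin point this mechanism collapses: $T_xX=T_xN_0=\{h_*=1\}$ is a codimension-one set, while $X\cap B^Y(x,r)$ is still $m$-dimensional, so $\varphi$ cannot carry $X$ to $T_xX$, and, more to the point, $\varphi$ carries no information at all about where $X$ goes away from the slice $K$. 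Consequently the set $\varphi(X\cap B^Y(x,r))$ is an a priori wild $m$-dimensional subset of the cone annulus, and there is no input from the stability theorem that would force its vertical fibers to be intervals or its upper envelope $g$ to be continuous. The appeal to Lemma \ref{lem:not-perp} for lower semicontinuity of $g$ does not help here, because that lemma controls geodesics in $Y$ at a single point, not the global behaviour of $\varphi$, which is a topological map with no metric regularity off $K$. Iterating respectful stability on nested annuli, as you suggest, compounds the problem rather than curing it: each iterate again only constrains the map on the slice $\{h=\text{const}\}$.

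There is a secondary issue in Step 2 worth being precise about. For a thin single point, $T_xY\cong T_xN_0\times\R_+$ and a boundary point $v\in T_xN_0$ has $\Sigma_v(T_xY)$ a half-suspension over $\Sigma_{v_0}(T_xN_0)$, so no $w$ can satisfy $\angle(w,\xi_v^+)>\pi/2+c$ inside $\Sigma_v(T_xY)$: the regularity of $(d_{o_x},h_*)$ fails at the boundary. To recover it one must carry out the whole argument in the double $D(Y)$, exactly as the paper does in Case 1 of the proof of Theorem \ref{thm:almostisometry(intro)}. You allude to this ("the finer two-sided suspension decomposition") but do not carry it through. Moreover, to then conclude that the $D(Y)$-homeomorphism descends to a $Y$-homeomorphism respecting $X$, you would want the construction to be $\Z_2$-equivariant, which is not automatic from the respectful stability theorem as stated. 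In summary: the framework you propose is the natural one, the splitting in Step 1 is correct, and you correctly identify the sticking point, but the conjecture remains open because the step "$\varphi(X)$ is a subgraph with continuous envelope" is exactly the content that a proof would have to supply and neither your sketch nor the existing literature supplies it.
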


\pmed
\setcounter{equation}{0}

\section{Local contractibility} \label{sec:loc=cont}\,
In this section, we prove the local Lipschitz contractibility of $X$.
Using Perelman's local stability theorem
for $Y$ together with the projection
$\pi:Y\to X$, which is a deformation retraction,
we can easily verify that for any $x\in X_0$, $B^X(x,\e)$ is 
contractible in  a slightly larger metric ball
for small enough $\e>0$.
To show the existence of a contractible
neighborhood, we make use of 
the gradient flows for semi-concave functions
(see \cite{Pet}).

In \cite{Per}, Perelman constructed a strictly concave function on a small neighborhood of any  point of an Alexandrov space with 
curvature bounded below.
Kapovitch modified Perelman's construction
in \cite{Kap}.
First we roughly recall  their construction
in our Alexandrov space $Y$.
\psmall
Let $x\in Y$.
For positive numbers $\theta$ and $\delta$ with 
$\delta\ll \theta\le1/100$,
take a small enough positive number $r=r(x,\theta,\delta)$.
Let $\{ x_{\alpha}\}_{\alpha\in A}$ be a maximal $\theta r$-discrete  system in 
$S(x,r)$, and take a maximal $\delta r$-discrete 
 system $\{ x_{\alpha\beta}\}_{\beta\in A_\alpha}$
in $B(x_{\alpha},\theta r)\cap S(x,r)$.
Taking small enough $r$,
we may assume that 
\begin{itemize}
\item $\{\uparrow_x^{x_\alpha}\}_{\alpha\in A}$ is 
 $\theta/2$-discrete and $2\theta$-dense  in $\Sigma_x(Y)\,;$
\item $\{\uparrow_x^{x_{\alpha\beta}}\}_{\beta\in A_\alpha}$ is $\delta/2$-discrete and
$2\delta$-dense in $B^{\Sigma_x(Y)}(\uparrow_x^{x_{\alpha}}, \theta)$.
\end{itemize}

For  $0<\e \ll r$ and $0<\zeta\ll 1$, let 
$\phi:[0,2r]\to\R_+$ be the $C^1$-function 
 with $\phi(0)=0$ 
satisfying  
\beq \label{eq:Sec6-phi}
\begin{aligned}
\begin{cases}
\text{$\phi' = 1$ \hspace{1cm} on $[0,r-2\e]$}\\
\text{$\phi' = 1-\zeta$  \hspace{0.3cm} on $[r+2\e,2r]$}\\
\text{$\phi'' = -\zeta/4\e$ \hspace{0.08cm} on $[r-2\e, r+2\e]$}. 
\end{cases}
\end{aligned}
\eeq

We consider the following $1$-Lipschitz functions $g_{\alpha}$ and $g$ on $M$:
\beq \label{eq:def-concavef}
   g_{\alpha} := \frac{1}{\# A_\alpha} \sum_{\beta\in A_{\alpha}} 
                                \phi(|x_{\alpha\beta}, \,\cdot\,|), \qquad
  g:= \min_{\alpha\in A} g_{\alpha},
\eeq

Here we need the following result.

\begin{thm}$($\cite{Per},\cite{Kap}$)$\label{thm:convex-nbd}
There exists $\e_{r,\theta,\delta,\zeta}>0$ such that for any $0<\e\le\e_{r,\theta,\delta,\zeta}$,
the function $g$ defined above is  $(-1)$-concave on $B(x,2\e)$
having a unique maximum at $x$ in $B(x, 2\e)$.
\end{thm}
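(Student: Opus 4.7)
The plan is to show that each of the averaged functions $g_\alpha$ is itself $(-c)$-concave on $B(x,2\e)$; since the minimum of $\lambda$-concave functions is $\lambda$-concave, this upgrades to $g=\min_\alpha g_\alpha$, and a first-variation computation then identifies $x$ as the unique maximum. The mechanism driving the strict concavity is the steep negative value $\phi''=-\zeta/(4\e)$ inside the transition window $[r-2\e,r+2\e]$, which is hit for every pair $(y,x_{\alpha\beta})$ with $y\in B(x,2\e)$ (since then $|x_{\alpha\beta},y|\in[r-2\e,r+2\e]$), combined with the averaging over the fine net $\{x_{\alpha\beta}\}_{\beta\in A_\alpha}$.

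First I would fix $y\in B(x,2\e)$, a direction $\xi\in\Sigma_y(Y)$, and a unit-speed geodesic $\gamma$ from $y$ tangent to $\xi$, and compute the second variation of $h_{\alpha\beta}(t):=\phi(|x_{\alpha\beta},\gamma(t)|)$. The chain rule in the semiconcave sense together with the standard distance-function second-variation estimate in an Alexandrov space of curvature $\ge\tilde\kappa$ yields
\[
h_{\alpha\beta}''(0)\le \phi''(|x_{\alpha\beta},y|)\cos^2\angle(\Uparrow_y^{x_{\alpha\beta}},\xi)+\phi'(|x_{\alpha\beta},y|)\,C_{\tilde\kappa,r},
\]
where $C_{\tilde\kappa,r}$ is bounded because $|x_{\alpha\beta},y|\ge r/2$. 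Substituting $\phi''\equiv-\zeta/(4\e)$ and $\phi'\le 1$ in the transition window and averaging over $\beta$, the $(-c)$-concavity of $g_\alpha$ is reduced to the uniform lower bound
\[
A_\alpha(y,\xi):=\frac{1}{\sharp A_\alpha}\sum_{\beta\in A_\alpha}\cos^2\angle(\Uparrow_y^{x_{\alpha\beta}},\xi)\ge b(\theta,m)>0,
\]
for then $g_\alpha''(\xi)\le-\zeta b(\theta,m)/(4\e)+C_{\tilde\kappa,r}\le -c$ as soon as $\e\le\zeta b(\theta,m)/(4(C_{\tilde\kappa,r}+c))$.

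The hard part will be exactly this bound on $A_\alpha$, because the directions $\{\Uparrow_y^{x_{\alpha\beta}}\}_\beta$ concentrate in a ball of radius $\sim\theta$ around $\Uparrow_y^{x_\alpha}$ rather than spreading over $\Sigma_y(Y)$: for $\xi$ nearly orthogonal to $\Uparrow_y^{x_\alpha}$ each individual cosine is small. The way out is that the $2\delta$-density of $\{\Uparrow_x^{x_{\alpha\beta}}\}_\beta$ in $B^{\Sigma_x(Y)}(\Uparrow_x^{x_\alpha},\theta)$ (and hence, after an $O(|x,y|/r)$ perturbation, of $\{\Uparrow_y^{x_{\alpha\beta}}\}_\beta$) lets one compare $A_\alpha(y,\xi)$ to the spherical integral $\int_{B(e_0,\theta)}\cos^2\angle(v,\xi)\,dv$ on $\mathbb{S}^{m-1}$; writing $v=\cos s\,e_0+\sin s\,w$ and integrating first over $w\in\mathbb{S}^{m-2}$ (which gives $1/(m-1)$) and then over $s\in[0,\theta]$ against $\sin^{m-2}s\,ds$ produces $A_\alpha(y,\xi)\gtrsim\theta^2/m$ in the worst orthogonal case, and $A_\alpha(y,\xi)\ge\cos^2(2\theta)$ otherwise, yielding the positive $b(\theta,m)$. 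For the unique maximum: $|x_{\alpha\beta},x|=r$ for every $\alpha,\beta$ gives $g_\alpha(x)=g(x)=\phi(r)$, and for any $\xi\in\Sigma_x(Y)$ the $2\theta$-density of $\{\Uparrow_x^{x_\alpha}\}$ furnishes $\alpha_0$ with $\angle(\Uparrow_x^{x_{\alpha_0}},\xi)\le2\theta$, whence $d_x g(\xi)\le d_x g_{\alpha_0}(\xi)\le -\phi'(r)\cos(3\theta)<0$. Every direction descends at $x$, and the strict concavity established above forces $x$ to be the unique maximum in $B(x,2\e)$.
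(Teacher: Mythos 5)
The paper does not prove this theorem itself — it is quoted from Perelman and Kapovitch — so your proposal has to be measured against that standard construction, and in its architecture it matches it: the reduction to the strict concavity of each $g_\alpha$ (min of $(-c)$-concave functions is $(-c)$-concave), the barrier second-variation estimate $(\phi\circ\rho)''\le\phi''(\rho)\cos^2\angle(\Uparrow_y^{x_{\alpha\beta}},\xi)+\phi'(\rho)C_{\tilde\kappa,r}$, the observation that $|x_{\alpha\beta},y|$ stays in the transition window so that $\phi''=-\zeta/4\e$ is in force, and the first-variation argument at $x$ using the $2\theta$-density of $\{\Uparrow_x^{x_\alpha}\}$ are all correct and are exactly the standard mechanism.

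The genuine gap is in the step you yourself flag as the hard one, the uniform bound $A_\alpha(y,\xi)\ge b(\theta,m)>0$. You justify it by comparing the discrete average with the integral of $\cos^2\angle(v,\xi)$ over a round cap $B(e_0,\theta)\subset\mathbb{S}^{m-1}$, computed in spherical coordinates. But $\Sigma_y(Y)$ is only an Alexandrov space of curvature $\ge 1$ — possibly singular, possibly of very small volume, possibly with boundary — so there is no round metric sphere in which your integral comparison takes place, and the coordinate computation (integrating over $w\in\mathbb{S}^{m-2}$, etc.) has no meaning there. Moreover, the transfer of the $2\delta$-density of $\{\Uparrow_x^{x_{\alpha\beta}}\}$ in $B^{\Sigma_x}(\Uparrow_x^{x_\alpha},\theta)$ to a density statement for $\{\Uparrow_y^{x_{\alpha\beta}}\}$ in $\Sigma_y$ is not an ``$O(|x,y|/r)$ perturbation'': in an Alexandrov space angles dominate comparison angles, so closeness of the points controls the spread of directions at $y$ only one-sidedly (via hinge comparison), and there is no map $\Sigma_x\to\Sigma_y$ along which density is preserved. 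A correct quantitative substitute avoids any identification with the round sphere: (i) the directions $\Uparrow_y^{x_{\alpha\beta}}$ are pairwise separated by an angle $\gtrsim\delta$, because the points are $\delta r$-separated at distance about $r$ from $y$ and angles are at least comparison angles; (ii) $\sharp A_\alpha\ge c_x(\theta/\delta)^{m-1}$, with $c_x$ depending on the (non-collapsed, $(m-1)$-dimensional) space $\Sigma_x$; (iii) by relative Bishop–Gromov in $\Sigma_y$ (curvature $\ge1$), the band $\{v:|\angle(v,\xi)-\pi/2|\le s\}$ has measure at most $C(m)\,s\,\mathcal H^{m-1}(\Sigma_y)$, so it contains at most $C(m)(s+\delta)\delta^{1-m}$ points of a $\delta$-separated family. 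Choosing $s$ small compared with $c_x\theta^{m-1}$ shows a definite fraction of the $\beta$'s satisfy $\cos^2\angle(\Uparrow_y^{x_{\alpha\beta}},\xi)\ge s^2$, which is all you need, since $\e$ is chosen last. Note that the resulting constant $b$ depends on $x$ (through $\Sigma_x$) and on $\delta$, not only on $\theta$ and $m$; this is harmless for the statement, but your claim of an $x$-independent $b(\theta,m)$ obtained from a round-sphere computation is not justified as written. The final assertion that strict concavity plus strict descent in every direction at $x$ gives the unique maximum is fine.
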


Let $s_0:=\min_{B(x,\e)}\,g$. We define a convex neighborhood  $W$ of $x$ as 
\beq\label{eq:def=W}
       W:= \{ g\ge s_0\}\cap B(x,2\e).
\eeq
We show that
\beq \label{eq:W-inclusion}
B(x,\e)\subset W\subset B(x,(1+\tau(\theta)+\tau(\e/r))\e).
\eeq
In fact, the first inclusion is obvious.
For the second inclusion, 
for any $y\in W\setminus \{ x\}$, choose $\alpha\in A$ with $\angle(\uparrow_x^y,\uparrow_x^{x_\alpha})<2\theta$.
Since 
$\angle(\uparrow_x^y,\uparrow_x^{x_{\alpha\beta}})<4\theta$,
by the law of cosine, we have for all $\beta\in A_\alpha$
\beq \label{eq:zxab-cosine}
||y,x_{\alpha\beta}| - (r-|x,y|\cos\wangle yxx_{\alpha\beta})|\le O((|x,y|/r)^2).
\eeq
Choose $\beta\in A_\alpha$ such that 
$|y,x_{\alpha\beta}|$ is maximal among the elements of $A_\alpha$. From  \eqref{eq:zxab-cosine},
we then have 
\beq
\begin{aligned}\label{eq:aboveg(z)}
\phi(r-|x,y|&\cos\wangle yxx_{\alpha\beta}+O((|x,y|/r)^2) \\
   &\ge g_\alpha(y)\ge g(y) \ge s\ge\phi(r-\e).
\end{aligned}
\eeq
Since $\phi$ is monotone, this implies 
\begin{align*}
       |x,y| &\le  (\cos\wangle yxx_{\alpha\beta})^{-1}(\e+O((|x,y|/r)^2)  \\
   &\le (1+\tau(\theta)+ \tau(\e/r))\e
\end{align*}
as required.
\psmall
\begin{prop}\label{prop:1-Lip-contractibleAlex}
$W$ is $(1+\zeta, \e)$-Lipschitz contractible
to $x$ for small enough $\theta$, $\zeta$  and  $\e/r$.
\end{prop}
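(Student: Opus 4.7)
The plan is to take $F:W\times[0,(1+\zeta)\e]\to W$ to be the gradient flow $\Phi_t$ of $g$ itself, with no reparametrization, and to verify the three required identities directly. This splits the task into two sub-tasks: showing that $\Phi$ is $1$-Lipschitz on $W\times[0,(1+\zeta)\e]$ with respect to the $L_1$-product metric, and showing that $\Phi_{(1+\zeta)\e}(y)=x$ for every $y\in W$.

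First I would check that $\Phi_t$ preserves $W$. Because $g\circ\Phi_t(y)$ is nondecreasing in $t$, the super-level set $\{g\ge s\}$ is flow-invariant; combining the inclusion $W\subset B(x,(1+\tau(\theta)+\tau_r(\e))\e)$ from \eqref{eq:W-inclusion} with the $1$-Lipschitz property of $g$ (immediate from $0\le\phi'\le 1$ together with the averaging/min structure in \eqref{eq:def-concavef}) gives $\{g\ge s\}\subset B(x,2\e)$ for small enough $\e$, so $\Phi_t(W)\subset W$. The $L_1$-Lipschitz bound on the product then follows by a standard triangle-inequality split: Proposition \ref{prop:lip} with $\lambda=-c<0$ makes each $\Phi_t$ spatially $1$-Lipschitz, while the speed bound $|\nabla g|\le 1$ (inherited from the $1$-Lipschitz property of $g$ via $|\nabla g|^2=dg(\nabla g)\le|\nabla g|$) makes each curve $t\mapsto\Phi_t(y)$ temporally $1$-Lipschitz.

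The heart of the proof, and the step I expect to be the main obstacle, is the finite-time convergence $\Phi_{(1+\zeta)\e}(y)=x$. I would reduce this to the lower bound
\beqq
|\nabla g|(y)\,\ge\,dg|_y(\uparrow_y^x)\,\ge\,1-\zeta-\tau(\theta)-\tau_r(\e),\qquad y\in W\setminus\{x\},
\eeqq
whose first inequality is the general identity $df(v)\le\langle\nabla f,v\rangle$ applied to the unit vector $\uparrow_y^x$. For the second one has to unpack the construction \eqref{eq:def-concavef}: for any $y\in W$, each distance $|x_{\alpha\beta},y|$ lies in or just beside the transition interval $[r-2\e,r+2\e]$ of $\phi$, where $\phi'\ge 1-\zeta$. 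The first-order expansion $|x_{\alpha\beta},y|\approx r-|x,y|\cos\angle(\uparrow_x^y,\uparrow_x^{x_{\alpha\beta}})+O(|x,y|^2/r)$ together with the $2\theta$-density of $\{\uparrow_x^{x_\alpha}\}$ in $\Sigma_x(Y)$ lets me pick an index $\alpha_0$ with $\angle(\uparrow_x^y,\uparrow_x^{x_{\alpha_0}})<2\theta$ and conclude $g(x)-g_{\alpha_0}(y)\ge|x,y|(1-\zeta)(1-2\theta^2)+O(|x,y|^2/r)$; since $g\le g_{\alpha_0}$, the claimed inequality follows after absorbing the error into the $\tau$-terms.

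With this speed bound in hand, setting $\rho(t):=|x,\Phi_t(y)|$ and differentiating yields $\rho'(t)\le-\langle\nabla g,\uparrow^x_{\Phi_t(y)}\rangle\le-(1-\zeta-\tau(\theta)-\tau_r(\e))$ as long as $\Phi_t(y)\ne x$, so $\rho(t)$ reaches $0$ by time $|x,y|/(1-\zeta-\tau(\theta)-\tau_r(\e))\le(1+\zeta)\e$ for sufficiently small $\theta$ and $\e$, using the upper bound on $|x,y|$ for $y\in W$. Once $\Phi_t(y)=x$, the identity $\nabla_x g=0$ keeps the flow at $x$ thereafter, so $F:=\Phi|_{W\times[0,(1+\zeta)\e]}$ satisfies $F(\cdot,0)=\mathrm{id}_W$, $F(\cdot,(1+\zeta)\e)\equiv x$ and $F(x,\cdot)\equiv x$, and is $1$-Lipschitz in the $L_1$-metric, as required.
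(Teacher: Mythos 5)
Your proposal is correct and its core coincides with the paper's: a definite lower bound for the derivative of $g$ in the direction of $x$ at every $y\in W\setminus\{x\}$, then the gradient flow of $g$ as the contraction, with the $1$-Lipschitz bound on $W\times[0,(1+\zeta)\e]$ coming from Proposition \ref{prop:lip} (spatially, since $g$ is $(-c)$-concave) and from $|\nabla g|\le 1$ (temporally). You differ in two local steps, both legitimate, but one needs a word. For the derivative bound, the paper works at $y$: it takes $\alpha'$ with $g(y)=g_{\alpha'}(y)$, proves $\wangle x y x_{\alpha'\gamma'}\ge\pi-\tau(\theta)-\tau_r(\e)$ and reads off \eqref{eq:grad-dg} from the first variation of the distance functions; you instead pick $\alpha_0$ with $\uparrow_x^{x_{\alpha_0}}$ close to $\uparrow_x^y$ and, via \eqref{eq:zxab-cosine}, bound the value drop $g(x)-g(y)\ge g(x)-g_{\alpha_0}(y)\ge(1-\zeta)(1-\tau(\theta)-\tau_r(\e))|x,y|$. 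As written, the jump from this difference-quotient estimate to $dg|_y(\uparrow_y^x)\ge 1-\zeta-\tau(\theta)-\tau_r(\e)$ is a non sequitur: you must add that $g$ is concave along the geodesic from $y$ to $x$ (this geodesic stays in $B(x,2\e)$, where Theorem \ref{thm:convex-nbd} applies), so its right derivative at $y$ is at least the slope $(g(x)-g(y))/|x,y|$; with that one line your bound is fine. For the arrival time you integrate the first-variation inequality $\frac{d^+}{dt}|x,\Phi_t(y)|\le-\langle\nabla g,\uparrow^x\rangle\le-dg(\uparrow^x)$, i.e.\ you track the distance to $x$, whereas the paper tracks the increase of $g$ and caps it by $\phi(r)-s$; both give $T\le(1+\tau(\theta)+\tau_r(\e))\e/(1-\zeta-\tau)$, which, exactly like the paper's own estimate $T\le\e(1+2\zeta)$, exceeds $(1+\zeta)\e$ only by an $O(\zeta^2)$ amount and is repaired by running the construction of $\phi$ in \eqref{eq:Sec6-phi} with $\zeta/2$. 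Your preliminary check that $\Phi_t$ preserves $W$ (monotonicity of $g$ along the flow plus \eqref{eq:W-inclusion}, so the flow can never reach $S(x,2\e)$) is a point the paper leaves implicit and is worth keeping, as is the remark that $\nabla_xg=0$ keeps the flow at $x$ after arrival.
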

\psmall
\begin{rem} \label{rem:extMY}
Proposition \ref{prop:1-Lip-contractibleAlex}
is related with  \cite[Theorem 1.2]{MY:SLC}.
\end{rem}

\begin{proof}[Proof of Proposition \ref{prop:1-Lip-contractibleAlex}]
(1)\, First we show that
\beq \label{eq:grad-dg}
dg(\uparrow_y^x)\ge (1-\zeta)(1-\tau(\theta)-\tau(\e/r))
\eeq
for all $y\in W\setminus \{ x\}$.

Take $\alpha'\in A$ and $\beta'\in A_{\alpha'
}$ such that $g(y)=g_{\alpha'}(y)$
and $|y,x_{\alpha'\beta'}|$ is minimal 
among the elements of $A_{\alpha'}$. 
From $g_\alpha(y)\ge g_{\alpha'}(y)$, we have 
$|y,x_{\alpha\beta}|\ge|y,x_{\alpha'\beta'}|$.
This implies 
\begin{align*}
\phi(r-|x,y| & \cos\wangle yxx_{\alpha\beta}+O((|x,y|/r)^2)  \\
  &\ge 
\phi(r-|x,y|\cos\wangle yxx_{\alpha'\beta'}-O((|x,y|/r)^2),
\end{align*}
which yields 
$\cos\wangle yxx_{\alpha'\beta'}\ge 
\cos 4\theta -\tau(|x,y|/r)$. Thus we have
\beq\label{eq:wangle(yxxab}
\wangle yxx_{\alpha'\gamma'}\le \tau(\theta)+
\tau(|x,y|/r)
\eeq
for all $\gamma'\in A_{\alpha'}$.
Since $\wangle yx_{\alpha'\gamma'}x\le \tau(\e/r)$, we obtain
\beq\label{eq:angle-xyac}
\wangle xyx_{\alpha'\gamma'}\ge \pi-\tau(\theta)-\tau(\e/r),
\eeq
which yields \eqref{eq:grad-dg}.

\pmed
\n
(2) \, By \eqref{eq:grad-dg}, the gradient flow $\Phi(y,t)$ of $g$
starting at any point $y\in W$ reaches $x$ at a finite time $T>0$.
We show that $T\le (1+2\zeta)\e$.

By \eqref{eq:grad-dg}, we obtain 
\begin{align*}
    \phi(r)-s_0& \ge g(\Phi(y,T))-g(\Phi(y,0))\\
      &\ge (1-\zeta)(1-\tau(\theta)-\tau(\e/r))T.
\end{align*}
 Take $y_0\in B(x,\e)$ with $g(y_0)=s_0$.
Applying \eqref{eq:zxab-cosine} and \eqref{eq:wangle(yxxab} for $y_0$ in place of $y$,
we get
\begin{align*}
    \phi(r)-s_0& \le \phi(r)-\phi(r-\e\cos
     (\tau(\theta)+\tau(\e/r)) +O((\e/r)^2))\\
     & \le \e(1-\tau(\theta)+\tau(\e/r)).
\end{align*}
Combining the above two inequalities, 
we get 
\beq\label{eq:boundT}
T\le \frac{\e(1-\tau(\theta)+\tau(\e/r))}{(1-\zeta)(1-\tau(\theta)-\tau(\e/r))} \le \e(1+2\zeta)
\eeq 
for 
small enough $\theta$ and $\e/r$. 

Since $g$ is concave and $1$-Lipschitz on $B(x,2\e)$,   Proposition \ref{prop:lip} shows that 
$\Phi:W\times [0,T]\to W$ is $1$-Lipschitz.
By \eqref{eq:boundT}, it 
pvovides a $(1, (1+2\zeta)\e)$-Lipschitz
strongly deformation retraction of $W$ to $x$. 
This completes the proof.
\end{proof}
%
\pmed  

 We denote by $\dot \Phi(y,t)=\nabla_{\Phi(y,t)}g$ the right derivative of 
$\Phi(y,t)$ with respect to $t$.
By \eqref{eq:gradient}, \eqref{eq:grad-dg} and the first
variation formula, 
we have the following immediately.

\begin{cor} \label{cor:grad-g} For any $y\in W\setminus \{ x\}$, we have 
\beq \label{eq:grad-normangle-dg}
\begin{cases}
\begin{aligned}
&|\dot \Phi(y,t)|\ge (1-\zeta)(1-\tau(\theta)-\tau(\e/r)), \quad \\
&  \angle(\dot \Phi(y,t),\uparrow_{\dot \Phi(y,t)}^x)<
\tau(\theta,\zeta)+\tau(\e/r),\\
& \frac{d}{dt}d_p^Y(\Phi(y,t)) < -1+\tau(\theta,\zeta)+\tau(\e/r).
\end{aligned}
\end{cases}
\eeq
\end{cor}  

\pmed

\begin{proof}[Proof of Theorem  \ref{thm:contractible}]
For $x\in X\setminus X_0$,
Proposition \ref{prop:1-Lip-contractibleAlex}
immediately implies the conclusion.

For any $x\in X_0$ and $\zeta>0$, choose 
$\e=\e_{1,r,\theta,\delta,\zeta}$,  the $(-1)$-concave function  $g$  and the convex neighborhood $W$ of $x$ as  above  togetehr with the 
$(1,(1+\zeta)\e)$-Lipschitz strongly deformation 
retraction $\Phi:W\times [0,(1+\zeta)\e]\to W$
of $W$ to $x$ defined by the gradient flow of $g$.
%
We consider $W$ as an Alexandrov space, and
set 
\[
      W_X:=W\cap X.
\]

\begin{lem} \label{lem:G1=pi}
For any $(y,t)\in W_X\times [0,\e]$, we have 
$$
\pi\circ \Phi(y,t)\in W_X.
$$
\end{lem}
\begin{proof} 
For any $(y,t)\in W_X\times [0,\e]$, 
if $\Phi(y,t)\in (W\setminus X)\cap B(x,\e/2)$,
the triangle inequality implies $|x,\pi(\Phi(y,t))|<\e$
and hence $\pi(\Phi(y,t))\in W_X$. Thus
we may always assume $\Phi(y,t)\in W\setminus B(x,\e/2)$.

Consider the distance function 
$d_X$ from $X$ in $Y$.
Lemma \ref{lem:dX-est}  together with \eqref{eq:grad-normangle-dg} implies
\beq\label{eq:angle(F,TC)}
\angle(\dot \Phi(y,t),  T_{\Phi(y,t)}(d_X^{-1}([0,s])))<\tau(\theta,\zeta)+\tau(\e/r)+\tau_x(\e).
\eeq
Choose a fixed function $\tau_{x}(\theta,\zeta,\e/r)$
such that 
$$
\tau(\theta,\zeta)+\tau(\e/r)+\tau_x(\e)
<\tau_{x}(\theta,\zeta, \e/r),
$$
where the LHS of the above inequality represents
the RHS of \eqref{eq:angle(F,TC)}.
We show 
\beq\label{eq:|z,X|<small}
    |\Phi(y,t),X|\le \tau_{x}(\theta,\zeta, \e/r)t
\eeq
for all $t\ge 0$ with $\Phi(y,t)\in W\setminus B(x,\e/100)$.
Let $t_0\ge 0$ be the supremum of such $u\ge 0$
satisfying \eqref {eq:|z,X|<small}  and $\Phi(y,t)\in  W\setminus B(x,\e/100)$ for all $t\in [0,u]$.
We may assume  
$|\Phi(y,t_0),X|= \tau_{x}(\theta,\zeta, \e/r)t_0$
and  $\Phi(y,t)\in  W\setminus B(x,\e/100)$.
Set $s_0:=|\Phi(y,t_0),X|$.
From \eqref{eq:angle(F,TC)} for $t=t_0$, we obtain
for any small enough $\delta>0$, 
\begin{align*}
   |\Phi(y,t_0+\delta), X|&=s_0+ |\Phi(y,t_0+\delta), d_X^{-1}([0,s_0])| \\
& \le s_0+(\tau(\theta)+\tau(\e/r)+\tau_x(\e))\delta\\
&  < \tau_{x}(\theta,\zeta, \e/r)(t_0+\delta).
\end{align*}
This contradicts the choice of $t_0$, and proves
\eqref{eq:|z,X|<small}.

Since $\frac{d}{dt}g(\Phi(y,t))=|\nabla g|^2\ge
(1-\zeta)^2(1-\tau(\theta)-\tau(\e/r))^2$, we have
\[
    g(\Phi(y,t))\ge g(y)+ t(1-\zeta)^2(1-\tau(\theta)-\tau(\e/r))^2.
\]
On the  other hand, since $g$ is $1$-Lipschitz, we have
\begin{align*}
   g(\pi\circ \Phi(y,t)) &\ge g(\Phi(y,t))-|\Phi(y,t), \pi\circ\Phi(y,t)|  \\
& \ge g(y) + t(1-\zeta)^2(1-\tau(\theta)-\tau(\e/r))^2-\tau_{x}(\theta,\zeta, \e/r)t \\
&\ge g(y) +   t\{ (1-\zeta)^2-2\tau_{x}(\theta,\zeta, \e/r)\}
\ge g(y)
\end{align*}
if $\theta,\zeta, \e/r\ll 1-\zeta$, 
yielding $\pi\circ \Phi(y,t)\in W$ 
\end{proof}

From here on, we use  a general symbol 
\[
    \tau_{x}(\theta,\zeta,\e/r)=
\tau(\theta,\zeta)+\tau(\e/r)+\tau_x(\e).
\]
Since 
$\pi: C^Y_{[0,\tau_{x}(\theta,\zeta, \e/r)\e]}\to X_0$
is $\phi(\tau_{x}(\theta,\zeta, \e/r)\e)^{-1}$-Lipschitz, by Lemma \ref{lem:G1=pi},
\beq\label{eq:retracionH}
H:=\pi\circ \Phi|_{W_X\times [0,(1+\zeta)\e]}
:W_X\times [0,\e]\to W_X
\eeq
provides a
$(1+\tau_{x}(\theta,\zeta, \e/r)\e, (1+\zeta)\e)$-Lipschitz strong deformation retraction 
of $W_X$  to $x$.
If $\theta$, $\zeta$ and $\e/r$ are chosen small enough,
$W_X$ is  $(1+\zeta,\e)$-Lipschitz strongly contractible to $x$.
This completes the proof of Theorem  \ref{thm:contractible}
\end{proof}  

\begin{figure}
\begin{center}
\begin{tikzpicture}
[scale = 0.5]

\draw (0,0) circle [x radius=5,y radius=1.7];
\draw  [-] (-5,0)  to [out=70, in=110] (5,0);

\fill (-0.5,0) circle (2pt) node [below] {\small{$x$}};
\fill (4,0) circle (2pt) node [below] {\small{$y$}};
\draw  [-,thin] (4,0)  to [out=150, in=30] (-0.5,0);

\draw  [-,thin] (-0.5,0)  to [out=-10, in=190] (1,0);
\draw  [-,thin] (1,0)  to [out=10, in=170] (2,0);
\draw  [-,thin] (2,0)  to [out=-10, in=190] (3,0);
\draw  [-,thin] (3,0)  to [out=10, in=170] (4,0);

\fill (2,0.65) circle (2pt);
\fill (2,0.50) circle (0pt) node [above] {\tiny{$\Phi(y,t)$}};
\fill (2,0) circle (2pt) node [below] {\tiny{$H(y,t)$}};
\draw[thin] (2,0.65)--(2,0);
\draw[thin] (1,0.55)--(1,0);
\draw[thin] (3,0.45)--(3,0);
\draw[thin] (0.25,0.38)--(0.25,-0.1);
\draw[->,thin] (0.65,0.5)--(0.35, 0.4);
\draw[->,thin] (0.65,-0.06)--(0.35, -0.06);
\fill (-3,0) circle (0pt) node [below] {\small{$W_X$}};
\fill (-3.2,2) circle (0pt) node [above] {\small{$W$}};
\end{tikzpicture}
\end{center}  
\caption{Homotopy $H$ }
\end{figure}
\pmed

The existence of the right derivative  $\dot H(y,t)$  is guaranteed by  Lemmas \ref{lem:derivative-pi} and \ref{lem:derivative-pi-0}.
In what follows, we present some geometric properties of the homotopy $H$ corresponding to ones 
for $\Phi$ given in Corollary \ref{cor:grad-g}.
These will be useful in 
Sections \ref{sec:LHS} and \ref{sec:LLHS}.

\begin{lem}\label{lem:gradH-6}
For any $(y,t)\in W_X\times\R_+$ with $H(y,t)\in W\setminus B(x,\e/100)$, we have 
\[
 \angle(\dot\gamma^Y_{H(y,t),x}(0), \dot H(y,t)) <  \tau_{x}(\theta,\zeta,\e/r)
\]
\end{lem}
\begin{proof}
Suppose this does not hold. Then we have sequences
$\theta_i\to 0$, $\zeta_i\to 0$, $\e_i/r_i\to 0$, $(y_i,t_i)\in W_i\times\R_+$ with 
$H_i(y_i, t_i)\in W_i\setminus B(x, \e_i/100)$ such that 
\beq\label{eq:angle(H,gammaH)}
     \angle(\dot H_i(y_i,t_i),\dot\gamma_{H_i(y_i,t_i),x}^Y)\ge \omega>0,
\eeq
where $W_i$ is a convex neighborhood of $x$ defined 
for $r_i$, $\theta_i\gg\delta_i$, $\zeta_i$, $\e_i$ as in \eqref{eq:def=W} by a strictly concave function $g_i$,
$H_i$ is 
defined as in \eqref{eq:retracionH} via  
the gradient flow $\Phi_i$ for $g_i$
and $\omega$ is a constant independent of $i$.
Set  $z_i:=\Phi_i(y_i,t_i)$, $w_i:=\pi(z_i)=H_i(y_i,t_i)$ and
$s_i:=|z_i,w_i|$.  If $s_i=0$, then   \eqref{eq:grad-normangle-dg} and Lemma \ref{lem:deviation} imply 
\begin{align*}
\angle(\dot \Phi_i(z_i,0),&\dot H_i(y_i,t_i))=\angle(\dot \Phi_i(z_i,0),T_{z_i}(X)) \\
&\le\angle(\dot \Phi_i(z_i,0),\dot\gamma^Y_{z_i,x}(0))
  +\angle(\dot\gamma^Y_{z_i,x}(0),(\pi\circ\gamma^Y_{z_i,x})'(0)) \\
&<\tau(\theta_i,\zeta_i)+\tau(\e_i/r_i)+O(\e_i).
\end{align*}  
Again by \eqref{eq:grad-normangle-dg},
we have a 
contradiction to \eqref{eq:angle(H,gammaH)}.

From now,  we assume $s_i\neq 0$, and 
set $\nu_i:=\dot\gamma_{w_i,x}(0)$.
Let $v_i\in\Sigma_{w_i}(Y)$ be the direction 
defined by $\pi(\gamma_{z_i,x})$.
By \eqref{eq:grad-normangle-dg}, we have
$\angle(\dot H(y_i,t_i), v_i)\to 0$.
Since $\angle(\nu_i,v_i)\to 0$ by Lemma \ref{eq:dpi-x}, we have a contradiction to \eqref{eq:angle(H,gammaH)}.
\end{proof}

\begin{slem}\label{slem:|H|=1}
For any $(y,t)\in W_X\times\R_+$ with $H(y,t)\in W\setminus B(x,\e/100)$, we have 
\[
 |\dot H(y,t)| >1- \tau_{x}(\theta,\zeta,\e/r).
\]
\end{slem}
\begin{proof} 
First consider the case  $\Phi(y,t)\in X$.
Applying Corollary \ref{cor:grad-g} and 
Lemma \ref{lem:deviation}, 
we obtain 
\beq
\begin{aligned}\label{eq:alpha<tau}
\alpha &:=\angle(\dot\Phi(y,t),\Sigma_{\Phi(y,t)}(X))\\
  &\le \angle(\dot\Phi(y,t),\dot\gamma^Y_{\Phi(y,t),x}(0))+ \angle(\dot\gamma^Y_{\Phi(y,t),x}(0),
   (\pi\circ\gamma^Y_{\Phi(y,t),x})'(0))\\
&\le \tau(\theta,\zeta)+\tau(\e/r).
\end{aligned}
\eeq  
The conclusion follows from Lemma \ref{lem:derivative-pi-0}.

%
%

Next suppose  $\Phi(y,t)\in Y \setminus X$.
By \eqref{eq:|z,X|<small}, we have 
\beq\label{eq:s<taut}
     s:=|\Phi(y,t),X|\le \tau_{x}(\theta,\zeta,\e/r)t< \tau_{x}(\theta,\zeta,\e/r)\e.
\eeq
Set $z:=\Phi(y,t)$ and $w:=\pi(z)$.
Choose $u\in Y$ with $|u,w|=\e$  such that 
$\wangle xwu>\pi-\tau_x(\e)$.
From a limit argument with the lower semicontinuity 
of angle, we have 
$\angle zwz \ge \pi/2-\tau_x(\e)$. It follows from 
Proposition \ref{lem:comparison} applied to the 
$(1,\tau_x(\e))$-strainer $(x,u)$ together with 
\eqref{eq:s<taut} that
$\angle xzw\ge \pi/2-\tau_{x}(\theta,\zeta,\e/r)$.
Namely we have 
\[
\beta:=\angle(\dot\Phi(y,t),\Sigma_{\Phi(y,t)}(C^Y_s))<\tau_{x}(\theta,\zeta,\e/r).
\]
By Lemma \ref{lem:derivative-pi}, we then have
\begin{align*}
|\dot H(y,t)| &= \phi(s)^{-1}|\dot\Phi(y,t)|\cos\beta
\ge 1-\tau_{x}(\theta,\zeta,\e/r).
\end{align*}
\end{proof}

Lemma \ref{lem:gradH-6} and Sublemma \ref{slem:|H|=1} imply the following immediately.

\begin{cor}\label{cor:dp(dotH)=-1}
For any $(y,t)\in W_X\times\R_+$ with $H(y,t)\in W\setminus B(x,\e/100)$, we have 
\[
 \frac{d}{dt}d_p^Y(H(y,t)) <-(1- \tau_{x}(\theta,\zeta,\e/r)).
\]
\end{cor}

\begin{rem} \label{rem:not-convex} 
The neighborhood $V=W_X$ obtained in 
Theorem \ref{thm:contractible}
 is not necessarily convex in $X$. 
We left the construction of such an 
example to the readers.
\end{rem} 
\pmed

\setcounter{equation}{0}
\section{Lipschitz homotopy stability}  \label{sec:LHS}

Our methods developed so far can be applied to 
the general collapse/convergence 
in  $\ca M=\ca M(n,\kappa,\nu,\lambda,d)$.

For $1\le m\le n$, let 
$\ol{\ca M}(m)$ denote the set of all
$m$-dimensional spaces 
$X$ in $\ol{\ca M}$, and for $v>0$
let
$\ol{\mathcal M}(m,v)$ denote the set of all 
$X\in \ol{\mathcal M}(m)$ with 
$\ca H^m(X)\ge v$.
We will see in Lemma \ref{lem:vol-conv}
that $\ol{\mathcal M}(m,v)$  is compact with 
respect to the Gromov-Hausdorff distance.
In this section, we establish the Lipschitz 
homotopy convergence in 
$\ol{\mathcal M}(m,v)$.
\psmall

\subsection{Preliminary discussion}
We begin with  preliminary discussion on the convergence in $\ol{\mathcal M}(m,v)$.

\pmed
\n


\begin{defn}\label{defn:extension}
We call an Alexandrov space $Y$ is an 
	{\it extension} of $N \in\ol{\mathcal M}$
if $N$ is the limit of a sequence $M_i\in \ca M$ and $Y$ is the limit of the extension $\tilde
M_i$ of $M_i$.
\end{defn}

The following example shows
that an extension of $N \in\ol{\mathcal M}$
is not uniquely determined.

\begin{ex}  \label{ex:Ext-nonGH}
\,(1)\,
Let $D_i$ be a sequence of nonnegatively curved 
two-disks with totally geodesic boundary 
converging to the interval $I=[0,1]$,
where $\pa D_i\to \{ 0\}$.
Let $M_i:=D_i\times \mathbb S^1$
and $M_i':=I\times \mathbb S^1\times S^1_{1/i}$.
Both $M_i$ and $M_i'$ converge to $X:=I\times\mathbb S^1$.
Let $Y$ and $Y'$ be the extensions of  $X$ determined by the limits of the extensions $\tilde M_i$  and $\tilde M_i'$ respectively.
Note that $Y$ is the gluing of 
$[0,1]\times \mathbb S^1$ and $[0,t_0]\times_\phi 
\mathbb S^1$ along $\{ 0\}\times \mathbb S^1$
and $Y'$ is the gluing of 
$I\times \mathbb S^1$ and $[0,t_0]\times_\phi 
(\pa I\times \mathbb S^1)$ along 
$\pa I\times \mathbb S^1=\{ 0\}\times\pa I\times \mathbb S^1$.


(2)\, 
Let $D_i$ be as in (1).
Here we assume that $D_i$ is rotationally symmetric with center $p_i$ and $D_i$ contains a small  rotationally symmetric disk
$D_i'$ with center $p_i$ and totally geodesic boundary. We assume that
there is an isometry $\varphi_i:D_i\setminus \mathring{D}_i'\to \mathbb S^1_{1/i}\times [0,t_0]$
sending $\pa D_i$ to  $\mathbb S^1_{1/i}\times 0$ and $\delta_i:=|p_i,\pa D_i'|\to 0$ as $i\to\infty$.
Let $(\theta,r)$ be the coordinates of $D_i$
with $\pa D_i=\{ r=0\}$ providing 
the product on $\mathbb S^1_{1/i}\times [0,t_0]$.
Let $\phi_i:[0,t_0+\delta_i]\to \R_+$ the extension of 
$\phi$ such that $\phi_i=\e_0$ on $[t_0, t_0+\delta_i]$.
We consider the warped product 
\[
     M_i:=D_i\times_{\phi_i(t)} \mathbb S^1,\quad
g_{M_i}=g_{D_i}+\phi_i(t)^2 g_{\mathbb S^1}.
\]
We also consider the product $M_i':=\mathbb S^1\times [0,1/i]$. 
Both the extensions $\tilde M_i$ and $\tilde M_i'$ 
converge to 
$Y:=[-t_0,t_0]\times_{\tilde\phi}\mathbb S^1$,
where $\tilde\phi(t)=\phi(|t|)$.
Note that $M_i$ and $M_i'$ converge to 
$X:=[0,t_0]\times_\phi \mathbb S^1$
and $X':=\mathbb S^1$ respectively. 
Note that the convergence $M_i\to X$ is a non-inradius 
collapse and $M_i'\to X'$ is an inradius collapse.
\end{ex}

\begin{rem}\label{rem:XnocontrolY}
Let $X_i\in \ol{\ca M}(m,v)$ converge to 
$X\in \ol{\ca M}(m,v)$, and let $Y_i$ and $Y$ be extensions of 
$X_i$ and $X$ respectively.
As shown in Example \ref{ex:Ext-nonGH},
the $GH$-convergence $X_i\to X$ does not necessarily 
imply the $GH$-convergence $Y_i\to Y$.
Similarly, the $GH$-convergence $Y_i\to Y$ does not necessarily 
imply the $GH$-convergence $X_i\to X$.
\end{rem}

Concerning the above remark, we have the following easy lemma.
 
\begin{lem} \label{lem:cY-conv}
Let $Y$ and $Y_i$ be extensions of 
$X\in \ol{\ca M}$ and $X_i\in \ol{\ca M}$
respectively.
Suppose  the $GH$-convergence
\beq \label{eq:GHconvYY}
     Y_i \to Y.
\eeq
If $C_{t_0}^{Y_i}$ converges to $C_{t_0}^Y$
under \eqref{eq:GHconvYY},  
then  $X_i$ converges to $X$
under \eqref{eq:GHconvYY}.
\end{lem}
\begin{proof}
From the assumption, $Y_i\setminus{\rm int} X_i=B(C^{Y_i}_{t_0},t_0)$ converges to
$Y\setminus{\rm int} X=B(C^{Y}_{t_0},t_0)$ under 
\eqref{eq:GHconvYY}.
This yields the conclusion
$X_i\to X$  immediately.
\end{proof}

\begin{prop} \label{prop:homotopy=}
For any $X\in \ol{\ca M}(m,v)$, there exists $\e=\e_X>0$
such that if $X'\in\ol{\ca M}(m,v)$ satisfies 
$d_{GH}(X,X')<\e$, then it has the same homotopy type
as $X$.
\end{prop}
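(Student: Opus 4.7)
The plan is to lift the stability question to the Alexandrov extensions from Section \ref{sec:non-inradius}, apply Perelman's topological stability there, and then transport the resulting homeomorphism back to $X$ and $X'$ via the deformation retractions of Proposition \ref{prop:retractionYX}.

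Arguing by contradiction, I would suppose there is a sequence $X_i \in \ol{\ca M}(m,v)$ with $d_{GH}(X_i, X) \to 0$ while $X_i$ is not homotopy equivalent to $X$. For each $i$ choose an extension $Y_i$ of $X_i$. The family $\{Y_i\}$ consists of compact Alexandrov spaces with curvature $\ge \tilde\kappa$ and diameter $\le d + 2 t_0$, so Gromov's precompactness theorem yields, after passing to a subsequence, $Y_i \to Y_\infty$ in GH for some compact Alexandrov space $Y_\infty$ containing $X$. A standard diagonal procedure, combining the sequences in $\ca M$ that define each $Y_i$ as an extension of $X_i$, produces a sequence $N_\ell \in \ca M$ with $N_\ell \to X$ and $\tilde N_\ell \to Y_\infty$; hence $Y_\infty$ is itself an extension of $X$ in the sense of Definition \ref{defn:extension}.

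To apply Perelman's topological stability (Theorem \ref{thm:stability}) to the convergence $Y_i \to Y_\infty$, I need $\dim Y_i = \dim Y_\infty$ for all large $i$. After a further subsequence, $\dim Y_i = m^*$ is constant, and necessarily $\dim Y_\infty \le m^*$. The equality I expect to obtain by combining the volume lower bound $\ca H^m(X_i) \ge v$ (which, since $X_i \subset Y_i$, gives $Y_i$ a uniform amount of $m$-dimensional content) with the warped-product structure $C_{M_{i,k}} = \pa M_{i,k} \times_\phi [0, t_0]$ of the attached cylinders and the uniform intrinsic boundary diameter bound (Proposition \ref{prop:cpn-diam}). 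These ingredients together should produce a uniform positive lower bound on $\ca H^{m^*}(Y_i)$, which is inherited by the GH limit $Y_\infty$ and forces $\dim Y_\infty = m^*$. Once the dimensions match, Perelman's stability gives a homeomorphism $Y_i \cong Y_\infty$ for large $i$; composing with the Lipschitz homotopy equivalences $X_i \simeq Y_i$ and $X \simeq Y_\infty$ from Proposition \ref{prop:retractionYX} yields $X_i \simeq X$, contradicting the assumption.

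The main obstacle is the dimension-matching step $\dim Y_i = \dim Y_\infty$. As Remark \ref{rem:XnocontrolY} warns, the GH convergence $X_i \to X$ does not control the convergence of the extensions, so in principle $\dim Y_i$ could strictly exceed $\dim Y_\infty$. The delicate point is that the volume lower bound is imposed on $X_i$ rather than on $Y_i$; one must argue that the extra dimensions of $Y_i$ coming from the glued cylinder carry a definite $m^*$-dimensional measure even under worst-case collapse of $\pa M_{i,k}$, using the explicit warping function $\phi$ of \eqref{eq:phi} together with the uniform intrinsic boundary-diameter bound. An alternative fallback, in case this quantitative non-collapsing of extensions proves elusive, would be to bypass the extensions entirely and instead extract uniform local Lipschitz contractibility for $\ol{\ca M}(m,v)$ from Theorem \ref{thm:contractible} via a compactness argument, and then invoke a Petersen--Grove--Wu type homotopy stability theorem for locally contractible spaces.
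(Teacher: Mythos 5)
Your proposal follows essentially the same route as the paper's proof: lift to Alexandrov extensions $Y_i$, extract a subsequential limit $Y_\infty$, apply Perelman's stability theorem, and transport the homeomorphism back through the Lipschitz retractions of Proposition \ref{prop:retractionYX}. The only substantive point is the dimension-matching step you flag as the ``main obstacle,'' and your sketch of it is in fact the right one -- the paper resolves it in two lines, more cleanly than your worry suggests. Since $Y_i$ is an extension of $X_i$ with $\dim X_i = m$, one has $\dim Y_i \in \{m, m+1\}$ (non-inradius vs.\ inradius collapse of the approximating sequence in $\ca M$), so the only problematic scenario is $\dim Y_i = m+1$ while $\dim Y_\infty = m$. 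In that case the gluing map $\eta_0^i : C_0^i \to X_i$ is surjective and $1$-Lipschitz, so $\ca H^m(C_0^i) \ge \ca H^m(X_i) \ge v$, and the warped cylinder $C_0^i \times_\phi [0,t_0]$ sitting inside $Y_i$ gives $\ca H^{m+1}(Y_i) \ge cv$ for a universal $c>0$; Burago--Gromov--Perelman volume convergence then forces $\ca H^{m+1}(Y_\infty) \ge cv > 0$, contradicting $\dim Y_\infty = m$. Note the intrinsic boundary-diameter bound of Proposition \ref{prop:cpn-diam} is a red herring here -- what does the work is the surjectivity and $1$-Lipschitz property of $\eta_0^i$ together with the uniform lower bound on the warping function $\phi$. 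Your fallback via local contractibility and a Petersen-type theorem is unnecessary for the bare homotopy-type statement (that quantitative machinery is deployed separately in the paper's Theorem \ref{thm:Lip-homotopy}).
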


\begin{proof}
Suppose the conclusion does not hold.
Then we have a sequence$X_i$ in $\ol{\ca M}(m,v)$
converging to $X\in\ol{\ca M}(m,v)$ such that $X_i$ does not have the 
same homotopy type as $X$ for any $i$.
Let $Y_i$ be any extension of  $X_i$.
We may assume that $(Y_i, X_i)$ converges to 
$(Y,W)$, where $W$ is a closed subset of 
an Alexandrov space $Y$.

\begin{lem}\label{lem:YextX}
$Y$ is an extension of $X$.
\end{lem}
\begin{proof}
For each $i$, take a sequence $M_{ij}$ in $\ca M$
converging to $X_i$ 
such that $\tilde M_{ij}$ converges to $Y_i$ 
as $j\to\infty$.
If we take large enough $j=j(i)$, setting $M_i:=M_{ij(i)}$, we may assume
that 
\begin{itemize}
\item $M_i$ converges to $X\,;$
\item $(\tilde M_i, M_i)$ converges to $(Y,W)$.
\end{itemize}
By Proposition \ref{prop:intrinsic},
the intrinsic metric $W^{\rm int}$ is isometric to 
$X$. This completes the proof.
\end{proof}  

Next we show  $\dim Y_i=\dim Y$.
Suppose this does not hold. Then the only possibility is that  
$\dim Y_i=m+1$,  and $\dim Y=m$.
Let $M_{ij}$ and $M_i$ be as in the proof of Lemma \ref{lem:YextX}. It follows from 
$\dim Y=\dim X$ that ${\rm inrad}(M_i)>c>0$
for some constant $c$ in dependent of $i$.
Moreover we may also assume 
${\rm inrad}(M_{ij})>c>0$ for any large enough $i$
and $j$. However this yields
$\dim X_i=\dim Y_i=m+1$, a contradiction.

Thus by Theorem \ref{thm:stability},
$Y_i$ is homeomorphic to $Y$ for large $i$.
Since $X_i$ and $X$ have the same Lipschitz homotopy types as $Y_i$ and $Y$ respectively (see Proposition \ref{prop:retractionYX}),
$X_i$ has the same homotopy type as $X$ for large $i$,
which is a contradiction.
\end{proof}

\begin{lem}\label{lem:volu-XY}
For any $X\in\ol{\ca M}(m,v)$, any extension $Y$
of $X$ satisfies $\ca H^{\dim Y}(Y)\ge v_0>0$,
where $v_0=v_0(m,\lambda,t_0,\e_0,v)$.
\end{lem}

 Note that $\dim Y=m$ if $X$ is the limit
of an non-inradius collapse/convergence
and $\dim Y=m+1$ if $X$ is the limit
of an inradius collapse.

\begin{proof}[Proof of Lemma \ref{lem:volu-XY}]
The lemma is obvious when ${\rm int}X$ is nonempty.
Suppose $X=X_0$. In this case we have $\ca H^m(C_0)\ge H^m(X)\ge v$. From the warped product 
structure on $C$, we obtain 
\[
 \ca H^{m+1}(Y)=\ca H^{m+1}(C)=\ca H^m(C_0)\int_0^{t_0}
\phi(t)^m(t)\,dt=cv,
\]
where $c>0$ is a uniform constant.
\end{proof} 
\psmall
\subsection{Local $C$-Lipschitz contractibility}
From the proof of Proposition \ref{prop:homotopy=},
we cannot conclude that $X_i$ has the same Lipschitz
homotopy type as $X$.
In what follows, more strongly, we provide a  quatitative Lipschitz homotopy convergence $X_i\to X$,
and prove Theorem \ref{thm:Lip-homotopy}.

A metric space is \textit{locally $C$-doubling} if any $\epsilon$-ball is covered by at most $C$ metric balls of radius $\epsilon/2$, where $0<\epsilon<C^{-1}$.

A metric space $X$ is called {\it locally $C$-Lipschitz contractible} if  for any $x \in X$ and $0<\epsilon<C^{-1}$, there exists a $(C,\epsilon)$-Lipschitz  contractible domain $V$ in $X$ containing  $B(x,\epsilon)$ (see \cite{FMY}).

For the proof of Theorem \ref{thm:Lip-homotopy},
we use the following general result
established in \cite{FMY},
which is a Lipschitz version of \cite{Peter}.

\begin{thm}[\cite{FMY}]\label{thm:Peter}
For any $C>0$ there exists $\tilde C>0$ satisfying the following:
Let $X$ and $X'$ be metric spaces that are locally $C$-Lipschitz contractible and locally $C$-doubling.
If their Gromov-Hausdorff distance is less than $\tilde C^{-1}$, then we have
\[d_{\tilde C\mathchar`-\mathrm{LH}}(X,X')<\tilde Cd_\mathrm{GH}(X,X').\]
\end{thm}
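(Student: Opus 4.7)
The plan is to realize the required $\tilde C$-Lipschitz $\epsilon$-approximations $f : X \to X'$, $g : X' \to X$ and the $\tilde C$-Lipschitz homotopies $F : X \times [0,\epsilon] \to X$, $G : X' \times [0,\epsilon] \to X'$ connecting $g\circ f$ to $\mathrm{id}_X$ and $f\circ g$ to $\mathrm{id}_{X'}$ by a nerve-type construction. Set $\epsilon := d_\mathrm{GH}(X,X')$ and fix arbitrary (measurable) $\epsilon$-approximations $\phi : X \to X'$, $\psi : X' \to X$. Choose maximal $\epsilon$-separated nets $\{x_\alpha\} \subset X$ and $\{x'_\beta\} \subset X'$. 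The $C$-doubling hypothesis implies that every ball $B(\,\cdot\,, R\epsilon)$ meets at most $M(C,R)$ net points, so the nerve $\mathcal{N}$ of the covering $\{B(x_\alpha, 10\epsilon)\}$ has uniformly bounded local valence and uniformly bounded combinatorial dimension $D = D(C)$. Standard truncation-and-normalization gives a $(1/\epsilon)$-Lipschitz partition of unity $\{\lambda_\alpha\}$ subordinate to this cover, identifying $X$ with the realization of $\mathcal{N}$ up to a bi-Lipschitz factor depending only on $C$.

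To construct $f$, set $f(x_\alpha) := \phi(x_\alpha)$ on vertices and extend inductively over the skeleta of $\mathcal{N}$. At each step a $(k+1)$-simplex $\sigma$ corresponds to a cluster of net points whose $\phi$-images lie in a common ball of radius $O(\epsilon)$ in $X'$; by local $C$-Lipschitz contractibility of $X'$ at that scale, the already-constructed boundary map $f|_{\partial\sigma}$ extends across $\sigma$ with a multiplicative blow-up of the Lipschitz constant bounded by $C$. After at most $D$ iterations one obtains a $\tilde C_1(C)$-Lipschitz map $f : X \to X'$, which remains an $O(\epsilon)$-approximation since each extension perturbs values by at most $O(\epsilon)$. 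The map $g : X' \to X$ is built symmetrically from $\psi$ using local contractibility of $X$.

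For the homotopy $F$, cover $X \times [0,\epsilon]$ by $\{B(x_\alpha, 10\epsilon) \times [0,\epsilon]\}$ and prescribe $F = \mathrm{id}_X$ on $X \times \{0\}$ and $F = g\circ f$ on $X \times \{\epsilon\}$; on net points these two prescriptions agree up to error $O(\epsilon)$ because both are $O(\epsilon)$-approximate identities. The same inductive skeletal extension, now using local Lipschitz contractibility of $X$, produces a $\tilde C$-Lipschitz $F$ with respect to the $L_1$ product metric; build $G$ on $X' \times [0,\epsilon]$ symmetrically. The central obstacle, and the only place where doubling is genuinely used, is producing the uniform Lipschitz bound: each skeletal extension multiplies the prior Lipschitz constant by a factor coming from the contractibility modulus, so one must bound the number of extension steps and the local valence of $\mathcal{N}$ purely in terms of $C$. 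This is an Assouad-type consequence of the doubling property (a $C$-doubling space has Assouad dimension $\le \log_2 C$), and yields the final constant $\tilde C = \tilde C(C)$ in the statement.
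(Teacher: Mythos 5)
The paper itself does not prove this statement: it is imported verbatim from [FMY] (Fujioka--Mitsuishi--Yamaguchi), where it is established as a quantitative Lipschitz analogue of Petersen's finiteness theorem, and the present paper only uses it as a black box in the proof of Theorem \ref{thm:Lip-homotopy}. Your outline follows the same general nerve-theoretic strategy that underlies [Peter] and [FMY] (nets, bounded-multiplicity nerve via doubling, skeletal extension powered by the local Lipschitz contractions, and scale bookkeeping that produces the threshold $\tilde C^{-1}$), so the architecture is the right one. However, two of your steps are not correct as written, and the second one is exactly where the real work of the quantitative theorem lies.

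First, the claim that the partition of unity identifies $X$ with the realization of $\mathcal N$ ``up to a bi-Lipschitz factor depending only on $C$'' is false in general: the nerve at scale $\epsilon$ sees nothing of the geometry or topology of $X$ below scale $\epsilon$, and a $C$-doubling, locally $C$-Lipschitz contractible space need not be bi-Lipschitz to any simplicial complex. What the truncated partition of unity actually gives is a one-way Lipschitz barycentric map $X\to|\mathcal N|$ (with constant $O(1)$ only after the simplices are metrized at size $\epsilon$); the skeletal induction then defines a map $|\mathcal N|\to X'$, and $f$ must be taken as the composition --- skeletal induction never defines a map on $X$ itself. Second, and more seriously, your homotopy step assumes what has to be proved: $X\times[0,\epsilon]$ carries no skeleta, so ``the same inductive skeletal extension'' with the exact boundary prescriptions $F(\cdot,0)=\mathrm{id}_X$ and $F(\cdot,\epsilon)=g\circ f$ does not literally make sense, and if you replace the cylinder by the nerve of a cover you lose the boundary conditions and only obtain a homotopy between nerve round-trips of the two maps, not between the maps themselves. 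Closing this gap requires an additional quantitative lemma --- for instance, that two uniformly $O(\epsilon)$-close Lipschitz maps from a doubling space into a locally $C$-Lipschitz contractible space are joined by a uniformly Lipschitz homotopy of height $O(\epsilon)$, proved via a controlled Lipschitz extension over $Z\times[0,\epsilon]$ relative to $Z\times\{0,\epsilon\}$ (using finite Assouad/Nagata dimension together with Lipschitz connectedness at small scales), which is in effect the good-cover machinery of [FMY]. Without it your construction yields $\tilde C$-Lipschitz approximations but only ``approximate'' homotopies, which is not enough for the definition of $d_{\tilde C\text{-}LH}$. A minor further point: since the strong contractions exist only at scales below $C^{-1}$ and each skeletal step inflates the working scale by a factor of order $C$, you must check that after the $D(C)$ induction steps the scale $O(C^{D})\epsilon$ is still below $C^{-1}$; this is precisely the origin of the hypothesis $d_{GH}(X,X')<\tilde C^{-1}$, which your sketch uses only implicitly.
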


We begin with 

\begin{lem}\label{lem:C-doubling}
There exists $C>0$ such that 
any $N\in\ol{\ca M}$ is $C$-doubling.
\end{lem}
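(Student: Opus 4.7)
My plan is to transfer the doubling property from a Wong extension $Y$ of $N$, which is a compact Alexandrov space with uniform dimension, curvature and diameter bounds, down to $N$ via the bi-Lipschitz identification of $N$ with a closed subset of $Y$. Given an arbitrary $N\in\ol{\ca M}$, I choose a sequence $M_i\in\ca M$ with $M_i\to N$ in the Gromov--Hausdorff sense. Fixing the uniform parameters $t_0=1$ and $\e_0=1/10$ used in Section \ref{sec:loc=cont}, Proposition \ref{prop:extendAS} guarantees that the extensions $\tilde M_i$ are Alexandrov spaces with curvature $\ge \tilde\kappa=\tilde\kappa(\kappa,\nu,\lambda)$, dimension $\le n$ and diameter $\le d+2t_0$, uniformly in $i$. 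After passing to a subsequence, $\tilde M_i\to Y$ with $Y$ satisfying the same uniform bounds, and $M_i^{\rm ext}$ converges to a closed subset $X\subset Y$. Passing Proposition \ref{prop:extendAS}(2) to the limit, together with Lemma \ref{lem:XbiLipX} and Proposition \ref{prop:intrinsic}, yields that $N$ is isometric to $X^{\rm int}$, which is $L$-bi-Lipschitz homeomorphic to $X$ with the exterior metric for the uniform constant $L=1/\e_0$.

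Next, I would apply the Bishop--Gromov volume comparison to $Y$: on any Alexandrov space of dimension at most $n$ with curvature $\ge \tilde\kappa$ and diameter $\le d+2t_0$, a maximal $(r/4)$-separated subset of any $r$-ball has uniformly bounded cardinality, yielding a uniform $C_1=C_1(n,\kappa,\nu,\lambda,d)$ such that every $r$-ball in $Y$ with $r<C_1^{-1}$ is covered by at most $C_1$ balls of radius $r/2$.

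Finally, I would transfer the doubling bound from $Y$ to $N$ in two standard steps. Doubling passes from $Y$ to the subspace $X$ at the cost of squaring the constant: iterating doubling in $Y$ twice covers $B^Y(x,r)$ by at most $C_1^2$ balls $B^Y(y_j,r/4)$, and replacing each $y_j$ that meets $X$ by some $x_j\in X\cap B^Y(y_j,r/4)$ gives $B^X(x,r)\subset \bigcup_j B^X(x_j,r/2)$. Then $L$-bi-Lipschitzness transfers doubling from $X$ to $N$ with a further cost of iterating doubling $\lceil \log_2(2L^2)\rceil$ times. Combining these yields a uniform constant $C$ depending only on $n,\kappa,\nu,\lambda,d$ such that every $N\in\ol{\ca M}$ is $C$-doubling. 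The argument is essentially routine bookkeeping of constants; the only noteworthy point to check is that the bi-Lipschitz identification $N\simeq X\subset Y$ holds for every $N\in\ol{\ca M}$, including the inradius-collapse case, and this follows since Proposition \ref{prop:extendAS}(2) passes to the limit regardless of the behavior of $\inrad(M_i)$.
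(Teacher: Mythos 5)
Your proof is correct and follows exactly the paper's strategy: pick a Wong extension $Y$ of $N$, observe that $Y$ is uniformly doubling by Bishop--Gromov, and transfer the doubling property to $N$ through the uniform bi-Lipschitz identification of $N$ with the subset $X\subset Y$ given by Lemma~\ref{lem:XbiLipX}. The paper's proof is a three-line sketch of precisely this argument; your version supplies the routine bookkeeping (passing doubling from a metric space to a subspace, then through a bi-Lipschitz map) that the paper omits, and correctly notes that the bi-Lipschitz identification holds irrespective of whether inradius collapse occurs.
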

\begin{proof}
Let $Y$ be an Alexandrov space extending $N$. 
Since $Y$ is $C$-doubling for some $C$ 
by the Bishop-Gromov comparison theorem,
Lemma \ref{lem:XbiLipX} implies the conclusion.
\end{proof}

To show that any $N\in\ol{\ca M}(m,v)$ is locally $C$-Lipschitz contractible
(Corollary \ref{cor:N-Ccontractible}), we need 
the following. 

\begin{thm}[cf. \cite{PtPt:extremal},\cite{FMY}]\label{thm:conv-cover}
 For any $0<a<1$, there exists a positive constant $C=C(m,\kappa,\nu,\lambda,d,v,a)$ satisfying the following$:$ For any $X\in \ol{\ca M}(m,v)$ and its extension $Y$,
any $p \in Y$ and $0<\epsilon<C^{-1}$, there exists a $(C,C\epsilon)$-Lipschitz deformation retraction
$F:W\times [0,C\e]\to W$ that is $1$-Lipschitz on $F^{-1}(W\setminus B(p, \e/100))$
satisfying the following:
\begin{enumerate}
\item There exists a $1$-Lipschitz and 
$-(\mu/\e)$-concave function $h$ defined on a neighborhood of $p$ such that 
$W$ is a superlevel set of $h$,
where $\mu >0$ is a uniform constant$\,;$
\item Let $q$ be the unique maximum point of $h$. Then 
$B(p,\epsilon)\cup B(q, \epsilon)\subset W\,;$
\item  
For any $(x,t)\in W\times [0,C\e]$ 
with $F(x,t)\in W\setminus B(p,\e/100)$, we have 
\begin{enumerate}
\item $h(\gamma_{x,p}(t))-h(x)\ge (1-a)t\,;$ 
\item $\angle(\dot F(x,t),\uparrow^p_{F(x,t)})<
a$, \,\, $|\dot F(x,t)|> (1-a)\,;$
\item $|F(x,t), \pa W|\ge (1-a)t+|x,\pa W|$.
\end{enumerate} 
\item Suppose $p\in X$ and set $W_X:=W\cap X$. Then $H:=\pi\circ F|_{W_X\times [0,C\e]}$ defines a
$(C, C\epsilon)$-Lipschitz  deformation 
retraction of $W_X$ to a point 
that is $1$-Lipschitz on $H^{-1}(W_X\setminus B(p, \e/100))$

such that  
for any $H(y,t)\in W_X\setminus 
B^{X^{\rm int}}(p, \e/100)$, we have 
\begin{enumerate}
\item $\angle(\dot H(x,t),\uparrow^p_{H(x,t)})<
a$, \,\, $|\dot H(x,t)|> (1-a)\,;$
%
\item $|H(y,t),\pa W_X|_Y\ge (1-a) t+|y,\pa W_X|_Y$,
where $\pa W_X$ denotes the topological boundary of $W_X$ in $X$. 
\end{enumerate}
\end{enumerate}
\end{thm}

\begin{rem}\label{rem:ConvexHull} (1)\, Let 
$\ca A(m,\kappa,d,v)$ denote the set of all $m$-dimensional compact 
Alexandrov space $Y$ with curvature 
$\ge\kappa$,  diameter $\le d$ and $\ca H^m(Y)\ge v$.  
 The statements (1), (2) and (3) in the above theorem actually hold for any $Y\in\mathcal A(m,\kappa,d,v)$ and $p \in Y$.
\par
\n  
(2)\, Changing the time parameter in an ovbious way, 
we see that $W$ is $(C,\e)$-Lipschitz deformation 
retraction to a point$\,;$  
\par
\n
(3)\, The existence of such a convex domain  $W$ 
containing $B(p,\e)$ was first
established in \cite{PtPt:extremal}.
In \cite{FMY}, $(C,\e)$-Lipschitz contractibility of $W$ was proved.
\par\n
(4)\, For each fixed $x\in Y$, the defining concave 
function $h$ of a convex neighborhood, say 
$W_x$, in Theorem \ref{thm:convex-nbd}
is constructed via averaged distance functions from points on a tiny  metric sphere  around $x$. 
Note that the geometric size of $W_x$  is not 
comparable when $x$ moves over $Y$. This suggests that we need a global construction
to get convex neighborhoods whose geometric size are comparable.
To construct the  concave function $h$ defining $W$ in  Theorem \ref{thm:conv-cover}, 
following \cite{PtPt:extremal},
we first construct averaged  
Busemann functions in a blow-up limit of Y, then 
we lift it to $Y$. 
\end{rem}
As a direct consequence of Theorem \ref{thm:conv-cover}, we have the following corollary.

\begin{cor} \label{cor:N-Ccontractible}
There is  $C=C(m,\kappa,\nu,\lambda, d,v)>0$ such that
any $N \in\ol{\mathcal M}(m,v)$
is locally $C$-Lipschitz  contractible.
\end{cor}

\begin{proof}[Proof of Theorem \ref{thm:Lip-homotopy}]
This follows from Theorem  \ref{thm:Peter},
Lemma \ref{lem:C-doubling}  and Corollary \ref{cor:N-Ccontractible}. 
\end{proof}

\begin{proof}[Proof of Theorem \ref{thm:conv-cover}]
The basic approach in the proof below is the same as in 
 \cite{PtPt:extremal},\cite{FMY}.
However,
we need a more refined 
construction of the strictly concave function 
$h$ to obtain (3).
We proceed by contradiction.  
Suppose that  the conclusion does not hold.
Then there are $a>0$ and sequences $C_i\to\infty$, $0<\epsilon_i<C_i^{-1}$, $X_i\in\ol{\mathcal M}(m,v)$ and their extensions $Y_i$ and  $p_i\in Y_i$ such that 
no strictly concave function $h_i$ satisfies the conclusion for $a$, $C_i$, $p_i\in Y_i$ and $\e_i$.
Let us consider the rescaling $\tilde Y_i:=\epsilon_i^{-1}Y_i$, and  write metric balls and spheres in $\tilde Y_i$ as $\tilde B(\,,\,)$ and $\tilde S(\, ,\,)$, respectively.
Consider the convergence
\beq \label{eq:blow-upY}
(\tilde Y_i,p_i) \to (Y_\infty,p).
\eeq
For the statement (4), we assume $p_i\in X$. In this case,  under \eqref{eq:blow-upY},  $X_i$ converges to 
a closed convex subset $X_\infty$ of $Y_\infty$ (Lemma \ref{lem:inrad-collapse}).
The  uniform lower volume bound for $Y_i$ implies that $\dim Y_\infty=m$ and 
the ideal boundary $Y_\infty(\infty)$ of $Y_\infty$ has dimension $m-1$.

We first prove the  statements in $(Y_\infty,X_\infty)$ corresponding to $(1)\sim (4)$,  and then show that the construction can be lifted to $\tilde Y_i$.
Along the same line as \cite[4.3]{PtPt:extremal},
\cite{Kap}, 
we first construct a strictly concave function $h$ defined on an arbitrarily large neighborhood of $p$, which can be lifted to a function $\tilde h_i$ on $\tilde Y_i$ with the same concavity.
We show that the convex domain $W$ defined as a superlevel set of $h$ satisfies the desired properties
and the same holds for $\tilde h_i$.
\pmed
\n
{\bf Step 1. Construction in the blow-up limit $Y_\infty$.}\,\,
Let $0<\delta\ll\theta\ll 1$. 
Take a maximal $\theta$-discrete set $\{ l_\alpha, \}_{\alpha\in A}$ in the ideal boundary $Y_\infty(\infty)$ of $Y_\infty$.
For each $\alpha$, take a maximal $\delta$-discrete set 
$\{l_{\alpha\beta}\}_{\beta\in A_\alpha}$ in the ball  $B^{Y_\infty(\infty)}(l_\alpha,\theta)$.
Note that 
\begin{equation}\label{eq:a}
\# A_\alpha\ge c(\theta/\delta)^{m-1}
\end{equation}
for some uniform constant $c>0$ depending only on 
$m,\kappa,d,v$.
This  follows from the Bishop-Gromov inequality.

The parameters  $\theta, \delta$ will be determined later on.
For each $\alpha\in A$ and $\beta\in A_\alpha$,
 there exist rays $r_\alpha$ and $r_{\alpha\beta}$ emanating from $p$
in the directions to $l_{\alpha}$ and 
$l_{\alpha\beta}$ respectively.

In what follows, since 
\beq\label{eq:GH-idealbdy}
   d_{GH}(R^{-1}S(p,R),Y_\infty(\infty))<\tau_p(R^{-1}),
\eeq
we choose  large enough $R\gg1$, and use the symbol  $\mu_p(R)$ 
instead of $\tau_p(R^{-1})$.
For any fixed $x\in S(p,R)$,
choose an $\alpha\in A$ such that 
\beq \label{eq:|x,r|<mu}
|x,r_\alpha(R)|<(\mu_p(R)+\theta)R.
\eeq
We denote by $\frak a(x)$ the set of all such $a\in A$. Put
$q_{\alpha\beta}:=r_{\alpha\beta}(\hat R)$
for all $\beta\in A_\alpha$.
Then 
we have 
\beqq 
|x,r_{\alpha\beta}(R)|<(\mu_p(R)+2\theta)R,
\eeqq 
and hence
\beq \label{eq:|xqab|}  
    |x,q_{\alpha\beta}|\le  (\hat R-R)+(\mu_p(R)+2\theta)R
\eeq
by triangle inequality.
In what follows, we assume  $\hat R\gg R$.
Then we have
\beq \label{eq:wang(xpq)}
\begin{cases}
\begin{aligned}
\wangle xpq_{\alpha\beta}&\le\wangle xpr_{\alpha\beta}(R)\le \mu_p(R)+\tau(\theta),\\
\wangle pq_{\alpha\beta}x&\le \wangle  r_{\alpha\beta}(R)q_{\alpha\beta}x \le
(\mu_p(R)+\tau(\theta))R/(\hat R-R)\\
&\le  \mu_p(R)+\tau(\theta).
\end{aligned}
\end{cases}
\eeq
which implies 
\beq
\begin{aligned} \label{eq:wang(pxq)}
\wangle pxq_{\alpha\beta} & \ge \pi-\wangle pq_{\alpha\beta}x -\wangle xpq_{\alpha\beta} \\
 & \ge  \pi-(\mu_p(R)+\tau(\theta)).
\end{aligned}
\eeq
\psmall
For $0<\zeta<1$, let 
$\phi:\R_+\to\R_+$ be the $C^1$-function 
 with $\phi(0)=0$ determined by  
\beq \label{eq:defn=phi}
\begin{cases}
\text{$\phi'= 1$ \hspace{1.2cm} on $[0,\hat R-2R]$} \\
\text{$\phi' = 1-\zeta$ \hspace{0.45cm}  on $[\hat R+2R,\infty)$}\\
\text{$\phi'' = -\zeta/4R$ \hspace{0.02cm} on $[\hat R-2R,\hat R+2R]$}.
\end{cases}
\eeq
For any $\alpha\in A$, we define  the functions $h_\alpha$ on $B(p,2R)$ by
\[
h_\alpha:=\frac1{\# A_\alpha}\sum_{\beta\in A_\alpha}\phi(|q_{\alpha\beta},\cdot\,|).\qquad
\]
Clearly $h_\alpha$ is $1$-Lipschitz, and $h_\alpha(p)=\phi(\hat R)$ for all $\alpha\in A$.

The following sublemma essentially follows from 
{\cite[3.6]{Per}} (cf. \cite{Kap}).
For the basic idea of the proof, see also \cite[Claim 3.4]{FMY}.

\begin{slem}[{\cite[3.6]{Per}},cf. \cite{Kap}]\label{slem:conc}
$h_\alpha$ is $(-\mu)$-concave on $B(p,2R)$ for some $\mu>0$ depending only on $c$ in \eqref{eq:a}, $\zeta$
 and $R$, provided $\delta/\theta$ is small enough
and $\hat R$ is large enough.
\end{slem}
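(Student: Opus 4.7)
The plan is to adapt the Perelman--Kapovitch averaging computation to the present setting, working in the rescaled space $\tilde Y_i := \e_i^{-1} Y_i$ and transferring the final estimate to $Y_i$ by the usual rescaling of concavity constants. Fix a unit-speed minimal geodesic $\gamma$ in $B(p, 2R')$ and a parameter $s$ in its domain; set $x := \gamma(s)$, $\xi := \gamma'(s)$, $f_\beta := |q_{\alpha\beta}, \cdot|$, $d_\beta := f_\beta(x)$, $u_\beta := \uparrow_x^{q_{\alpha\beta}}$, and $\theta_\beta := \angle(\xi, u_\beta)$. Since $|q_{\alpha\beta}, p| = \hat R$ and $|x, p| \le 2R'$, the triangle inequality places every $d_\beta$ in the interval $[\hat R - 2R', \hat R + 2R']$, so \eqref{eq:defn=phi} gives $\phi''(d_\beta) = -1/(8R')$ and $\phi'(d_\beta) \in [1/2, 1]$ on each summand.

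Next, for each $\beta$ the Alexandrov second-variation estimate in curvature $\ge \kappa$ yields $(f_\beta \circ \gamma)''(s) \le c_\kappa \sin^2 \theta_\beta / d_\beta$ with $c_\kappa$ depending only on $\kappa$ and an upper bound for $d_\beta$, and the chain rule produces
$$
(\phi \circ f_\beta \circ \gamma)''(s) \le -\frac{\cos^2 \theta_\beta}{8R'} + \frac{c_\kappa}{\hat R}.
$$
Averaging over $\beta \in A_\alpha$ and writing $\bar K := (\#A_\alpha)^{-1} \sum_\beta \cos^2 \theta_\beta$, we obtain
$$
(h_\alpha \circ \gamma)''(s) \le -\frac{\bar K}{8R'} + \frac{c_\kappa}{\hat R}.
$$

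The key step is a uniform positive lower bound on $\bar K$. By \eqref{eq:GH-idealbdy} and the construction of the $l_{\alpha\beta}$ as a $\delta$-net in the $\theta$-ball $B^{Z(\infty)}(l_\alpha, \theta)$, the directions $\{u_\beta\}$ in $\Sigma_x$ form, up to error $\tau(\delta, \mu_p(R), R'/\hat R)$, a $\delta$-dense family in a spherical $\theta$-cap around a fixed direction $u^*_\alpha := \uparrow_x^{l_\alpha}$. The counting estimate \eqref{eq:a}, a consequence of Bishop--Gromov together with $\ca H^m(Y_i) \ge v$, then supports a spherical-mean computation yielding $\bar K \ge c_1(m) \theta^2$ uniformly in $\xi$; the factor $\theta^2$ is saturated when $\xi$ is nearly perpendicular to $u^*_\alpha$. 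Choosing $\hat R \ge C_0 R'/\theta^2$ for a large $C_0 = C_0(m, \kappa)$ makes the right-hand side above at most $-c_2(m) \theta^2 / R'$; fixing $\theta = \theta(m, \kappa, d, v)$ and taking $\delta/\theta$ small, then rescaling back to $Y_i$ by $\e_i$, yields the claimed $(-(C\e_i)^{-1})$-concavity of $h_\alpha$ with $C = C(m, \kappa, d, v)$. That the estimate survives lifting from $Z$ to $\tilde Y_i$ for large $i$ follows from the convergence of the distance functions $|q_{\alpha\beta}, \cdot|$ and the stability of second-variation estimates for semiconcave functions under GH-convergence, absorbing an arbitrarily small error into the constants.

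The main obstacle is the lower bound $\bar K \ge c_1(m) \theta^2$: because all directions $u_\beta$ sit within $2\theta$ of the single direction $u^*_\alpha$, the radial contribution collapses when $\xi$ is nearly perpendicular to $u^*_\alpha$, and only the asymmetric scaling $\hat R \gg R'/\theta^2$ can restore overall concavity. Verifying that the cumulative approximation errors---from the ideal-boundary approximation $R^{-1} S(p,R) \approx Z(\infty)$, from the $\delta$-discreteness of the net, and from the $GH$-convergence $\tilde Y_i \to Z$---do not dominate the $\theta^2$ gain is the technical heart of the argument and is what dictates the quantitative dependence of the constants on $(m, \kappa, d, v)$.
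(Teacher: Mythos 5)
The paper does not provide its own proof of this sublemma---it only cites Perelman (3.6), Kapovitch, and refers to [FMY, Claim 3.4] for the basic idea---so I cannot compare your argument line-by-line against the paper's. Your reconstruction follows the standard Perelman--Kapovitch averaging scheme, and the two key ingredients---the chain-rule bound $(\phi\circ f_\beta\circ\gamma)''\le\phi''(d_\beta)\cos^2\theta_\beta+\phi'(d_\beta)\sin^2\theta_\beta/d_\beta$ and the uniform lower bound $\bar K\ge c_1(m)\theta^2$ obtained from the $(m-1)$-dimensional density of the $\delta$-net in the $\theta$-cap---are correct. Your observation that the bound saturates at order $\theta^2$ when $\xi\perp u_\alpha^*$, and that the only way to make the averaged Hessian negative is to push $q_{\alpha\beta}$ out to distance $\hat R\gtrsim R'/\theta^2$ so that $\phi'(d_\beta)/d_\beta\lesssim\theta^2/R'$ is dominated by $\phi''(d_\beta)\bar K\approx-\theta^2/(8R'(m+1))$, is also right. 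The curvature normalization is correctly handled by working in the rescaled spaces, where curvature is $\ge\e_i^2\kappa\to 0$; this is what makes the comparison coefficient $c_\kappa$ close to $1$ rather than blowing up with $d_\beta$ as it would in $Y_i$ with fixed $\kappa<0$.

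The one point that deserves flagging is that your choice $\hat R\ge C_0R'/\theta^2$ is not merely a convenience---it is \emph{necessary}, and it conflicts with the paper's explicit prescription $\hat R:=3R'$ appearing just before the sublemma. If one literally takes $\hat R=3R'$, then $d_\beta\approx 2R'$ and $\phi'(d_\beta)/d_\beta$ is of order $1/R'$, which a factor of $\theta^2$ can never beat: the Euclidean model already shows $\nabla^2 h_\alpha(\xi,\xi)>0$ for $\xi\perp l_\alpha$. So either the paper's $\hat R:=3R'$ is a slip (and the hedge ``provided $\hat R$ is large enough'' in the sublemma statement, which you correctly exploit, is the operative hypothesis), or the subsequent numerical computations in Sublemma 7.8 that rely on $\hat R=3R'$ would need to be redone with $\hat R=KR'$ for a large constant $K=K(m,\theta)$. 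Your proof is the mathematically consistent version; you should just note explicitly that your choice of $\hat R$ departs from the $\hat R:=3R'$ set earlier in the text.
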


Let $h$ be the minimum of $h_\alpha$ over all $\alpha\in A$, which is also $1$-Lipschitz and $(-\mu)$-concave on $B(p,2R)$.

For any $x\in S(p,R)$, let $\alpha\in \frak{a}(x)$ be as in \eqref{eq:|x,r|<mu}.
Choose $\alpha_*\in A$ such that 
$h(x)=h_{\alpha_*}(x)$.
We denote by $\frak{a}_*(x)$ the set of all
such $\alpha_*\in A$.
Take $\beta\in A_{\alpha}$ and $\beta_*\in A_{\alpha_*}$
such that $|x,q_{\alpha\beta}|\ge |x,q_{\alpha_*\beta_*}|$.
It follows that 
\eqref{eq:wang(xpq)}  and \eqref{eq:wang(pxq)} hold for $q_{\alpha_*\gamma_*}$
for all $\gamma_*\in A_{\alpha_*}$  in place of $q_{\alpha\beta}$.  
Note also that \eqref{eq:wang(xpq)}  and \eqref{eq:wang(pxq)}
hold for all $x\in A(p,R/1000,2R)$.

Set
\[W:=h^{-1}([b,\infty))\cap \mathring{B}(p,2R),\quad  b:= \min_{x\in S(p,R)} h(x).\]

In Sublemma \ref{slem:inrad(W)}, we show 
that $W$ is a compact convex domain of $Y_\infty$.

In what follows, we set 
\[
\rho:=\mu_p(R)+\tau(\theta). 
\]

Let $\Phi$ denote the gradient flow for $h$. We have the following gradient estimates.

\begin{slem} \label{slem:grad-f-Phi}
For any $(y,t)\in W\times\R_+$ with $\Phi(y,t)\in W\setminus B(p,R/100)$, we have the following for small enough $\zeta$ and $\rho$$:$
\begin{enumerate}
\item $\angle(\dot\Phi(y,t),\uparrow_{\Phi(y,t)}^p)<\tau(\zeta,\rho)$,\quad $|\dot\Phi(y,t)|\ge 1-\zeta-\rho\,;$
\item $h(\gamma_{y,p}(t))\ge h(y) +(1-\zeta-\rho)t$.
\end{enumerate}
\end{slem}
\begin{proof} (1)\,
 First remark that for any $z\in A(p,R/1000,2R)$, 
from  \eqref{eq:wang(pxq)}, it holds that  
 \begin{align} \label{eq:wang(pzq)}
\wangle pzq_{\alpha\beta} & \ge \pi-\rho
\end{align}
for all $\alpha\in\frak{a}_*(z)$ and $\beta\in A_\alpha$.
This implies that 
\beq \label{eq:dh(gammayp)}
 dh(\dot\gamma_{z,p}(0))\ge (1-\zeta)(1-\rho^2)\ge 1-\zeta-\rho,
\eeq
Applying \eqref{eq:dh(gammayp)} to $z:=\Phi(y,t)$,
we obtain the conclusion (1) by \eqref{eq:gradient}.

(2)\,
Consider the function $f(s)=|p,\Phi(y,s)|$.
(1) implies that
\begin{align*} 
   f'(s)&=d_p'(\dot\Phi(y,s))=
-\langle\uparrow_{\Phi(y,t)}^p,\dot\Phi(y,s)\rangle    \\
&\le -dh(\uparrow_{\Phi(y,s)}^p)\le 
-(1-\zeta-\rho).
\end{align*}
It follows  
that 
\begin{align*}
R/100 &\le f(t)\le f(0)-t(1-\zeta-\rho), 
\end{align*}
and hence $t\le (f(0)-R/100)/(1-\zeta-\rho)$. 
This implies 
\[
 |p,\gamma_{y,p}(t)|=f(0)-t
  \ge f(0)\biggl( 1-\frac{1}{1-\zeta-\rho}\biggr)
\ge R/1000.
\]
Therefore we can apply \eqref{eq:dh(gammayp)}
for  $\gamma_{y,p}(s)$\,$(s\in [0,t])$ to get the conclusion (2).
\end{proof}

\begin{slem}\label{slem:inrad(W)}
We have the inclusions
\beq \label{eq:inclusionBW}
B(p, R)\subset W
\subset B(p,(1+\rho)R).
\eeq
In particular, $W$ is a compact convex domain of $Y_\infty$.
\end{slem}
\begin{proof} 
The first inclusion is obvious from the definition of 
$W$. By the triangle inequality and \eqref{eq:|xqab|}, for any $x\in S(p,R)$, we have  
$\hat R-R\le |x,q_{\alpha\beta}| \le \hat R-R + R\rho$
for any $\alpha\in\frak{a}_*(x)$ and $\beta\in A_\alpha$.
Using \eqref{eq:defn=phi}, we obtain
\begin{align*}
0\le\phi(|q_{\alpha\beta},x|)-\phi(\hat R-R)
&\le \int_{\hat R-R}^{\hat R-R + \rho R}
      \phi'(t)\,dt \le \rho R,
\end{align*}
which implies 
\beq\label{eq:h(x)-b}
    h(x)-b \le \rho R.
\eeq
For any $y\in\pa W$, let 
$z:=\Phi(y,t_1)\in S(p,R)$ for some $t_1\ge 0$.
By Sublemma \ref{slem:grad-f-Phi}, we have 
\[
 h(z)=h(\Phi(y,t_1))\ge (1-\zeta-\rho)t_1+h(y),
\]
and hence 
\beq\label{eq:t<h-h}
t_1\le (h(z)-h(y))(1-\zeta-\rho)^{-1}.
\eeq
Since $d_p'(\dot\Phi(y,s))\ge -1$, we have
$R-|p,y|\ge -t_1$.
It follows from \eqref{eq:h(x)-b} and \eqref{eq:t<h-h} that 
\begin{align*}
  R-|p,y|&\ge -(h(z)-h(y))(1-\zeta-\rho)^{-1} 
 \\
&\ge -\rho R(1-\zeta-\rho)^{-1} \ge -2\rho R, 
\end{align*}
yielding $|p,y|\le (1+\rho) R$.
\end{proof}

\begin{slem}\label{slem:Hdist(WS)}
For $y\in W\setminus B(p,R/100)$, 
set $T:=d_p(y)$ and $W_y:=h^{-1}([h(y),\infty))$. Then we have 
\beq \label{eq:angle(xi,ST)}
    d_H(\Sigma_y(\pa W_y), \Sigma_y(S(p,T))) <\tau(\rho),
\eeq
where $d_H$ denotes the Hausdorff  distance in $\Sigma_y(Y_\infty)$.
\end{slem}
\begin{proof} 
First we show that $\Sigma_y(\pa W_y)$ is contained in a $\tau(\rho)$-neighborhood of 
$\Sigma_y(S(p,T))$.
If this does not hold, we have $\xi\in\Sigma_y(\pa W_y)$ such that 
$\angle(\xi, \Sigma_y(S(p,T)))\ge \omega>0$
for some uniform constant $\omega$ independent of  $\rho$.
We assume $d_p'(\xi)>0$, since the other case is similarly discussed.
Take a sequence $z_i\in\pa W_y$ converging to $y$ such that 
$\angle(\uparrow_y^{z_i},\xi)\to 0$.
\eqref{eq:wang(pzq)} then shows 
\[
     \wangle z_iyq_{\alpha\beta}<  \pi/2- \omega+\rho <\pi/2- \omega/2,
\] 
for all
$\alpha\in\frak{a}_*(y)$, $\beta\in A_\alpha$
and large enough $i$ if $\rho$ is small enough.
Take any $\alpha_i\in\frak{a}_*(z_i)$.
Passing to a subsequence, we may assume
that $\alpha_i$ does not depend on $i$
and therefore all $\alpha_i$ coincide with some $\alpha\in\frak{a}_*(y)$.
This yields the contradiction
$h(z_i) <h(y)$. 

Next we show the reverse inclusion.
Take any $\xi\in\Sigma_y(S(p,T))$.
It suffices to show that if  $\rho$ is  
small enough, then for any fixed $a>0$,
there is $u_0\in\Sigma_y(\pa W_y)$ in the $a$-neighborhood of $\xi$.
Set $v_+:=\uparrow_y^p$, $v_-:=\uparrow_y^{q_{\alpha}}$ with $\alpha\in\frak{a}_*(y)$.
Consider geodesic segments $\gamma_+$ and 
$\gamma_-$ in $T_y(Y_\infty)$ joining $\xi$ to $v_+$ and $v_-$ 
respectively, and let $\gamma$ be the union of 
$\gamma_+$ and $\gamma_-$.
Let $\tilde \gamma=\tilde\gamma_1\cup\tilde\gamma_2$ be the broken geodesic
 in $\Sigma_y(Y_\infty)$ defined by the normalization of $\gamma$.
Let $U$ denote the open cone in $T_y(Y_\infty)$ making 
angle $<a$ with $\xi$.
Take any $u\in\tilde\gamma\setminus U$.
If $u\in \tilde\gamma_+$,
from \eqref{eq:wang(pzq)}, we have
\[
    \angle(u,\uparrow_y^{q_{\alpha\beta}}) 
> \pi/2+a/2-\rho>\pi/2+a/3,
\]
for small enough $\rho$. This yields  $dh(u)>0$.
If $u\in \tilde\gamma_-$,
we have  $dh(u)<0$ in a similar way.
Thus we can find $u_0\in\tilde\gamma\cap U$
such that $dh(u)=0$, that is, $u\in\Sigma_y(\pa W_y)$.
\end{proof}
\psmall

\begin{slem} \label{slem:grad-f-Phi2}
For any $(y,t)\in W\times\R_+$ with $\Phi(y,t)\in W\setminus B(p,R/100)$, we have 
\[
  |\Phi(y,t), \pa W|\ge (1-\tau(\rho))t+|y,\pa W|.
\]
\end{slem}

\begin{proof} 
Let $(y,t)$ be as in the sublemma.
For any fixed $s\in [0,t]$, set $z:=\Phi(y,s)$, and 
choose a nearest point $u$ of $\pa W$ from 
$w$ such that if $\alpha_{\min}(s)$ denotes the
minimum of angles between $\dot\Phi(y,s)$
and all the minimal geodesics joining $z$ to $\pa W$,
then $\alpha_{\min}(s)=\angle(\dot\Phi(y,s),\dot\gamma_{z,u}(0))$.
Then the first variation formula shows
\[
   \frac{d}{ds}|\Phi(y,s), \pa W|=-|\dot\Phi(y,s)|\cos\alpha_{\min}(s).
\]
To prove the sublemma, it suffices to show 
\beq \label{eq:wangle(pwu)=pi}
  \wangle pzu>\pi-\tau(\rho).
\eeq

Let $\xi_*:=\uparrow_u^z$, $\xi:=\uparrow_u^p$.
Since $\angle(\xi_*,\Sigma_u(\pa W))\ge \pi/2$,
considering the double $D(\Sigma_u(W))$, we see 
that $\Sigma_u(W)$ is the half suspension 
$\xi_**\Sigma_u(\pa W)$.
It follows from Sublemma \ref{slem:Hdist(WS)}
that $\angle(\xi_*,\xi)<\tau(\rho)$.
Thus we obtain 
\beq\label{eq:angle(puw)}
\angle puz<\tau(\rho).
\eeq
To prove \eqref{eq:wangle(pwu)=pi}, it suffices to show  $\wangle pzu>\pi-a$  for any fixed $a>0$ and for small enough $\rho$.
Suppose this does not hold. Then there are
$a>0$ and sequences
$\hat R_i > R_i\to\infty$ with $R_i/\hat R_i\to 0$, $\theta_i\to 0$, $y_i\in W_i$ and
$z_i=\Phi_i(y_i,s_i)\in W_i\setminus B(p,R_i/100)$ such that 
\[
\wangle pz_iu_i \le \pi-a, 
\] 
where $W_i$ is a convex set, $\Phi_i$
and  $u_i\in \pa W_i$ are  constructed and taken as before for $R=R_i$, $\hat R=\hat R_i$, $\theta=\theta_i$ and $u=u_i$.
If $|z_i,u_i|/R_i \to 0$, then 
$\wangle u_ipw_i\to 0$. Together with \eqref{eq:angle(puw)}, this implies 
\[
\wangle pz_iu_i\ge \pi-\wangle u_ipz_i-\wangle pu_iz_i \ge \pi - o_i,
\]
which is a contradiction.
Next suppose $|z_i,u_i|/R_i \ge c>0$ for a constant 
independent of $i$, and consider the 
convergence
$(Y_\infty/R_i,p)\to K(Y_\infty(\infty)),p_\infty)$. 
It follows from \eqref{eq:angle(puw)} that 
$z_\infty\in\gamma_{p_\infty u_\infty}$ and 
therefore $\wangle pz_iu_i\to\pi$, a contradiction.
This proves \eqref{eq:wangle(pwu)=pi},
 completes the proof of of  Sublemma \ref{slem:grad-f-Phi2}.  
\end{proof}

\begin{slem}\label{slem:Phi-gamma}
For any $(y,t)\in W\times\R_+$ with $\Phi(y,t)\in W\setminus B(p,R/100)$, we have 
\[
  \angle(\dot\Phi(y,0),\dot\gamma_{y,\Phi(y,t)}(0))<\tau(\zeta,\rho).
\]
\end{slem}
\begin{proof}
Set $z:=\Phi(y,t)$ and $\beta:=\angle(\dot\Phi(y,0),\dot\gamma_{y,z}(0))$. Observe that 
\[
    dh(\dot\gamma_{y,z}(0))\le\langle\dot\Phi(y,0),\dot\gamma_{y,z}(0)\rangle\le\cos\beta.
\]
It follows from the concavity of $h$ that  
\[
h(z)\le h(y)+|y,z|dh(\dot\gamma_{y,z}(0))
\le h(y)+|y,z|\cos\beta,
\]
and hence
$h(z)-h(y)\le |y,z|\cos\beta\le t$.
On the other hand, from Sublemma \ref{slem:grad-f-Phi}, we get
\[
     h(z)-h(y)=\int_0^t  \frac{d}{ds} h(\Phi(y,s) ds
     =\int_0^t |\dot\Phi(y,s)|^2 ds\ge  (1-\tau(\zeta,\rho))t.
\]
Combining these two inequalities, we have
$\cos\beta\ge 1-\tau(\zeta,\rho)$, and 
the conclusion.
\end{proof}

\psmall

By Sublemma \ref{slem:grad-f-Phi},
the maximum point $q$ of $h$ 
must be contained in $B(p,R/1000)$
if $\zeta$ and $\rho$ are small enough.
It follows that 
\beq \label{eq:B(q)subsetW}
    B(q,0.99R)\subset B(p,0.991R)\subset \mathring{W}.
\eeq

\pmed
\n
{\bf Step 2. Projection to $X_\infty$.}\,\,
\pmed  
 In the blow-up limit \eqref{eq:blow-upY},
we assume $p_i\in X_i\subset Y_i$.
Let $\eta_i:C_i\to Y_i$ 
and $(\eta_i)_0:(C_i)_0\to (X_i)_0$ be as in 
Section \ref{sec:non-inradius}.

We may assume that under  \eqref{eq:blow-upY}
\begin{itemize}
\item $X_i$ converges to a closed convex 
subset $X_\infty$ of $Y_\infty\,;$
\item $\eta_i$ and $(\eta_i)_0$ converge to $1$-Lipschitz maps
$\eta_\infty:C_\infty\to Y_\infty$ and 
$(\eta_\infty)_0:(C_\infty)_0\to (X_\infty)_0$
respectively$\,;$
\item $\pi_i:Y_i\to X_i$ and 
$\tilde\pi_i: C_i\to (C_i)_0$ converge to  $1$-Lipschitz
maps $\pi_\infty:Y_\infty\to X_\infty$ and 
 $\tilde\pi_\infty:C_\infty\to (C_\infty)_0$
which are the projections along the perpendiculars to $X_\infty$ and $(C_\infty)_0$ respectively.
\end{itemize}
Note that 
$X_\infty(\infty)\subset Y_\infty(\infty)$
and  
$\pi_\infty=\eta_\infty\circ\tilde\pi_\infty\circ\eta_\infty^{-1}$ on $Y_\infty\setminus X_\infty$.

\pbig
Since 
\beq\label{eq:GH-idealbdy}
   d_{GH}(R^{-1}S^{X_\infty}(p,R),X_\infty(\infty))<\mu_p(R),
\eeq
for any $x\in A^{X_\infty}(p, R/100,R)$,
we can choose a geodesic ray $l:[0,\infty)\to X_\infty$
such that 
$|x,l(R)|<\mu_p(R)R$, and hence 
\beq\label{eq:wangle(pxl)}
     \wangle px l(\hat R) >\pi - \rho.
\eeq
\pbig

  For the point $p\in X_\infty$, let $W$ be as 
in Step 1.
Set 
\[
      W_{X_\infty}:=W\cap X_\infty.
\] 
In a way similar to Lemma \ref{lem:G1=pi},
we have the following.

\begin{lem} \label{lem:G2=pi}
For any $(y,t)\in W_{X_\infty}\times \R_+$, we have 
$$
\pi_\infty\circ \Phi(y,t)\in W_{X_\infty}.
$$
\end{lem}
\psmall
Consider the $1$-Lipschitz map  
$$
H:=\pi_\infty\circ \Phi:W_{X_\infty}\times \R_+\to
W_{X_\infty}.
$$   
 
In what follows we describe some 
properties of the homotopy $H$
similar to those of $\Phi$  in Sublemmas \ref{slem:grad-f-Phi} and \ref{slem:grad-f-Phi2},  to prove Theorem \ref{thm:conv-cover}(4).
 Roughly speaking, we show that $H(y,t)$ is almost parallel to $\Phi(y,t)$.

\begin{lem} \label{lem:almost-grad-phi}
For any $(y,t)\in W_{X_\infty}\times\R_+$ with 
$H(y,t)\in W_{X_\infty}\setminus B^{X_\infty}(p, R/100)$, we have 
\begin{enumerate}
 \item $\angle(\dot H(y,t),\dot\gamma_{H(y,t),p}(0))<\tau(\zeta,\rho), \quad 
 |\dot H(y,t)| > 1-\tau(\zeta,\rho)$,  
 \item $|H(y,t),\pa W_{X_\infty}|_{Y_\infty}\ge (1-\tau(\zeta,\rho))t+|y,\pa W_{X_\infty}|_{Y_\infty}$. 
\end{enumerate}
\end{lem}
\begin{proof} 
(1)\,
The first inequality of  (1)  follows in a way 
similar to Lemma \ref{lem:gradH-6},
and and the second inequality follows in a way 
similar to Sublemma \ref{slem:|H|=1}.
Therefore we omit the details.

 (2)\,  The basic idea of the proof of (2) is the same as that of Sublemma \ref{slem:grad-f-Phi2}.
In  a way similar to Sublemma \ref{slem:Hdist(WS)},
we obtain the following, and hence omit the proof.

\begin{slem}\label{slem:Hdist(WSX)}
For $y\in W_{X_\infty}\setminus B^{X_\infty}(p,R/100)$, 
set $T:=d_p(y)$ and $W_{X_\infty y}:=h^{-1}([h(y),\infty))\cap X_\infty$. Then we have 
\beq \label{eq:angle(xi,ST)}
    d_H(\Sigma_y(\pa W_{X_\infty y}), \Sigma_y(S^{X_\infty}(p,T))) <\tau(\rho),
\eeq
where $d_H$ denotes the Hausdorff  distance in $\Sigma_y(Y_\infty)$.
\end{slem}

\psmall

Let $(y,t)$ be as in the lemma.
For any fixed $s\in [0,t]$, set $z:=H(y,s)$, and 
choose a nearest point $u$ of $\pa W_{X_\infty}$ from 
$w$ such that if $\alpha_{\min}(s)$ denotes the
minimum of angles between $\dot H(y,s)$
and all the minimal geodesics joining $z$ to $\pa W_{X_\infty}$,
then $\alpha_{\min}(s)=\angle(\dot H(y,s),\dot\gamma_{z,u}(0))$.
Then the first variation formula shows
\[
   \frac{d}{ds}|H(y,s), \pa W_{X_\infty}|=-|\dot H(y,s)|\cos\alpha_{\min}(s).
\]

Since $W_{X_\infty}$ is convex in $Y_\infty$,
we can show that $\wangle pzu>\pi-\tau(\rho)$
in the same way as \eqref{eq:wangle(pwu)=pi}.
This completes the proof of (2),
and hence  the proof of Lemma  \ref{lem:almost-grad-phi}.
\end{proof}

\pmed\n
{\bf Step 3. Lifting.}\,\,
Finally we lift the above situation to $\tilde Y_i$.
Let $\tilde h_i$ and $\tilde W_i$ denote the lifts of $h$ and $W$ to $\tilde Y_i$ respectively.
Namely, letting $\tilde q_\alpha^i$, $\tilde q_{\alpha\beta}^i$ be the images of 
$q_\alpha$, $q_{\alpha\beta}$
under an $o_i$-approximation
$\varphi_i:(Y_\infty,p)\to (\tilde Y_i,p_i)$,
we define
\begin{align*}
\tilde  h_{i,\alpha}&:=\frac1{\# A_\alpha}\sum_{\beta\in A_\alpha}\phi(|\tilde q_{\alpha\beta}^i, \cdot\,|),\quad
\tilde h_i:=\min_{\alpha\in A} \tilde h_{i,\alpha},\\
\tilde W_i&:=\{ \tilde h_i\ge a_i\},\quad
 b_i:=\min_{x\in \tilde S(p_i,R)}\tilde h_i(x).
\end{align*}
Then $\tilde h_i$ is $1$-Lipschitz and $(-\lambda)$-concave on $\tilde B(p_i,2R)$.
This follows from the same argument using volume comparison as in Sublemma \ref{slem:conc}.
Therefore $\tilde W_i$ is a convex domain containing $\tilde B(p_i,R)$. 
Let $\tilde\Phi_i$ denote the gradient flow of $\tilde h_i$.
Note that the above argument based on comparison
angles is stable under the pointed Gromov-Hausdorff convergence. Thus from  Sublemmas \ref{slem:grad-f-Phi} and \ref{slem:grad-f-Phi2}, we get 
for any $(y,t)\in \tilde W_i\times \R_+$ with 
$\tilde\Phi_i(y, t)\in \tilde W_i\setminus \tilde B(p_i,R/100)$ 
\beq\label{eq:tildeh(gamma)>t}
\begin{cases}
\begin{aligned}
    & \tilde h_i(\gamma^{\tilde Y_i}_{y,p_i}(t))-\tilde h_i(y)\ge (1-\tau(\zeta,\rho))t, \\
& \angle(\dot{\tilde\Phi}_i(y,t),\uparrow_{\tilde\Phi_i(y,t)}^{p_i})<\tau(\zeta,\rho),\,\,\,\,
|\dot{\tilde\Phi}_i(y,t)|>1-\tau(\zeta,\rho), \\
&|\tilde\Phi_i(y,t), \pa \tilde W_i|\ge (1-\tau(\zeta,\rho))t+|y,\pa \tilde W_i|. 
\end{aligned}
\end{cases}
\eeq
From Lemma \ref{lem:almost-grad-phi}, we get 
for any $(y,t)\in (\tilde W_i)_{\tilde X_i}\times\R_+$ with 
$\tilde H_i(y,t)\in (\tilde W_i)_{\tilde X_i}\setminus B^{\tilde X_i}(p, R/100)$,

\beq\label{eq:tildeH(gamma)>t}
\begin{cases}
\begin{aligned}
    &  \angle(\dot{\tilde H}_i(y,t),\dot\gamma_{\tilde H_i(y,t),p_i}(0))<\tau(\zeta,\rho), \,\,
 |\dot{\tilde H}_i(y,t)|> 1-\tau(\zeta,\rho),\\
&  |\tilde H_i(y,t),\pa (\tilde W_i)_{\tilde X_i}|_{\tilde Y_i}\ge (1-\tau(\zeta,\rho))t+|y,\pa (W_i)_{\hat X_i}|_{\tilde Y_i},
\end{aligned}
\end{cases}
\eeq
 
To confirm the situation in the original metric $Y$, let 
\[
\text{
    $ h_i:=\e_i \tilde h_i$, \quad  
$W_i:=\{ h_i\ge\e_i b_i\}=\tilde W_i$}.
\] 
Note that $h_i$ is $1$-Lipschitz and 
$-(\lambda/\e_i)$-concave on $W_i$.
Let $\Phi_i$ denote the gradient flow of $h_i$:
\[
         \Phi(y_i,t)=\tilde\Phi(y_i,t/\e_i).
\] 
Recall that $B(p_i, \e_i R)\subset W_i$.
From \eqref{eq:tildeh(gamma)>t},
we have 
\beq \label{eq:Phi-final}
\begin{cases}
&h_i(\gamma^{Y_i}_{y,p_i}(t)) -h_i(y) \ge (1-\tau(\zeta,\rho))t, 
\quad\\
& \angle(\dot{\Phi}_i(y,t),\dot\gamma_{\Phi_i(y,t),p_i}(0))<\tau(\zeta,\rho), \,\,
 |\dot{\Phi}_i(y,t)| > 1-\tau(\zeta,\rho),\\
& |\Phi_i(y,t), \pa W_i|\ge (1-\tau(\zeta,\rho))t+|y, \pa W_i|.
\end{cases}
\eeq
for  any $(y,t)\in W_i\times \R_+$ with 
$\Phi_i(y,t)\in W_i\setminus B(p_i,\e_i R/100)$. 

Consider 
\[
    H_i:=\pi\circ \Phi_i.
\]
From \eqref{eq:tildeH(gamma)>t}
we have
\beq \label{eq:H-final}
\begin{cases}
&\angle(\dot{H}_i(y,t),\dot\gamma_{H_i(y,t),p_i}(0))<\tau(\zeta,\rho), \,\,
 |\dot{H}_i(y,t)|>1-\tau(\zeta,\rho),\\
& |H_i(y,t), \pa (W_i)_{X_i}|\ge (1-\tau(\zeta,\rho))t+|y, \pa (W_i)_{X_i}|,
\end{cases}
\eeq
for  any $(y,t)\in (W_i)_{X_i}\times \R_+$ with 
$H_i(y,t)\in (W_i)_{X_i}\setminus B(p_i,\e_iR/100)$.

\pmed\n
{\bf Step 4. Lipschitz contractibility.}\,\,
\pmed

Finally we have to verify that the convex set $W_i$ 
constructed in Step 3 
is $(C, C\epsilon_i)$-Lipschitz strongly 
contractible to 
a point of $W_i$.
However this is established in 
Steps 2 and 3 in the proof of  \cite[Theorem 3.1]{FMY}.
Therefore we only give an outline of the argument.

First we need to return to Step 1.
Recall that $q\in W$ is the unique maximum point  of the $-\mu$ concave function $h$. 
Fix a small enough $r>0$.
By using a maximal $\theta r$-discrete set $\{ x_{\alpha}\}_{\alpha\in \hat A}$ in $S(q,r)$ and 
 a maximal $\delta r$-discrete 
 system $\{ x_{\alpha\beta}\}_{\beta\in \hat A_\alpha}$
in $B(x_{\alpha},\theta r)\cap S(q,r)$,
we construct a $(-1)$-concave function $g$ on 
$B(q,\rho)$ as in Theorem \ref{thm:convex-nbd}.
By the gradient flow $\Phi$ of $h$, we can deform $W$ to 
$B(q,\rho)$.
By Lemma \ref{lem:grad-df}, we have
\[
    dh(\uparrow_x^q)\ge \mu |p,x|/2
\]
for any $x\in W\setminus B(q,\rho)$.
Therefore $\Phi$ gives a $(1, C_1)$-Lipschitz
strong deformation retraction of $W$ to
$B(q, \rho)$, where $C_1:=(\mu \rho/2)^{-1}$. From the construction, 
for any $x\in B(q,\rho)\setminus \{ q\}$, there is $y\in
S(q,2\rho)$ such that 
$$
\wangle qxy>\pi/2+\nu
$$
for a uniform constant $\nu>0$.
If we consider the gradient frow $\Psi$ of $d_{S(q,2\rho)}$, it provides a $(C_2,C_2\rho)$-Lipschitz strong
deformation retraction of $B(q,\rho)$ to $q$.
\psmall
We return  to Step 2.
Let 
$\tilde g_i$ be a $(-1)$-concave function defiend as a
lift of $g$ to $\tilde Y_i$.
Let $\hat q_i$ be a maximum point of $g_i$ converging to $q$.
We have to show that the gradient flow $\tilde\Phi_i$
for $\tilde h_i$ meets $\tilde S(\hat q_i,\rho)$ for large enough $i$
if $\theta$ is small enough compared with $\rho$.
Suppose this does not hold. Then we have a 
point  $x_i\in \tilde S(\hat q_i,\rho)$
such that $d\tilde h_i(\uparrow_{x_i}^{\hat q_i})\to 0$.
We may assume $x_i$ converges to a point $x\in
S(q,\rho)$. We may also assume $\alpha_i\in \frak{a}_*(x_i)$ (this is the notation for $\tilde g_i$)  does not depend 
on $i$. Then $\alpha=\alpha_i\in \frak{a}_*(x)$.
If $\theta$ is small enough (e.g., $\theta \hat R\le \rho/10$), then we have 
\[
     \lim_{i\to\infty}\wangle \hat q_i x_i q_{\alpha\beta}^i
         =\wangle  \hat q x q_{\alpha\beta}\ge \pi/2+\nu,
\]
which implies $d\tilde h_i(\uparrow_{x_i}^{\hat q_i})> c(\nu)>0$, a contradiction. 
Thus $\tilde\Phi_i$ provides a $(1,C_3)$-Lipschitz
strong deformation retraction of $\tilde W_i$ to 
$\tilde B(\hat q_i,\rho)$.

A key is to show that for any $x_i\in B(\hat q_i,\rho)\setminus \{ \hat q_i \}$, there is $y_i\in
S(\hat q_i,2\rho)$ such that 
\beq \label{eq:wangle(qxy>pi/2}
\wangle \hat q_ix_iy_i>\pi/2+\tilde \nu
\eeq
for a constant $\tilde\nu>0$ independent of $i$.
This was established in \cite[Theorem 3.1]{FMY}.
\eqref{eq:wangle(qxy>pi/2}  implies that the gradient flow $\tilde\Psi_i$ of $d_{S(\hat q_i, 2\rho)}$ provides a $(C_4,C_4\rho)$-Lipschitz
strongly deformation retraction of $\tilde B(\hat q_i, \rho)$
to $\hat q_i$.

Combining the two Lipschitz deformation retractions
$\tilde\Phi_i$ and $\tilde\Psi_i$, we define 
a $(C_4,C_3+C_4)$-Lipschitz deformation retraction $\tilde F_i$ 
of $\tilde W_i$ to  $\hat q_i$
as 
\beq
 \tilde F_i(x,t)=
\begin{cases}
   \tilde\Phi_i(x,t)   &  0\le t\le C_3,  \\
   \Psi_i(\Phi_i(x,\e_i), t-C_3)  & C_3\le t\le C_3+C_4.
\end{cases}
\eeq

In the original metric of $Y_i$, letting $C:=C_3+C_4$, we finally get
a $(C, C\e_i)$-Lipschitz deformation retraction $F_i$ 
of $W_i$ to  $\hat q_i$
satisfying the conclusions (1),(2) and (3)
for small enough $\zeta$ and $\rho$
by \eqref{eq:Phi-final}.
Note that it is $1$-Lipschitz on 
$F_i^{-1}(W_i\setminus B(p_i,\e_i/100))$.

Let $H_i:=\pi\circ  F_i$. This defines the 
$(C, C\e)$-Lipschitz strong deformation retraction
of $(W_i)_{X_i}$ to $\pi(\hat q_i)$ satisfying the conclusion (4)
by \eqref{eq:H-final}.
Note that it is $1$-Lipschitz on 
$H_i^{-1}((W_i)_{X_i}\setminus B(p_i,\e_i/100))$.
Thus we have a contradiction.
This completes the proof of Theorem \ref{thm:conv-cover}.   
 \end{proof}

\begin{proof}[Proof of Corollary \ref{cor:finiteness}]
Let $C>1$ be the constant given in Theorem \ref{thm:Lip-homotopy}. From the compactness of
$\ol{\ca M}(m,v)$, for any $0<\e<C^{-1}$,
there are finitely many $X_1,\ldots, X_k$
and $GH$-neighborhoods $\ca U_i$ of $X_i$ in 
$\ol{\ca M}(m,v)$ for each $1\le i\le k$
such that  
\begin{itemize}
\item $\{ \ca U_i\}_{i=1}^k$ covers $\ol{\ca M}(m,v)\,;$
\item arbitrary $X$ and $X'$ in $\ca U_i$ have
the Gromov-Hausdorff distance 
$d_{GH}(X,X')<C^{-1}\e$.
\end{itemize}
Since $d_{\text{$C$-$LH$}}(X,X')<\e$
for $X,X'\in \ca U_i$ by Theorem \ref{thm:Lip-homotopy}, $X$ and $X'$ have the same $(C,\e)$-Lipschitz homotopy types.
This completes the proof.
\end{proof}

\pmed


\setcounter{equation}{0}
\section{Local Lipschitz homotopy stability}  \label{sec:LLHS}

In this section, we prove Theorems 
\ref{thm:metric-sphere} and \ref{thm:punctured-ball}.

\pmed\n
{\bf Lipschitz homotopy stability of small metric spheres}.
\pmed

Let $T^r_p(Y):=\{ v\in T_p(Y)\,|\,|v|\le r\}$.

To prove Theorem \ref{thm:metric-sphere},
we first verify the following.

\begin{prop} \label{prop:int-metric-sphereGH} 
For any $X\in\ol{\ca M}$ and $p\in X$, 
$S^{X^{\rm ext}}(p,r)/r$ converges to  $\Sigma_p(X)$
as $r\to 0$ with respect to the Gromov-Hausdorff 
distance.
\end{prop}
\begin{proof}
Since the proposition is well known in case $X$ is an Alexandrov space, 
we assume $p\in X_0$. 
First suppose $p\in {\rm int} Y$, and 
consider the convergence 
\beq \label{conv(BYtoT(Y)}
B^Y(p,2r)/r  \xrightarrow{GH} T^2_p(Y) \quad \text{as $r\to 0$.}
\eeq
Let $\varphi: B^Y(p,2r)/r \to T^2_p(Y)$ and 
$\psi:T^2_p(Y)\to B^Y(p,2r)$ be $\tau_p(r)$-approximations such that
\beq\label{eq:varphi-psi}
\begin{aligned}
&|\psi\circ\varphi(y),y|<\tau_p(r), \quad  
|\varphi\circ\psi(\tilde y),\tilde y|<\tau_p(r),  \\
& 
|\varphi(x), T^2_p(X)|<\tau_p(r), \quad  
|\psi(\tilde x),B^{X^{\rm ext}}(p,2r)|<\tau_p(r), 
\end{aligned}
\eeq
for all $y\in B^Y(p,2r)$ and $\tilde y\in T^2_p(Y)$
and for all
$x\in B^{X^{\rm ext}}(p,2r)$ and 
$\tilde x\in T^2_p(X)$.

In what follows, we use the identifications
\[
   \Sigma_p(X)=\Sigma_p(X)\times\{ 1\}\subset T_p(X), \quad
\Sigma_p(Y)=\Sigma_p(Y)\times\{ 1\}\subset T_p(Y).
\]
Note that $\Sigma_p(X)$ is convex in $\Sigma_p(Y)$
 (\cite[Lemma 5.12]{YZ:part1})
and 
$\Sigma_p(X)^{\rm reg}\subset  \Sigma_p(Y)^{\rm reg}$ (\cite{BGP}).
For any $\delta$, take a finite set $\tilde V:=\{ \tilde v_i\}_{i=1}^I\subset \Sigma_p(X)$
such that 
\begin{itemize}
\item it is $\delta/2$-discrete and $\delta$-dense in $\Sigma_p(X)\,;$
\item $\tilde V\subset\Sigma_p(X)^{\rm reg}
       \subset\Sigma_p(Y)^{\rm reg}\subset T_p(Y)^{\rm reg}$.
\end{itemize}
 Let $\tilde\Gamma$ be the union of all 
minimal geodesics
$\tilde \gamma_{i,j}$ joning $\tilde v_i$
and $\tilde v_j$ in $\Sigma_p(X)$.
Note that $\tilde\Gamma$
is contained in $\Sigma_p(X)^{\rm reg}$
 (see \cite{Pet:Parallel}). 
For any small enough  $0<\e_0\ll 1/\dim Y$, choose $\sigma_0>0$ such that 
the $\sigma_0$-neighborhood 
\[
   \tilde A^{T_p(Y)}:=B^{T_p(Y)}(\tilde\Gamma,\sigma_0)
\]
of $\tilde\Gamma$ in $T_p(Y)$ is contained in 
$T_p(Y)_{\e_0}^{\rm reg}$, the $\e_0$-regular set of 
$T_p(Y)$ (see Section \ref{sec:prelim}).
Under the convergence \eqref{conv(BYtoT(Y)},  for small enough $r$, we have a closed domain $A_r^Y\subset B^Y(p, 2r)/r$ contained in $Y_{2\e_0}^{\rm reg}$,  and a 
$\tau_p(r,\e_0)$-almost isometry
$f^Y_r: A_r^Y \to \tilde A^{T_p(Y)}$  such that
\beq \label{eq:|f-varphi|}
|f^Y_r(x), \varphi(x)|<\tau_p(r)
\eeq
for all
$x\in A_r^Y$
(see \cite{BGP}, \cite{Ya:conv}).
Let $g_r^Y:=(f_r^Y)^{-1}$. 
For any $y=f_r^Y(x)\in \tilde A^{T_p(Y)}$, we then have
\[
|g_r^Y(y),\psi(y)|\le |x,\psi\circ\varphi(x)|+|\psi\circ\varphi(x),\psi\circ f_r^Y(x)|<\tau_p(r).
\]  
Set 
$$
A_r:=A_r^Y\cap X, \quad  
\tilde A:= \tilde A^{T_p(Y)}\cap T_p(X).
$$
Let $\pi_\infty:T_p(Y)\to T_p(X)$ be the projection along the perpendiculars to $T_p(X)$.
Although one can show $\tilde A=\pi_\infty(\tilde A^{T_p(Y)})$, we do not give the proof since we do not use it. 
%
Consider the maps
\begin{align*}
&f_r:=\pi_\infty\circ f^Y_r: A_r\to  \pi_\infty(\tilde A^{T_p(Y)}), \\  
&g_r:=\pi\circ g_r^Y:\tilde A\to \pi(A_r^Y),
\end{align*}
where 
we may assume 
$\pi_\infty(\tilde A^{T_p(Y)})\subset T_p(Y)_{\e_0}^{\rm reg}$ and 
$\pi(A_r^Y)\subset Y_{2\e_0}^{\rm reg}$.

For each $1\le i\le I$, let $v_i^r$ be a nearest point  of $S^{X^{\rm ext}}(p,r)$ from $g_r(\tilde v_i)$.
\eqref{eq:varphi-psi} implies  
$|v_i^r, g_r(\tilde v_i)|<\tau_p(r)$.


\begin{slem} \label{slem:delta-dense}
$V:=\{ v_i^r\}$ is $7\delta$-dense in $S^{X^{\rm ext}}(p,r)/r$. 
\end{slem}
\begin{proof}
 For any $v\in S^{X^{\rm ext}}(p,r)/r$, choose
$i$ such that $|\varphi(v),\tilde v_i|<\delta+\tau_p(r)$.
Then we have
\begin{align*}
|v,v_i^r| &\le|\varphi(v),\varphi(v_i^r)|+\tau_p(r)\\
&\le |\varphi(v),\tilde v_i| +
|\tilde v_i, \varphi\circ\psi(\tilde v_i)|  \\
&+
|\varphi\circ\psi(\tilde v_i),\varphi\circ g_r(\tilde v_i)|+
|\varphi\circ g_r(\tilde v_i),\varphi(v_i^r)|+
\tau_p(r)\\
&\le \delta+\tau_p(r).
\end{align*}
Take points $u, u_i\in S^{X^{\rm ext}}(p,(1+2\delta)r)/r$ such that
\[
    \wangle pvu>\pi-\tau_p(r),\quad
\wangle pv_i^ru_i>\pi-\tau_p(r).
\]
It is easy from the curvature condition that
$|u,u_i|<2\delta$ for small enough $r$.
Let $\alpha$ be a broken $Y$-geodesic 
from $v$ to $v_i^r$ in $A^{Y/r}(p,1, 1+2\delta)$  consisting of minimal geodesics 
$vu$, $uu_i$ and $u_iv_i^r$.
Note that $\pi\circ\alpha$ is a curve in 
$A^{X^{\rm ext}/r}(p,1, 1+2\delta)$ of length
$L(\pi\circ\alpha)\le (1+\tau_p(r))L(\alpha) <7\delta$.
Let $H:A^{X^{\rm ext}/r}(p,1, 1+2\delta)\times [0,3\delta]\to A^{X^{\rm ext}/r}(p,1, 1+2\delta)$
be the $(1+\tau_p(\theta,r)r,3\delta)$-Lipschitz strong deformation 
retraction of  $A^{X^{\rm ext}/r}(p,1, 1+2\delta)$ to $B^{X^{\rm ext}/r}(p,1)$
given in \eqref{eq:retracionH} (see also 
Corollary \ref{cor:dp(dotH)=-1}).
Then $H\circ\pi\circ\alpha$ is a curve joning
$v$ and $v_i^r$ in $S^{X^{\rm ext}/r}(p,1)$
of length $<7\delta$.
\end{proof}


We show that 
\beq \label{eq:ratio(vivj)}
       \frac{|v_i^r,v_j^r|_{S^{X^{\rm ext}}(p,r)/r}}{|\tilde v_i,\tilde v_j|_{\Sigma_p(X)}}\to 1 \quad \text{as $r\to 0$}
\eeq
for all $1\le i\neq j\le I$.
Choose 
\[
    0<\e\ll\min\{ \delta,\e_0,\sigma_0\},
\]
and take $a_i>0$ for each $1\le i\le I$ such that 
$\tilde v_{i,\e}:=(1+a_i)f_r(v_i^r)$ has norm $1+\e$ in 
$T_p(X)$.
For fixed $1\le i\neq j\le I$, let $\tilde\gamma_{\e}(s)$\,$(0\le s\le |\tilde v_{i,\e},\tilde v_{j,\e}|)$ be a minimal geodesic joining
$\tilde v_{i,\e}$ to $\tilde v_{j,\e}$ in 
$\Sigma^{\e}_p(X):=\Sigma_p(X)\times\{ 1+\e\}\subset T_p(X)$.
Set 
$$
\zeta_{\e}:=g_r\circ\tilde\gamma_{\e}.
$$
Taking small enough $\e$,   we may assume
\[
 \tilde \gamma_{\e} \subset T_p(X)_{\e_0}^{\rm reg}.
\]

\pmed
\begin{figure}
\begin{center}
\begin{tikzpicture}
[scale = 0.4]

\draw [thick] (-1.5,3.5) to [out=-65, in=70] (-1.5,-3.5);
\draw [thick] (2.5,4.5) to [out=-60, in=70] (2.7,-2.7);
\draw [thick] (-1.5,3.5)--(2.5,4.5);
\draw [thick] (-1.5,-3.5)--(2.7,-2.7);
\draw [thick] (0.2,2.5) to [out=-70, in=70] (0.3,-2);
\fill (0.17,2.8) circle(2pt) node[left]
{\tiny{$v_{i}^r$}};
\fill (0.26,-2.4) circle(2pt) node[left]
{\tiny{$v_{j}^r$}};
\draw [thick] (1.5,2.5) to [out=-70, in=70] (1.5,-2.2);
\draw [thin] (1.5,2.5) --(0.2,2.5);
 \draw [thin] (1.5,-2.2)--(0.3,-2);

\draw [thin] (1.8,1.56) --(0.5,1.6);
\draw [thin] (2,0.62) --(0.7,0.7);
\draw [thin] (2,-0.32) --(0.7,-0.2);
\draw [thin] (1.8,-1.26) --(0.55,-1.1);

\fill (1.5,2.5) circle (2pt) node [right] 
{\tiny{$v_{i,\e}$}};
\fill (1.5,-2.2) circle (2pt);
 \fill (1.3,-2.2) circle (0pt) node [right] 
{\tiny{$v_{j,\e}$}};
\fill (1,0) circle (0pt) node [left] 
{\tiny{$\sigma_\e$}};
\fill (1.8,0) circle (0pt) node [right] {\tiny{$\zeta_{\e}$}};
\draw [thick,dotted] (-2.5,3.8) to [out=-80, in=70] (-2.5,-3.8);
\draw [thick,dotted] (3.5,4.8) to [out=-60, in=70] (3.7,-3);
\draw [thick,dotted] (-2.5,-3.8) to [out=-70, in=240] (3.7,-3);
\draw [thick,dotted] (3.5,4.8) to [out=140, in=70] (-2.5,3.8);
\fill(-3,0)  circle (0pt) node [left] 
{\small{$A_r$}};

\draw[->, thick] (6,1)--(9,1);
\fill(7.5,1)  circle (0pt) node [above] 
{\footnotesize{$f_r$}};
\draw[<-, thick] (6,-1)--(9,-1);
\fill(7.5,-1)  circle (0pt) node [below] 
{\footnotesize{$g_r$}};

\draw[shift={(14,0)}] [thick] (-1.5,3.5) to [out=-65, in=70] (-1.5,-3.5);
\draw[shift={(14,0)}] [thick] (2.5,4.5) to [out=-60, in=70] (2.7,-2.7);
\draw[shift={(14,0)}] [thick] (-1.5,3.5)--(2.5,4.5);
\draw[shift={(14,0)}] [thick] (-1.5,-3.5)--(2.7,-2.7);
\fill[shift={(14,0)}] (0.2,2.3) circle (2pt) node [left] {\tiny{$\tilde v_{i}$}};
\fill[shift={(14,0)}] (0.28,-1.8) circle (2pt) node [left] {\tiny{$\tilde v_{j}$}};

\draw[shift={(14,0)}] [thick] (1.5,2.5) to [out=-70, in=70] (1.5,-2.2);
\draw[shift={(14,0)}] [thin] (1.5,2.5) --(0.8,2.5);
\fill[shift={(14,0)}] (0.8,2.5) circle (2pt) node [above] {\tiny{$f_r(v_{i}^r)$}};
 \draw[shift={(14,0)}] [thin] (1.5,-2.2)--(0.8,-2.1);
\fill[shift={(14,0)}] (0.8,-2.1) circle (2pt) ;
\fill[shift={(14,0)}] (0.8,-2.0) circle (0) node [below] {\tiny{$f_r(v_{j}^r)$}};

\fill[shift={(14,0)}] (1.5,2.5) circle (2pt) node [right] {\tiny{$\tilde v_{i,\e}$}};
\fill[shift={(14,0)}] (1.5,-2.2) circle (2pt);
 \fill [shift={(14,0)}](1.3,-2.2) circle (0pt) node [right] {\tiny{$\tilde v_{j,\e}$}};
\fill [shift={(14,0)}](1.8,0) circle (0pt) node [right] {\tiny{$\tilde\gamma_{\e}$}};

\draw[shift={(14,0)}] [thick,dotted] (-2.5,3.8) to [out=-80, in=70] (-2.5,-3.8);
\draw [shift={(14,0)}] [thick,dotted] (3.5,4.8) to [out=-60, in=70] (3.7,-3);
\draw[shift={(14,0)}]  [thick,dotted] (-2.5,-3.8) to [out=-70, in=240] (3.7,-3);
\draw[shift={(14,0)}]  [thick,dotted] (3.5,4.8) to [out=140, in=70] (-2.5,3.8);
\fill[shift={(14,0)}] (7,0)  circle (0pt) node [left] 
{\small{$\tilde A$}};
\end{tikzpicture}
\end{center}
\vspace{-0.3cm}
\caption{}
\end{figure}

In what follows, we use the directions
$\uparrow_{\rho_{\e}(s)}^{\rho_{\e}(s')}$
and
$\uparrow_{\zeta_{\e}(s)}^{\zeta_{\e}(s')}$
for any $s'$ close enough to $s$ in place of considering "directions $\dot\rho_\e(s)$ and 
$\dot\zeta_\e(s)$ " 
that are not necessarily defined.
We now show   
\begin{slem}\label{slem:<zeta,dp>=0}
For any fixed $s\in [0,|\tilde v_{i,,\e},\tilde v_{j,\e}|]$ and any $s'$ close enough to $s$, we have 
\begin{enumerate}
\item
\begin{enumerate}
 \item $|\angle(\uparrow_{\zeta_{\e}(s)}^{\zeta_{\e}(s')},\dot\gamma^Y_{\zeta_{\e}(s),p}(0)-\pi/2|< \tau_p(r,\e_0)$, \quad
 \item $|\zeta_{\e}(s'),\zeta_{\e}(s)|/|s'-s|>1-\tau_p(r,\e_0)\,;$
\end{enumerate}
\item $|\angle(\uparrow_{\zeta_{\e}(s)}^{\zeta_{\e}(s')},\ \xi_{\zeta_{\e}(s)}^+)-\pi/2|<\tau_p(r,\e_0)$ whenever
$\zeta_{\e}(s)\in X_0$.
\end{enumerate}
\end{slem}
\begin{proof} 
Let $\mu_0>0$ be the $(m,\delta)$-strain radius of 
$A_r$ in the sense of \cite{Ya:conv}, and let $0<\sigma\ll \min\{ \sigma_0,\mu_0\}$ be a new parameter.
Put $\rho_{\e}:=g_r^Y\circ\tilde\gamma_{\e}$.
For any fixed $s\in [0,|\tilde v_{i,\e},\tilde v_{j,\e}|]$, set $\tilde y:=\tilde\gamma_{\e}(s)$ and 
$y:=g_r^Y(\tilde y)$ for  simplicity.

 First we show (1) and (2)  for 
$\rho_{\e}(s)$ in place of $\zeta_{\e}(s)$,
where (2) should be replaced in an obvious way.
Let $\tilde w:=\tilde\gamma_{\e}(s+\sigma)$ 
and $w:=\rho_\e(s+\sigma)$.
Note that $\angle(\dot{\tilde\gamma}_{\e}(s), \uparrow_{\tilde y}^{\tilde w})<\tau(\sigma)$.
By the second half of Lemma 4.6 of \cite{Ya:conv}), we have
\beq \label{eq:angle(rhoss)}
\angle(\uparrow_{\rho_{\e}(s)}^{\rho_{\e}(s')},\uparrow_y^w)<\tau_p(r,\e_0,\sigma)
\eeq
for any $s'$ close enough to $s$.
 It should be noted that we can ignore another 
parameter $\sigma_1$ appeared in 
\cite[Lemma 4.6]{Ya:conv} by taking
$\sigma_1\ll \sigma$, $r\ll \sigma_1$.

Take a point $q$ with $|p,q|=2r$ and $\wangle pyq >\pi-\tau_p(r)$.

\eqref{eq:|f-varphi|}  implies  
\[
|\wangle pyw -\pi/2|\le 
|\wangle pyw -\wangle o_p\tilde y\tilde w|+
|\wangle o_p\tilde y\tilde w|-\pi/2|
\le \tau_p(r,\sigma).
\]
Similarly we have 
\[
|\wangle qyw -\pi/2|\le \tau_p(r,\sigma).
\]
By Lemma \ref{lem:comparison} applied to the
$(1,\tau_p(r))$-strainer $(p,q)$ at $y$, this implies 
\begin{align*}
  |\angle pyw -\pi/2| &\le 
 |\angle pyw -\wangle pyw|+ |\wangle pyw -\pi/2|
    <\tau_p(r,\sigma),
 \end{align*}
and similarly
\[
 |\angle qyw -\pi/2| <\tau_p(r,\sigma).
\]
From \eqref{eq:angle(rhoss)}, we then have 
\beq \label{angle(prho=pi/2}
\begin{aligned}
&|\angle p\rho_{\e}(s) \rho_{\e}(s')-\pi/2|<\tau_p(r,\e_0,\sigma), \\
&|\angle q\rho_{\e}(s) \rho_{\e}(s')-\pi/2|<\tau_p(r,\e_0,\sigma).
\end{aligned}
\eeq
On the other hand, by Lemma \ref{eq:dpi-x},
we obtain
\begin{align} \label{eq:angle(dpi)}
   \angle(d\pi(\uparrow_y^p), \uparrow_z^p)<\tau_p(r),\quad
\angle(d\pi(\uparrow_y^q), \uparrow_z^q)<\tau_p(r).
\end{align}
Since $\angle pzq>\pi-\tau_p(r)$, \eqref{eq:angle(dpi)}  implies
\begin{align}\label{eq:angle(dpi,dpi)}
   \angle(d\pi(\uparrow_y^p), d\pi(\uparrow_y^q)) >\pi-\tau_p(r).
\end{align}
Let $\xi^+(y)\in\Sigma_y(Y)$ denote the
direction of the perpendicular 
through $y$.
Since $|y,z|<\tau_p(r)$ and 
since $|\angle(\uparrow_y^p,\xi^+(y))-\pi/2|<\tau_p(r)$ and
$|\angle(\uparrow_y^q,\xi^+(y))-\pi/2|<\tau_p(r)$, we have 
\beq\label{eq:dpi(yp)}
   |d\pi(\uparrow_y^p)|>1-\tau_p(r), \quad
    |d\pi(\uparrow_y^q)|>1-\tau_p(r).
\eeq
Moreover since $\pi$ is locally
$\phi(\tau_p(r))^{-1}$-Lipschitz at $y$,
 so is $d\pi$  at $y$.
It follows from \eqref{angle(prho=pi/2} and \eqref{eq:dpi(yp)} that 
\begin{align*}
&\angle(d\pi(\uparrow_y^p),
 d\pi\bigl(\uparrow_{\rho_{\e}(s)}^{ \rho_{\e}(s')}\bigr))<\pi/2+\tau_p(r,\e_0,\sigma),\quad \\
&\angle(d\pi(\uparrow_y^q),
 d\pi\bigl(\uparrow_{\rho_{\e}(s)}^{ \rho_{\e}(s')}\bigr))<\pi/2+\tau_p(r,\e_0,\sigma).
\end{align*}
\eqref{eq:angle(dpi,dpi)} then implies 
that the above inequalities are almost equal sign.
Thus together with 
\eqref{eq:angle(dpi)},  we have 
\beq \label{eq:angle(zp,dpi(rho))}
|\angle(\uparrow_z^p, d\pi\bigl(\uparrow_{\rho_{\e}(s)}^{ \rho_{\e}(s')}\bigr))-\pi/2|<
\tau_p(r,\e_0,\sigma),
\eeq
which implies  (1)(a).

\par\n
(2)\,
By \cite[Lemma 1.8]{Ya:conv}, choose $a:=\gamma_z^+(t)$ and $b\in {\rm int}X$
with $|a,z|>\sigma$ and $|b,z|>\sigma$  such that $\wangle azb>\pi-\tau(\e_0,\sigma)$.
Let 
$\tilde a:=f_r^Y(a)$ and $\tilde b:=f_r^Y(b)$.
Since $|\wangle \tilde a\tilde y\tilde w-\pi/2|<\tau(r,\sigma)$ and 
$|\wangle \tilde b\tilde y\tilde w-\pi/2|<\tau(r,\sigma)$, replacing $p,q$ by $a,b$,
in a way similar to \eqref{angle(prho=pi/2},
we have 
\beq \label{angle(abrho=pi/2}
\begin{aligned}
&|\angle a\rho_{\e}(s) \rho_{\e}(s')-\pi/2|<\tau_p(r,\e_0,\sigma), \\
& |\angle b\rho_{\e}(s) \rho_{\e}(s')-\pi/2|<\tau_p(r,\e_0,\sigma).
\end{aligned}
\eeq
Replacing $p,q$ by $a,b$, 
in a way simiar to \eqref{eq:angle(zp,dpi(rho))},
we obtain 
\beq \label{eq:angle(za,dpi(rho))}
|\angle(\uparrow_z^a, d\pi\bigl(\uparrow_{\rho_{\e}(s)}^{ \rho_{\e}(s')}\bigr))-\pi/2|
<\tau_p(r,\e_0,\sigma),
\eeq
yielding (2).

(1)(b) is an immediate consequence of 
\eqref{angle(abrho=pi/2}.
This completes the proof.
\end{proof}

\psmall
Given $1\gg\theta\gg \delta>0$, let $r_0:=r_0(p,\theta,\delta)$ be small enough.
Starting wih a maximal $\theta r_0$-discrete
set of $S^Y(p,r_0)$, we apply
 Proposition \ref{prop:1-Lip-contractibleAlex} to 
$x=p$ and $\e=r$ and $r=r_0$, 
we obtain a  domain $W_r$ containing $B^Y(p,2r)$ and a 
$(1,(1+\zeta)r)$-Lipschitz strong deformation retraction of 
$W_r$  to $p$. Taking $1/r$-rescaling into account, we have 
$(1,(1+\zeta))$-Lipschitz strong deformation retraction
$\Phi:W_r\times [0,(1+\zeta)]\to W_r$.
Let $(W_r)_X:=W_r\cap X$, and let
\[
H:=\pi\circ\Phi:
(W_r)_X\times [0,(1+\zeta)]\to (W_r)_X
\]
be a $(1+\tau_p(\theta,\zeta,r/r_0)r, 1+\zeta)$-Lipschitz strong 
deformation retraction of $(W_r)_X$ to $p$
as in \eqref{eq:retracionH}.

\pmed
By Lemma \ref{lem:gradH-6}, for each $s\in [0,1]$, 
we choose $t_s\in (0,2\e)$ in such a way that 
$$
\sigma_\e(s):=H(\zeta_{\e}(s),t_s)\in S^{X^{\rm ext}}(p,r).
$$


\begin{slem}
\beq \label{eq:L(plroj)}
      | L(\sigma_\e)/L(\zeta_{\e})-1|<\tau_p(r,\e_0).
\eeq
\end{slem}
\begin{proof}
We first show that 
\beq\label{eq:ts/ts}
\text{$|t_{s'}-t_s|/|s'-s|<\tau_p(r,\e_0)$ for any $s,s'\in [0,1]$,}
\eeq
if $\theta, \zeta, r/r_0$ are small enough.
By Corollary \ref{cor:dp(dotH)=-1},
we have 
 \[
 ||d_p^Y(H(\zeta_{\e}(s'),t_{s'}))-d_p^Y(H(\zeta_{\e}(s'),t_{s}))|/|t_{s'}-t_s| -1|\le \tau_p(\theta,\zeta,r/r_0)
\]
Moreover, from the law of cosines with
Sublemma \ref{slem:<zeta,dp>=0}, we get
\begin{align*}
 |d_p^Y(H(\zeta_{\e}(s')&,t_{s}))-d_p^Y(H(\zeta_{\e}(s),t_{s}))|\\
&\le (\tau_p(r,\e_0)+|s-s'|)|s-s'|\le \tau_p(r,\e_0)|s-s'|
\end{align*}
for small enough $|s-s'|$.
Thus combining these inequalities, we have 
\begin{align*}
  0&=|d_p^Y(H(\zeta_{\e}(s'),t_{s'}))-d_p^Y(H(\zeta_{\e}(s'),t_{s}))\\
&\hspace{0.5cm}
+d_p^Y(H(\zeta_{\e}(s'),t_{s}))-d_p^Y(H(\zeta_{\e}(s),t_{s}))| \\
&\ge |d_p^Y(H(\zeta_{\e}(s'),t_{s'}))-d_p^Y(H(\zeta_{\e}(s'),t_{s}))|  \\
&\hspace{0.5cm}
  -|d_p^Y(H(\zeta_{\e}(s'),t_{s}))-d_p^Y(H(\zeta_{\e}(s),t_{s}))| \\
&\ge (1-\tau_p(\theta,\zeta,r/r_0))|t_{s'}-t_s|-\tau_p(r,\e_0)|s'-s|,
\end{align*}
from which \eqref{eq:ts/ts} follows.
\pmed
Now we obtain from \eqref{eq:ts/ts} that
\begin{align*}
 |\sigma_\e&(s),\sigma_\e(s')|_Y  \\
 & \le |H(\zeta_{\e}(s),t_s), H(\zeta_{\e}(s'),t_s)|_Y+
    |H(\zeta_{\e}(s'),t_s), H(\zeta_{\e}(s'),t_{s'})|_Y\\
& \le(1+\tau_p(r,\e_0))|\zeta_{\e}(s),\zeta_{\e}(s')| +
      (1+\tau_p(\theta,\zeta,r/r_0))\tau_p(r,\e_0)|s-s'|  \\
& \le (1+\tau_p(r,\e_0))|s-s'|,
\end{align*}
for  small enough $\theta, \zeta, r/r_0$.
In a similar way, we have the opposite inequality to conclude
\[
| |\sigma_\e(s),\sigma_\e(s')|_Y/|s-s'|-1|
<\tau_p(r,\e_0).
\]
Since
\[
| |\zeta_{\e}(s), \zeta_{\e}(s')|_Y/|s-s'|-1|<\tau_p(r,\e_0),
\]
we get the required estimate \eqref{eq:L(plroj)}.
\end{proof}

\pmed

 From construction, the distances between $v_i^r, v_j^r$ and the 
corresponding endpoints of $\zeta_{\e}$ are less than
$c\e$ for a uniform constant $c$. 
Therefore one can verify that 
the intrinsic distances in
$S^{X^{\rm ext}}(p,r)/r$ between $v_i^r, v_j^r$ 
and the corresponding endpoints of $\sigma_{\e}$ are less than $c'\e+\tau_p(r)$ as in the proof of Sublemma \ref{slem:delta-dense}.

Using  \eqref{eq:L(plroj)}, we have 
\begin{align*}
  |v_i^r,v_j^r|_{S^{X^{\rm ext}}(p,r)/r}&\le L(\sigma_\e) +c'\e+\tau_p(r)
    \le (1+\tau_p(r,\e_0))L(g^Y_r\circ\tilde\gamma_{\e})+c\e \\
  &\le (1+\tau_p(r,\e_0))L(\tilde\gamma_{\e})+c'\e+\tau_p(r)  \\
& \le  (1+\tau_p(r,\e_0))|\tilde v_i, \tilde v_j|_{\Sigma_p(X)}+c'\e+\tau_p(r),
\end{align*}
and hence 
\[\frac{
|v_i^r,v_j^r|_{S^{X^{\rm ext}}(p,r)/r}}{|\tilde v_i, \tilde v_j|_{\Sigma_p(X)}}\le 1+\tau_p(r,\e_0) +(c'\e+\tau_p(r))/\delta.
\]
Making use of the map $f_r$ and the canonical gradient flow 
for $d_{o_p}$ on $T_p(Y)\setminus \{ o_p\}$ in place of 
$g_r$ and $\Phi$, we have the reverse inequality in a similar way. We omit the detail.
Thus we obtain \eqref{eq:ratio(vivj)}.

Next suppose that $p\in \pa Y$. Taking  the double $D(Y)$, and considering the convergence 
\[
B^{D(Y)}(p,2r)/r  \xrightarrow{GH} T^2_p(D(Y))
 \quad \text{as $r\to 0$}
\]
we get \eqref{eq:ratio(vivj)} in a similar way.
This completes the proof of Proposition \ref{prop:int-metric-sphereGH}.
\end{proof}

\pmed

For the proof of Theorem \ref{thm:metric-sphere}, 
in view of Proposition \ref{prop:int-metric-sphereGH}, it suffices to show that 
$S^{X^{\rm ext}}(p,r)/r$ is locally $C$-doubling and  locally $C$-Lipschitz contractible for large enough uniform constant $C$ and small enough $0<r\le r_p$.

First we consider the latter.
Let $Y$ be an Alexandrov space extending $X$.
Theorem \ref{thm:X1-f} together with 
\eqref{eq:volume-Sigma} shows 
$ \ca H^{{\rm dim}Y-1}(\Sigma_p(Y)\ge c_1(v)>0$, and hence 
\beq\label{eq:Vol(B(2r))}
              \ca H^{{\rm dim} Y}(B^Y(p,2r)/r)\ge c_2(v)>0
\eeq
for any small enough $r\le r_p$,
where $c_i(v)$\,$(i=1,2)$ are constants 
depending only on $\dim Y$ and the lower volume bound
$v$ in \eqref{eq:volume-Sigma}.
Therefore we can apply Theorem \ref{thm:conv-cover}
to each point of $B^Y(p,r)/r$.

Let $C=C(n,\kappa,\nu,\lambda,d,m,c_2(v))>1$ be the constant provided in  
Theorem \ref{thm:conv-cover}.
For any $x\in S^{X^{\rm ext}}(p,r)/r$ and
$0<\e\ll C^{-1}$, 
choose a $(C,C\e)$-Lipschitz deformation retraction
$F:W\times [0,C\e]\to W$ of $W$ to a point with $B^Y(x,\e)\subset W$
provided in Theorem \ref{thm:conv-cover}.
If $\e$ is small enough, we have 
\[
W\subset  A^Y(p, r-C\e,r+C\e)/r.
\]
Choose $q\in X$ with $|p,q|_Y=2r$ such that 
\beq \label{eq:wangle(pxq)8}
\wangle^Y pyq>\pi-\tau_p(r,\e)
\eeq
for all $y\in W$.
Let $\Psi$ denote the $d_p^Y$-gradient flow
on $A^Y(p, r-C\e,r+C\e)/r$.  
\eqref{eq:gradient} and \eqref{eq:wangle(pxq)8} yield
\[
\angle(\dot\Psi(y,t),\uparrow_{\Psi(y,t)}^q)<\tau_p(r,\e), \quad
|\dot\Psi(y,t)|>1-\tau_p(r,\e).
\]

For simplicity, we put $U:=W\cap X$.
In a way similar to \eqref{eq:|z,X|<small},
we have 
\[
         |\Psi(y,t), X|\le \tau_p(r,\e) t
\]
for all $(y,t)$ with $y\in U$ and $\Psi(y,t)\in
A^Y(p, r-C\e,r+C\e)/r$.
Let us consider 
\[
      \psi:=\pi\circ\Psi:U\times\R_+\to X
\]
By Lemmas \ref{lem:derivative-pi} and \eqref{lem:derivative-pi-0},
$\psi(y,t)$ has the right derivative with respect to $t$.

\begin{lem}\label{lem:dp-increasing}
$d^Y_p$ is increasing along the homotopy
$t\mapsto \psi(y,t)$ as long as $\psi(y,t)\in
U$.
\end{lem}
\begin{proof}
In a way similar to Lemma \ref{eq:dpi-x}, 
we obtain 
\[
\angle( d\pi(\uparrow_{\Psi(y,t)}^q),
\uparrow_{\psi(y,t)}^q) < \tau_p(r,\e),
\]
from which we have
\begin{align*}
\angle(\dot\psi(y,t),\uparrow_{\psi(y,t)}^q)
&\le \angle(\dot\psi(y,t), d\pi(\uparrow_{\Psi(y,t)}^q)) +
\angle( d\pi(\uparrow_{\Psi(y,t)}^q),
\uparrow_{\psi(y,t)}^q) \\
&\le \tau_p(r,\e).
\end{align*}
\eqref{eq:wangle(pxq)8} then yields the conclusion.
\end{proof}

Now we define a map
\beq\label{eq:map-alpha}
\alpha_x:U\to S^{X^{\rm ext}}(p,r)/r
\eeq
as follows. Set
\[
    U_{-}:=U\cap \{ d_p^Y\le r\},\quad  U_{+}:=U\cap \{ d_p^Y\ge r\}, \quad
    \Pi:=U\cap S^{X^{\rm ext}}(p,r).
\]
For each $y\in U_{-}$, there is a unique $t_y\ge 0$ such that 
$\psi(y,t_y)\in S^{X^{\rm ext}}(p,r)/r$.
Define $\alpha_-:U^-\to S(p,r)/r$ by
\[
     \alpha_-(y)=\psi(y, t_y).
\]
In a similar way, using $d_q^Y$-gradient flow,
we construct a map $\alpha_+:U^+\to S^{X^{\rm ext}}(p,r)/r$.
Combining $\varphi_+$ and $\varphi_-$, we obtain 
the  map $\alpha_x$ in \eqref{eq:map-alpha}.

\begin{slem}\label{slem:alpha-lip}
$\alpha_x$ is $C_0$-Lipschitz for some uniform constant
$C_0$.
\end{slem}
\begin{proof}
First we show that $t_y$ is $(1+\tau_p(r,\e))$-Lipschitz. 
Since $d_p^Y$ is $\lambda$-concave for some 
uniform constant $\lambda>0$, by Proposition \ref{prop:lip},
$\varphi$ is $e^{3\lambda/2}$-Lipschitz
on $A^Y(p, r/2, 2r)/r$.
Thus we have  
\begin{align*}
  0&=|d_p^Y(\varphi(y',t_{y'}))-d_p^Y(\varphi(y',t_{y})) 
+d_p^Y(\varphi(y',t_{y}))-d_p^Y(\varphi(y,t_{y}))| \\
&\ge |d_p^Y(\varphi(y',t_{y'}))-d_p^Y(\varphi(y',t_{y})) | 
  -|d_p^Y(\varphi(y',t_{y}))-d_p^Y(\varphi(y,t_{y}))| \\
&\ge (1-\tau_p(r))|t_{y'}-t_y|-e^{C\e}|y,y'|,
\end{align*}
and hence $|t_{y'}-t_y|\le (1+\tau_p(r,\e))|y,y'|$.
The conclusion follows immediately.
\end{proof}

By Theorem \ref{thm:conv-cover},
$H:=\pi\circ F$ provides with a $(C,C\e)$-Lipschitz
deformation retraction of $U=W\cap X$ to a point.
We replace $H$ to another one suitable for
the map $\alpha_x$.

\begin{lem}\label{lem:varphiget/9}
For given $1>a>0$, 
there exist $C'>1$ satisfying the following:
For any 
$p\in X$ with \eqref{eq:volume-Sigma},
choose $r_p>0$ satisfying \eqref{eq:Vol(B(2r))}.
 Then for any small enough $0<r\ll r_p$,  any $x\in S^{X^{\rm ext}}(p,r)/r$ and $0<\e<(C')^{-1}$,
there is a 
$(C', C'\e)$-Lipschitz deformation retraction
$H:U\times [0,C'\e]\to U$ to a point with
$B^{X^{\rm ext}/r}(x,\e)\subset U$ 
such that 
\beq \label{eq:apart-varphi}
  |\alpha_x\circ H(y,t), \pa U|_{Y/r}\ge (1-a)(t+|y,\pa U|_{Y/r}),
\eeq
for all $(y,t)\in U\times [0,C'\e]$ with  
$H(y,t)\in U\setminus B^{{X^{\rm ext}}/r}(x,\e/100)$, where 
$\alpha_x$ is defined as in \eqref{eq:map-alpha} for 
$x$.
 \end{lem}

\begin{proof}
Now suppose  the assertion does not hold.
Then for some $1>b>a$, 
we have sequences $p_i\in X_i\subset Y_i$ with  \eqref{eq:volume-Sigma}, $r_i$ with $r_i/r_{p_i}\to 0$, 
$0<\e_i<C_i^{-1}$ and $x_i\in S^{X_i^{\rm ext}}(p_i,r_i)/r_i$ 
such that 
for any $(C_i,C_i\e_i)$-Lipschitz deformation retractions 
$H_i:U_i\times [0,C_i\e_i]\to U_i$ to a point with
$B^{{X^{\rm ext}}/r_i}(x_i,\e_i)\subset U_i$ constructed in 
Theorem \ref{thm:conv-cover}(4),
\eqref{eq:apart-varphi} does not hold.
Namely we have
\beq \label{eq:|alpha,paU|<1/9}
  |\alpha_i\circ H_i(y_i,t_i), \pa U_i|_{Y_i/r_i} \le (1-a)(t_i+|y_i,\pa U_i|_{{Y_i}/r_i})
\eeq
for some $y_i\in U_i$ with $H_i(y_i,t_i)\in U_i\setminus B^{Y_i/r_i}(x_i, \e_i/100)$.
Here we assume that the parameters $\theta=\theta_i$, $\zeta=\zeta_i$ and 
$R=R_i$ defining the convex domain $W_i$ of $Y_i$
with $U_i=W_i\cap X_i$ satisfy $\theta_i\to 0$, $\zeta_i\to 0$ and 
$R_i\to\infty$ as $i\to \infty$ (see Step 1 in the proof
of Theorem \ref{thm:conv-cover}).
Taking smaller $r_i\ll r_{p_i}$, we may also assume
that there is $q_i$ with $|p_i,q_i|_{Y_i/r_i}=2$
such that $\wangle p_ix_iq_i>\pi-o_i$.
Let $z_i:=H_i(y_i,t_i)$ and $s_i:=|z_i, \pa U_i|_{Y_i/r_i}$,  and 
consider the rescaling limit
\beq \label{eq:conv(s-1Y)}
    (s_i^{-1}(Y_i/r_i), z_i) \to (Y_\infty, z_\infty).
\eeq
By the splitting theorem,  $Y_\infty$ is isometric to a product
$\R\times Z_\infty$,
where $S^Y(p_i,r_i)$ converges to $\{ 0\}\times Z_\infty$.
Let 
$U_\infty$, $H_\infty$, $\alpha_\infty$ and
$y_\infty$ 
be the respective limits of 
$U_i$, $H_i$, $\alpha_i$ and $y_i$ 
under the above convergence. 

We show that 
\beq \label{eq:zinfty}
   z_\infty\in \{ 0\}\times Z_\infty.
\eeq
Since $y_\infty\in \{ 0\}\times Z_\infty$, 
we assume $y_\infty\neq z_\infty$.
Let $\Phi_i$ be the gradient flow such that 
$H_i=\pi_i\circ \Phi_i$ outside $B^{Y_i/r_i}(x_i,\e_i/100)$. 
Let $w_i:=\Phi_i(y_i,t_i)$.
By \eqref{eq:|z,X|<small},
we get 
\[
       |w_i,z_i|<o_i t_i.
\]
Lemma \ref{lem:almost-grad-phi} implies
$s_i\ge (1-o_i)+|y_i,\pa U_i|$.
Thus in the limit under \eqref{eq:conv(s-1Y)}, we have
$w_\infty=z_\infty$.
Take $x_i'\in\gamma_{x_i,y_i}^{Y_i}$ such that 
$|x_i',y_i|=2t_i$.
Sublemmas \ref{slem:grad-f-Phi} and  \ref{slem:Phi-gamma} imply 
\[
\angle    x_i'y_iw_i <o_i
\]
This shows that $z_\infty$ is on the geodesic
joining $x_\infty'$ and $y_\infty$.
On the other hand, $d_{p_i}$ and $d_{q_i}$ are 
$o_i$-concave near $S^{Y_i}(p_i,r_i)/r_i$ under
the $1/s_i$-rescaling.
 This yields $x_\infty'\in \{ 0\}\times Z_\infty$,
and hence $z_\infty\in \{ 0\}\times Z_\infty$
showing \eqref{eq:zinfty}.
\psmall

We may assume that 
 $t_i/s_i$ converges to $t\in  [0,1]$.
By Lemma \ref{lem:almost-grad-phi}, 
we have in the limit
\beq \label{eq:|zZ|=1}
\begin{aligned}
1=|z_\infty,\pa U_\infty|& \ge  t+|y_\infty,\pa U_\infty| \\
           & > (1-a)(t+|y_\infty,\pa U_\infty|).
\end{aligned}
\eeq
Note that  
$\alpha_\infty$ must be the canonical projection
to   $\{ 0\}\times Z_\infty$ along the $\R$-factors, and therefore from \eqref{eq:zinfty}, we have
$\alpha_\infty(z_\infty)=z_\infty$.
However, \eqref{eq:|alpha,paU|<1/9} and \eqref{eq:|zZ|=1} 
yield a contradiction:
\beq \label{eq:alpha,paU|<9}
1= |\alpha_\infty (z_\infty),  \pa U_\infty| 
\le (1-a)(t+|y_\infty, \pa U_\infty|) < 1.
\eeq
This completes the proof of Lemma \ref{lem:varphiget/9}.
\end{proof}

 The idea of the proof of Theorem \ref{thm:metric-sphere} is indicated in 
Fig. \ref{fig.idea}.
\pmed
\begin{figure}
\begin{center}
\begin{tikzpicture}
[scale = 0.33]

 \draw(0,0) circle[radius=2.6];
\fill[thick]  (0.14,0) circle (3pt);
\draw [thick] (-1,3) to [out=-50, in=50] (-1,-3);
\fill (-3.5,0) circle (0pt) node [above] {\small{$U$}};
\fill (-2,-3) circle (0pt) node [below] {\footnotesize{$S^{X^{\rm ext}}(p,r)/r$}};

\draw [->,thick] (-2,1)--(-0.2,1);
\draw [->,thick] (-2,0)--(-0.2,0);
\draw [->,thick] (-2,-1)--(-0.2,-1);
\fill (-1,1) circle (0pt) node [above] {\footnotesize{$\alpha$}};

\draw [->,thick] (2,1)--(0.3,1);
\draw [->,thick] (2,0)--(0.3,0);
\draw [->,thick] (2,-1)--(0.3,-1);

\draw(10,0) circle[radius=2.6];
\fill (13.5,0) circle (0pt) node [above] {\small{$U$}};

\draw [->,thick] (10,2)--(10,0.5);
\fill (10.2,1.8) circle (0pt) node [left] {\tiny{$H$}};
\draw [->,thick] (10,-2)--(10,-0.5);
\draw [->,thick] (8,0)--(9.5,0);
\draw [->,thick] (12,0)--(10.5,0);
\draw [->,thick] (8.6,1.3)--(9.7, 0.28);
\draw [->,thick] (8.6,-1.3)--(9.7, -0.28);
\draw [->,thick] (11.4,1.3)--(10.3, 0.28);
\draw [->,thick] (11.4,-1.3)--(10.3, -0.28);

\fill[thick]  (10,0) circle (2pt);

\fill[shift={(0,-1)}] (6,-3.5) circle (0pt) node [left] 
{\large$\Downarrow$};

\draw[shift={(0,-2)}](5,-7) circle[radius=2.6];
\fill[shift={(0.2,-2)}](5,-7) circle (0pt) node[left]
{\tiny{$\alpha\circ H$}};
\draw[shift={(0,-2)}] [ thick] (4.5,-4.4) to [out=-65, in=65] (4.5,-9.6);

\draw[shift={(0,-2)}] [->, thick] (4.5,-4.4) to [out=-60, in=100] (5.2,-6.5);
\draw[shift={(0,-2)}] [->, thick] (4.5,-9.6) to [out=60, in=-100] (5.2, -7.5);
\fill[shift={(0,-2)}][thick]  (5.13, -7) circle (3.5pt);
\fill[shift={(0,-2)}] (4,-9.7) circle (0pt) node [below] {	\footnotesize{$\Pi$}};
\end{tikzpicture}
\end{center}
\caption{Idea of the proof of Theorem \ref{thm:metric-sphere}}\label{fig.idea}
\end{figure}

\begin{proof}[Proof of Theorem \ref{thm:metric-sphere}]
Let $C'$ be as in Lemma \ref{lem:varphiget/9},
and use the same symbol $C:=C_0(C')^2$. We show that 
$S^{X^{\rm ext}}(p,r)/r$ is locally $C$-Lipschitz 
contractible. 
For any $x\in S^{X^{\rm ext}}(p,r)/r$ and 
any $0<\e<(C')^{-1}$,  
let $H:U\times [0,C'\e]\to U$ be the 
$(C', C'\e)$-Lipschitz deformation retraction
with $B(x,\e)\subset U$ as in Lemma \ref{lem:varphiget/9}.
Set $\Pi:=U\cap S^{X^{\rm ext}}(p,r)/r$.
Let $\iota:\Pi\to U$ be the inclusion, and 
consider the $(C_0C',C'\e)$-Lipschitz homotopy
$$
\Psi:=\alpha_x\circ H\circ(\iota, {\rm id}):\Pi\times [0,C'\e]\to S^{X^{\rm ext}}(p,r)/r.
$$
Lemma \ref{lem:varphiget/9} confirms that
$\Psi_t(\Pi)\subset \Pi$ for all $t\in[0,C\e]$.
Thus
$\Psi:\Pi\times [0,C'\e]\to\Pi$
provides $(C_0C',C'\e)$-Lipschitz deformation retraction to a point.
This completes the proof of the
local $C$-Lipschitz 
contractibility of $S^{X^{\rm ext}}(p,r)/r$.

Using the $e^{3\lambda/2}$-Lipschitz retraction $\varphi$,  we also verify that 
the intrinsic metric (length metric) and the extrinsic  metrc of $S^{X^{\rm ext}}(p,r)/r$
are $C(\lambda)$-biLipschitz homeomorphic to each other.
Since ${X^{\rm ext}}/r$ is locally $C_1$-doubling
with uniform $C_1$, so is 
$S^{X^{\rm ext}}(p,r)/r$. 
This completes the proof of Theorem \ref{thm:metric-sphere}.       
\end{proof}  

\pmed\n
{\bf Lipschitz homotopy stability of small punctured balls}.
\pmed\n

\begin{proof}[Proof of Theprem \ref{thm:punctured-ball}]
Let $Y$ be an Alexandrov space extending $X$.
As before,  we have 
$$
              \ca H^{{\rm dim} Y}(B^Y(p,2r)/r)\ge c_0(v)>0
$$
for any small enough $r$.
Therefore we can apply the method developed in 
\cite{FMY} together with Theorem \ref{thm:conv-cover}
to $B^Y(p,2r)/r$.
We proceed along the same line as in
\cite{FMY}. A main difference is that
we have to deal with infinite open coverings with different rescalings.
Therefore we will give only an outline emphasizing the main differences.

For each integer $i\ge 0$, let us consider the annuli 
\begin{align*}
    & \tilde A_i:=A^{T_pX}(o_p, r/2^{i+1}, r/2^{i}), \quad
        A_i:=A^{X^{\rm ext}}(p, r/2^{i+1}, r/2^{i}), \\
     & \tilde A_*:=A^{T_pX}(o_p, 1/2, 1).
\end{align*}
For a small enough $\e>0$, we take a maximal $\e/2$-discrete set
$\{ \tilde x_j\}_{j=1}^J$ in $\tilde A_*$, and set $\tilde V_j:=B(\tilde x_j,2\e)$.
Consider the covering
$\tilde{\ca V}_*=\{\tilde V_j\}$ of $\tilde A_*$.
In what follows, we use the abbraviation 
\[
     a U:=\{ au\,|\,  u\in U\} \subset T_pX
\]
for $U\subset T_pX$ and $a>0$. 
We also set $\e_i:=2^{-i}\e$
For each $i\ge 1$, set $\tilde V_j^i:=(r/2^{i})^{-1} \tilde V_j$
and 
\[ 
        \tilde{\ca V}_i:=\{ \tilde V_j^i\}_j,
\]
which is a covering of $\tilde A_i$.
Let $\tilde{\ca V}$ be the collection of all elements in $\tilde{\ca V}_i$ for all
$i\ge 0$. .

Note that $d_{GH}(r^{-1}B^{X^{\rm ext}}(p,r),T_p^1X)<\tau_p(r)$, where we may assume
that $\tau_p(r)$ is increasing in $r$.
Therefore by \cite{FMY}, there is small enough $r$ satisfying 
the following: For any $i\ge 0$, consider the convergence 
\[
  (*)_i\quad     (r/2^i)^{-1}B^{X^{\rm ext}}(p, r/2^i)\to T_p^1.
\]
We choose metric balls $V_j^i$ corresponding to $\tilde V_j$
under $(*)_i$.
Note that  $\ca V_i:=\{ V_j^i\}_j$  is a covering of $A_i$.

Let $\ca V$ be the collection of all elements in $\ca V_i$ for all
$i\ge 0$, and let $\ca N$ be the nerve of  $\ca V$.
We define the \textit{$\{ \epsilon_i r\}$-geometric realization} 
$|\ca N|$ of $\ca N$  as follows.
Let $\{ v_j \}_{j=1}^J$ be the set of all vertices
of $\ca N$.
For each  $v_j$, find $i$ such that 
$v_j$ corresponds  to an element 
of the covering $\ca V_i$. We identify $v_j$ with the $j$-th standard vector of norm $\epsilon_i r$ in Euclidean space, i.e.,
\[
v_i=(0,\dots,\overset{j}{\epsilon_i r},\dots)\in\mathbb R^\infty.
\]
The set $|\ca N|$ is defined as the union of  all the convex hulls of 
$\{v_i\}_i$ corresponding the simplices of $\ca N$.
We consider the intrinsic metric on $|\ca N|$ induced by the Euclidean metric.

Define a $C$-Lipschitz map $\Theta:B^{X^{\rm ext}}(p,r)\setminus \{ p\}\to |\ca N|$
by using a partition of unity associated with $\ca V$.
For each simplex $\sigma$ of $\ca N$, we choose a $(C,\e_ir)$-Lipschitz
contractible domain  $U_\sigma$ in a systematic way as in \cite{FMY}.
More precisely, we choose $U_\sigma$ by reverse induction on  
the dimension of $\sigma$ as follows. For any $\sigma\in\ca N$, 
let $B_\sigma$ denote the corresponding nonempty intersection of
elements of $\ca V$.
For each vertex $v$ of $\ca N$, define $i=i_v$ 
by the condition that $v$ 
corresponds to an element of $\ca V_i$.
Let  $i_\sigma$ be the maximum of $i_v$ for all
the vertices $v$ of $\sigma$.
For $\sigma$ of maximal dimension, we choose a 
$(C,r\e_{i_\sigma})$-Lipschitz
contractible domain  $U_\sigma$ containing $B_\sigma$.
For nonmaximal simplex $\tau$, we choose a
$(C,r\e_{i_\tau})$-Lipschitz
contractible domain  $U_\tau$ containing 
$$
       B_\tau\cup(\cup_{\tau\subset\sigma} U_\sigma).
$$

Using $\{ U_\sigma\}_{\sigma\in\ca N}$ based on the notion of domination by polyhedra (\cite{Peter}),
we construct a $C$-Lipschitz map
$\zeta:|\ca N|\to B(p,(1+C\e)r)\setminus\{ p\}$ such that 
$\zeta\circ\Theta$ is $(C,r\e)$-Lipschitz homotopic to the inclusion
$\iota:B^{X^{\rm ext}}_*(p,r)\to 
B^{X^{\rm ext}}_*(p,(1+C\e)r)$ (see \cite{FMY}).
Note that this is possible since the diamters of $\sigma$ and $U_\sigma$ are comparable
for all $\sigma\in \ca N$.

We can also construct a $C$-Lipschitz map $f:|\ca N|\to T_p^{(1+C\e)r}X_*$ as in \cite{FMY}.
Set 
$$
F:=\tilde\chi\circ f\circ\Theta:B^X_*(p,r)\to
T_p^rX_*,
$$
where $\tilde\chi:T_p^{2r}X \to T_p^rX$ denotes the 
canonical retraction along the geodesics to $o_p$.

Let $\tilde{\ca N}$ be the nerve of  $\tilde{\ca V}$.
Let $\chi:B^{X^{\rm ext}}(p,2r)\to B^{X^{\rm ext}}(p,r)$
be the $C$-Lipschitz deformation retraction 
defined via the flow $H$ in \eqref{eq:retracionH}.
Replacing $\ca N$ and $\tilde{\chi}$  by $\tilde{\ca N}$
and $\chi$,
we define a $C$-Lipschitz map  $\tilde F:T_p^rX_*\to
B_*(p,r)$ in a similar way.
As in \cite{FMY}, we can show that $F$ and $\tilde F$  give $(C,Cr\e)$-Lipschitz homotopy equivalences.
This completes the proof.
\end{proof}

\pmed\n
{\bf Diffeomorphism stability.}\,
We discuss a stability  in  $\ca M(n,\kappa,\nu,\lambda,d)$.
For $v>0$, let $\ca M(n,\kappa,\nu,\lambda,d,v)$ be
the non-collapsing moduli space consisting of all
$M\in \ca M(n,\kappa,\nu,\lambda,d)$ with
${\rm vol}(M)\ge v$.
Let us consider an example of a flat disk with an inside open disk removed.
If the inside disk approaches to the outside boundary, then we have a singular surface in the limit.
This shows that in contrast to the closed manifold case, the topological stability 
does not holds in the closure $\overline{\ca M(n,\kappa,\nu,\lambda,d,v)}$.
It was proved in \cite[Theorem 1.5]{wong0}  that if $M_i\in \ca M(n,\kappa,\nu,\lambda,d,v)$ converges to 
$N$, then $M_i$ has the same Lipschitz homotopy type as $N$ for large $i$
 (see also Proposition \ref{prop:retractionYX}).

In this context, we have the following diffeomorphism stability {\it  within} 
 $\ca M(n,\kappa,\nu,\lambda,d)$.

\begin{prop} \label{prop:w-stability}
For a given 
$M\in \ca M(n,\kappa,\nu,\lambda,d)$, there exists a positive number $\e=\e_M>0$ such that
if $M'\in \ca M(n,\kappa,\nu,\lambda,d)$ satisfies $d_{GH}(M,M')<\e$, then it is 
$\tau_M(\e)$-almost isometric and diffeomorphic to $M$, 
where $\lim_{\e\to 0}\tau_M(\e)=0$.
\end{prop}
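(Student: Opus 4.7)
The plan is to argue by contradiction using the Alexandrov extensions, Perelman's respectful stability theorem, and a smoothing procedure anchored in the fixed smooth structure of $M$.

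First I would assume the conclusion fails and extract a sequence $M_i \in \ca M(n,\kappa,\nu,\lambda,d)$ with $\e_i := d_{GH}(M, M_i) \to 0$ such that for each $i$ the conclusion is violated for any reasonable vanishing function $\tau_M$. Fixing gluing parameters $(t_0, \ve_0, \phi)$ uniformly in $i$, I would form the extensions $\tilde M$ and $\tilde M_i$ from Proposition \ref{prop:extendAS}; these are $n$-dimensional Alexandrov spaces with a common lower curvature bound $\tilde\kappa$ and uniformly bounded diameter. Since the warped-product gluing with fixed parameters is GH-continuous, I would verify $\tilde M_i \to \tilde M$ in GH by extending a given $\e_i$-approximation $M_i \to M$ naturally to $C_{M_i} \to C_M$ via the induced convergence $\pa M_i \to \pa M$. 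In particular $\dim \tilde M_i = n$ for all large $i$.

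Next I would apply Perelman's respectful stability theorem (Theorem \ref{thm:stability-respectful}) with the distinguished closed subsets $B := \pa M \times \{t_0\} \subset \tilde M$ and $B_i := \pa M_i \times \{t_0\} \subset \tilde M_i$, namely the outer boundaries of the glued cylinders. This yields, for large $i$, a homeomorphism $\Phi_i : \tilde M \to \tilde M_i$ that is locally $\tau(\e_i)$-almost isometric and preserves the distance to these outer boundaries. Since $M$ is recovered as $\{x \in \tilde M : d(x, B) \ge t_0\}$, and analogously for $M_i \subset \tilde M_i$, $\Phi_i$ restricts to a homeomorphism $M \to M_i$. Combining with Lemma \ref{lem:deviation} and Lemma \ref{lem:XbiLipX}, which compare interior and exterior metrics infinitesimally and globally, I would conclude that $\Phi_i|_M$ is $\tau_M(\e_i)$-almost isometric with respect to the intrinsic Riemannian distances, contradicting the failure of the almost-isometry clause.

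It remains to upgrade the almost-isometric homeomorphism to a diffeomorphism. Here I would exploit that $M$ is a \emph{fixed} smooth compact manifold with smooth boundary, so it admits a finite atlas of geodesic normal coordinates in the interior and boundary-normal coordinates near $\pa M$, all on a fixed scale $r_M>0$. Covering $M$ by finitely many such charts and using a smooth partition of unity, I would regularize $\Phi_i|_M$ by local mollification in these charts, producing a smooth map $\Psi_i: M \to M_i$ that is $C^0$-close to $\Phi_i|_M$ and whose chart differentials are close to linear isometries. Hence $\Psi_i$ is a smooth immersion, and by properness and connectedness a diffeomorphism onto $M_i$. The main obstacle is this last smoothing step near $\pa M$: the extension $\tilde M$ is only $C^0$ along $\pa M \times \{0\}$, so the strainer maps underlying $\Phi_i$ are not smooth there. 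To handle this I would use that $\pa M_i \to \pa M$ intrinsically (Proposition \ref{prop:cpn-diam} giving a uniform intrinsic diameter bound on each boundary component), so that boundary-normal charts on $M$ pull back via $\Phi_i$ to approximate boundary-normal charts on $M_i$, within which the partition-of-unity smoothing can be carried out safely.
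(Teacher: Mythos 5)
Your proposal diverges from the paper's argument at two points, one of which is a genuine gap.

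First, a structural difference. The paper does not try to smooth a Perelman homeomorphism. Instead, after passing to a subsequence so that $\tilde M_i \to Y$, it uses the structure theory of the limit ($X^{\rm int}$ isometric to $M$ via Proposition~\ref{prop:intrinsic}, whence $X_0\cong\pa M$, $\ca S=\emptyset$, $X_0=X_0^1$, $\eta_0$ an isometry, and finally $Y\cong\tilde M$), and then applies the \emph{equivariant fibration theorem} of \cite{Ya:four} to the doubles $D(\tilde M_i)\to D(\tilde M)$ to obtain a diffeomorphism directly, respecting the involutions. No homeomorphism-to-diffeomorphism upgrade is ever needed. In particular, the stability theorem~\ref{thm:stability} in the paper's proof is only used to note $\tilde M_i\cong Y$ topologically; the diffeomorphism comes from a different tool.

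Second, and more seriously, your smoothing step is a genuine gap. Mollifying a topological homeomorphism does not produce a diffeomorphism: mollification controls $C^0$ distance, not derivatives, and an averaged differential of a merely bi-Lipschitz map can easily degenerate. Upgrading a Gromov--Hausdorff approximation (or a Perelman homeomorphism, which is not known to be bi-Lipschitz with distortion $\to 1$) to a diffeomorphism is precisely the content of the Cheeger--Gromov/Shikata-type theorems, and those are proved by \emph{constructing} the diffeomorphism via center-of-mass in uniformly controlled charts, not by regularizing a given continuous map. Near $\pa M$ your proposal acknowledges the difficulty but does not resolve it; this is exactly where the paper instead passes to doubles to remove the boundary. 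Also note that Theorem~\ref{thm:stability-respectful} only gives a homeomorphism commuting with certain distance maps; it does not assert almost-isometry, so the input to your mollification is weaker than you state.

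Finally, a smaller issue: you assert $\tilde M_i\to\tilde M$ by asserting $\pa M_i\to\pa M$ under a given $\e_i$-approximation $M_i\to M$. That is not automatic. In general the limit of $\pa M_i$ is $N_0$, which can contain double points, cusps, or a nontrivial gluing map $\eta_0$, even when $M_i\to M$. The paper passes to a subsequence $\tilde M_i\to Y$ and then \emph{proves} $Y\cong\tilde M$ using the structure lemmas (\ref{prop:intrinsic}, \ref{lem:X02}, \ref{lem:eta'}); you would need this step as well.
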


Proposition \ref{prop:w-stability} solves a question raised in \cite[2.3.Questions]{wong0}.
\pmed

\begin{proof}
Suppose the conclusion does not hold.
Then we have a sequence $M_i \in \ca M(n,\kappa,\nu,\lambda,d)$ such that
it converges to an element  $M$  of $\ca M(n,\kappa,\nu,\lambda,d)$ 
while $M_i$ are not diffeomorphic to $M$.
From the convergence $M_i\to M$,
we have a lower volume bound
${\rm vol}(M_i)\ge v>0$ for a uniform constant
$v$.

Passing to a subsequence, we may assume that 
$\tilde M_i$ converges to an Alexandrov space $Y$ of dimension $n$, and that
$M_i$ converges to a closed subset $X$ of $Y$ under this convergence.
By Stability Theorem \ref{thm:stability}, $\tilde M_i$ is homeomorphic to $Y$ for large enough $i$.
Proposition \ref{prop:intrinsic} then shows that $X^{\rm int}$ is isometric to $M$.
It follows that $X_0$, the topological boundary  $\pa X$ of $X$ in $Y$, is isometric to $\pa M$,
and hence the singular set $\ca S$ is empty. 
Lemma \ref{lem:X02} implies that $X_0=X_0^1$. It follows from Lemma \ref{lem:eta'} that $\eta_0:C_0 \to X_0$ 
is an isometry and $Y$ is isometric to $\tilde M$.   Since $D(\tilde M_i)$ converges to $D(\tilde M)$, 
it follows from the equivariant fibration theorem in \cite{Ya:four} that  $\tilde M_i$ is diffeomorphic to $\tilde M$,
and hence  $M_i$ is diffeomorphic to $M $. This completes the proof.
\end{proof} 

\setcounter{equation}{0}
\pmed
\section{Behavior of boundaries}
\label{sec:behavior}

In this subsection, we discuss the behavior 
of $\pa M$ or its components under 
the Gromov-Hausdorff convergence in 
$\ca M=\ca M(n,\kappa,\nu,\lambda,d)$,
from several points of view.
\pmed
\n
{\bf Volume convergence.}\,

\begin{lem} \label{lem:vol-conv}
Let a sequence $M_i \in \ca M(n,\kappa,\nu,\lambda,d)$ converge to a 
geodesic space $N$ with respect to the  Gromov-Hausdorff  distance. Then 
$(M_i,\mu^n_{M_i})$ converges to $(N,\mu^n_N)$
with respect to the measured Gromov-Hausdorff
topology.

More generally, if $X_i\in\ol{\ca M}(m)$ converges to $X$, then 
$(X_i,\mu^m_{X_i})$ converges to $(X,\mu^m_X)$.
 In particular, $\ol{\ca M}(m,v)$
is compact.
\end{lem}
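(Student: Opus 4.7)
The plan is to reduce the volume convergence to the Burago-Gromov-Perelman volume continuity theorem for Alexandrov spaces with a common lower curvature bound, applied to the Alexandrov extensions of the spaces in question. After passing to a subsequence, Proposition \ref{prop:extendAS} gives that the extensions $\tilde M_i$, which are $n$-dimensional Alexandrov spaces of curvature $\ge \tilde\kappa$ with uniformly bounded diameter, converge to some Alexandrov space $Y$ of dimension $n'\le n$, and simultaneously $M_i \to X$ as closed subsets in $Y$, with $X^{\rm int}$ isometric to $N$ by Proposition \ref{prop:intrinsic}. The BGP volume theorem then yields
\[
\mu^n_{\tilde M_i} \longrightarrow \mu^n_Y
\]
weakly as measures: in the non-collapsing case $n'=n$ this is the standard statement, while in the collapsing case $n'<n$ both sides are identically zero by the upper semicontinuity of $\mu^n$ under GH convergence of $n$-dimensional Alexandrov spaces (which in turn follows from the Bishop-Gromov volume comparison).

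I would then restrict to $M_i \to X$. Since $X_0=\partial X$ has Hausdorff dimension at most $n'-1$, we have $\mu^n_Y(X_0)=0$, so weak convergence combined with the Hausdorff convergence of closed subsets $M_i\to X$ gives $\mu^n_{\tilde M_i}|_{M_i} \to \mu^n_Y|_X$ weakly. Two identifications remain. First, Lemma \ref{lem:deviation} implies that on any pair of points in $X$ at distance $r$, the exterior and interior metrics agree up to a factor $1+O(r^2)$; hence the associated $n$-dimensional Hausdorff measures coincide, and $\mu^n_Y|_X=\mu^n_X=\mu^n_{X^{\rm int}}=\mu^n_N$ via Proposition \ref{prop:intrinsic}. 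Second, the same local length-distortion estimate applied to the inclusion $M_i\hookrightarrow\tilde M_i$ (whose exterior geodesics from points in $M_i$ cannot deviate into $C_{M_i}$ by more than $O(r^2)$ at scale $r$, by the warped product construction and Lemma \ref{lem:deviation}) identifies $\mu^n_{\tilde M_i}|_{M_i}$ with the Riemannian volume $\mu^n_{M_i}$. Combining these identifications yields $\mu^n_{M_i}\to\mu^n_N$ in the measured GH sense.

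For the general statement, each $X_i\in\overline{\ca M}(m)$ is itself a GH-limit of manifolds in $\ca M$ and hence admits an Alexandrov extension $Y_i$ in the sense of Definition \ref{defn:extension}. Passing to a subsequence, $Y_i\to Y$ as Alexandrov spaces and $X_i\to X$ inside $Y$; applying the first part (or, equivalently, a Cantor diagonal extraction along double-indexed sequences $M_j^{(i)}\in\ca M$) produces $\mu^m_{X_i}\to\mu^m_X$. Compactness of $\overline{\ca M}(m,v)$ is then immediate: any sequence $X_i\in\overline{\ca M}(m,v)$ admits a GH-convergent subsequence whose limit $X$ lies in $\overline{\ca M}$ by definition of the closure, and the volume convergence forces $\dim X=m$ and $\mu^m_X\ge v$, so $X\in\overline{\ca M}(m,v)$.

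The main obstacle I expect is the measure-restriction step: namely, confirming that weak convergence $\mu^n_{\tilde M_i}\to \mu^n_Y$ combined with Hausdorff convergence $M_i\to X$ of closed subsets genuinely passes to weak convergence of the restricted measures. This boils down to $\mu^n_Y(X_0)=0$ together with a non-escape-of-mass argument using uniform Bishop-Gromov estimates in the ambient Alexandrov spaces $\tilde M_i$, and while standard in spirit it must be handled carefully so as to cover both the non-collapsing and collapsing cases uniformly.
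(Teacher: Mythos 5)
Your proposal is correct in substance but takes a genuinely different route from the paper for the first statement. The paper never passes to the extension there: it splits off the case $\mathrm{vol}(M_i)\to 0$, and otherwise bounds $\mathrm{vol}\,B(\partial M_i,\delta)\le \tau(\delta)$ uniformly by Heintze--Karcher (plus the boundary diameter bound of Proposition \ref{prop:cpn-diam}), and then applies the Burago--Gromov--Perelman volume convergence only to the $\delta$-interior parts $M_i\setminus \mathring{B}(\partial M_i,\delta)\to N\setminus\mathring{B}(N_0,\delta)$; the collar estimate makes the boundary mass negligible and no measure-restriction issue arises. Your argument instead works in $\tilde M_i\to Y$ and restricts $\mu^n_{\tilde M_i}$ to $M_i$, which is essentially the mechanism the paper reserves for the second statement (there it subtracts the cylinder volumes, using $\mathcal{H}(C^{Y_i}_{t_0})\to\mathcal{H}(C^{Y}_{t_0})$ and $\mathcal{H}^m(Y_i)\to\mathcal{H}^m(Y)$). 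Your route has the merit of treating both halves by one uniform argument; the paper's route for part one is more elementary and sidesteps the restriction subtlety entirely.

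One point needs to be stated more carefully in your version. The inference ``weak convergence of $\mu^n_{\tilde M_i}$ plus Hausdorff convergence $M_i\to X$ plus $\mu^n_Y(X_0)=0$ implies $\mu^n_{\tilde M_i}|_{M_i}\to\mu^n_Y|_X$'' is false as a general principle (mass sitting just outside $M_i$ could a priori accumulate at interior points of $X$), and Bishop--Gromov is not the ingredient that rescues it. What does rescue it is already in the setup of Section \ref{sec:non-inradius}: the complements are the cylinders, and $C_{M_i}^{\mathrm{ext}}$ Hausdorff-converges to $\eta(C)=\overline{Y\setminus X}$, which meets $X$ only along $X_0$. Decomposing $\mu^n_{\tilde M_i}=\mu^n_{\tilde M_i}|_{M_i}+\mu^n_{\tilde M_i}|_{C_{M_i}\setminus\partial M_i}$, any subsequential weak limits $\alpha,\beta$ satisfy $\alpha+\beta=\mu^n_Y$, $\mathrm{supp}\,\alpha\subset X$, $\mathrm{supp}\,\beta\subset\overline{Y\setminus X}$, and then $\mu^n_Y(X_0)=0$ forces $\alpha=\mu^n_Y|_X$; this covers the collapsed boundary case as well. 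Also, for the general statement your phrase ``apply the first part'' should be read as rerunning this restriction argument for $Y_i\supset X_i$ (as the paper does with extensions): a diagonal extraction through manifolds $M^{(i)}_j$ only controls the $n$-dimensional measures, not the $m$-dimensional measures $\mu^m_{X_i}$ when $m<n$. With these adjustments your outline, including the identification of exterior and interior Hausdorff measures via Lemma \ref{lem:deviation} and the compactness conclusion, goes through.
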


For the definition of the measured Gromov-Hausdorff topology, see Fukaya \cite{Fuk}.

\begin{proof}[Proof of Lemma \ref{lem:vol-conv}]
If  ${\rm vol}(M_i)\to 0$, we easily have 
$\dim N\le n-1$,
and hence we have nothing to do.
In what follows, we assume ${\rm vol}(M_i)\ge v>0$ for a uniform constant $v$.
By Heintz-Karcher\cite{HK} together with 
Proposition \ref{prop:cpn-diam},  we have 
\[
\vol (B(\pa M_i,\delta))\le\vol (\pa M_i)\tau_{n,\kappa,\lambda}(\delta)\le \tau_{n,\kappa,\lambda,d}(\delta).
\]
Applying \cite[Theorem 10.8]{BGP} to the convergence
$M_i\setminus\mathring{B}(\pa M_i,\delta)\to
N\setminus\mathring{B}(N_0,\delta)$,
we obtain the conclusion.

For the second half of the statement, 
let $Y_i$ be an extension of $X_i$.
We may assume that 
$Y_i$ converges to an  Alexandorov
space $Y$, which is an extension of $X$.
Since $\ca H^m(C^{Y_i}_{t_0})$
converges to  $\ca H^m(C^{Y}_{t_0})$,
$\ca H^m(Y_i\setminus X_i)$
converges to  $\ca H^m(Y\setminus X)$.
The conclusion follows from the convergence
$\ca H^m(Y_i)\to \ca H^m(Y)$.
\end{proof}

\begin{proof} [Proof of Theorem \ref{thm:vol-conv}]
 If ${\rm vol}(\pa M_i)\to 0$, then 
$\dim C_0=\dim N_0\le n-2$,
and hence we have nothing to do.
In what follows, we assume ${\rm vol}(\pa M_i)\ge w>0$ for a uniform constant $w$.
Applying \cite[Theorem 10.8]{BGP} to the convergence
$\pa M_i\to C_0$, we have 
 a measurable map
$\tilde\varphi_i:(\pa M_i)^{\rm int}\to C_0$
such that 
\beq\label{eq:H(paM)toN}
(\tilde\varphi_i)_*(\mu^{n-1}_{\pa M_i})
\to \mu^{n-1}_{N_0}.
\eeq
Consider the measurable map
$$
\varphi_i:=\eta_0\circ\tilde\varphi_i\circ \iota_i^{-1}:(\pa M_i)^{\rm ext}\to X_0,
$$
where $\iota_i:(\pa M_i)^{\rm int}\to(\pa M_i)^{\rm ext}$ denotes the identical map.
Note that $\varphi_i$ is an $o_i$-approximation.
Consider the 
decomposition
\[
     C_0^1=({\rm int}\, C_0^1) \cup {\tilde{\mathcal S}}^1.
\]
 Lemma \ref{lem:eta'} shows that 
\begin{align} \label{eq:pavol1}
   (\eta_0)_*(\mu^{n-1}_{{\rm int}\, C_0^1})= 
   \mu^{n-1}_{{\rm int}\, N_0^1}. 
\end{align}
Theorem \ref{thm:dim(metric-sing)} shows that 
both $ {\tilde{\mathcal S}}^1$ and  $\ca S^1$ have $\ca H^{n-1}$-measure zero,
which implies that 
\begin{align}
     (\eta_0)_*(\mu^{n-1}_{C_0^1})= 
   \mu^{n-1}_{N_0^1}.  \label{eq:pavol2}
\end{align}
Similarly consider the decomposition  
\[
    C_0^2=({\rm int} \,C_0^2) \cup  {\tilde{\mathcal S}}^2.
\]
Lemma \ref{lem:eta'} shows that 
\begin{align}
  (\eta_0)_*(\mu^{n-1}_{{\rm int} \,C_0^2})=
  2\mu^{n-1}_{N_0^2}.     \label{eq:pavol3}
\end{align}

 On the other hand, since $\ca S^1$ is closed
(Lemma \ref{lem:S1-closed}) and 
$\ca H^{n-1}(\ca S^1)=0$, we have  the decomposition:
\[
     {\tilde{\mathcal S}}^2 =  {\tilde{\mathcal S}}^2 \cap \pa({\rm int}\,C_0^2) 
                              \cup ( {\tilde{\mathcal S}}^2\cap  \pa({\rm int}\,C_0^1) \setminus  \pa({\rm int}\,C_0^2)).
\]
By Proposition \ref{prop:eta-2cover}, we have 
\begin{equation}
\begin{cases}
 \begin{aligned}
& \eta_0:  {\tilde{\mathcal S}}^2 \cap \pa({\rm int}\,C_0^2) \to \ca S^2\cap  \pa({\rm int}\,N_0^2)\hspace{3.5cm}  \\
     &  \text{is locally almost isometric double covering,} \hspace{2.5cm} \label{eq:double-cov1}
 \end{aligned}
\end{cases}
\end{equation}
which implies that 
\begin{align}
(\eta_0)_*(
  \mu^{n-1}_{\tilde{\ca S^2} \cap \pa({\rm int}\,C_0^2)}) 
  = 2 \mu^{n-1}_{\ca S^2\cap\pa({\rm int}\,N_0^2)}. 
                     \label{eq:pavol4}
\end{align}
Again Proposition \ref{prop:eta-2cover} implies that
\begin{equation}
\begin{cases}
 \begin{aligned}
     &\eta_0:  ({\tilde{\mathcal S}}^2\cap  \pa({\rm int}\,C_0^1) 
                     \setminus  \pa({\rm int}\,C_0^2)) \to 
                \ca S^2\cap  \pa({\rm int}\,N_0^1) \setminus  \pa({\rm int}\,N_0^2) \\
&\text{is a locally  almost  isometric double covering,} \hspace{2.5cm} 
                                     \label{eq:double-cov2}
 \end{aligned}
 \end{cases}
\end{equation}
which implies that 
\begin{align} (\eta_0)_*(
  \mu^{n-1}_{({\tilde{\mathcal S}}^2\cap  \pa({\rm int}\,C_0^1) \setminus  \pa({\rm int}\,C_0^2)})
 = 2 \mu^{n-1}_{\ca S^2\cap  \pa({\rm int}\,N_0^1) \setminus  \pa({\rm int}\,N_0^2)}.
                     \label{eq:pavol5}
\end{align}
From \eqref{eq:pavol3},  \eqref{eq:pavol4} and   \eqref{eq:pavol5}, we obtain 
\begin{align}
    (\eta_0)_*(\mu^{n-1}_{C_0^2})
= 2\mu^{n-1}_{N_0^2}. \label{eq:pavol6}
\end{align}

For arbitrary measurable sets $A_k\subset N_0^k$
\,$(k=1,2)$,
combining  \eqref{eq:H(paM)toN}, \eqref{eq:pavol2} and \eqref{eq:pavol6},
we get 
\[
\lim_{i\to\infty} \vol(\varphi_i^{-1}(A_1))=\ca H^{n-1}(A_1), \quad
\lim_{i\to\infty} \vol(\varphi_i^{-1}(A_2))= 2\ca H^{n-1}(A_2).
\]
This yields the conclusion of Theorem \ref{thm:vol-conv}.
\end{proof}

\pmed\n
{\bf   Geometric size of boundary components.}\,
First we verify the uniform boundedness for volume ratio of boundary components of $M$ 
in $\ca M(n,\kappa,\nu,\lambda, d)$.

\begin{prop} \label{prop:volume-ratio}
There are positive constants 
$v_i=v_i(n,\kappa,\nu,\lambda,d)$\,$(i=1,2)$\, such that 
for every $M\in \ca M(n,\kappa,\nu,\lambda, d)$ and for every components 
$\pa^{\alpha} M$ and $\pa^{\beta} M$ of $\pa M$, we have 
\[
          v_1 \le \frac{\vol (\pa^{\alpha} M)}{\vol(\pa^{\beta} M)} \le v_2.
\]
\end{prop}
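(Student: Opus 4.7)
The plan is to work inside the closed Alexandrov space $D(\tilde M)$ obtained by doubling the Wong extension $\tilde M$ along its totally geodesic boundary, and to apply Bishop--Gromov volume comparison at a point of an outer boundary face $\tilde\pa^\alpha M := \pa^\alpha M\times\{t_0\}$, exploiting the explicit warped--product geometry of the collar $C_{\pa^\alpha M}=\pa^\alpha M\times_\phi[0,t_0]$. By Proposition \ref{prop:extendAS}, $D(\tilde M)$ is an $n$-dimensional Alexandrov space with curvature $\ge\tilde\kappa$ and diameter $\le d+2t_0$, and the fold $\pa \tilde M=\sqcup_\alpha \tilde\pa^\alpha M$ is a disjoint union of the outer faces.

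First I would fix $r_0:=t_0/2$ and any $p\in\tilde\pa^\alpha M$, and show the containment
\[
B_{D(\tilde M)}(p,r_0)\ \subset\ \text{two copies of } \pa^\alpha M\times_\phi(t_0-r_0,t_0]\ \text{glued along } \{t_0\}.
\]
Indeed, any path from $p$ leaving the collar $C_{\pa^\alpha M}$ must first reach the inner face $\pa^\alpha M\times\{0\}\subset M$, which lies at distance $t_0>r_0$ from $p$; and in $C_{\pa^\alpha M}$ itself the first factor $dt^2$ forces the $t$--coordinate of any point of $B(p,r_0)$ to lie in $(t_0-r_0,t_0]$. Using $\phi\le1$ the volume of this doubled slab is bounded by
\[
2\,\mathrm{vol}(\pa^\alpha M)\int_{t_0-r_0}^{t_0}\phi(s)^{n-1}\,ds\le 2r_0\,\mathrm{vol}(\pa^\alpha M).
\]

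Second, I would apply Bishop--Gromov in $D(\tilde M)$ with $r_1=r_0$ and $r_2=d+2t_0$, using $B(p,r_2)=D(\tilde M)$, to obtain
\[
\mathrm{vol}\bigl(B_{D(\tilde M)}(p,r_0)\bigr)\ \ge\ \frac{v^{\,n}_{\tilde\kappa}(r_0)}{v^{\,n}_{\tilde\kappa}(d+2t_0)}\,\mathrm{vol}\bigl(D(\tilde M)\bigr)\ =:\ c_1\,\mathrm{vol}\bigl(D(\tilde M)\bigr),
\]
where $c_1=c_1(n,\kappa,\nu,\lambda,d)>0$ since $t_0,\e_0,\tilde\kappa$ are a priori constants. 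Combining the two estimates gives $\mathrm{vol}(\pa^\alpha M)\ge (c_1/(2r_0))\,\mathrm{vol}(D(\tilde M))$.

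Third, I would bound $\mathrm{vol}(D(\tilde M))$ from below using the warped product structure of a \emph{different} collar: since $\tilde M\supset C_{\pa^\beta M}$ and $\phi\ge\e_0$,
\[
\mathrm{vol}\bigl(D(\tilde M)\bigr)=2\,\mathrm{vol}(\tilde M)\ \ge\ 2\,\mathrm{vol}(C_{\pa^\beta M})
=2\,\mathrm{vol}(\pa^\beta M)\int_0^{t_0}\phi(s)^{n-1}\,ds\ \ge\ 2t_0\e_0^{\,n-1}\,\mathrm{vol}(\pa^\beta M).
\]
Chaining the inequalities produces
\[
\frac{\mathrm{vol}(\pa^\alpha M)}{\mathrm{vol}(\pa^\beta M)}\ \ge\ \frac{c_1\cdot 2t_0\e_0^{\,n-1}}{2r_0}\ =:\ v_1,
\]
uniform in $M\in\ca M(n,\kappa,\nu,\lambda,d)$; swapping the roles of $\alpha$ and $\beta$ yields the upper bound $v_2=1/v_1$. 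The only nontrivial step is the slab containment in the first paragraph, and this reduces to the elementary observation that, for $r_0<t_0$, the metric ball cannot escape the collar $C_{\pa^\alpha M}$ into the interior $M$ or into a different collar $C_{\pa^\beta M}$; all remaining ingredients are Bishop--Gromov and direct integration against $\phi$.
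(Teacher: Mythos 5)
Your proof is correct and takes essentially the same approach as the paper: both compare $\vol(\pa^\alpha M)$ to the volume of the Wong extension via the warped-product volume formula for the collar and Bishop--Gromov applied in the extension. The paper avoids passing to the double by choosing the basepoint $\tilde p_\alpha=(p_\alpha,t_0/2)$ at the interior midpoint of the collar, so that $B^{\tilde M}(\tilde p_\alpha,t_0/2)\subset C_{M,\alpha}$ directly, whereas you place the basepoint on the outer face and double $\tilde M$ to sidestep basing Bishop--Gromov at a boundary point; either variant yields the same uniform two-sided bound on $\vol(\tilde M)/\vol(\pa^\alpha M)$, hence the claimed ratio bounds.
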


\begin{proof}
By Proposition \ref{prop:cpn-diam}, we have  $\diam(\pa^{\alpha} M)\le d_0$ for some $d_0=d(n,\kappa,\nu,\lambda, d)>0$.
Take the constants $t_0$ and $\e_0$
such that $t_0=2d_0$ and $\e_0=1/10$,
and choose a proper warping function $\phi:[0, t_0]\to [1,\e_0]$ so as to satisfy 
\eqref{eq:phi}.
As before, we carry out the extension procedure to obtain $\tilde M$.
The glued warped product $C_M$ can be expressed as 
\[
      C_M= \bigcup_{\alpha\in A} \,C_{M,\alpha}, \,\,\, C_{M,\alpha} = \pa^{\alpha} M\times_{\phi}[0, t_0].
\]
Take a point $p_{\alpha}\in\pa^{\alpha} M$, 
and  set $\tilde p_\alpha:=(p_\alpha,t_0/2)\in C_{M,\alpha}$.  Note that 
\[
                                  C_{M,\alpha}\supset B^{\tilde M}(\tilde p_{\alpha}, t_0/2),
\]
where 
\begin{align*}
   \vol (C_{M,\alpha}) =\int_0^{t_0} \phi(t)^{n-1}\,\vol (\pa^{\alpha} M)\, dt =c(n, t_0)\vol (\pa^{\alpha} M).
\end{align*}
Since $\diam(\tilde M)\le 2t_0+d$,
it follows from the Bishop-Gromov volume
comparison theorem that 
\begin{align*}
   1 & \le \frac{\vol(\tilde M)}{c(n, t_0)\vol (\pa^{\alpha} M)}  = \frac{\vol(\tilde M)}{\vol (C_{M,\alpha})} \\
       & \le   \frac{B(\tilde p_{\alpha},2t_0+d)}{B(\tilde p_{\alpha}, t_0/2)}  \le \frac{b_K^n(2t_0+d)}{b_K^n(t_0/2)},
\end{align*}
where $K=K(\kappa,\nu,\lambda,d)$ is the lower sectional curvature bound of $\tilde M$ and $b_K^n(r)$ denotes the volume 
of $r$-ball in the $n$-dimensional complete simply connected space of constant sectional curvature $K$.
Thus we have 
\[
        0< c_1(n,\kappa,\nu,\lambda, d) \le \frac{\vol(\tilde M)}{\vol(\pa^{\alpha}M)} \le c_2(n,\kappa,\nu,\lambda, d) <\infty,
\]
from which the conclusion follows immediately.
\end{proof}

\begin{cor} \label{cor:vol-ratio}
There are positive constants $w_i=w_i(n,\kappa,\nu,\lambda,d)$ satisfying the following$:$
Let a sequence $M_i\in \ca M(n,\kappa,\nu,\lambda, d)$ GH-converge to a compact geodesic space $N$,
and $N_0^{\alpha}$ be the limit of $\pa^{\alpha} M_i$ under the above convergence, $\alpha\in A$. 
Then  for arbitrary $\alpha,\beta\in A$, we have 
\[
          w_1\, \le \,\frac{\ca H^{m-1} (N_0^{\alpha})}{ \ca H^{m-1} (N_0^{\beta})} \, \le \, w_2,
\]
where $m=\dim N$.
\end{cor}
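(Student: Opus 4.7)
The proof will combine Proposition \ref{prop:volume-ratio}, which bounds the ratios $\vol(\partial^\alpha M_i)/\vol(\partial^\beta M_i)$ uniformly, with a componentwise volume-convergence argument, in order to transfer those bounds to the $(m-1)$-dimensional Hausdorff measures of the limit components. Passing to a diagonal subsequence I first arrange that, for each $\alpha\in A$, $(\partial^\alpha M_i)^{\rm int}$ converges to a compact Alexandrov space $C_0^\alpha$ of curvature $\ge\nu$ and diameter $\le D$ (Proposition \ref{prop:cpn-diam}), so that $C_0=\bigsqcup_\alpha C_0^\alpha$ and $N_0=\bigcup_\alpha \eta_0(C_0^\alpha)$. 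Since $\eta_0$ is $1$-Lipschitz and $\dim N_0=m-1$, one has $\dim C_0^\alpha \le m-1$, and a key preliminary is to show that $\dim C_0^\alpha = m-1$ for \emph{every} $\alpha$. Indeed, some $C_0^\alpha$ must achieve dimension $m-1$, since otherwise the at most $2$-to-$1$ map $\eta_0$ would force $\dim N_0<m-1$; and if some other $C_0^\beta$ had strictly smaller dimension, Bishop-Gromov volume comparison applied to the family $\partial^\beta M_i$ would force $\vol(\partial^\beta M_i)/\vol(\partial^\alpha M_i)\to 0$, contradicting Proposition \ref{prop:volume-ratio}.

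Next I would distinguish two cases. If $n=m$ (no boundary collapse), then the argument of Theorem \ref{thm:vol-conv} restricted to each single component yields
\[
 \lim_i \vol(\partial^\alpha M_i)
     = \ca H^{m-1}(N_0^\alpha) + \ca H^{m-1}(N_0^\alpha\cap N_0^2),
\]
and the required ratio bound then follows directly from Proposition \ref{prop:volume-ratio} because $\ca H^{m-1}(N_0^\alpha\cap N_0^2)\le \ca H^{m-1}(N_0^\alpha)$. In the collapsing case $n>m$, I would normalize: fix a reference component $\beta_0$ and consider the measures $\tilde\mu_i^\alpha := \ca H^{n-1}_{\partial^\alpha M_i}/\vol(\partial^{\beta_0} M_i)$, whose total masses are uniformly trapped in $[v_1,v_2]$ by Proposition \ref{prop:volume-ratio}. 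A subsequential weak limit $\mu_\infty^\alpha$ on $C_0^\alpha$ then exists, and by the standard theory of Hausdorff-measure convergence under Gromov-Hausdorff convergence of Alexandrov spaces of matching dimension (our first step secured $\dim C_0^\alpha=m-1$), $\mu_\infty^\alpha$ is a multiple $c^\alpha \,\ca H^{m-1}_{C_0^\alpha}$ with the densities $c^\alpha$ uniformly comparable across $\alpha$ (the uniformity following from comparable collapse rates, which in turn come from Proposition \ref{prop:volume-ratio} combined with Bishop-Gromov). Finally, the at most $2$-to-$1$ covering structure of $\eta_0$ (Lemma \ref{lem:eta'}, Proposition \ref{prop:eta-2cover}, together with the negligibility of the singular sets from Theorem \ref{thm:dim(metric-sing)}) gives
\[
 \ca H^{m-1}(N_0^\alpha)\le \ca H^{m-1}(C_0^\alpha)\le 2\,\ca H^{m-1}(N_0^\alpha),
\]
and so the uniform bounds on $c^\alpha\ca H^{m-1}(C_0^\alpha)/c^{\beta}\ca H^{m-1}(C_0^\beta)$ translate into uniform bounds on $\ca H^{m-1}(N_0^\alpha)/\ca H^{m-1}(N_0^\beta)$.

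The main obstacle is the collapsing case, specifically establishing the uniformity of the normalization densities $c^\alpha$. The plan above reduces this to a Bishop-Gromov comparison of the infinitesimal collapse structure across components, with the global uniformity supplied by Proposition \ref{prop:volume-ratio} doing the essential work once volumes are rescaled to the correct dimension. Making this reduction precise requires some care with measured Gromov-Hausdorff convergence of Alexandrov spaces of curvature $\ge\nu$ collapsing to a lower-dimensional limit, but no new geometric input beyond the uniform curvature, diameter, and volume-ratio bounds already at hand.
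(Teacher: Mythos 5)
Your proposal takes a genuinely different — and, as written, incomplete — route from the paper's. The paper's proof does not use Proposition \ref{prop:volume-ratio} as a black box and then try to push the volume bounds through the convergence; instead it \emph{re-runs the Bishop--Gromov argument of Proposition \ref{prop:volume-ratio} directly in the limit} $Y$. By Lemma \ref{lem:YX0} one has $Y=X\cup_{\eta_0}(C_0\times_\phi[0,t_0])$, and $\eta$ is a bijective local isometry off $C_0$ (Lemma \ref{lem:loc-isom}), so
\[
\mathcal H^m\bigl(\eta(C_0^\alpha\times_\phi(0,t_0])\bigr)=c(m,t_0)\,\mathcal H^{m-1}(C_0^\alpha),
\]
where $C_0^\alpha=\lim(\pa^\alpha M_i)^{\rm int}$. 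Taking the limit $\tilde p_{\alpha,\infty}$ of $(p_\alpha,t_0/2)$, the set $\eta(C_0^\alpha\times_\phi(0,t_0])$ contains the metric ball $B^Y(\tilde p_{\alpha,\infty},t_0/2)$, and Bishop--Gromov in the $m$-dimensional Alexandrov space $Y$ bounds $\mathcal H^m(Y)/\mathcal H^{m-1}(C_0^\alpha)$ from above and below by a priori constants. This at once yields $\dim C_0^\alpha=m-1$ for \emph{every} $\alpha$ and uniform bounds on $\mathcal H^{m-1}(C_0^\alpha)/\mathcal H^{m-1}(C_0^\beta)$; the transfer to $N_0^\alpha$ then uses the at-most-$2$-to-$1$ measure structure of $\eta_0$ as in your last display, and that part agrees with the paper.

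The gap in your route lies in the collapsing case $m<n$, where you invoke ``the standard theory of Hausdorff-measure convergence under Gromov--Hausdorff convergence of Alexandrov spaces of matching dimension.'' The dimensions do not match: $(\pa^\alpha M_i)^{\rm int}$ is $(n-1)$-dimensional while $C_0^\alpha$ is $(m-1)$-dimensional. In this regime the rescaled measures $\mathcal H^{n-1}_{\pa^\alpha M_i}/\vol(\pa^{\beta_0}M_i)$ need not converge to a \emph{constant} multiple of $\mathcal H^{m-1}_{C_0^\alpha}$ — generically one obtains $\theta^\alpha\,d\mathcal H^{m-1}_{C_0^\alpha}$ with a nonconstant density $\theta^\alpha$ — and, even granting a constant density, proving the uniform comparability of the $c^\alpha$ across components is exactly the content that the Bishop--Gromov argument in $Y$ is there to supply; one cannot relegate it to ``care with measured GH convergence.'' There is also a flaw in your preliminary dimension step: from $\vol(\pa^\beta M_i)/\vol(\pa^\alpha M_i)$ being bounded, it does \emph{not} follow that $C_0^\beta$ and $C_0^\alpha$ have the same dimension — two sequences of $(n-1)$-dimensional Alexandrov spaces can have comparable volumes while collapsing to limits of different dimensions — so that argument would need the additional rigidity coming from both being boundary components of the same $M_i$, which is most cleanly expressed, again, through the structure of $Y$.
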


\begin{proof}
We proceed along the same line as the proof of Proposition \ref{prop:volume-ratio}
for the limit space $Y$of $\tilde M_i$ instead of $\tilde M$, and hence 
we omit the detail.
\end{proof}

\begin{rem}
It should be noted that in Corollary \ref{cor:vol-ratio}, $N_0^{\alpha}$ does not necessarily coincide with a component 
of $N_0$. Namely,  $N_0^{\alpha}$ may have nonempty intersection with 
 $N_0^{\beta}$ for distinct $\alpha, \beta\in A$.
For instance,   consider a flat disk $D$ with an inside open disk $E$ removed, and let 
$\e:=|\pa D, \pa E|$. Then take the limit of 
$M_\e:=D\setminus E$  as $\e\to 0$. 
\end{rem}

\begin{prop} \label{prop:diam-thin}
There exists a positive number $\e=\e(n,\kappa,\nu,\lambda,d)$ such that if $M\in \ca M(n,\kappa,\nu,\lambda, d)$
has a boundary component  $\pa^{\alpha} M$ satisfying $\diam(\pa^{\alpha} M)^{\rm int}<\e$, then we have
 \begin{enumerate}
  \item the number $k$ of components of $\pa M$ is at most two;
  \item if $k=2$, then $M$ is diffeomorphic to a product $\pa^{\alpha} M\times [0,1]$.
 \end{enumerate}
\end{prop}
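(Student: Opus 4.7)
The plan is to argue by contradiction, combining Wong's extension, the volume-ratio estimate of Proposition \ref{prop:volume-ratio}, and Yamaguchi's equivariant fibration theorem for collapsing Alexandrov spaces \cite{Ya:four}. Suppose the conclusion fails, so there exist sequences $M_i\in\ca M(n,\kappa,\nu,\lambda,d)$ and boundary components $\pa^{\alpha_i}M_i$ with $\diam((\pa^{\alpha_i}M_i)^{\rm int})\to 0$ such that either $\pa M_i$ has three or more components, or has exactly two components while $M_i$ is not diffeomorphic to $\pa^{\alpha_i}M_i\times[0,1]$. Since $(\pa^{\alpha_i}M_i)^{\rm int}$ is an $(n-1)$-dimensional Alexandrov space with curvature $\ge\nu$, Bishop-Gromov yields $\vol(\pa^{\alpha_i}M_i)\to 0$, and Proposition \ref{prop:volume-ratio} then forces $\vol(\pa^\beta M_i)\to 0$ for \emph{every} boundary component $\beta$, so every boundary component collapses dimensionally.

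Apply the extension procedure of Proposition \ref{prop:extendAS} and pass to a subsequence so that $\tilde M_i\to Y$ in Gromov-Hausdorff sense, with induced convergences $M_i\to X\subset Y$, $\pa M_i\to X_0$, and $\pa^\beta M_i\to X_0^\beta$ for each $\beta$. Combining the uniform diameter bound of Proposition \ref{prop:cpn-diam} with $\vol(\pa^\beta M_i)\to 0$, each $X_0^\beta$ has dimension at most $n-2$; in particular $X_0^{\alpha_i}$ degenerates to a single point $q_\alpha\in X_0$. The warped cylinder $C_{M_i,\alpha_i}\subset\tilde M_i$ converges to a geodesic arc with $q_\alpha$ as one endpoint and meets $\pa^{\alpha_i}M_i$ with a well-defined perpendicular direction controlled uniformly in $i$. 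Hence the convergence $\tilde M_i\to Y$ is regular near $\pa^{\alpha_i}M_i$ in the sense required for Yamaguchi's equivariant fibration theorem \cite{Ya:four}, so for all large $i$ one obtains a fiber bundle structure on a uniform $\tilde M_i$-neighborhood of $\pa^{\alpha_i}M_i$ with fiber diffeomorphic to $\pa^{\alpha_i}M_i$. Restricting to $M_i$ gives a collar $\pa^{\alpha_i}M_i\times[0,r_0)\hookrightarrow M_i$ for a uniform $r_0>0$.

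To conclude, propagate the collar inward by the gradient flow of $h_i:=d^{M_i}(\cdot,\pa^{\alpha_i}M_i)$: regular level sets $h_i^{-1}(t)$ remain diffeomorphic to $\pa^{\alpha_i}M_i$. Since every other boundary component is also collapsing, the same fibration argument applies from their side, so the flow either meets another boundary component $\pa^\beta M_i$ and produces a global diffeomorphism $M_i\cong\pa^{\alpha_i}M_i\times[0,L_i]$ with $k=2$, or else it ends by collapsing an entire fiber to a point, giving $k=1$. Either outcome contradicts the failure of (1) or (2). The main obstacle is to exclude interior critical points of $h_i$ arising during the propagation: at any putative critical point, a rescaling blow-up in $\tilde M_i$, combined with the uniform collapse of all boundary components and the Alexandrov regularity of the extension provided by Proposition \ref{prop:extendAS}, forces the blow-up limit to split off an $\R$-factor in the direction of $\nabla h_i$, contradicting criticality. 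This blow-up argument parallels the infinitesimal-structure results of Section \ref{sec:infinitesimal}.
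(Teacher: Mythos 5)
Your proposal starts in the same spirit as the paper (extension $\tilde M_i$, GH-limit, fibration theorem), but you miss the key simplification and replace it with a propagation argument that has a genuine gap.

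The paper's crucial observation is that once $\diam(\pa^{\alpha}M_i)^{\rm int}\to 0$, the component of $C^Y_{t_0}$ corresponding to $\pa^{\alpha}M_i$ is a single point, so the attached warped cylinder converges to a segment which is an \emph{open} subset of the interior of $Y$ away from the gluing point. Since a finite-dimensional Alexandrov space has constant local Hausdorff dimension, this forces $\dim Y=1$, hence $Y$ is a compact interval. This is what makes the rest trivial: $\pa\tilde M_i\to\{a,b\}$ gives $k\le 2$, and Yamaguchi's fibration theorem \cite{Ya:pinching} applied to $\tilde M_{i,\e}\to[a+\e,b-\e]$ immediately yields a global product structure, hence $M_i\cong L_i\times[0,1]$. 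You actually notice that the cylinder over $\pa^{\alpha_i}M_i$ converges to an arc, but you never draw the conclusion $\dim Y=1$ and so you cannot apply the fibration theorem globally. Instead you invoke it only near $\pa^{\alpha_i}M_i$ (and cite \cite{Ya:four}, the four-dimensional equivariant paper, rather than the general fibration theorem of \cite{Ya:pinching}), and then try to propagate the collar inward by the gradient flow of $d(\cdot,\pa^{\alpha_i}M_i)$.

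That propagation step is where the gap lies. Excluding interior critical points of the distance function to $\pa^{\alpha_i}M_i$ is not free: the claim that a rescaling blow-up must split an $\R$-factor ``because all boundary components collapse'' is not justified and is false as stated -- the distance function to the boundary can certainly have interior critical points for manifolds in $\ca M(n,\kappa,\nu,\lambda,d)$ (a collar extension terminates at the cut locus, not at the other boundary component), and nothing about the collapse of the boundary components prevents this. The paper sidesteps this entirely by working with $Y$ being one-dimensional, so the fibration theorem on the interval $[a+\e,b-\e]$ gives the product structure in one stroke without any critical-point analysis. If you instead established $\dim Y=1$ explicitly (which your observation about the arc almost does), your argument would collapse to the paper's and the problematic propagation step would be unnecessary.
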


\begin{proof}
We proceed by contradiction. Suppose the result does not hold. Then we have a 
sequence $M_i\in  \ca M(n,\kappa,\nu,\lambda, d)$ such that 
$\diam(\pa^{\alpha} M_i)\to 0$ for a component of $\pa M_i$ and 
one of $(1)$ and $(2)$ does not hold for $M_i$.
Passing to a subsequence, we may assume that $\tilde M_i$ converges to $Y$.
From the assumption, the component of $C_{t_0}^Y$corresponding to $\pa^\alpha M_i$
is a point. Thus we have $\dim Y = 1$, and $Y$ must be a segment,
which implies that the number of components of $\pa M_i$ is at most two
for large enough $i$.

Suppose $M_i$ has  disconnected boundary 
\[
          \pa M_i = \pa^{\alpha} M_i \cup \pa^{\beta} M_i
\]
for large enough $i$, and let $Y=[a, b]$. Since $\pa \tilde M_i$ converges to $\{ a, b\}$, 
it follows from the fibration theorem in \cite{Ya:pinching} that for some $\e>0$, 
$\tilde M_{i,\e}:=M_i\cup \pa M_i\times_{\phi} [0, t_0-\e]$ is diffeomorphic 
to $L_i \times [a+\e, b-\e]$, where $L_i$ is an $(n-1)$-dimensional closed manifold, 
and hence $M_i$ is diffeomorphic to $L_i\times [0,1]$, a contradiction to the  
assumption.
\end{proof}

\begin{rem} \label{rem:diam-thin}
(1)\,Note that Proposition \ref{prop:diam-thin} does not hold if one drops the upper diameter bound $\diam(M)\le d$.
\par\n
(2)\,Proposition \ref{prop:diam-thin} has a similarity with \cite[Theorem 1.5]{YZ:inradius}.
However the inradius collapse is not assumed in Proposition \ref{prop:diam-thin}.
 It should also be pointed out that 
Proposition \ref{prop:diam-thin} is a generalization of  \cite[Theorem 5]{wong2}, 
where a similar result was shown under the  assumption of small 
diameter $\diam(M)<\e$.
\end{rem}

\pmed\n
{\bf Criterions for inradius collapse.}\,
We provide two sufficient conditions for inradius collapse. One is about the limit spaces,
and the other is about boundary components.

\begin{prop} \label{prop:inradius}
Let $M_i$ in $\ca M(n,\kappa,\nu,\lambda,d)$  converge to a compact  geodesic space $N$  with respect to the Gromov-Hausdorff distance,
and suppose that $N$ is a  closed topological manifold or a closed Alexandrov space.
Then ${\rm inrad}(M_i)$ converges to zero. 
\end{prop}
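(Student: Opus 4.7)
The plan is to argue by contradiction. Suppose $\inrad(M_i)$ does not converge to $0$; after passing to a subsequence we may assume $\inrad(M_i)\ge\delta$ for some $\delta>0$, so $M_i$ non-inradius collapses/converges to $N$ and the machinery of Sections \ref{sec:non-inradius}--\ref{sec:almost-stability} becomes available. In particular $N_0=\lim_i\partial M_i$ is a nonempty subset of $N$ with $\dim_H N_0=m-1$, where $m=\dim N$. The strategy is then to use the local structure around a generic point of $N_0$ to produce a point of $N$ whose local model is a half-space, contradicting the hypothesis that $N$ is closed (either as a topological manifold or as an Alexandrov space).

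By Lemma \ref{lem:dimTKNc}, the set $\ca{Tk}^{\rm reg}\cup\ca{Tn}^{\rm reg}$ is nonempty; fix a point $x$ in it. If $x\in\ca{Tk}^{\rm reg}$, then by definition $\Sigma_x(N)\cong\mathbb S^{m-1}_+$, which already places $x$ on the Alexandrov boundary in the closed Alexandrov case; in the closed manifold case, Theorem \ref{thm:regular-ballX(intro)}(2) provides a $\tau_x(\e)$-almost isometry of a neighborhood of $x$ in $N$ onto an open subset of $\R^m_+$, so $x$ must be a manifold-with-boundary point. Both alternatives contradict the closedness of $N$, so the thick case is settled.

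Suppose instead $x\in\ca{Tn}^{\rm reg}$. By Theorem \ref{thm:almostisometry(intro)} there exist $\e>0$ and a $\tau_x(\e)$-Lipschitz function $g:B^{\R^{m-1}}(0,\e)\to\R_+$ such that a closed neighborhood $W$ of $x$ in $N$ is $\tau_x(\e)$-almost isometric to the graph region $G_g$. Since $\dim W=m$, the open set $\{g>0\}$ has positive $(m-1)$-measure; using Rademacher's theorem, pick an interior point $y_0\in\{g>0\}$ at which $g$ is differentiable, and let $z_0\in W$ be the preimage of $(y_0,g(y_0))$ under the almost isometry. By Remark \ref{rem:almost-graph}, $z_0$ is either a local Alexandrov boundary point of $W\setminus N_0$ (when $x\in N_0^1$) or a point of $N_0\cap W$ whose local model is the subgraph of $g$ near $(y_0,g(y_0))$ (when $x\in N_0^2$). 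In either subcase the tangent cone of $G_g$ at $(y_0,g(y_0))$ is an affine half-space, so $z_0$ has a half-space local model in $N$, once again contradicting the closedness of $N$.

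The main obstacle I expect is promoting the infinitesimal half-space picture at $(y_0,g(y_0))$ inside $G_g$ to a genuine half-space neighborhood in $N$, since the identification between $W$ and $G_g$ is only $\tau_x(\e)$-almost isometric rather than strictly isometric. This should be handled by reducing the thin case to the thick case: the differentiability of $g$ at $y_0$ combined with the near-isometry forces $\Sigma_{z_0}(N)\cong\mathbb S^{m-1}_+$, so $z_0\in\ca{Tk}^{\rm reg}$, and a second application of Theorem \ref{thm:regular-ballX(intro)}(2) at $z_0$ produces the required half-space neighborhood of $N$ and therefore the desired contradiction.
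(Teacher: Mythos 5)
Your thick case is fine, but the thin case (Case 2) has two genuine gaps. First, the assertion ``since $\dim W=m$, the open set $\{g>0\}$ has positive $(m-1)$-measure'' is unjustified and false in general: the local Hausdorff dimension of $N$ is \emph{not} equal to $m$ near every point of $N_0$. By Lemma \ref{lem:X02}(1), near any point of ${\rm int}\,N_0^2$ one has $\dim_H(N\cap B(x,\e))=m-1$, and by Remark \ref{rem:thin-large} such regions can even have positive $\ca H^{m-1}$-measure; at a regular thin point $x$ in such a region the graph function of Theorem \ref{thm:almostisometry(intro)} is identically $0$, so there is no point $z_0$ with $g(y_0)>0$ to select, and your argument stops. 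Since you fix an arbitrary point of ${\ca Tk}^{\rm reg}\cup{\ca Tn}^{\rm reg}$, nothing guarantees the chosen point avoids this situation (indeed ${\ca Tk}^{\rm reg}$ may be empty and all regular points may be of this degenerate thin type near $N_0$); you would either have to treat the case $g\equiv 0$ separately (playing the resulting $(m-1)$-dimensional open chart off against the $m$-dimensional interior ${\rm int}\,N\neq\emptyset$, which needs an extra argument in the closed-manifold case) or prove that a ``good'' point can always be found, e.g. on $\ol{{\rm int}\,N}\cap N_0$.

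Second, the proposed repair of the almost-isometry issue does not work as stated: the identification $W\to G_g$ is only $(1+\tau_x(\e))$-bi-Lipschitz with $\e$ \emph{fixed}, and this constant does not improve when you blow up at $z_0$. Hence differentiability of $g$ at $y_0$ only yields that $T_{z_0}(N)$ is bi-Lipschitz, with a fixed constant close to $1$, to a half-space; this is perfectly compatible with $\Sigma_{z_0}(N)$ being, say, a lune slightly smaller than a hemisphere, so it does not force $\Sigma_{z_0}(N)\cong\mathbb S^{m-1}_+$, and you cannot conclude $z_0\in{\ca Tk}^{\rm reg}$ nor apply Theorem \ref{thm:regular-ballX(intro)}(2) at $z_0$. (In the sub-case $x\in N_0^1$ a fix is available without rigidity: by Remark \ref{rem:almost-graph}(1) the points over $\{g>0\}$ are local Alexandrov boundary points of ${\rm int}\,N$, which contradicts closedness directly in the Alexandrov case and contradicts manifold-ness via local homology or invariance of domain; but in the sub-case $x\in N_0^2$ the point $z_0$ lies in $N_0$ and more work is needed.) Note that the paper avoids all of this: it blows up at the limit of inradius-realizing points, uses Lemma \ref{lem:single-interior} to place the foot point in ${\rm int}\,X_0^1$, produces a product neighborhood in $Y$ via Perelman's fibration theorem for the distance from a piece of $C^Y_{t_0}$, and then contradicts Theorem \ref{thm:dim-sing} together with invariance of domain, never touching the thin-point structure theory your argument relies on.
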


\begin{rem} \upshape
In \cite{wong2}, Wong proved Proposition \ref{prop:inradius}
when $N$ is a Poincare duality space.
 Our method is different from the one
in \cite{wong2}.
\end{rem}

\begin{proof}[Proof of Proposition \ref{prop:inradius}]
We assume that $N$ is a closed Alexandrov space with curvature bounded below.
The case when $N$ is a closed topological manifold is similar.
Suppose that Proposition  \ref{prop:inradius} does not hold so that $r_i:={\rm inrad}(M_i)$ is uniformly  bounded away from $0$.
Take 
a point  $p_i \in\inte M_i$  and $q_i\in\partial M_i$  such that 
$r_i=|p_i, q_i|=|p_i, \pa M_i|$.
Passing to a  subsequence, we may assume that $(B(p_i,r_i), q_i)$ converges to a metric 
ball $(B(x_0,r), y_0)$ in $X$ under the convergence $\tilde M_i\to Y$, where $r>0$.

Take minimal geodesics $\gamma$ and  $\gamma_{y_0}^{+}$ from $y_0$ to $x_0$ and $C_{t_0}^Y$
respectively. 
Note that $\gamma$ and $\gamma_{y_0}^{+}$ form a minimal geodesic in $Y$.
It follows from Lemma \ref{lem:single-interior} that $y_0\in {\rm int}\, X_0^1$.
Take $p_0\in C_0$ with $\eta_0(p_0)=y_0$,
and fix  neighborhoods $U_0$ of  $p_0$ in $C_0$ and $V_0$ of $y_0$ in $X_0$ respectively in such a way 
that $\eta_0:U_0\to V_0$ is a homeomorphism.
Since $C_0$ is an Alexandrov space of dimension $m-1$,  
 one can take a point $y_1\in V_0$, which is close to $y_0$,  having 
a neighborhood $V_1\subset V_0$ homeomorphic to  $\mathbb R^{m-1}$.
Let $W_1$ be the neighborhood of 
$w_1:=\gamma_{y_1}^{+}(t_0)$ in $C_{t_0}^Y$ corresponding to $V_1$.
Take a small neighborhood $W$ of $W_1$, and consider the distance function 
$d_W$ from $W$. Taking $y_1$ close to $y_0$,
we may assume that $\tilde\angle w_1 y_1 x_0 >\pi-\delta$ for small $\delta>0$. Thus,
$d_W$ is regular on a neighborhood  $V_1$ 
of $y_1$ in $Y$.
Perelman's fibration theorem \cite{Pr:alexII} 
implies that $y_1$ has a neighborhood $P$ in $Y$
such that $P$ is a product neighborhood in the sense that we have a 
commutative diagram:
$$\xymatrix{
  P \ar[rr]^{\cong} \ar[dr]_{d_W}
                &  &    L\times[a,b] \ar[dl]^{\pi_2}    \\
                &  [a,b]                }
$$
where $L=P\cap V_1\simeq \mathbb R^{m-1}$.
On the other hand, from the assumption and Theorem \ref{thm:dim-sing}, there is a point $v\in L$ having a neighborhood in $X$ homeomorphic to $\mathbb R^m$.   
This contradicts to 
the invariance of domain theorem in $\mathbb R^m$. This completes the proof.
 \end{proof}

\begin{thm}\label{thm:close-components}
For $M\in\ca M(n,\kappa,\nu,\lambda)$,
suppose that there are distinct components 
$\pa^\alpha M$ and $\pa^\beta M$ of
$\pa M$ such that the Hausdorff distance 
$d_{H}^M (\pa^\alpha M,\pa^\beta M)$ in $M$ 
is less than $\e$
for some $\e=\e_n(\kappa,\lambda)>0$.
Then $M$ is diffeomorphic to 
$\pa^\alpha  M\times I$.
\end{thm}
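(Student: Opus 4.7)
My plan is to argue by contradiction, adapting the strategy used at the end of the proof of Proposition~\ref{prop:diam-thin} to the local (non-compact) setting, since the threshold $\e$ depends only on $n,\kappa,\lambda$ and no diameter bound is available. Suppose the conclusion fails: there is a sequence $M_i\in\ca M(n,\kappa,\nu,\lambda)$ with distinct boundary components $\pa^\alpha M_i$ and $\pa^\beta M_i$ such that $\e_i:=d_H^{M_i}(\pa^\alpha M_i,\pa^\beta M_i)\to 0$, yet no $M_i$ is diffeomorphic to $\pa^\alpha M_i\times I$. Fix a basepoint $p_i\in\pa^\alpha M_i$ together with a nearest point $q_i\in\pa^\beta M_i$, so that $|p_i,q_i|_{M_i}<\e_i$.

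After passing to the extensions $\tilde M_i$ with fixed parameters $t_0,\e_0$ from Proposition~\ref{prop:extendAS}, a subsequence of $(\tilde M_i,p_i)$ converges in the pointed Gromov-Hausdorff topology to a complete Alexandrov space $(Y,p_\infty)$ of curvature $\ge\tilde\kappa$, while both $(\pa^\alpha M_i,p_i)$ and $(\pa^\beta M_i,q_i)$ converge to the \emph{same} subset $N_0\subset Y$ through $p_\infty$. By Lemma~\ref{lem:YX0}, $Y$ carries the gluing structure $X\cup_{\eta_0}(C_0\times_\phi[0,t_0])$; the coincidence of the two boundary limits forces every point of $N_0$ sufficiently near $p_\infty$ to lie in $N_0^2$, with $\eta_0$ identifying one preimage from each of the two collars. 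Lemma~\ref{lem:double-sus} then yields $\Sigma_{p_\infty}(Y)=\{\xi^+,\xi^-\}*\Sigma$ with antipodal perpendicular directions, so the perpendicular geodesics of length $t_0$ into the two collars concatenate to a single geodesic of length $2t_0$ through $p_\infty$.

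Using the warped-product structure of both collars together with Lemma~\ref{lem:eta-2} (which identifies the tangent-cone slices on the two sides of $N_0^2$), I expect to show that a neighborhood of $p_\infty$ in $Y$ is isometric to the doubled warped product $L\times_\phi[-t_0,t_0]$ for an Alexandrov space $L$ arising as the common local limit slice of $\pa^\alpha M_i$ and $\pa^\beta M_i$. The equivariant fibration theorem of \cite{Ya:four} (as invoked at the end of the proof of Proposition~\ref{prop:diam-thin}) --- which applies whether or not the $\pa^\alpha M_i$-direction collapses --- then lifts this splitting back to $\tilde M_i$: a neighborhood of $\pa^\alpha M_i$ in $\tilde M_i$ admits a fibration over $[-t_0,t_0]$ whose fiber over $0$ is $\pa^\alpha M_i$ itself. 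Since the fibration is trivial over the contractible base $[-t_0,t_0]$, restriction to $M_i\subset\tilde M_i$ yields the desired diffeomorphism $M_i\cong\pa^\alpha M_i\times I$, contradicting the choice of $M_i$.

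The main obstacle is globalization: the basepoint analysis above only produces a local splitting and trivialization near a single $p_i$, whereas the conclusion is a global product structure. The uniformity of the Hausdorff hypothesis along $\pa^\alpha M_i$ means the same argument applies at every basepoint, so the local product structures should patch together; equivalently, one can reformulate this as showing that the distance function $d_{\pa^\alpha M_i}$ has no critical points on $M_i\setminus\pa^\alpha M_i$ (again by the same GH-limit analysis) and then integrate its Lipschitz gradient flow via Proposition~\ref{prop:lip} to construct the diffeomorphism directly. Checking the compatibility of the local trivializations along with the regularity of the gradient flow all the way across to $\pa^\beta M_i$ --- uniformly in $i$ --- is the most delicate step.
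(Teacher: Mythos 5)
There is a genuine gap, and it is the one you flag yourself at the end: the globalization. Your blow-up at the \emph{fixed} scale $t_0$ around a single basepoint $p_i$ cannot deliver it, and the intermediate claims are themselves not justified. First, the assertion that a neighborhood of $p_\infty$ in $Y$ is the doubled warped product $L\times_\phi[-t_0,t_0]$ presupposes that $M_i$ has no interior thickness near $p_i$ up to scale comparable with $t_0$; but the hypothesis only controls the Hausdorff distance between the two boundary \emph{components}, not the distance of interior points of $M_i$ to $\pa M_i$, so local inradius collapse near $p_i$ at scale $t_0$ is part of what must be proved, not an input (the structure theory of $\eta_0$, $N_0^2$, Lemma \ref{lem:double-sus}, etc.\ is moreover developed for compact convergence with a diameter bound). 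Second, the fibration theorem you invoke (the one from \cite{Ya:four}; the proof of Proposition \ref{prop:diam-thin} actually uses \cite{Ya:pinching}, where the limit is a segment) needs regularity of the limit and does not apply when the slice $L$ is a general, possibly collapsed and singular, Alexandrov space; and its fibers would be level sets near $\pa^\alpha M_i$, not $\pa^\alpha M_i$ itself. Third, your contradiction set-up does not localize: ``not diffeomorphic to a product'' is a global property, so there is no preferred basepoint at which the failure can be detected by a single pointed limit.

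What the paper does instead --- and what your last paragraph gestures at but does not carry out --- is exactly the missing uniform regularity statement. Sublemma \ref{slem:thetax=pi} shows that at \emph{every} point $x$ of the $10\e$-neighborhood $W$ of $\pa^\alpha M\cup\pa^\beta M$, the minimal geodesics from $x$ to the two components make angle $>\pi-\tau(\e)$; this is proved by contradiction with a blow-up not at the fixed scale $t_0$ but at the variable scale $\delta_i=\max\{|x_i,\pa^\alpha M_i|,|x_i,\pa^\beta M_i|\}\le 11\e_i\to 0$, where the rescaled partial extensions $\frac{1}{\delta_i}\hat M_i$ converge to a complete noncompact nonnegatively curved Alexandrov space and one argues as in \cite[Sublemma 7.3]{YZ:part1}. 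With this angle estimate in hand, one constructs an almost-gradient vector field for the distance function $d_{\pa^\alpha \hat M}$ supported in $W$, whose flow sweeps out an open domain $W_0\subset W$ diffeomorphic to $\pa^\alpha M\times[0,1]$; connectedness of $M$ then forces $M=W_0$, which is precisely the step that rules out interior ``bulges'' and further boundary components and completes the globalization you leave open. (The paper also first uses Proposition \ref{prop:diam-thin} to reduce to the case where both components have intrinsic diameter bounded below.) As written, your proposal identifies the right reformulation (no critical points of $d_{\pa^\alpha M_i}$) but does not prove it, so it stops short of a proof.
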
 

\begin{proof}
For $M\in\ca M(n,\kappa,\nu,\lambda)$, let us assume
that  
$$
d_{H}^M (\pa^\alpha M,\pa^\beta M)<\e
$$ 
for distinct components $\pa^\alpha M$ and $\pa^\beta M$ of $\pa M$. 
By Proposition \ref{prop:diam-thin}, we may assume that both  $\pa^\alpha M$ and $\pa^\beta M$ have diameters $\ge d_0$
for a uniform positive constant $d_0$.
Let $W$  be the $10\e$-neighborhood of 
$\pa^\alpha M\cup \pa^\beta M$ in $M$.

First we show 

\begin{slem}\label{slem:thetax=pi}
For any $x\in {\rm int} \,M\cap W$, let $\gamma_\alpha$ and $\gamma_\beta$ be 
minimal geodesics from $x$ to 
$\pa^\alpha M$ and $\pa^\beta M$ respectively, and let $\theta_x$ denote
the angle between them at $x$. Then we have
\[
        \theta_x >\pi-\tau_x(\e).
\]
\end{slem}
\begin{proof}
The basic strategy is the same as
 \cite[Sublemma 7.3]{YZ:part1}
and \cite[Lemma 8.10]{YZ:part1}.
Suppose that the sublemma does not hold. Then we have a sequence $M_i$ in
$\ca M(n,\kappa,\nu,\lambda)$ such that for two distinct 
components $\pa^\alpha M_i$ and $\pa^\beta M_i$ of $\pa M_i$, the following holds: 
\begin{itemize}
\item $d_H^{M_i}(\pa^\alpha M_i,\pa^\beta M_i)<\e_i$ with $\lim_{i\to\infty} \e_i=0,\,$
\item for some $x_i\in {\rm int} \,M_i\cap W_i$,
\[
      \theta_{x_i}\le \pi-\theta_0,
\]
for a constant $\theta_0>0$ independent of $i$, where $W_i$ and $\theta_{x_i}$ are defined for $\pa^\alpha M_i$,
$\pa^\beta M_i$ and $x_i$ as above using $\e_i$.
\end{itemize}
Let $\delta_i:=\max\{ |x_i,\pa^\alpha M_i|,
|x_i,\pa^\beta M_i|\}$, and let us consider
$$
\hat M_i:=M_i\bigcup_{\pa M_i} \pa M_i
\times_{\phi}
[0,\delta_i]\subset \tilde M_i.
$$
Since $\delta_i\le 11\e_i$, we obtain $\lim_{i\to\infty}\delta_i=0$.
Let $\pa^\alpha\hat M_i$ and 
$\pa^\beta \hat M_i$ be the components
of $\pa M_i$ corresponding to 
$\pa^\alpha M_i$ and $\pa^\beta M_i$
respectively. Obviously, the minimal geodesics
$\gamma_\alpha^i$ and  $\gamma_\beta^i$ joining $x_i$ to $\pa^\alpha M_i$ and $\pa^\beta M_i$ 
extends to minimal geodesics
$\hat\gamma_\alpha^i$, 
$\hat\gamma_\beta^i$ to
$\pa^\alpha \hat M_i$ and $\pa^\beta \hat M_i$ respectively.
Passing to a subsequence, we may assume that 
$(\frac{1}{\delta_i}\hat M_i, x_i)$
converges to a pointed space $(\hat Y,y)$.
Note that $\hat Y$ is a complete noncompact 
Alexandrov space with nonnegative curvature.
Let $\hat\gamma_\alpha^\infty$ and 
$\hat\gamma_\beta^\infty$ be the limits of 
$\hat\gamma_\alpha^i$ and  
$\hat\gamma_\beta^i$ respectively.
Clearly we have 
\beq \label{eq:hatangle=pi}
\angle_{y} (\hat\gamma_\alpha^\infty,\hat\gamma_\beta^\infty)\le \pi-\theta_0.
\eeq
In a way similar to the proof of 
 \cite[Sublemma 7.3]{YZ:part1}, we reach the contradiction 
to \eqref{eq:hatangle=pi}. We omit the detail since 
the argument is essentially the same.
\end{proof}

Let $U$ be the $5\e$-neighborhood
of $\pa^\alpha M\cup \pa^\beta M$ in $M$.
Using Sublemma \ref{slem:thetax=pi},
we can construct a vector field $V$
such that 
\begin{itemize}
\item $U\subset {\rm supp} V\subset {\rm int}\, \hat M\cap W\,;$
\item $V$ is almost gradient for the distance 
function $d_{\pa^\alpha \hat M}$ on $U$.
\end{itemize}
Using the flow curves of $V$, we obtain 
an open domain $W_0$ contained in $W$ that  is diffeomorphic to 
$\pa^\alpha M\times [0,1]$.
Since $M$ is assumed to be connected, we conclude $M=W_0$.
This completes the proof of Theorem \ref
{thm:close-components}.
\end{proof}
\pmed

\pmed\n
{\bf Obstruction to collapse.}\,
Finally, we discuss simplicial volumes as an obstruction of 
general collapse in $\mathcal M(n,\kappa,\nu,\lambda)$.
For a closed orientable $n$-manifold $N$, the {\it simplicial volume} $||N||$ introduced by Gromov \cite{G:vol}  is defined as 
\[
      ||N|| := \inf_{c\in [N]}\,\, ||c||_1,
\]
where $[N]\in H_n(N;\mathbb R)$ is a fundamental class of $N$, and $||c||_1=\sum_i |a_i|$
for the real cycles $c=\sum_i a_i\sigma_i$ representing $[N]$
(see \cite{G:vol}).
When $N$ is non-orientable,  $||N||$ is defined as the half of 
the simplicial volume of the orientable double covering of $N$.

\begin{prop} \label{prop:simplicial2}
For given $n,\kappa, \nu,\lambda>0$, there exists
$\e>0$ such that 
if every unit ball in $M\in \mathcal M(n,\kappa,\nu,\lambda)$ has volume $<\e$,
 then $||D(M)|| =0$.
\end{prop}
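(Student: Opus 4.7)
My plan is to prove $\|D(M)\| = 0$ by reducing to a vanishing statement on the double of Wong's extension $\tilde M$, and then invoking Gromov's vanishing theorem for amenable covers of bounded multiplicity.

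First, I reduce the problem to $D(\tilde M)$. The warped collar $C_M = \partial M \times_\phi [0,t_0]$ deformation retracts onto $\partial M \times \{0\}$, so $\tilde M = M \cup_{\partial M} C_M$ retracts onto $M$. Doubling: the two copies of $\tilde M$ in $D(\tilde M)$ are glued along their outer boundaries $\partial M \times \{t_0\}$, and on each copy the retraction sends that gluing locus to $\partial M \times \{0\}$, the common boundary of $D(M)$; the retractions therefore agree on the gluing boundary and assemble to a deformation retraction $D(\tilde M) \to D(M)$. By Gromov's homotopy invariance of simplicial volume,
\[
\|D(M)\| = \|D(\tilde M)\|.
\]
By Proposition 2.8, $\tilde M$ is an $n$-dimensional Alexandrov space with curvature $\ge \tilde\kappa(\kappa,\nu,\lambda)$, so $D(\tilde M)$ is a closed $n$-dimensional Alexandrov space of the same lower curvature bound. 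Moreover, a Heintze-Karcher normal-exponential computation (in the spirit of Lemma 8.2) combined with the Bishop-Gromov doubling property on $M$ shows that the local $(n-1)$-volume of $\partial M$ inside $B^M(p,1)$ is bounded by a constant times $\vol(B^M(p,2)) \le C(n,\kappa)\vol(B^M(p,1))$; since $\phi \le 1$, this in turn bounds the local volume of the collar $C_M$. Hence the hypothesis $\vol(B^M(\cdot,1)) < \epsilon$ upgrades to $\vol(B^{D(\tilde M)}(\cdot,1)) < C\epsilon$ for some $C = C(n,\kappa,\nu,\lambda)$.

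Next, I would deduce $\|D(\tilde M)\| = 0$ from Gromov's theorem by constructing an open cover of $D(\tilde M)$ by contractible subsets of multiplicity at most $n = \dim D(\tilde M)$. Perelman's local stability theorem, or more precisely this paper's local contractibility result (Theorem 1.5 applied to the closed Alexandrov space $D(\tilde M)$), supplies, around every point $x$, a contractible neighborhood $V_x \supset B(x,r_x)$. The local volume collapse, via a rescaling-limit argument in the spirit of Yamaguchi's fibration theorem, produces a local fibration structure whose fibers (low-dimensional collapsed Alexandrov spaces) can be absorbed into these contractible neighborhoods. The resulting refinement can be taken with multiplicity dropping from the trivial bound $n+1$ (given by the covering dimension) to $n$. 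Since contractible sets are amenable, Gromov's vanishing theorem for amenable covers then gives $\|D(\tilde M)\| = 0$.

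The main obstacle is this last multiplicity reduction from $n+1$ to $n$. In the smooth Riemannian setting with two-sided sectional curvature bounds it is provided by a Cheeger-Gromov $F$-structure; in our Alexandrov setting, where only lower curvature bounds are available, one must instead exploit the local fibration structure forced by the volume collapse, via tangent-cone blow-ups combined with the local contractibility theorem already established in this paper. Verifying that this structure descends to a cover of the required multiplicity, without relying on any upper sectional curvature bound, is the technical heart of the argument.
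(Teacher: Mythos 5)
Your proposal diverges from the paper's proof and has two genuine problems. The central step of your argument --- producing an open cover of $D(\tilde M)$ by amenable (contractible) sets of multiplicity at most $n$ and invoking Gromov's vanishing theorem --- is exactly the part you leave unproved, and it is not a deferrable technicality: with only lower curvature bounds, local volume collapse does not yield an $F$-structure, and it does not yield a local fibration either, since Yamaguchi's fibration theorem requires the blow-up or limit space to be sufficiently regular (essentially Riemannian), whereas here the limits are merely Alexandrov spaces with singularities. There is no known mechanism in this setting to cut the multiplicity from the trivial bound $n+1$ down to $n$, and this is precisely why the paper does not argue via covers at all. Instead it rescales $M_i':=\alpha_i M_i$ with $\alpha_i\to\infty$ and $\alpha_i^n v_i\to 0$, so the curvature and second fundamental form bounds tend to $0$, chooses the extension so that the collar height $t_{0,i}\to 0$ and $\e_{0,i}\to 1$, proves by a limiting argument (Assertion \ref{ass:small-vol}) that then \emph{all} unit balls of $\tilde M_i'$ have arbitrarily small volume, and applies Gromov's Isolation Theorem directly to $D(\tilde M_i')$ --- a result whose proof goes through bounded cohomology/diffusion of cycles and needs no construction of amenable covers.

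In addition, your preliminary ``volume upgrade'' from $M$ to $D(\tilde M)$ is false as stated. Heintze--Karcher bounds the volume of a collar \emph{above} by the boundary area, not the boundary area by the interior volume, and small unit balls in $M$ do not force small boundary area: take $M=T^{n-1}\times[0,\delta]$ with a fixed large flat torus and $\delta$ tiny, which lies in $\mathcal M(n,\kappa,\nu,\lambda)$ since it is flat with totally geodesic boundary. Every unit ball of $M$ has volume $O(\delta)$, yet unit balls of $\partial M$, and hence of the attached collar $\partial M\times_\phi[0,t_0]$ with $t_0$ fixed, have volume bounded below independently of $\delta$, so $D(\tilde M)$ is not locally collapsed near the collar. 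This is exactly why the paper lets the collar height go to zero along the rescaled sequence before doubling. Both your reduction step and your main vanishing step therefore need to be replaced; the isolation-theorem route of the paper (or a genuinely new construction of small-multiplicity amenable covers under lower curvature bounds alone) is required.
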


\begin{proof}
Suppose that the theorem does not hold. Then  for a sequence
$v_i\to 0$, there would exist a sequence
$M_i$ in $\mathcal M(n,\kappa,\nu,\lambda)$ such that 
every unit ball in $M_i$ has volume $<v_i$ while
$||D(M_i)||>0$.
Take $\alpha_i\to\infty$ satisfying
\[
     \lim_{i\to\infty} \alpha_i^n v_i = 0.
\]
Consider the rescaling $M_i' := \alpha_i M_i$.
Note that
\beq
 K_{M_i'}\ge -o_i, \quad K_{\pa M_i'}\ge -o_i,
\quad \Pi_{\pa M_i'} \ge  o_i,  \label{eq:rescale-small}
\eeq
where $\lim_{i\to\infty} o_i=0$. Moreover, we have 
\beq \label{eq:rescale-small}
 \text{any metric $\alpha_i$-ball in $M_i'$ has volume less than $\alpha_i^n v_i\to 0$.}\hspace{0.5cm}
\eeq
By \eqref{eq:rescale-small}, we can take the warping function 
$\phi_i:[0, t_{0,i}]\to [\e_{0,i}, 1]$ of  \eqref{eq:phi}  so that the height 
$t_{0,i}$ and the scaling factor $\e_{0,i}=\phi_i(t_{0,i})$ of the end of 
the glued cylinder $C_{M_i'}$  satisfy 
\[
    \lim_{i\to\infty} t_{0,i}=0, \quad  \lim_{i\to\infty} \e_{0,i}=1.
\]
Let $\tilde M_i'$ be the result of the gluing of $M_i'$ and $C_{M_i'}$ as before.

\begin{ass}  \label{ass:small-vol}
For any $\e>0$ there exists an $I$ such that for any $i\ge I$ and $p_i\in M_i$,
we have 
\[
             \vol B^{\tilde M_i'}(p_i,1) <\e.
\]
\end{ass}
\begin{proof}
Otherwise, we have a subsequence $\{ j\}$ of $\{ i\}$ such that 
\beq
                  \vol B^{\tilde M_j'}(p_j,1) \ge \e_1>0  \label{eq:large-ballvol}
\eeq
for some $p_j\in M_j$, 
where $\e_1$ is a uniform positive constant.
Passing to a subsequence, we may assume that 
$(\tilde M_j', p_j)$ converges to an Alexandrov space $(Y,y)$.
It follows from \eqref{eq:large-ballvol} that $\dim Y=n$.
Since $d_{GH}(M_j'^{\rm ext}, \tilde M_j')<t_{0,i}$ and since
$M_j'$ and $M_j'^{\rm ext}$ are $1/\e_{0,i}$-bi-Lipschitz homeomorphic 
to each other, 
both limits $(N,q)$ and  $(X,x)$ of $(M_j',p_j)$ and $(M_j'^{\rm ext},p_j)$ respectively 
isometric to $(Y,y)$. It turns out that 
\[
            \liminf \vol\, B^{M_j'}(p_j, 1) \ge c>0
\]
for some uniform constant $c$. This is a contradiction to \eqref{eq:rescale-small}.
\end{proof}

Now Isolation Theorem (\cite{G:vol})  
yields that $||D(\tilde M_j')||=0$ for all large $j$.
Here it should be noted that we can certainly apply the proof of  Isolation Theorem in \cite{G:vol} to $D(\tilde M_j')$ since $D(\tilde M_j')$ has a triangulation and is an Alexandrov space with a uniform lower curvature bound, although it is only a $C^0$-Riemannian manifold.
This completes the proof.
\end{proof}

We have the following immediately.

\begin{cor} \label{cor:simplicial}
Let $M$ be a compact $n$-manifold with boundary.
If the double of $M$ has non-vanishing  simplicial volume  $||D(M)||>0$,
then $M$ does not carry Riemannian metrics which  
collapse in $\mathcal M(n,\kappa,\nu,\lambda)$ for any fixed $\kappa,\nu,\lambda$.
\end{cor}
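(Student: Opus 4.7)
The plan is to derive Corollary \ref{cor:simplicial} as a direct consequence of Proposition \ref{prop:simplicial2} combined with the topological invariance of the simplicial volume. First I would make explicit the working notion of collapse: a sequence of Riemannian metrics $g_i$ on $M$ collapses in $\ca M(n,\kappa,\nu,\lambda)$ if it lies in that moduli space and the suprema over all points of the unit-ball volumes tend to zero, i.e.\ for each $\e>0$ eventually every unit ball in $(M,g_i)$ has volume less than $\e$.

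Next I would argue by contradiction. Suppose $M$ is a compact $n$-manifold with boundary satisfying $||D(M)||>0$, and suppose toward a contradiction that $M$ admits a sequence of Riemannian metrics $g_i\in\ca M(n,\kappa,\nu,\lambda)$ that collapses in the above sense. Applying Proposition \ref{prop:simplicial2} to the constant $\e$ it provides, one obtains an index $i_0$ such that for every $i\ge i_0$, every unit ball in $(M,g_i)$ has volume less than that $\e$; hence $||D(M,g_i)||=0$ for all such $i$.

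Finally I would invoke the topological invariance of the simplicial volume: for any Riemannian metric $g$ on $M$, the double $D(M,g)$ is homeomorphic (in fact diffeomorphic) to the topological double $D(M)$, so $||D(M,g)||=||D(M)||$. Combining this with the previous step yields $||D(M)||=0$, contradicting the hypothesis $||D(M)||>0$. This completes the proof.

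The whole argument is essentially a packaging of Proposition \ref{prop:simplicial2}, so there is no real obstacle; the only point that deserves care is confirming that the hypothesis of Proposition \ref{prop:simplicial2}, namely that every unit ball of $(M,g_i)$ has volume less than $\e$, indeed captures the intended notion of collapse in $\ca M(n,\kappa,\nu,\lambda)$, and that the simplicial volume in question is genuinely a topological invariant of $D(M)$ independent of the chosen smooth/Riemannian structure.
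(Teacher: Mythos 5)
Your argument is exactly the one the paper intends — the paper derives Corollary \ref{cor:simplicial} as an immediate consequence of Proposition \ref{prop:simplicial2}, and your proposal simply spells out the (correct) contradiction argument together with the topological invariance of simplicial volume. No issues.
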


\begin{rem} \label{rem:Ricci-Gromovinv}
Proposition \ref{prop:simplicial2} actually holds for the following family 
 $\ca R(n,\kappa,\eta, \lambda)$ of $n$-dimensional compact 
Riemannian manifolds  $M$ whose Ricci curvatures are 
uniformly bounded below:
\beq\label{eq:Ricci-bounds}
      {\rm Ricci}_M \ge \kappa, \,\,
 {\rm Ricci}_{\pa M} \ge \eta,\,\,
  \Pi_{\partial M}\ge-\lambda.
\eeq
Actually applying Perelman's
gluing theorem (\cite{P97}, see also \cite{BWW19})
to the above argument, we get the conclusion 
in a similar way.
See \cite{HY} on  some results for inradius 
collapsed manifolds satisfying \eqref{eq:Ricci-bounds} with 
uniformly bounded diameters.
\end{rem}

\begin{ex}
(1)\, Let $M$ be a punctured torus of negative curvature 
having totally geodesic boundary.
Then $\e M$ collapses to a point as $\e\to 0$ while $||D(M)||>0$.
This example shows that Proposition \ref{prop:simplicial2}  does not hold
if one drops the lower curvature bound $\kappa$.

(2)\,
Let $N \subset \mathbb R^2$
be the union of the unit circle $\{ (x,y)\,|\, x^2+y^2=1\}$ and the segment $\{ (x,y)\,|\, x=0, -1\le y\le 1\}$.
Let $M_{\e}$ be the closed $\e$-neighborhood of $N$ in $\mathbb R^2$.
After slight smoothing of $M_{\e}$, it is a compact surface with smooth boundary
such that 
$K_{M_{\e}}\equiv 0$ and 
$\inf {\rm II}_{\pa M_{\e}} \to -\infty$ as $\e\to 0$, and
$M_{\e}$ collapses to $N$ while $||D(M_{\e})||>0$.
This example shows that Proposition \ref{prop:simplicial2}  does not hold
if one drops the lower bound $\Pi_{\pa M}\ge -\lambda$.
\end{ex}

\begin{prob}
Determine if Proposition \ref{prop:simplicial2} still holds
if one drops the lower bound $K_{\pa M}\ge \nu$.
\end{prob}



\begin{thebibliography}{9999}

\bibitem{AKP} S. Alexander, V. Kapovitch and A. Petrunin,
Alexandrov geometry: preliminary version no. 1, arXiv:1903.08539



%


\bibitem{BWW19}
B.~Botvinnik, M.~Walsh, and D.~Wraith.
\newblock {Homotopy groups of the observer moduli space of Ricci positive metrics}.
\newblock {\em Geometry $\&$ Topology}, 23 2019, 3003–3040.

%


\bibitem{BBI} Y.~Burago,D.~Burago, and S.~Ivanov.  \textit{A course in metric geometry.} Graduate Studies in Mathematics  2001;  Volume: 33.

\bibitem{BGP} Y.~Burago, M.~Gromov,  and G.~Perelman.  \textit{A. D. Alexandrov spaces with curvature bounded below.}
 Uspekhi Mat. Nauk. {\bf{42}}:2, 3-51, 1992.


\bibitem{FMY} T.~Fujioka, A.~Mitsuishi,
T.~Yamaguchi.
Lipschitz homotopy convergence of Alexandrov spaces II, to appear in J. Topol. Anal.



\bibitem{Fuk} K.~Fukaya. Collapsing of Riemannian manifolds and eigenvalues of Laplace operator. Invent. Math. {\bf{87}}, 517–547 (1987).



%
%
%
%
%
%
%






%

\bibitem{GLP} M.~Gromov.  Structures m\'etriques pour les 
   vari\'et\'es riemanniennes,  Edited by J. Lafontaine and P. Pansu,
   Textes Math\'ematiques [Mathematical Texts],{\bf 1}
   CEDIC, Paris, 1981.

\bibitem{G:vol} M.~Gromov. Volume and bounded cohomology. Publ. Math. IHES 56 (1982), 5--99.



\bibitem{GP} K.~Grove and P.~Petersen.
A radius sphere theorem, Invent. Math. 112
(1993), 577-583.



\bibitem{HK} E.~Heintze and H.~Karcher.
 A general comparison theorem with
applications to volume estimates for submanifolds, Ann. Sci. Ec. Norm. Sup.
 11 (1978),  451–470.


\bibitem{HY}Z.~Huang, T.~Yamaguchi,
Inradius collapsed manifolds with a lower Ricci curvature bound, arXiv:2511.22081




%
%



\bibitem{Kap} V.~Kapovitch.  \textit{ Regularity of limits of noncollapsing sequences of manifolds}. 
Geom. Funct. Anal.  12  (2002),  no. 1, 121–137. 


\bibitem{Kap:stab} V.~Kapovitch. \textit{Perelman's stability theorem}.  Surveys in differential geometry. Vol. XI,  103-136, Surv. Differ. Geom., 11, Int. Press, Somerville, MA, 2007.

%
%
%

\bibitem{Kos} N.~Kosovski${\rm \breve{i}}$. \textit{Gluing of Riemannian manifolds of curvature $\geq\kappa$}. Algebra i Analiz 14:3 (2002), 140-157.

\bibitem{Kram} L.~Kramer.
\textit{On the local structure and homology of 
 spaces and buidlings}. Advances in Geometry 11, (2011), 347–369 




\bibitem{NL:geodesically} A. Lytchak and K.  Nagano,
 Geodesically complete spaces with an upper curvature bound. Geom. Funct. Anal.  29  (2019),  295--342.









%
\bibitem{MY:SLC} A.~Mitsuishi, T.~Yamaguchi.
\textit{ Locally strongly Lipschitz contractibility of 
Alexandrov spaces}, Pacific J. 270 (2014),393—421.



%







\bibitem{OS} Y.~Otsu, T.~Shioya.
\textit{The Riemannian structure of Alexandrov spaces.} 
J. Differential Geom. 39 (1994), no. 3, 629--658

%


\bibitem{Pr:alexII} G.~Perelman.\textit{Alexandrov spaces with curvature bounded below II}. preprint, 1994.

\bibitem{Per} G.~Perelman.  \textit{Elements of Morse theory on Alexandrov spaces.} 
St. Petersburg Math. J. 5 (1994) 207--214.

\bibitem{P97}
G.~Perelman.
\newblock {Construction of manifolds of positive Ricci curvature with big volume and large Betti numbers}.
\newblock In {\em Comparison Geometry}, Mathematical Sciences Research  Institute Berkeley, Calif: Mathematical Sciences Research Institute publications, pages 157--163. Cambridge University Press, 1997.
%
%
%


\bibitem{PtPt:extremal} G.~Perelman, A.~Petrunin.
\textit{Extremal subsets in Aleksandrov spaces and the generalized Liberman theorem}. (Russian)  Algebra i Analiz  5  (1993),  no. 1, 242--256;  translation in  St. Petersburg Math. J.  5  (1994),  no. 1, 215–227. 

\bibitem{Peter}
P. Petersen V, \textit{A finiteness theorem for metric spaces}, J. Differential Geom. 31 (1990), no. 2, 387--395.

\bibitem{Pet:Parallel}
A.~Petrunin, \textit{Parallel Transportation for
Alexandrov space with curvature bounded below},
GAFA, {\bf{8}} (1998),  123--148.

\bibitem{Pet} A. Petrunin, \textit{Semiconcave functions in Alexandrov's geometry.}
Metric and comparison geometry,
Surveys in Comparison Geometry, (2007),137 – 202.


%
%
%

\bibitem{wong0} J.~Wong. \textit{An extension procedure for manifolds with boundary.} Pacific J. {\bf{235}} (2008),
173-199.

\bibitem{wong2} J.~Wong.  \textit{Collapsing manifolds with boundary.} Geom Dedicata. {\bf{149}} (2010), 291-334.

\bibitem{Xu:precpt} S.  Xu. \textit{
Precompactness of domains with lower Ricci curvature bound under Gromov-Hausdorff topology}, arXiv:2311.05140
%
\bibitem{Ya:pinching} T.~Yamaguchi. \textit{Collapsing and pinching under a lower curvature bound.} Ann. Math. {\bf{133}}(2) 317-357, 1991

\bibitem{Ya:conv} T.~Yamaguchi. \textit{A convergence theorem in the geometry of Alexandrov spaces.} Actes
de la Table Ronde de G\'eom\'etrie Diff\'erentielle, Luminy 1992. S\'emin. Congr., vol. 1,
601-642, Soc. Math. France, Paris (1996)


\bibitem{Ya:four} T.~Yamaguchi.\textit{Collapsing 4-manifolds with a lower curvature bound}, arXiv:1205.0323 



\bibitem{Ya:ess} T.~Yamaguchi. \textit{Collapsing and essential covering},  
arXiv:1205.0441 


\bibitem{YZ:inradius} T.~Yamaguchi,  and Z. Zhang. \textit{Inradius collapsed manifolds}, 
Geometry and Topology, 23-6 (2019), 2793--2860.


\bibitem{YZ:part1} T.~Yamaguchi,  and Z. Zhang. \textit{Limits of manifolds with boundary I}, 
arXiv:2406.00970.

\end{thebibliography}
\end{document}